\newtheorem{thm}{Theorem}[section]
\newtheorem{cor}[thm]{Corollary}
\newtheorem{lem}[thm]{Lemma}
\newtheorem{prop}[thm]{Proposition}
\theoremstyle{definition}
\newtheorem{defn}[thm]{Definition}
\newtheorem{es}[thm]{Example}
\newtheorem{rmk}[thm]{Remark}
\newtheorem{claim}[thm]{Claim}
\newcommand{\ev}{\mathrm{ev}}
\newcommand{\id}{\mathrm{Id}}
\newcommand{\co}{\mathrm{co}}
\newcommand{\op}{\mathrm{op}}
\newcommand{\Nat}{\mathrm{Nat}}
\newcommand{\Hom}{\mathrm{Hom}}
\newcommand{\End}{\mathrm{End}}
\newcommand{\cc}{\mathcal{C}}
\newcommand{\dd}{\mathcal{D}}
\newcommand{\e}{\mathcal{E}}
\newcommand{\f}{\mathcal{F}}
\newcommand{\G}{\mathfrak{G}}
\newcommand{\m}{\mathcal{M}}
\newcommand{\p}{\mathcal{P}}
\newcommand{\NN}{\mathbb{N}}
\newcommand{\QQ}{\mathbb{Q}}
\newcommand{\ZZ}{\mathbb{Z}}
\newcommand{\Set}{{\sf Set}}
\newcommand{\Comp}{{\sf Comp}}
\newcommand{\Top}{{\sf Top}}
\newcommand{\Dom}{{\sf Dom}}
\newcommand{\Field}{{\sf Field}}
\def\Alg{{\sf Alg}}
\def\Coalg{{\sf Coalg}}
\newenvironment{invisible}{{\noindent\sc \colorbox{yellow}{Invisible:}\;}\color{gray}}{\medskip}
\begin{document}
\title{Semiseparable functors}
\author{Alessandro Ardizzoni}
\address{%
\parbox[b]{\linewidth}{University of Turin, Department of Mathematics ``G. Peano'', via
Carlo Alberto 10, I-10123 Torino, Italy}}
\email{alessandro.ardizzoni@unito.it}
\urladdr{\url{www.sites.google.com/site/aleardizzonihome}}

\author{Lucrezia Bottegoni}
\address{%
\parbox[b]{\linewidth}{University of Turin, Department of Mathematics ``G. Peano'', via
Carlo Alberto 10, I-10123 Torino, Italy}}
\email{lucrezia.bottegoni@unito.it}

\subjclass[2010]{Primary 16H05; Secondary 18A40; 18C20; 16T15}
\thanks{This paper was written while the authors were members of the
"National Group for Algebraic and Geometric Structures and their
Applications" (GNSAGA-INdAM). They were partially supported by MIUR
within the National Research Project PRIN 2017. The authors would like to express their gratitude to Fosco Loregian, Claudia Menini, Paolo Saracco  and Joost Vercruysse for meaningful comments on a preliminary version of this paper.}

\begin{abstract}
 In this paper we introduce and investigate the notion of semiseparable functor. One of its first features is that it allows a novel description of separable and naturally full functors in terms of faithful and full functors, respectively. To any semiseparable functor we attach an invariant, given by an idempotent natural transformation, which controls when the functor is separable and yields a characterization of separable functors in terms of (dual) Maschke and conservative functors. We prove that any semiseparable functor admits a canonical factorization as a naturally full functor followed by a separable functor. Here the main tool is the construction of the coidentifier category attached to the associated idempotent natural transformation.
  Then we move our attention to the semiseparability of functors that have an adjoint. First we
  obtain a Rafael-type Theorem. Next we characterize the semiseparability of adjoint functors in terms of the (co)separability of the associated (co)monads and the natural fullness of the corresponding (co)comparison functor. We also focus on functors that are part of an adjoint triple. In particular, we describe bireflections as semiseparable (co)reflections, or equivalently, as either Frobenius or naturally full (co)reflections. As an application of our results, we study the semiseparability of functors traditionally attached to ring homomorphisms, coalgebra maps, corings and bimodules, introducing the notions of semicosplit coring and semiseparability relative to a bimodule which extend those of cosplit coring and Sugano's separability relative to a bimodule, respectively.
\end{abstract}
\keywords{Separability, Eilenberg-Moore categories, (Co)reflections, Corings, Bimodules}
\maketitle
\tableofcontents

\section*{Introduction}
The notion of separable ring extension occurs in Algebra, Number Theory and Algebraic Geometry. In \cite{NVV89} C. N\v{a}st\v{a}sescu et al. reinterpreted this notion at a categorical level by introducing the so-called separable functors.
Explicitly, a functor $F:\mathcal{C}\to\mathcal{D}$ is said to be \emph{separable} if the associated natural transformation $\mathcal{F}^F_{X,Y} : \mathrm{Hom}_{\mathcal{C}}(X,Y)\rightarrow \mathrm{Hom}_{\mathcal{D}}(FX, FY)$, mapping $f$ to $Ff$, has a left inverse, i.e. there is a natural transformation $\mathcal{P}^F_{X,Y} : \mathrm{Hom}_{\dd}(FX, FY)\rightarrow \mathrm{Hom}_{\cc}(X,Y)$ such that $\mathcal{P}^F_{X,Y}\circ\mathcal{F}^F_{X,Y} = \mathrm{Id}_{\mathrm{Hom}_{\cc}(X,Y)}$ for all $X$ and $Y$ in $\cc$. 
A right version of this property yields to \emph{naturally full} functors, as defined in \cite{ACMM06}.
In this paper, we introduce the notion of \emph{semiseparable} functor, by requiring $\mathcal{F}^F_{X,Y}$ to be a regular natural transformation - an analogue of von Neumann regular element - i.e., by requiring $\mathcal{F}^F_{X,Y}$ to admit a natural transformation $\mathcal{P}^F_{X,Y}$ as above such that $\mathcal{F}^F_{X,Y}\circ\mathcal{P}^F_{X,Y}\circ\mathcal{F}^F_{X,Y} =\mathcal{F}^F_{X,Y}$.
Semiseparability allows to treat separability and natural fullness in a unified way and from a new perspective that reveals further features of them. For instance, it is well-known that a separable functor is faithful and that a naturally full functor is full: In Proposition \ref{prop:sep}, we see how the reverse implications hold by adding the assumption of semiseparability. 
In Proposition \ref{prop:idempotent}, to any semiseparable functor $F:\cc\to\dd$ we attach, in a unique way, a suitable idempotent natural transformation $e:\mathrm{Id}_{\cc}\rightarrow \mathrm{Id}_{\cc}$, which is trivial only in case $F$ is separable. Interestingly, as a  particular case, in Corollary \ref{cor:natfidp} we obtain that every naturally full functor admits such an idempotent natural transformation that is not trivial unless the functor is also separable whence fully faithful. By using the idempotent natural transformation $e$, in Corollary \ref{cor:conserv},  we prove that a functor is separable if and only if it is semiseparable and either Maschke, dual Maschke or conservative.
To such an $e$  we can attach a suitable quotient category $\cc_e$ of $\cc$, the so-called coidentifier \cite{FOPTST99}. This is the main ingredient to prove the notable property, stated in Theorem \ref{thm:coidentifier}, that any semiseparable functor $F:\cc\to \dd$ factors as the naturally full quotient functor $H:\cc\to \cc_e$ followed by a unique separable functor $F_e:\cc_e\to\dd$. As a consequence, in Corollary \ref{cor:fattoriz}, a functor is shown to be semiseparable if and only if it factors as a naturally full functor followed by a separable functor.


Next we investigate semiseparable functors which have a right (resp. left) adjoint. In this setting a celebrated result for separable functors is the so-called Rafael Theorem \cite{Raf90}, which provides a characterization of separability in terms of splitting properties of the (co)unit. It is natural to wonder whether such a result is also available for semiseparable functors, and in fact in Theorem \ref{thm:rafael} we prove that a functor which has a right (resp. left) adjoint is semiseparable if and only if the (co)unit of the adjunction is regular as a natural transformation.
Then we study semiseparability in the context of Eilenberg-Moore categories. Our main result here is Theorem \ref{thm:ssepMonad} stating that, given an adjunction $F\dashv G:\dd\to\cc$, the right adjoint $G$ is semiseparable if and only if the monad $GF$ is separable and the comparison functor $K_{GF}:\dd\to \cc_{GF}$ is naturally full, where  $\cc_{GF}$ is the Eilenberg-Moore category of modules over $GF$. A similar result for $F$ is given in Theorem \ref{thm:ssep-comonad}.
As a consequence, we recover similar characterizations for separable, naturally full and fully faithful functors.

Then, we focus on functors that have both a left and a right adjoint. It is well-known that in an adjoint triple $F\dashv G\dashv H$, the functor $F$ is fully faithful if and only if so is $H$. 
Proposition \ref{prop:adj-triples} shows that a similar behaviour holds for semiseparable, separable and naturally full functors. As far as we know, this result is new even at the separable and naturally full cases. Next, as a consequence of Rafael-type Theorem, in Proposition \ref{prop:frob-ssep} we obtain necessary and sufficient conditions for the semiseparability of a Frobenius functor.

We explore semiseparability in connection with functors admitting a fully faithful (left) right adjoint, which are known as  \emph{(co)reflections}, cf. \cite{Ber07}, and with functors admitting a fully faithful left and right adjoint equal and satisfying a coherence condition relating the unit and counit of the two adjunctions, which are called \emph{bireflections}, cf. \cite{FOPTST99}. 
Our main result in this direction is Theorem \ref{thm:frobenius} where we prove that a (co)reflection is semiseparable if and only if it is naturally full if and only if it is Frobenius if and only if it is a bireflection. In Proposition \ref{prop:Hcorefl} we see that, given a category $\cc$ and an idempotent natural transformation $e:\id_{\cc}\rightarrow \id_{\cc}$, the quotient functor $H:\cc\to \cc_e$ is a bireflection if and only if $e$ splits (e.g. $\cc$ is idempotent complete). As a consequence, in Corollary \ref{cor:fact-birefl} we show that a factorization of a semiseparable functor as a bireflection followed by a separable functor is available if and only if the associated idempotent natural transformation $e$ splits, and that such a factorization amounts to the canonical one given by the coidentifier category.

Finally, the results we obtained so far are applied to several functors traditionally connected to the study of separability.
The first functors we look at are the restriction of scalars functor $\varphi_*:S\text{-}\mathrm{Mod}\rightarrow R\text{-}\mathrm{Mod}$, whose semiseparability falls back to its separability, the extension of scalars functor $\varphi^*= S\otimes_{R}(-):R\text{-}\mathrm{Mod}\rightarrow S\text{-}\mathrm{Mod}$ and the coinduction functor $\varphi^!= {}_R\Hom(S,-):R\text{-Mod}\rightarrow S\text{-Mod}$ associated to a ring morphism $\varphi:R\to S$. These functors form an adjoint triple $\varphi^*\dashv\varphi_*\dashv\varphi^!$. In Proposition \ref{prop:inducfunc}, we characterize the semiseparability of $\varphi^*$, equivalent to that of $\varphi^!$, in terms of the regularity of $\varphi$ as a morphism of $R$-bimodules. Explicitly, the extension of scalars functor $\varphi^*$ is semiseparable if and only if there exists an $R$-bimodule map $E:S\to R$ such that $\varphi\circ E\circ\varphi =\varphi$, i.e., such that $\varphi E(1_S)=1_S$.
In a similar fashion, we investigate the semiseparability of the corestriction of coscalars functor $\psi_*:\m^C\to\m^D$ and of the coinduction functor $\psi^*:=(-)\square_D C:\m^D\to\m^C$ attached to a coalgebra morphism $\psi:C\to D$, obtaining in Proposition \ref{prop:coinduc-coalg} that $\psi^* $ is semiseparable if and only if $\psi$ is a regular morphism of $D$-bicomodules if and only if there is a $D$-bicomodule morphism $\chi:D\to C$ such that $\varepsilon_C \circ\chi\circ\psi=\varepsilon_C$.

The subsequent functor we investigate is the induction functor $G:=(-)\otimes_R\cc:\mathrm{Mod}\text{-}R\to \m^\cc$ attached to an $R$-coring $\cc$. Whereas an $R$-coring $\cc$ is sometimes called cosplit in the literature whenever $G$ is a separable functor, we say that $\cc$ is \emph{semicosplit} if $G$ is semiseparable. In Theorem \ref{thm:inducoring} we prove that $\cc$ is semicosplit if and only if the coring counit $\varepsilon_{\cc}:\cc\to R$ is regular as a morphism of $R$-bimodules if and only if there is an invariant element $z\in\cc^R= \{c\in\cc \mid rc=cr,\text{ } \forall r\in R\}$ such that $\varepsilon_{\cc}(z)\varepsilon_{\cc}(c)=\varepsilon_{\cc}(c)$ (i.e., such that $\varepsilon_{\cc}(z)c=c$), for every $c\in \cc$.

Next we consider the coinduction functor $\sigma_*=\Hom_S(M,-): \mathrm{Mod}\text{-}S\to \mathrm{Mod}\text{-}R$ associated to an $(R,S)$-bimodule $M$, together with its left adjoint $\sigma^*:=(-)\otimes_R M: \text{Mod-}R\to \text{Mod-}S$. As we will prove in Theorem \ref{thm:bimod}, the semiseparability of this functor, which results to be equivalent to the fact that the evaluation map $\ev_M:M^*\otimes_R M\to S$ is regular as a morphism of $S$-bimodules and $M\otimes_S\ev_M$ is surjective, can be also completely described in terms of a property of $M$ that led us to introduce the \emph{$M$-semiseparability over $R$} for the ring $S$, an extension of $M$-separability investigated by Sugano in \cite{Su71}. In Corollary \ref{cor:sep-bimod} we provide the following characterization: $S$ is $M$-separable over $R$ if and only if $S$ is $M$-semiseparable over $R$ and $M$ is a generator in $\mathrm{Mod}\text{-}S$. A different characterization of $M$-semiseparability of $S$ over $R$ is obtained in Proposition \ref{prop:ssVSs}. This allows us to exhibit in Example \ref{es:ssVSs} an instance where $S$ is $M$-semiseparable but not $M$-separable over $R$. 
As in the separable case, if we add the assumption that $M$ is finitely generated and projective as a right $S$-module, then the (co)monad associated to the adjunction $(\sigma^*,\sigma_*)$ can be described in a easier way. This allows to achieve further characterizations of the semiseparability of $\sigma_*$ and $\sigma^*$ in Proposition \ref{prop:right-equiv-bimod} and Proposition \ref{prop:left-equiv-bimod}, respectively. Moreover, in Proposition \ref{prop:phi*}, Corollary \ref{cor:inducoring}, and Proposition \ref{prop:condcMfact}, an explicit factorization, as a bireflection followed by a separable functor, is provided for the above functors $\varphi^*$, $G$, and $\sigma_*$, respectively, when they are semiseparable.
Finally, in Theorem \ref{thm:rightHopf} we study the semiseparability of the coinvariant functor $\left( -\right)^{\co B}:\mathfrak{M}_{B}^{B}\rightarrow \mathfrak{M}$ attached to a bialgebra $B$ over a field $\Bbbk$, where $\mathfrak{M}_{B}^{B}$ and $\mathfrak{M}$ denote the category of right Hopf modules over $B$ and the category of $\Bbbk$-vector spaces, respectively. Explicitly, $\left( -\right)^{\co B}$ is semiseparable if and only if $B$ is a right Hopf algebra with anti-multiplicative and anti-comultiplicative right antipode.\medskip

The organization of the paper is the following. In Section \ref{sect:notion-semisep} we introduce the notion of semiseparable functor and we investigate its interaction with separable and naturally full functors, relative separable functors and composition. We show that any semiseparable functor admits the associated idempotent natural transformation and that it factors as a naturally full functor followed by a separable one. We also see how the existence of a suitable type of generator within its source category implies that a functor is semiseparable if and only if it is separable.
Section \ref{sect:semisep-adjunct} collects results on semiseparable functors that have an adjoint. Explicitly, we obtain a Rafael-type theorem for semiseparable functors, we study the behaviour of semiseparable adjoint functors in terms of (co)monads and the associated (co)comparison functor, we investigate functors that have both a (possibly equal or fully faithful) left and right adjoint.
In Section \ref{sect:applications} we test the notion of semiseparability on relevant functors attached to ring homomorphisms, coalgebra maps, corings, bimodules and Hopf modules. This section closes with a discussion on (co)reflections that provides an overview of the notions considered in this work highlighting their mutual interaction.


\subsection{Preliminaries and notations}
Given an object $X$ in a category $\cc$, the identity morphism on $X$ will be denoted either by $\id_X$ or $X$ for short. For categories $\cc$ and $\dd$, a functor $F:\cc\to \dd$ just means a covariant functor. By $\id_{\cc}$ we denote the identity functor on $\cc$. For any functor $F:\cc\to \dd$, we denote $\id_{F}:F\to F$ (or just $F$, if no confusion may arise) the natural transformation defined by $(\id_{F})_X:=\id_{FX}$.

Let $\cc$ be a category. Denote by $\cc^{\op }$ the opposite category of $\cc$. An object $X$ and a morphism $f:X\rightarrow Y$ in $%
\cc$ will be denoted by $X^{\op }$ and $f^{\op }:Y^{%
\op }\rightarrow X^{\op }$ respectively when regarded as an object and a
morphism in $\cc^{\op }$. Given a functor $F:\cc\to \dd$, one defines its opposite functor $F^\op :\cc^\op \to \dd^\op $ by setting $F^\op X^\op =(FX)^\op $ and $F^\op f^\op =(Ff)^\op $. 

A morphism (natural transformation) $f$ is called \emph{regular}\footnote{This terminology is derived from von Neumann regularity of rings. See also \cite[Subsection 2.1]{Wisb13}.} provided there is a morphism (resp. natural transformation) $g$ with $f\circ g\circ f=f$. Note that the asymmetry of this definition is apparent as $g$ could be replaced by $g'=g\circ f\circ g$ in such a way that $f\circ g'\circ f=f$ and $g'\circ f\circ g'=g'$.

By a ring we mean a unital associative ring.

\section{The notion of semiseparable functor}\label{sect:notion-semisep}
In this section we introduce and investigate the notion of semiseparable functor. Subsection \ref{sub:semis} presents its definition and characterizes the known notions of separable and naturally full functors in terms of it. In Subsection \ref{sub:assosidmp} we attach an invariant to any semiseparable functor, that we call the associated idempotent natural transformation, which controls the separability of the functor and allows a characterization of separable functors in terms of (dual) Maschke and conservative functors. In Subsection \ref{sub:relsep} the connection between semiseparable and relative separable functors is explored.
Subsection \ref{subsect:comp} concerns the behaviour of semiseparable functors with respect to composition. Subsection \ref{sub:coidentifier} shows how semiseparable functors admit a canonical factorization as a naturally full functor followed by a separable one. Here the main tool is the construction of the coidentifier category attached to the associated idempotent natural transformation. In Subsection \ref{generators} we investigate under which conditions the existence of a suitable type of generator within its source category implies that a functor is semiseparable if and only if it is separable.

\subsection{Semiseparable functors}\label{sub:semis}
Let $F: \cc \rightarrow \dd$ be a functor and consider the associated natural transformation
\begin{equation*}\label{nat_transf}
\f^F : \Hom_{\cc}(-,-)\rightarrow \Hom_{\dd}(F-, F-),
\end{equation*}
defined by setting $\f^F_{C,C'}(f)= F(f)$, for any $f:C\rightarrow C'$ in $\cc$. We recall that $F$ is said to be \emph{separable} \cite{NVV89} if there is a natural transformation
$\p^F : \Hom_{\dd}(F-, F-)\rightarrow \Hom_{\cc}(-,-)$
such that $\p^F\circ\f^F = \id _{\Hom_{\cc}(-,-)}$.
Similarly, a functor $F: \cc \rightarrow \dd$ is called \emph{naturally full} \cite{ACMM06} if there exists a natural transformation $\p^F : \Hom_{\dd}(F-, F-)\rightarrow \Hom_{\cc}(-,-)$ such that $\f^F\circ\p^F = \id _{\Hom_{\dd}(F-,F-)}$.

Clearly, a functor is fully faithful if and only if it is both separable and naturally full.

\begin{defn}\label{semisep}
We say that a functor $F: \cc \rightarrow \dd$ is \emph{\textbf{semiseparable}} if the natural transformation $\f^F$ is regular, i.e. if
there exists a natural transformation $\p^F : \Hom_{\dd}(F-, F-)\rightarrow \Hom_{\cc}(-,-)$ such that
$\f^F\circ\p^F\circ\f^F = \f^F$.
\end{defn}

%

\begin{rmk}\label{rmk:opposemi}
  Since $\f^F_{X,Y}=\f^{F^\op}_{Y^\op,X^\op}$ it is clear that a functor $F:\cc\to\dd$ is semiseparable (resp. separable, naturally full, full, faithful, fully faithful) if and only if so is $F^\op:\cc^\op\to\dd^\op$.
\end{rmk}

It is well-known that a separable functor is faithful and that a naturally full functor is full. Let us see how adding the notion of semiseparable functor to the picture allows us to turn these implications into equivalences.

\begin{prop}\label{prop:sep} Let $F: \cc \rightarrow \dd$ be a functor. Then,
\begin{itemize}
\item[(i)]$F$ is separable if and only if $F$ is semiseparable and faithful;
\item[(ii)]$F$ is naturally full if and only if $F$ is semiseparable and full.
\end{itemize}
\end{prop}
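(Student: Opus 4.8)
The plan is to prove each equivalence by combining the two conditions on the natural transformation $\f^F$. For part (i), the forward direction is classical: a separable functor is faithful (if $Ff = Fg$ then applying $\p^F$ gives $f = g$), and separable trivially implies semiseparable since $\p^F \circ \f^F = \id$ forces $\f^F \circ \p^F \circ \f^F = \f^F$. For the converse, suppose $F$ is semiseparable, so there is $\p^F$ with $\f^F \circ \p^F \circ \f^F = \f^F$, and suppose $F$ is faithful, so each $\f^F_{C,C'}$ is injective. The key observation is that injectivity of $\f^F_{C,C'}$ lets us cancel it on the left in the equation $\f^F_{C,C'} \circ (\p^F_{C,C'} \circ \f^F_{C,C'}) = \f^F_{C,C'}$, yielding $\p^F_{C,C'} \circ \f^F_{C,C'} = \id_{\Hom_\cc(C,C')}$, which is exactly the separability condition. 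Naturality of $\p^F$ is inherited, so $F$ is separable.

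For part (ii), I would argue dually. A naturally full functor is full because surjectivity of $\f^F_{C,C'}$ is immediate from $\f^F_{C,C'} \circ \p^F_{C,C'} = \id$; and naturally full implies semiseparable by the same formal manipulation as above (compose with $\f^F$). For the converse, assume $F$ semiseparable with witness $\p^F$, and assume $F$ full, so each $\f^F_{C,C'}$ is surjective. Now cancel $\f^F_{C,C'}$ on the right in $(\f^F_{C,C'} \circ \p^F_{C,C'}) \circ \f^F_{C,C'} = \f^F_{C,C'}$: surjectivity means every element of $\Hom_\dd(FC,FC')$ is of the form $\f^F_{C,C'}(h)$, so $\f^F_{C,C'} \circ \p^F_{C,C'}$ agrees with the identity on all such elements, hence equals $\id_{\Hom_\dd(FC,FC')}$. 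This gives the natural-fullness condition.

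Alternatively, part (ii) can be deduced from part (i) via the opposite-functor trick recorded in Remark \ref{rmk:opposemi}: $F$ is naturally full (resp. semiseparable, full) iff $F^\op$ is separable (resp. semiseparable, faithful)—one would need the elementary fact that $F$ is naturally full iff $F^\op$ is separable, which follows since $\f^F$ has a right inverse iff $\f^{F^\op}$ has a left inverse, together with the identification $\f^F_{X,Y} = \f^{F^\op}_{Y^\op,X^\op}$. I would probably present the direct proof of (ii) for transparency and mention the duality as a remark.

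I do not anticipate a genuine obstacle here; the only point requiring a little care is the cancellation step, where one must be explicit that $\f^F$, $\p^F$ are families of maps between hom-sets and that the regularity identity holds componentwise, so that left-cancellation (using injectivity, i.e. faithfulness) and right-cancellation (using surjectivity, i.e. fullness) are legitimate at each pair $(C,C')$. One should also note that $\p^F$ is already a natural transformation by hypothesis, so no naturality needs to be checked afresh in the converse directions. This is a short, formal argument.
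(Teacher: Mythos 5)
Your proof is correct and follows essentially the same route as the paper: separability/natural fullness trivially imply semiseparability plus faithfulness/fullness, and conversely the regularity identity $\f^F\circ\p^F\circ\f^F=\f^F$ is cancelled on the left (using injectivity of $\f^F$ on components, i.e. faithfulness) for (i) and on the right (using surjectivity, i.e. fullness) for (ii). The duality remark via $F^{\op}$ is a fine optional addendum but is not needed.
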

\proof We only prove (i), the proof of (ii) being similar. Assume that $F$ is separable. From $\p^F\circ\f^F = \id $ it follows that $\f^F\circ \p^F\circ\f^F = \f^F\circ\id=\f^F$, i.e. $F$ is semiseparable, and that for all $C,C'\in\cc$, the map $\f^F _{C,C'}$ is injective, i.e. $F$ is faithful. Conversely, if $F$ is semiseparable, we have that there exists a natural transformation $\p^F $ such that $\f^F\circ\p^F\circ\f^F = \f^F$, hence, if $F$ is faithful, $\p^F\circ\f^F = \id $, as $\f^F$ is injective on components.
\begin{invisible}
  If $F$ is naturally full, then, for all $C,C'\in\cc$, the map $\f^F _{C,C'}: \Hom_{\cc}(C,C')\rightarrow \Hom_{\dd}(FC,FC')$ is surjective, since it has a right inverse, and it follows that $F$ is a full functor. $F$ is also semiseparable, as $(\f^F\circ\p^F )\circ\f^F = \id _{\Hom_{\dd}(F-,F-)}\circ\f^F =\f^F$.\par  Conversely, if $F$ is semiseparable, there is a natural transformation $\p^F : \Hom_{\dd}(F-, F-)\rightarrow \Hom_{\cc}(-,-)$ such that $\f^F\circ\p^F\circ\f^F = \f^F$. So, if $F$ is full, then $\f^F\circ\p^F = \id _{\Hom_{\dd}(F-,F-)}$, as $\f^F$ is surjective.
\end{invisible}
\endproof

In view of Proposition \ref{prop:sep}, both separable and naturally full functors are instances of semiseparable functors.
Next aim is to endow any semiseparable functor with an invariant that will play a central role in our treatment.

\subsection{The associated idempotent}\label{sub:assosidmp}
Here we attach, in a canonical way, a suitable idempotent natural transformation to any semiseparable functor.

\begin{prop}\label{prop:idempotent}
 Let $F:\cc\rightarrow \dd$ be a semiseparable functor. Then
there is a unique idempotent natural transformation $e:\id_{\cc%
}\rightarrow \id_{\cc}$ such that $Fe=\id_{F}$ with the following universal property: if $f,g:A\to B$ are morphisms, then $Ff=Fg$ if and only if $e_B\circ f=e_B\circ g$.
 \end{prop}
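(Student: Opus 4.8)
Since $F$ is semiseparable, fix a natural transformation $\p^F:\Hom_\dd(F-,F-)\to\Hom_\cc(-,-)$ with $\f^F\circ\p^F\circ\f^F=\f^F$. The idea is to define $e_C:=\p^F_{C,C}(\id_{FC})$ for each object $C$ of $\cc$, i.e. $e_C$ is the image under the ``section'' of the identity on $FC$. First I would check that $e:\id_\cc\to\id_\cc$ is a natural transformation: for $f:A\to B$ one must show $e_B\circ f=f\circ e_A$. This should follow from the naturality of $\p^F$ applied to the pair of morphisms induced by $f$ — concretely, naturality of $\p^F$ in its two variables gives $\p^F_{A,B}(Ff\circ -)=f\circ\p^F_{A,A}(-)$ and $\p^F_{A,B}(-\circ Ff)=\p^F_{B,B}(-)\circ f$; evaluating both on the appropriate identity and using $Ff\circ\id_{FA}=\id_{FB}\circ Ff$ yields $f\circ e_A=e_B\circ f$.

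Next I would establish the key relation $Fe=\id_F$ and idempotency. Applying $\f^F\circ\p^F\circ\f^F=\f^F$ to $\id_{FC}$ — more precisely, using that $\f^F_{C,C}\big(\p^F_{C,C}(\f^F_{C,C}(\id_C))\big)=\f^F_{C,C}(\id_C)$ together with $\f^F_{C,C}(\id_C)=\id_{FC}$ — gives $F(e_C)=F\p^F_{C,C}(\id_{FC})=\id_{FC}$, i.e. $Fe=\id_F$. Idempotency then comes from the naturality of $e$ itself applied to the morphism $e_C:C\to C$: naturality gives $e_C\circ e_C=e_C\circ e_C$ trivially, so instead I would argue $e_C\circ e_C=e_C$ by using the universal property once it is proven, or directly: since $F(e_C\circ e_C)=Fe_C\circ Fe_C=\id_{FC}=F e_C$, the ``only if'' direction of the universal property (proven next) forces $e_C\circ e_C=e_C\circ\id_C$.

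The heart of the statement is the universal property: $Ff=Fg\iff e_B\circ f=e_B\circ g$ for $f,g:A\to B$. For the backward direction, apply $F$ and use $Fe_B=\id_{FB}$: $e_B\circ f=e_B\circ g$ implies $Ff=F(e_B\circ f)=F(e_B\circ g)=Fg$. For the forward direction, suppose $Ff=Fg$; then $\f^F_{A,B}(f)=\f^F_{A,B}(g)$, so $\p^F_{A,B}(\f^F_{A,B}(f))=\p^F_{A,B}(\f^F_{A,B}(g))$, and I claim $\p^F_{A,B}(\f^F_{A,B}(f))=e_B\circ f$. This last identity is exactly naturality of $\p^F$: $\p^F_{A,B}(Ff\circ\id_{FA})=\p^F$ applied with postcomposition, $=\p^F_{B,B}(\id_{FB})\circ f=e_B\circ f$ (using naturality in the first variable with respect to $f:A\to B$). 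Finally, uniqueness: if $e'$ is another idempotent natural transformation with $Fe'=\id_F$ and the same universal property, then applying the universal property of $e$ to the pair $e'_C,\id_C:C\to C$ (noting $F e'_C=\id_{FC}=F\id_C$) gives $e_C\circ e'_C=e_C$, and symmetrically $e'_C\circ e_C=e'_C$; combining these with idempotency and naturality (which lets one commute $e$ and $e'$) yields $e_C=e'_C$. The main obstacle I anticipate is bookkeeping the two-variable naturality of $\p^F$ correctly — separating the ``precompose with $Ff$'' and ``postcompose with $Ff$'' instances and making sure the identity morphisms are inserted in the right slots — but no genuinely hard step is involved.
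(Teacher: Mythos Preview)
Your proposal is correct and follows essentially the same route as the paper: define $e_C:=\p^F_{C,C}(\id_{FC})$, verify $Fe=\id_F$ and naturality via the two-variable naturality of $\p^F$, establish the universal property from $\p^F_{A,B}(Ff)=e_B\circ f$, and deduce uniqueness by the symmetric argument $e_C\circ e'_C=e_C$, $e'_C\circ e_C=e'_C$ together with naturality. The only cosmetic difference is that the paper proves idempotency directly from naturality of $\p^F$ (namely $e_X\circ e_X=\p^F_{X,X}(\id_{FX})\circ e_X=\p^F_{X,X}(\id_{FX}\circ Fe_X)=\p^F_{X,X}(\id_{FX})=e_X$) rather than deferring it to the universal property as you do, but since your proof of the universal property does not use idempotency there is no circularity.
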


 \begin{proof}
Since $F$ is semiseparable, there is a natural transformation $\mathcal{P}^F$
such that $\mathcal{F}^F\circ \mathcal{P}^F\circ \mathcal{F}^F=\mathcal{F}^F$. Set $%
e_{X}:=\mathcal{P}^F_{X,X}\left( \id_{FX}\right) $. Note that $Fe_{X}=F%
\mathcal{P}^F_{X,X}\left( \id_{FX}\right) =\mathcal{F}^F_{X,X}\mathcal{P}^F%
_{X,X}\mathcal{F}^F_{X,X}\left( \id_{X}\right) =\mathcal{F}^F%
_{X,X}\left( \id_{X}\right) =\id_{FX}$. Thus $e_{X}\circ
e_{X}=\mathcal{P}^F_{X,X}\left( \id_{FX}\right) \circ e_{X}=\mathcal{P}^F%
_{X,X}\left( \id_{FX}\circ Fe_{X}\right) =\mathcal{P}^F_{X,X}\left(
\id_{FX}\right) =e_{X}$ and hence $e_{X}$ is idempotent. Moreover,
for every morphism $f:X\rightarrow Y$ we have $f\circ e_{X}=f\circ \mathcal{P%
}^F_{X,X}\left( \id_{FX}\right) =\mathcal{P}^F_{X,Y}\left( Ff\circ
\id_{FX}\right) =\mathcal{P}^F_{X,Y}\left( \id_{FY}\circ
Ff\right) =\mathcal{P}^F_{Y,Y}\left( \id_{FY}\right) \circ
f=e_{Y}\circ f$ so that $f\circ e_{X}=e_{Y}\circ f$, i.e. $e=\left(
e_{X}\right) _{X\in \cc}:\id_{\cc}\rightarrow
\id_{\cc}$ is an idempotent natural transformation such that
$Fe=\id_{F}$. Now, consider morphisms $f,g:A\to B$. If $Ff=Fg$, then $\mathcal{P}^F%
_{A,B}\left( Ff\right) =\mathcal{P}^F_{A,B}\left( Fg\right) $ i.e. $\mathcal{P}^F%
_{B,B}\left( \id_{FB}\right) \circ f=\mathcal{P}^F_{B,B}\left( \mathrm{%
Id}_{FB}\right) \circ g$, i.e. $e_{B}\circ f=e_{B}\circ g$. Conversely, from $e_{B}\circ f=e_{B}\circ g$ we get $Fe_{B}\circ Ff=Fe_{B}\circ Fg$ and hence $Ff=Fg$ as $Fe=\id_{F}$. Finally, let $e':\id_{\cc%
}\rightarrow \id_{\cc}$ be an idempotent natural transformation such that, if $f,g:A\to B$ are morphisms, then $Ff=Fg$ if and only if $e'_B\circ f=e'_B\circ g$. From $e'_X\circ e'_X=e'_X\circ \id_X$ we get $Fe'_X=F\id_X$ (whence $Fe'=\id_F$). From the universal property of $e$ we get $e_X\circ e'_X=e_X\circ \id_X$ i.e. $e_X\circ e'_X=e_X$. If we interchange the roles of $e$ and $e'$, in a similar way we get $e'_X\circ e_X=e'_X$. By naturality we have $e_X\circ e'_X=e'_X\circ e_X$ whence $e_X=e'_X$, i.e. $e=e'$.
 \end{proof}

\begin{defn}
  The idempotent natural transformation $e:\id_\cc\to\id_\cc$ we have attached to a semiseparable functor  $F:\cc\to\dd$ in Proposition \ref{prop:idempotent} 
  will be called the \textbf{associated idempotent natural transformation}. Thus $e$ is defined on components by $e_{X}:=\mathcal{P}^F_{X,X}\left( \id_{FX}\right) $ where $\mathcal{P}^F$ is any natural transformation such that $\mathcal{F}^F\circ \mathcal{P}^F\circ \mathcal{F}^F=\mathcal{F}^F$.
\end{defn}

Since naturally full functors are in particular semiseparable, we get the following result which was unknown before to the best of our knowledge.
\begin{cor}\label{cor:natfidp}
Any naturally full functor admits the associated idempotent natural transformation.
\end{cor}

We now see how the idempotent natural transformation associated to a semiseparable functor controls its separability.

\begin{cor}\label{cor:esep}
Let $F:\cc\to\dd$ be a semiseparable functor and let $e:\id_\cc\to\id_\cc$ be the associated idempotent natural transformation. Then, $F$ is separable if and only if $e=\id$.
\end{cor}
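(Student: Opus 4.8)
The plan is to prove the equivalence in Corollary \ref{cor:esep} by using Proposition \ref{prop:sep}(i), which characterizes separability as semiseparability together with faithfulness, and then translating faithfulness into the condition $e=\id$ via the universal property of $e$ established in Proposition \ref{prop:idempotent}.

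First I would prove the implication $(\Leftarrow)$. Suppose $e=\id$, meaning $e_X=\id_X$ for every $X\in\cc$. The universal property of $e$ says that for morphisms $f,g:A\to B$ one has $Ff=Fg$ if and only if $e_B\circ f=e_B\circ g$. With $e_B=\id_B$, the right-hand side reads $f=g$, so $Ff=Fg$ implies $f=g$; that is, $F$ is faithful. Since $F$ is semiseparable by hypothesis, Proposition \ref{prop:sep}(i) gives that $F$ is separable.

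Conversely, for $(\Rightarrow)$, assume $F$ is separable. Then $F$ is in particular faithful. Fix $X\in\cc$ and apply the universal property of $e$ to the pair of morphisms $e_X,\id_X:X\to X$. Since $Fe=\id_F$ (part of the statement of Proposition \ref{prop:idempotent}), we have $Fe_X=\id_{FX}=F\id_X$, so the ``only if'' direction of the universal property yields $e_X\circ e_X=e_X\circ\id_X$; but this is just the idempotency of $e_X$ and gives nothing new. Instead I would argue directly: $Fe_X=\id_{FX}=F\id_X$ and faithfulness of $F$ immediately force $e_X=\id_X$. As $X$ was arbitrary, $e=\id$.

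I do not anticipate a genuine obstacle here: the proof is a short bookkeeping argument once Propositions \ref{prop:idempotent} and \ref{prop:sep} are in hand. The only point requiring a moment's care is making sure the direction being used is the correct half of the stated universal property (or, more simply, appealing directly to faithfulness against the identity $Fe=\id_F$), and observing that $F$ separable was already recorded as implying $F$ faithful in Proposition \ref{prop:sep}(i), so no circularity arises.
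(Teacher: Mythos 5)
Your proof is correct, but it takes a different route from the paper's. The paper argues directly with the splitting $\mathcal{P}^F$: if $F$ is separable one may choose $\mathcal{P}^F$ with $\mathcal{P}^F\circ\mathcal{F}^F=\id$, whence $e_X=\mathcal{P}^F_{X,X}(\id_{FX})=\mathcal{P}^F_{X,X}\mathcal{F}^F_{X,X}(\id_X)=\id_X$; conversely, if $e=\id$ then for every $f:X\to Y$ naturality gives $\mathcal{P}^F_{X,Y}(Ff)=f\circ\mathcal{P}^F_{X,X}(\id_{FX})=f\circ e_X=f$, so $\mathcal{P}^F\circ\mathcal{F}^F=\id$. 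You instead never touch $\mathcal{P}^F$ and work only with the two abstract properties of $e$ recorded in Proposition \ref{prop:idempotent} ($Fe=\id_F$ and the universal property), combined with Proposition \ref{prop:sep}(i): from $e=\id$ the universal property yields faithfulness, hence separability; from separability, faithfulness against $Fe_X=\id_{FX}=F\id_X$ forces $e_X=\id_X$. Both arguments are short and valid; yours is slightly more modular (it would survive any other construction of an idempotent with those two properties, and in particular the forward direction does not need to worry about which splitting $\mathcal{P}^F$ was used to define $e$, since it relies only on uniqueness of $e$), while the paper's has the minor virtue of exhibiting explicitly that the given $\mathcal{P}^F$ itself becomes a retraction of $\mathcal{F}^F$ when $e=\id$. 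Your parenthetical observation that the universal-property detour in the forward direction yields only idempotency, and that one should instead appeal directly to faithfulness, is accurate.
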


\begin{proof}
By construction $e_X=\mathcal{P}^F_{X,X}(\id_{FX})$ where $\mathcal{P}^F$ is a natural transformation such that $\mathcal{F}^F\circ\mathcal{P}^F\circ \mathcal{F}^F=\mathcal{F}^F.$ If $F$ is separable then $\mathcal{P}^F\circ \mathcal{F}^F=\id$ and hence $e_X=\mathcal{P}^F_{X,X}(\id_{FX})=\mathcal{P}^F_{X,X}\mathcal{F}^F_{X,X}(\id_{X})=\id_{X}$. Conversely, suppose $e=\id$. Then, for every $f:X\to Y$, we have $\mathcal{P}^F_{X,Y}(Ff)=\mathcal{P}^F_{X,Y}(Ff\circ \id_{FX})=f\circ \mathcal{P}^F_{X,X}(\id_{FX})=f\circ e_X=f$ so that $\mathcal{P}^F\circ \mathcal{F}^F=\id$ and $F$ is separable.
\end{proof}

The existence of the associated idempotent natural transformation leads us to a further characterization of separable functors in terms of Maschke, dual Maschke and conservative functors. Recall that a functor $F:\cc\to\dd$ is called a \emph{Maschke functor} if it reflects split-monomorphisms,
 i.e. for every morphism $i$ in $\cc$ such that $Fi$ is split-mono, then $i$ is split-mono\footnote{This is equivalent to \cite[Remark 6]{CMZ02}, where $F$ is called a Maschke functor if every object in $\cc$ is relative injective. Recall that an object $M$ is called relative injective if, for every morphism $i:C\to C'$
such that $Fi$ is split-mono, then the map $\Hom_\cc(i,M):\Hom_\cc(C',M)\to \Hom_\cc(C,M),f\mapsto f\circ i$, is surjective.}. Similarly, $F$ is a \emph{dual Maschke functor} if it reflects split-epimorphisms. A functor is called \emph{conservative} if it reflects isomorphisms. 

\begin{rmk}\label{rmk:Maschke}By \cite[Proposition 1.2]{NVV89} a separable functor is both Maschke and dual Maschke.
Moreover a functor which is both Maschke and dual Maschke is conservative.
\end{rmk}

\begin{cor}\label{cor:conserv}
The following assertions are equivalent for a functor $F:\cc\to\dd$.
\begin{enumerate}[$(1)$]
  \item $F$ is separable;
  \item $F$ is semiseparable and Maschke;
  \item $F$ is semiseparable and dual Maschke;
  \item $F$ is semiseparable and conservative.
\end{enumerate}
\end{cor}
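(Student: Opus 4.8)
The plan is to prove the cycle of implications $(1)\Rightarrow(2)\Rightarrow(4)$, then $(1)\Rightarrow(3)\Rightarrow(4)$, and finally $(4)\Rightarrow(1)$; this clearly establishes the equivalence of all four assertions. The implications $(1)\Rightarrow(2)$ and $(1)\Rightarrow(3)$ are immediate from Remark \ref{rmk:Maschke}, since a separable functor is both Maschke and dual Maschke, and it is semiseparable by Proposition \ref{prop:sep}(i). The implications $(2)\Rightarrow(4)$ and $(3)\Rightarrow(4)$ also follow from Remark \ref{rmk:Maschke}: actually I would first observe that if $F$ is semiseparable and Maschke (resp. dual Maschke), then $F$ is automatically conservative — this is the step where the associated idempotent does the work. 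Indeed, given the associated idempotent natural transformation $e:\id_\cc\to\id_\cc$ from Proposition \ref{prop:idempotent}, for every object $X$ the identity $Fe_X=\id_{FX}$ shows that $Fe_X$ is split (by the identity), so if $F$ is Maschke then $e_X$ is a split monomorphism; being also idempotent, $e_X$ is forced to be an isomorphism, hence $e_X=\id_X$. Dually, if $F$ is dual Maschke then $e_X$ is a split epimorphism and idempotent, again forcing $e_X=\id_X$. In either case $e=\id$, so by Corollary \ref{cor:esep} $F$ is separable, hence conservative by Remark \ref{rmk:Maschke}; in particular $(2)\Rightarrow(4)$ and $(3)\Rightarrow(4)$ hold.

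The only remaining implication is $(4)\Rightarrow(1)$. Here I assume $F$ is semiseparable and conservative, and I must show $F$ is separable, which by Corollary \ref{cor:esep} reduces to proving that the associated idempotent $e$ satisfies $e=\id$. The key point is that the naturality square for $e$ at the morphism $e_X:X\to X$ combined with idempotency gives $e_X\circ e_X=e_X\circ\id_X$; but more usefully, I want to exhibit $e_X$ as an isomorphism and invoke conservativity. Since $Fe_X=\id_{FX}$ is an isomorphism in $\dd$ and $F$ is conservative, $e_X$ is an isomorphism in $\cc$. An idempotent isomorphism is the identity (compose $e_X\circ e_X=e_X$ with $e_X^{-1}$), so $e_X=\id_X$ for all $X$, i.e. $e=\id$, and Corollary \ref{cor:esep} concludes that $F$ is separable.

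I expect the main (very mild) obstacle to be purely organizational: making sure the logical structure is economical, since several implications collapse once one notices that semiseparable-plus-any-of-the-three-reflection-properties forces $e=\id$ via Corollary \ref{cor:esep}. The cleanest writeup is probably to prove $(1)\Rightarrow(2)$, $(1)\Rightarrow(3)$ directly from Remark \ref{rmk:Maschke} and Proposition \ref{prop:sep}(i); then show $(2)\Rightarrow(1)$, $(3)\Rightarrow(1)$ and $(4)\Rightarrow(1)$ by the same mechanism — in each case the relevant reflection property applied to the split (indeed identity) morphism $Fe_X$ yields that the idempotent $e_X$ is a split mono, a split epi, or an isomorphism, respectively, and in all three cases an idempotent with that extra property is an isomorphism equal to $\id_X$; then Corollary \ref{cor:esep} gives separability, and $(1)\Rightarrow(4)$ follows from Remark \ref{rmk:Maschke}. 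No genuinely hard computation is involved; the subtlety, if any, is just recalling that a split monomorphism (resp. epimorphism) which is idempotent must be an isomorphism.
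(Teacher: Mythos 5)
Your proposal is correct and follows essentially the same route as the paper: both use the associated idempotent $e$ with $Fe_X=\id_{FX}$, deduce from the Maschke / dual Maschke / conservative hypothesis that $e_X$ is split-mono / split-epi / iso, conclude $e_X=\id_X$ from idempotency, and invoke Corollary \ref{cor:esep}. The only difference is organizational (you route $(2),(3)$ through $(4)$ before settling on the direct $(2),(3),(4)\Rightarrow(1)$ scheme, which is exactly what the paper does).
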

\begin{proof}
$(1)\Rightarrow (2), (3), (4)$. By Proposition \ref{prop:sep} (i), a separable functor is semiseparable. Moreover, by Remark \ref{rmk:Maschke}, a separable functor is both Maschke and dual Maschke whence conservative.

$(2), (3), (4)\Rightarrow (1)$. Since $F$ is semiseparable, we can consider its associated idempotent natural transformation $e$ such that $Fe_X=\id_{FX}$, for every object $X$ in $\cc$. Thus $Fe_X$ is split-mono, split-epi and iso. Depending on whether $F$ is either Maschke, dual Maschke or conservative, we get that $e_X$ is either split-mono, split-epi or iso. Since $e_X$ is idempotent, we get $e_X=\id_X$ so that $F$ is separable by Corollary \ref{cor:esep}.
\end{proof}

\subsection{Relative separability}\label{sub:relsep}
Let $F:\cc\to\dd$ and $H:\cc\to\e$ be functors. We recall from \cite[Definition 4, page 97]{CMZ02} that $F$ is called $H$-\emph{separable} if there exists a natural transformation
\begin{equation*}
\p^{F,H} : \Hom_{\dd}(F-, F-)\rightarrow \Hom_{\e}(H-,H-)
\end{equation*}
such that $\p^{F,H}\circ\f^F = \f^H$. In particular, a $\id _{\cc}$-separable functor coincides with a separable functor.
The following result represents a connection between semiseparable functors and $H$-separable ones, and it will be used in Lemma \ref{lem_A} to study what happens if $G\circ F$ is semiseparable and $G$ is faithful.

\begin{prop}\label{prop:H-sep}
Let $H:\cc\to\e$ be a semiseparable functor with associated idempotent natural transformation $e$ and let $F:\cc\to\dd$ be a $H$-separable functor. If $Fe=\id_F$ (e.g. $\p^{F,H} $ is injective on components), then $F$ is semiseparable.
\end{prop}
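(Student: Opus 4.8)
The plan is to produce explicitly a natural transformation $\p^F : \Hom_\dd(F-,F-)\to\Hom_\cc(-,-)$ satisfying $\f^F\circ\p^F\circ\f^F=\f^F$, built out of the data we already have: the $H$-separability datum $\p^{F,H}:\Hom_\dd(F-,F-)\to\Hom_\e(H-,H-)$ with $\p^{F,H}\circ\f^F=\f^H$, the semiseparability datum $\p^H:\Hom_\e(H-,H-)\to\Hom_\cc(-,-)$ with $\f^H\circ\p^H\circ\f^H=\f^H$, and the associated idempotent $e:\id_\cc\to\id_\cc$ of $H$ with $e_X=\p^H_{X,X}(\id_{HX})$ and the universal property from Proposition \ref{prop:idempotent} (for the functor $H$). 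The natural candidate is
\[
\p^F_{X,Y}:=\p^F_{X,Y}\circ ?\qquad\text{— more precisely, set }\ \p^F := \p^H\circ\p^{F,H}:\Hom_\dd(F-,F-)\to\Hom_\cc(-,-),
\]
which is a composite of natural transformations, hence natural. (I will not reuse the symbol $\p^F$ ambiguously in the actual write-up; I define $\p^F$ once as this composite.)

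First I would check the regularity identity. For a morphism $f:X\to Y$ in $\cc$ compute
\[
\p^F_{X,Y}(Ff)=\p^H_{X,Y}\bigl(\p^{F,H}_{X,Y}(Ff)\bigr)=\p^H_{X,Y}(Hf),
\]
using $\p^{F,H}\circ\f^F=\f^H$. Now I must show $F\bigl(\p^H_{X,Y}(Hf)\bigr)=Ff$. By naturality of $\p^H$ applied to $f:X\to Y$ (in the first variable $X,X\to X,Y$, precomposing, and in the second $Y,Y\to X,Y$, postcomposing) one gets $\p^H_{X,Y}(Hf)=\p^H_{X,Y}(Hf\circ\id_{HX})=f\circ\p^H_{X,X}(\id_{HX})=f\circ e_X$ — exactly as in the proof of Proposition \ref{prop:idempotent}. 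Hence $\p^F_{X,Y}(Ff)=f\circ e_X$, and therefore
\[
F\bigl(\p^F_{X,Y}(Ff)\bigr)=F(f\circ e_X)=Ff\circ Fe_X=Ff\circ\id_{FX}=Ff,
\]
using precisely the hypothesis $Fe=\id_F$. This gives $\f^F\circ\p^F\circ\f^F=\f^F$ on the "image" morphisms $Ff$; but that is all that is required, since the regularity condition for $\f^F$ is exactly $\f^F_{X,Y}(\p^F_{X,Y}(\f^F_{X,Y}(f)))=\f^F_{X,Y}(f)$ for every $f:X\to Y$, i.e. $F(\p^F_{X,Y}(Ff))=Ff$, which is what we just verified. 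So $F$ is semiseparable.

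Finally I would dispatch the parenthetical remark: if $\p^{F,H}$ is injective on components, then from $\p^{F,H}_{X,X}(Fe_X)=\p^{F,H}_{X,X}\f^F_{X,X}(e_X)=\f^H_{X,X}(e_X)=He_X=\id_{HX}=\f^H_{X,X}(\id_X)=\p^{F,H}_{X,X}(F\id_X)$ and injectivity we get $Fe_X=\id_{FX}$ for all $X$, i.e. $Fe=\id_F$; here I use $He_X=\id_{HX}$, which holds because $e$ is the associated idempotent of the semiseparable functor $H$. I do not anticipate a serious obstacle here; the only mildly delicate point is bookkeeping the naturality squares for $\p^H$ to obtain $\p^H_{X,Y}(Hf)=f\circ e_X$, which is a verbatim repetition of the computation already carried out in the proof of Proposition \ref{prop:idempotent}, so I would simply cite that computation rather than redo it.
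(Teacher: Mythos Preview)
Your proof is correct and follows essentially the same approach as the paper: define $\p^F:=\p^H\circ\p^{F,H}$, use $\p^{F,H}\circ\f^F=\f^H$ together with the computation $\p^H_{X,Y}(Hf)=f\circ e_X$ from Proposition~\ref{prop:idempotent}, and then apply the hypothesis $Fe=\id_F$; the treatment of the injectivity remark is also identical.
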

\begin{proof}
By definition $\p^{F,H}\circ\f^F = \f^H$. Since $H$ is semiseparable, there exists a natural transformation $\p^H : \Hom_{\e}(H-, H-)\rightarrow \Hom_{\cc}(-,-)$ such that $\f^H\circ\p^H\circ\f^H =\f^H$. Set $\p^F:=\p^H\circ\p^{F,H}$, for every $X,Y$ in $\cc$. Then $\p^F\circ\f^F=\p^H\circ\p^{F,H}\circ\f^F=\p^H\circ\f^H$. Thus, for every $f:FX\to FY$, we have $\p^F_{X,Y}\f^F_{X,Y}(f)=\p^H_{X,Y}\f^H_{X,Y}(f)=\p^H_{X,Y}(Hf)=f\circ\p^H_{X,X}(\id_{HX})=f\circ e_X$ and hence $\f^F_{X,Y}\p^F_{X,Y}\f^F_{X,Y}(f)=F(f\circ e_X)=Ff\circ Fe_X=Ff=\f^F_{X,Y}(f)$ so that $\f^F_{X,Y}\p^F_{X,Y}\f^F_{X,Y}=\f^F_{X,Y}$ i.e. $F$ is semiseparable.
If $\p^{F,H} $ is injective on components, then from $\p^{F,H}_{X,X}(Fe_X)=\p^{F,H}_{X,X}\f^F_{X,X}(e_X)= \f^H_{X,X} (e_X)=He_X=H\id_X=\f^H_{X,X}(\id_X)=\p^{F,H}_{X,X}(F\id_X)$ we infer $Fe_X=\id_{FX}$.
\end{proof}

\begin{cor}\label{cor:isomorphic}
Let $H:\cc\to\e$ be a semiseparable functor with associated idempotent natural transformation $e$ and assume $H$ is a retract of a functor $F:\cc\to\dd$. If $Fe=\id_F$ then $F$ is semiseparable.
As a consequence, semiseparable functors are closed under isomorphisms.
\end{cor}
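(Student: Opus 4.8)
The plan is to deduce Corollary~\ref{cor:isomorphic} directly from Proposition~\ref{prop:H-sep}, the key observation being that a retract of a functor is, in particular, a relative-separable functor over that functor. Concretely, suppose $H:\cc\to\e$ is a retract of $F:\cc\to\dd$, meaning there are natural transformations $\iota:H\to F$ and $\pi:F\to H$ (wait---this does not typecheck since $H$ and $F$ have different targets); so instead ``$H$ is a retract of $F$'' must be read at the level of the associated $\Hom$-functors: there are natural transformations $\alpha:\Hom_\e(H-,H-)\to\Hom_\dd(F-,F-)$ and $\beta:\Hom_\dd(F-,F-)\to\Hom_\e(H-,H-)$ with $\beta\circ\alpha=\id$, compatible with $\f^H$ and $\f^F$ in the sense that $\alpha\circ\f^H=\f^F$. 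The first thing I would do is spell out exactly this: from such data one reads off $\p^{F,H}:=\beta$ as a witness that $F$ is $H$-separable, since $\p^{F,H}\circ\f^F$ need not literally be $\f^H$---so actually the correct reading is that $\beta\circ\f^F=\f^H$, i.e. $F$ is $H$-separable with $\p^{F,H}=\beta$. Once this identification is in place, Proposition~\ref{prop:H-sep} applies verbatim: $H$ is semiseparable with associated idempotent $e$, $F$ is $H$-separable, and the hypothesis $Fe=\id_F$ is exactly the extra assumption needed, giving that $F$ is semiseparable.

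For the ``as a consequence'' clause, I would argue as follows. Suppose $F:\cc\to\dd$ and $F':\cc\to\dd$ are naturally isomorphic functors and $F$ is semiseparable; I want $F'$ semiseparable. A natural isomorphism $\theta:F\to F'$ induces a natural isomorphism $\Hom_\dd(F-,F-)\cong\Hom_\dd(F'-,F'-)$ (conjugation by $\theta$) identifying $\f^F$ with $\f^{F'}$; transporting a regularity witness $\p^F$ across this isomorphism yields a regularity witness $\p^{F'}$ for $\f^{F'}$, so $F'$ is semiseparable. Alternatively, and more in the spirit of the corollary, one observes that naturally isomorphic functors are mutually retracts of each other with $F e=\id_F$ automatically satisfied (the associated idempotent of an isomorphism-equivalent pair is the identity, since $F$ faithful forces $e=\id$ by Corollary~\ref{cor:esep}---but $F$ need not be faithful, so this shortcut fails in general). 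I would therefore present the direct transport-of-structure argument for the consequence, and reserve the retract machinery for the main statement.

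The main obstacle I anticipate is purely bookkeeping: pinning down the precise meaning of ``$H$ is a retract of $F$'' in a setting where $H$ and $F$ may land in different categories. The cleanest formulation---and the one I expect the authors intend---is the existence of natural transformations between the $\Hom$-bifunctors $\Hom_\e(H-,H-)$ and $\Hom_\dd(F-,F-)$ that split and intertwine $\f^H$, $\f^F$ appropriately; with that convention fixed, every step is a one-line check and Proposition~\ref{prop:H-sep} does all the real work. The only genuinely substantive point is verifying that the splitting data makes $F$ into an $H$-separable functor, which amounts to checking the single identity $\p^{F,H}\circ\f^F=\f^H$ from the retract equations, after which the hypothesis $Fe=\id_F$ and Proposition~\ref{prop:H-sep} conclude the proof.
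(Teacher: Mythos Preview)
Your high-level strategy---show that the retract data makes $F$ into an $H$-separable functor and then invoke Proposition~\ref{prop:H-sep}---is exactly the paper's approach. However, you were derailed by the apparent typing mismatch. In the paper, ``$H$ is a retract of $F$'' is meant in the ordinary sense: there are genuine natural transformations $\varphi:F\to H$ and $\psi:H\to F$ with $\varphi\circ\psi=\id_H$. This forces $\dd=\e$ (the different letters in the statement are a vestige of Proposition~\ref{prop:H-sep}, not a signal that the targets differ). Your attempted reinterpretation at the level of $\Hom$-bifunctors is speculative and unnecessary. With the standard reading, the paper simply sets
\[
\p^{F,H}_{X,Y}(g)\;:=\;\varphi_Y\circ g\circ\psi_X
\]
for $g:FX\to FY$, and naturality of $\varphi$ gives $\p^{F,H}_{X,Y}(Ff)=\varphi_Y\circ Ff\circ\psi_X=Hf\circ\varphi_X\circ\psi_X=Hf$, i.e.\ $\p^{F,H}\circ\f^F=\f^H$. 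This is the single identity you anticipated needing, obtained in one line. Proposition~\ref{prop:H-sep} then finishes the main statement under the hypothesis $Fe=\id_F$.

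For the closure under isomorphisms, your direct transport-of-structure argument (conjugate $\p^F$ by the isomorphism $\theta$) is perfectly valid. The paper takes a slightly different route, using the first part of the corollary itself: if $\varphi:F\to H$ is an isomorphism with $H$ semiseparable, then $H$ is a retract of $F$ via $\psi:=\varphi^{-1}$, and one checks $Fe=\id_F$ by the short naturality computation
\[
Fe \;=\; Fe\circ\psi\circ\varphi \;=\; \psi\circ He\circ\varphi \;=\; \psi\circ\id_H\circ\varphi \;=\; \id_F.
\]
Your abandoned attempt to force $e=\id$ via faithfulness was rightly discarded; the point is not that $e$ is trivial, but that $Fe$ is, which follows from $He=\id_H$ and the isomorphism.
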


\begin{proof}
Since $H$ is a retract of $F$, there are natural transformations $\varphi : F\rightarrow H$ and $\psi : H\rightarrow F$ such that $\varphi\circ \psi=\id_H$. Define $\p^{F,H} $ by  setting $\p^{F,H}_{X,Y}(g):=\varphi_{Y}\circ g\circ\psi_{X}$, for every $g: FX\rightarrow FY$, and note that $\p^{F,H}\circ\f^F =\f^H$ so that $F:\cc\to\dd$ is $H$-separable.
\begin{invisible}
Indeed, since $\varphi$ is natural, for any morphism $f:X\to Y$ in $\cc$, we have
$\p^{F,H}_{X,Y}(F(f))= (\varphi_{Y}\circ F(f))\circ\psi_{X} = H(f)\circ\varphi_{X}\circ\psi_{X}=H(f).$
 \end{invisible}
Thus, by Proposition \ref{prop:H-sep}, if $Fe=\id_F$, the functor $F$ is semiseparable.
Let us prove the last part of the statement. Let $\varphi : F\rightarrow H$ be an isomorphism of the functors $F, H:\cc\rightarrow\dd$, where $H$ is semiseparable with associated idempotent natural transformation $e$. Clearly $H$ is a retract of $F$ via $\psi:=\varphi^{-1}$. Thus $F$ is semiseparable, as $Fe=Fe\circ \psi\circ\varphi=\psi\circ He\circ \varphi=\psi\circ \id_H\circ \varphi=\id_F$.
\end{proof}

\subsection{Behaviour with respect to composition}\label{subsect:comp} It is known that if $F:\cc\to\dd$ and $G:\dd\to\e$ are separable functors so is their composition $G\circ F$ and, the other way around, if the composition $G\circ F$ is separable so is $F$, see \cite[Lemma 1.1]{NVV89}. A similar result with some difference, holds for naturally full functors, see \cite[Proposition 2.3]{ACMM06}. Here we study the behaviour of semiseparable functors with respect to composition.
The first difference, with respect to the separable and naturally full cases, is that semiseparable functors are not closed under composition as we will see later in Example \ref{es:sscompos}. However the closeness is available in some cases, as the following result shows.

\begin{lem}\label{lem:comp}
Let $F: \cc \rightarrow \dd$ and $G:\dd\rightarrow\e$ be functors and consider the composite $G\circ F:\cc\rightarrow \e$.
\begin{itemize}
\item[(i)] If $F$ is semiseparable and $G$ is separable, then $G\circ F$ is semiseparable.
\item[(ii)] If $F$ is naturally full and $G$ is semiseparable, then $G\circ F$ is semiseparable.
\end{itemize}
\end{lem}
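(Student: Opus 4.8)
\textbf{Proof plan for Lemma \ref{lem:comp}.}

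The plan is to prove both parts by writing down the candidate section for $\f^{G\circ F}$ directly from the data and then verifying the regularity identity by a component-wise computation. First, observe that $\f^{G\circ F}_{X,Y} = \f^G_{FX,FY}\circ \f^F_{X,Y}$ as maps $\Hom_\cc(X,Y)\to \Hom_\e(GFX,GFY)$, since both send $f$ to $G(Ff)$. This factorization is the key structural fact, and everything reduces to manipulating it.

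For part (i), assume $F$ is semiseparable with $\f^F\circ\p^F\circ\f^F=\f^F$, and $G$ is separable with $\p^G\circ\f^G=\id$. The natural candidate is $\p^{G\circ F}_{X,Y} := \p^F_{X,Y}\circ\p^G_{FX,FY}$. I would then compute
\begin{align*}
\f^{G\circ F}\circ\p^{G\circ F}\circ\f^{G\circ F}
&= \f^G\circ\f^F\circ\p^F\circ\p^G\circ\f^G\circ\f^F\\
&= \f^G\circ\f^F\circ\p^F\circ(\p^G\circ\f^G)\circ\f^F\\
&= \f^G\circ\f^F\circ\p^F\circ\f^F = \f^G\circ\f^F = \f^{G\circ F},
\end{align*}
using $\p^G\circ\f^G=\id$ in the middle and then the regularity of $\f^F$. (Strictly speaking one should index the $\f^G$ and $\f^F$ terms at the appropriate objects, but naturality makes the bookkeeping routine.) Hence $G\circ F$ is semiseparable.

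For part (ii), assume $F$ is naturally full with $\f^F\circ\p^F=\id$, and $G$ is semiseparable with $\f^G\circ\p^G\circ\f^G=\f^G$. I would again take $\p^{G\circ F}_{X,Y} := \p^F_{X,Y}\circ\p^G_{FX,FY}$ and compute
\begin{align*}
\f^{G\circ F}\circ\p^{G\circ F}\circ\f^{G\circ F}
&= \f^G\circ\f^F\circ\p^F\circ\p^G\circ\f^G\circ\f^F\\
&= \f^G\circ(\f^F\circ\p^F)\circ\p^G\circ\f^G\circ\f^F\\
&= \f^G\circ\p^G\circ\f^G\circ\f^F = \f^G\circ\f^F = \f^{G\circ F},
\end{align*}
this time using $\f^F\circ\p^F=\id$ first and then the regularity of $\f^G$. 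So again $G\circ F$ is semiseparable.

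I do not expect a serious obstacle here: the argument is the natural transfer of the von Neumann regularity identity through the composite, and the only thing to be careful about is that in a product $\f^G\circ\p^G$ or $\p^F\circ\p^G$ the components are indexed consistently, i.e. the $\f^G$, $\p^G$ pieces live over objects of the form $FX$ in $\dd$ while the $\f^F$, $\p^F$ pieces live over objects $X$ in $\cc$; naturality of $\p^G$ and $\p^F$ ensures everything fits together, and this is exactly parallel to the standard proofs of \cite[Lemma 1.1]{NVV89} and \cite[Proposition 2.3]{ACMM06}. The mild subtlety worth flagging explicitly is \emph{why} one cannot drop the hypotheses to get closure under composition in general — namely, with only $\f^F\circ\p^F\circ\f^F=\f^F$ and $\f^G\circ\p^G\circ\f^G=\f^G$ one gets stuck with $\f^G\circ\f^F\circ\p^F\circ\p^G\circ\f^G\circ\f^F$ and no way to collapse the inner $\f^F\circ\p^F\circ\p^G\circ\f^G$; this is consistent with the promised counterexample in Example \ref{es:sscompos}, and I would not attempt to patch it.
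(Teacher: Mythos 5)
Your proposal is correct and follows essentially the same route as the paper: the same candidate $\p^{GF}_{X,Y}:=\p^F_{X,Y}\circ\p^G_{FX,FY}$, the same factorization $\f^{GF}=\f^G\circ\f^F$, and the same order of cancellations (first $\p^G\circ\f^G=\id$ then regularity of $\f^F$ in (i); first $\f^F\circ\p^F=\id$ then regularity of $\f^G$ in (ii)). Nothing further is needed.
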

\proof
If $F$ is semiseparable with respect to $\p^F$ and $G$ is separable with respect to $\p^G$, then for every $X,Y$ in $\cc$ we have
\begin{gather*}
\f^{GF}_{X,Y}\p^F_{X,Y}\p^G_{FX,FY}\f^{GF}_{X,Y}=\f^{GF}_{X,Y}\p^F_{X,Y}\p^G_{FX,FY}\f^G_{FX,FY}\f^F_{X,Y}\\ 
=\f^{GF}_{X,Y}\p^F_{X,Y}\f^F_{X,Y}= \f^G_{FX,FY}\f^F_{X,Y}\p^F_{X,Y}\f^F_{X,Y} =\f^G_{FX,FY}\f^F_{X,Y}=\f^{GF}_{X,Y},
\end{gather*}
hence $G\circ F$ is semiseparable through $\p^{GF}_{X,Y}:=\p^F_{X,Y}\p^G_{FX,FY}$.
The proof of (ii) is similar.
\begin{invisible}
If $F$ is naturally full with respect to $\p^F$ and $G$ is semiseparable with respect to $\p^G$, then for every $X,Y$ in $\cc$ we have
\begin{gather*}
\f^{GF}_{X,Y}\p^F_{X,Y}\p^G_{FX,FY}\f^{GF}_{X,Y}=\f^G_{FX,FY}\f^F_{X,Y}\p^F_{X,Y}\p^G_{FX,FY}\f^{GF}_{X,Y}\\ 
=\f^G_{FX,FY}\p^G_{FX,FY}\f^{GF}_{X,Y}=\f^G_{FX,FY}\p^G_{FX,FY}\f^G_{FX,FY}\f^F_{X,Y} =\f^G_{FX,FY}\f^F_{X,Y}=\f^{GF}_{X,Y}\,
\end{gather*}
hence $G\circ F$ is semiseparable with respect to $\p^{GF}_{X,Y}:=\p^F_{X,Y}\p^G_{FX,FY}$.
\end{invisible}
\endproof


We now provide a variant of the property that if $G\circ F$ is separable so is $F$.

\begin{lem}\label{lem_A}
Let $F:\cc\to\dd$ and $G:\dd\to\e$ be functors. If $G\circ F$ is semiseparable and $G$ is faithful, then $F$ is semiseparable.
\end{lem}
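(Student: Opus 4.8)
The plan is to use the relative-separability machinery from Subsection~\ref{sub:relsep}, specifically Proposition~\ref{prop:H-sep}, applied with the roles $H := G\circ F\colon \cc\to\e$ and the given functor $F\colon\cc\to\dd$. Since $G\circ F$ is semiseparable by hypothesis, it has an associated idempotent natural transformation $e\colon\id_\cc\to\id_\cc$ with $(GF)e=\id_{GF}$; in particular $G(Fe)=\id_{GF}=G\,\id_F$, so the faithfulness of $G$ immediately gives $Fe=\id_F$. Thus the ``$Fe=\id_F$'' hypothesis of Proposition~\ref{prop:H-sep} will be available for free once we exhibit an appropriate $(G\circ F)$-separability structure on $F$.

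The first step, then, is to produce a natural transformation $\p^{F,GF}\colon\Hom_\dd(F-,F-)\to\Hom_\cc(-,-)$ with $\p^{F,GF}\circ\f^F=\f^{GF}$. This is where faithfulness of $G$ does the work a second time: for $g\colon FX\to FY$ in $\dd$, one first applies $G$ to land in $\Hom_\e(GFX,GFY)$, then uses the semiseparability datum $\p^{GF}$ of $G\circ F$. Concretely I would set $\p^{F,GF}_{X,Y}(g) := \p^{GF}_{X,Y}(Gg)$. Naturality of this assignment follows from naturality of $\p^{GF}$ together with functoriality of $G$. To check the defining equation, compute $\p^{F,GF}_{X,Y}(\f^F_{X,Y}(f)) = \p^{GF}_{X,Y}(G(Ff)) = \p^{GF}_{X,Y}(\f^{GF}_{X,Y}(f))$; this need not equal $f$, but it equals $f\circ e_X = \f^{GF}_{X,X}$ evaluated appropriately... more precisely, by the computation in the proof of Proposition~\ref{prop:idempotent} we have $\p^{GF}_{X,Y}(\f^{GF}_{X,Y}(f)) = f\circ e_X$, and then applying $\f^{GF}$ gives $\f^{GF}_{X,Y}(f\circ e_X) = \f^{GF}_{X,Y}(f)\circ (GF)e_X = \f^{GF}_{X,Y}(f)$. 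Wait --- I want $\p^{F,GF}\circ\f^F=\f^{GF}$ on the nose, so I should instead simply observe that $\p^{F,GF}_{X,Y}(\f^F_{X,Y}(f)) = \p^{GF}_{X,Y}(G F f)$, and since $GFf=\f^{GF}_{X,Y}(f)$, and we are free to choose $\p^{GF}$ so that it behaves well; but the cleanest route is actually to bypass $\p^{GF}$ entirely and instead directly invoke Proposition~\ref{prop:H-sep}, which only needs $G\circ F$ semiseparable (true) and $\p^{F,GF}$ giving $(G\circ F)$-separability of $F$ --- and the latter holds provided $\p^{F,GF}\circ\f^F=\f^{GF}$ holds, which I should double-check carefully.

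Here is the subtle point, and the main obstacle: the equation $\p^{F,GF}\circ\f^F=\f^{GF}$ requires $\p^{GF}\circ\f^{GF}=\f^{GF}$ composed correctly, i.e. it holds only if $\p^{GF}$ is a genuine \emph{left} inverse of $\f^{GF}$ on the relevant components, which we do not have --- we only have $\f^{GF}\p^{GF}\f^{GF}=\f^{GF}$. So a naive choice fails. The fix is to not route through $\p^{F,GF}$ but to build the semiseparability datum for $F$ directly: set $\p^F_{X,Y}(g) := \p^{GF}_{X,Y}(Gg)$ for $g\colon FX\to FY$, and verify $\f^F\p^F\f^F=\f^F$ by hand. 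Indeed, for $f\colon X\to Y$, $\p^F_{X,Y}(\f^F_{X,Y}(f)) = \p^{GF}_{X,Y}(GFf) = \p^{GF}_{X,Y}(\f^{GF}_{X,Y}(f))$, which by the proof of Proposition~\ref{prop:idempotent} equals $f\circ e_X$ where $e$ is the idempotent associated to $G\circ F$; then $\f^F_{X,Y}(f\circ e_X) = F(f\circ e_X) = Ff\circ Fe_X = Ff = \f^F_{X,Y}(f)$, using $Fe=\id_F$ established at the outset from faithfulness of $G$. Naturality of $\p^F$ is inherited from that of $\p^{GF}$ and functoriality of $G$. This completes the proof; the only real content is remembering that faithfulness of $G$ is used twice --- once to get $Fe=\id_F$ and once implicitly in that $e$ (defined via $\p^{GF}$) satisfies $Fe=\id_F$ --- and being careful that $\p^{GF}$ is only a weak (regular) inverse, so one must insert the idempotent $e$ rather than expecting $\p^{GF}\f^{GF}=\id$.
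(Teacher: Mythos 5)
Your final argument is correct, and it is in substance the same as the paper's: the paper proves this lemma by applying Proposition~\ref{prop:H-sep} with $H:=G\circ F$ and $\p^{F,GF}_{X,Y}:=\f^G_{FX,FY}$ (i.e.\ $g\mapsto Gg$), and unwinding that proposition yields exactly the transformation $\p^F_{X,Y}(g)=\p^{GF}_{X,Y}(Gg)$ that you construct and verify by hand. The one thing to correct is the ``obstacle'' you describe in the middle: it comes from misreading the codomain of $\p^{F,H}$ in the definition of relative separability. That transformation lands in $\Hom_{\e}(H-,H-)=\Hom_{\e}(GF-,GF-)$, not in $\Hom_{\cc}(-,-)$, so the required identity $\p^{F,GF}\circ\f^F=\f^{GF}$ is simply $G(Ff)=\f^{GF}(f)$, which holds on the nose for $\p^{F,GF}=\f^G$ without ever invoking $\p^{GF}$; no left inverse of $\f^{GF}$ is needed at that stage, and the ``naive choice'' does not fail. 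Also, faithfulness of $G$ is really used only once, to deduce $Fe=\id_F$ from $GFe=\id_{GF}$ (either directly, as you do, or via the injectivity of $\p^{F,GF}$ on components, as in the parenthetical hypothesis of Proposition~\ref{prop:H-sep}); your remark that it is used ``twice'' double-counts this single step.
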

\begin{proof}It follows from Proposition \ref{prop:H-sep}, by setting for every $X,Y$ in $\cc$, $\p^{F,GF}_{X,Y}:=\f^G_{FX,FY}$, which is injective, as $G$ is faithful.
\end{proof}

Afterwards, in Proposition \ref{prop:HGsscoref} we will see how, under stronger assumptions on $F$, the functor $G$ comes out to be semiseparable whenever $G\circ F$ is.

In Theorem \ref{thm:coidentifier}, we will give a criterion to factorize any semiseparable functor as the composition of a naturally full functor followed by a separable functor. The main ingredient will be the coidentifier category which is the object of the following subsection.

\subsection{The coidentifier}\label{sub:coidentifier} Given a category $\cc$ and an idempotent natural transformation $e:%
\id_{\cc}\rightarrow \id_{\cc}$, consider the coidentifier $\cc_{e}$ defined as in \cite[Example 17]{FOPTST99}. This is the quotient category $\cc/\sim$ of $\cc$ where $\sim$ is the congruence relation on the hom-sets defined, for all $f,g:A\to B$, by setting $f\sim g$ if and only if $e_{B}\circ f=e_{B}\circ g$. Thus, $\mathrm{Ob}\left( \cc%
_{e}\right) =\mathrm{Ob}\left( \cc\right) $ and $\Hom_{%
\cc_{e}}\left( A,B\right) =\Hom_{\cc}\left(
A,B\right) /\hspace{-2pt}\sim $. We denote by $\overline{f}$ the class of $f\in \Hom_{\cc%
}\left( A,B\right) $ in $\Hom_{\cc_{e}}\left( A,B\right) $.
We have the quotient functor $H:\cc\rightarrow \cc_{e}$
acting as the identity on objects and as the canonical projection on
morphisms. Note that $H$ is naturally full with respect to $\mathcal{P}^H_{A,B}:\Hom_{%
\cc_{e}}\left( A,B\right)\to \Hom_{\cc}\left(
A,B\right)$ defined by  $\mathcal{P}^H_{A,B}(\overline{f})=e_B\circ f$ and that the idempotent natural transformation associated to $H$ is exactly $e$.

\begin{lem}\label{lem:coidentifier}
 Let $\cc$ be a category, let $e:\id_{\cc}\rightarrow \id_{\cc}$ be an idempotent natural transformation and let $H:\cc\rightarrow \cc_{e}$ be the quotient functor.

  \begin{enumerate}
    \item[$(1)$] A functor $F:\cc\to \dd$  satisfies $Fe=\id_{F}$ if and only if there is a functor $F_{e}:\cc_{e}\rightarrow \dd$ (necessarily unique) such that $F=F_{e}\circ H$. Given $F,F':\cc\to \dd$  such that $Fe=\id_{F}$ and $F'e=\id_{F'}$, and a natural transformation $\beta:F\to F'$, there is a unique natural transformation $\beta_e:F_e\to F'_e$ such that $\beta=\beta_e H$.
    \item[$(2)$] The functor $H:\cc\rightarrow \cc_{e}$ is orthogonal to any faithful functor $S:\dd\to \mathcal{E}$ i.e., given  functors $F$ and $G$ such that $S\circ F=G\circ H$, then there is a unique functor $F_e:\cc_e\to \dd$ such that $F_e\circ H=F$ and $S\circ F_e=G$.
  \end{enumerate}

 \begin{gather*}
  \xymatrix{\cc\ar[r]^H\ar[d]_{F}&\cc_e \ar@{.>}[dl]^{F_e}\\ \dd
  }\qquad
  \xymatrix{\cc\ar[r]^H\ar[d]_{F}&\cc_e \ar@{.>}[dl]|{F_e}\ar[d]^{G}\\\dd\ar[r]^S&\mathcal{E}
  }
  \end{gather*}
\end{lem}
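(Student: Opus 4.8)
\textbf{Proof plan for Lemma \ref{lem:coidentifier}.}

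The plan is to verify each universal property directly from the definition of the coidentifier as a quotient by the congruence $\sim$, using at every turn the key translation provided by Proposition \ref{prop:idempotent}-style reasoning: namely that $Fe=\id_F$ is equivalent to ``$F$ factors through $\sim$'', i.e. to $Ff=Fg$ whenever $f\sim g$. So for part $(1)$, first I would show the ``if'' direction: if $F=F_e\circ H$ for some functor $F_e$, then $Fe_X=F_e(He_X)=F_e(\overline{e_X})$; since $e_X\circ\id_X=e_X=e_X\circ e_X$ we have $\id_X\sim e_X$, hence $\overline{e_X}=\overline{\id_X}=\id_{HX}$, and applying $F_e$ gives $Fe_X=\id_{FX}$. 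For the ``only if'' direction, assuming $Fe=\id_F$, I would define $F_e$ to be $F$ on objects and $F_e(\overline f):=Ff$ on morphisms; this is well defined because $f\sim g$ means $e_B\circ f=e_B\circ g$, whence $Ff=Fe_B\circ Ff=F(e_B\circ f)=F(e_B\circ g)=Fe_B\circ Fg=Fg$ using $Fe=\id_F$. Functoriality of $F_e$ is immediate from that of $F$, and $F_e\circ H=F$ holds by construction; uniqueness is forced since $H$ is bijective on objects and surjective on morphisms. For the natural-transformation statement, given $\beta:F\to F'$ I would set $(\beta_e)_X:=\beta_X:F_eX\to F'_eX$ and check naturality against morphisms $\overline f$ of $\cc_e$, which reduces to the naturality square of $\beta$ against $f$ in $\cc$ after applying $F=F_e\circ H$ and $F'=F'_e\circ H$; again $\beta=\beta_e H$ and uniqueness are clear from surjectivity of $H$ on morphisms.

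For part $(2)$, suppose $S:\dd\to\e$ is faithful and $S\circ F=G\circ H$ for functors $F:\cc\to\dd$ and $G:\cc_e\to\e$. The strategy is to reduce to part $(1)$ by showing $Fe=\id_F$. Indeed $S(Fe_X)=SF(e_X)=GH(e_X)=G(\overline{e_X})=G(\id_{HX})=\id_{GHX}=\id_{SFX}=S(\id_{FX})$, and since $S$ is faithful we conclude $Fe_X=\id_{FX}$, i.e. $Fe=\id_F$. Then part $(1)$ yields a unique functor $F_e:\cc_e\to\dd$ with $F_e\circ H=F$. It remains to check $S\circ F_e=G$: both functors agree on objects since $S F_e(HX)=SF X=GHX$ and $H$ is surjective on objects; on a morphism $\overline f$ we have $S F_e(\overline f)=S(Ff)=SF(f)=GH(f)=G(\overline f)$. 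Uniqueness of $F_e$ as a functor making both triangles commute follows already from the uniqueness in part $(1)$ (the condition $F_e\circ H=F$ alone pins it down).

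I do not expect a genuine obstacle here; the only point requiring a little care is the well-definedness of $F_e$ on morphisms in part $(1)$, which is exactly where the hypothesis $Fe=\id_F$ is used in the form $Ff=Fe_B\circ Ff$, and the symmetric use of faithfulness of $S$ in part $(2)$ to promote an equality of images under $S$ to an equality in $\dd$. Everything else is a routine check that the assignments respect composition, identities, and naturality, transported along the bijection-on-objects, surjection-on-morphisms functor $H$.
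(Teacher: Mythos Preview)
Your proposal is correct and follows essentially the same approach as the paper: for part~(1) you spell out the well-definedness of $F_e(\overline{f}):=Ff$ via $Fe=\id_F$ and define $(\beta_e)_X:=\beta_X$, exactly as the paper indicates (deferring details to \cite{FOPTST99}), and for part~(2) you use faithfulness of $S$ together with $He=\id_H$ to deduce $Fe=\id_F$ and then invoke part~(1), which is precisely the paper's argument.
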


\begin{proof} First note that $He=\id_H$ as $e$ is the idempotent natural transformation associated to $H$.

(1). This property is the universal property of the coidentifier that can be deduced from the dual version of \cite[Definition 14(1)]{FOPTST99}. We just point out that the functor $F_{e}:\cc_{e}\rightarrow
\dd$ acts as $F$ on objects and maps the class $\overline{f}$ into $Ff$ and that, for every object $X$ in $\cc$, we have $(\beta_e)_X=\beta_X$.
\begin{invisible}
 Assume $Fe=\id_{F}$. Given $f,g\in \Hom_{\cc}\left( A,B\right) $, we have that  $\overline{f}=\overline{g}$ implies $e_{B}\circ
f=e_{B}\circ g$ and hence $Fe_{B}\circ Ff=Fe_{B}\circ Fg$ i.e. $Ff=Fg$ as $%
Fe_{B}=\id_{FB}$. Thus we can define the functor $F_{e}:\cc_{e}\rightarrow
\dd$ which acts as $F$ on objects and maps the class $\overline{f}$
into $Ff$. A direct computation shows that $F_{e}\circ H=F.$  Conversely, given a functor $G:\cc_e\to \dd$ such that $G\circ H=F$, we have $Fe=GHe=G\id_{H}=\id_{GH}=\id_F$ so that $Fe=\id_{F}$ and we can consider $F_e$. Moreover $GX=GHX=FX=F_eX$ and $G\overline{f}=GHf=Ff=F_e\overline{f}$ so that $G=F_e$. Given $\overline{f}:X\to Y$, the naturality of $\beta_e$ defined as above, is $(\beta_e)_Y\circ F_e\overline{f}=F_e'\overline{f}\circ (\beta_e)_X$ i.e. $\beta_Y\circ F{f}=F'{f}\circ \beta_X$ which is true by naturality of $\beta$.
\end{invisible}

(2). We compute $SFe_X=GHe_X=G\id_{HX}=\id_{GHX}=\id_{SFX}=S\id_{FX}$ so that, since $S$ is faithful, we get that $Fe_X=\id_{FX}$ and hence $Fe=\id_F$. Thus, by (1) there is a unique functor $F_{e}:\cc_{e}\rightarrow
\dd$, such that $F_e\circ H=F$, which acts as $F$ on objects and maps the class $\overline{f}$
into $Ff$. Moreover $S F_eX=SF_eHX=SFX=GHX=GX$ and $S F_e\overline{f}=S F_eHf=SFf=GHf=G\overline{f}$ so that $S\circ F_e= G.$
\end{proof}

 Another way to prove that $H:\cc\rightarrow \cc_{e}$ is orthogonal to any faithful functor $S:\dd\to \mathcal{E}$ is to observe it is eso (essentially surjective on objects, i.e. for every $D\in\cc_e$ there is $C\in\cc$ such that $D\cong H(C)$) and full and that there is an (eso and full, faithful) factorization system, see e.g. \cite[Example 7.9]{DV03}.
 In the following result, we show that any semiseparable functor admits a special type of (eso and full, faithful) factorization. In fact $F=F_{e}\circ H$, where $H$ is eso and (naturally) full while $F_{e}$ is separable whence faithful.

\begin{thm}\label{thm:coidentifier}
Let $F:\cc\rightarrow \dd$ be a semiseparable functor and let $e:\id_\cc\to\id_\cc$ be the associated idempotent natural transformation. Then, there is a unique functor $F_{e}:\cc%
_{e}\rightarrow \dd$ (necessarily separable) such that $F=F_{e}\circ H$ where $H:\cc\rightarrow \cc_{e}$ is the quotient functor. Furthermore, if $F$ also factors as $S\circ N$ where $S:\mathcal{E}\rightarrow \dd$ is a separable functor and $N:\cc\rightarrow \mathcal{E}$ is a naturally full functor, then there is a unique functor $N_e:\cc_e\to \mathcal{E}$ (necessarily fully faithful) such that $N_e\circ H=N$ and $S\circ N_e=F_e$, and $e$ is also the idempotent natural transformation associated to $N$.
\begin{gather*}
  \xymatrix{\cc\ar[r]^H\ar[d]_{N}&\cc_e \ar@{.>}[dl]|{N_e}\ar[d]^{F_e}\\\mathcal{E}\ar[r]^S&\dd
  }
  \end{gather*}
\end{thm}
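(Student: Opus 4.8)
The plan is to build the factorization in two stages. First, since $F$ is semiseparable with associated idempotent $e$, Proposition \ref{prop:idempotent} gives $Fe=\id_F$, so part (1) of Lemma \ref{lem:coidentifier} immediately produces a unique functor $F_e:\cc_e\to\dd$ with $F=F_e\circ H$, acting as $F$ on objects and sending $\overline{f}$ to $Ff$. It remains to check $F_e$ is separable. For this I would recall that $H:\cc\to\cc_e$ is naturally full with $\mathcal{P}^H_{A,B}(\overline f)=e_B\circ f$, and that $F=F_e\circ H$; since $F$ is semiseparable there is $\p^F$ with $\f^F\circ\p^F\circ\f^F=\f^F$. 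The natural candidate for a left inverse of $\f^{F_e}$ is $\mathcal{P}^{F_e}_{A,B}:=H_{A,B}\circ\p^F_{A,B}$, i.e. $g\mapsto\overline{\p^F_{A,B}(g)}$. One then computes $\mathcal{P}^{F_e}_{A,B}\circ\f^{F_e}_{A,B}(\overline f)=\overline{\p^F_{A,B}(Ff)}$; using that $e$ is the idempotent associated to $F$ (so $e_B\circ h=e_B\circ h'\iff Fh=Fh'$) together with $\p^F_{A,B}(Ff)=\p^F_{A,B}(Ff\circ\id_{FA})=f\circ\p^F_{A,A}(\id_{FA})=f\circ e_A$, one gets $\overline{\p^F_{A,B}(Ff)}=\overline{f\circ e_A}=\overline{e_B\circ f}=\overline{f}$ (the last equality since $e_B\circ(e_B\circ f)=e_B\circ f$ means $\overline{e_B\circ f}=\overline f$). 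Hence $F_e$ is separable. I expect the only mild subtlety here is being careful with which composite represents which class in $\cc_e$.

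For the second part, suppose also $F=S\circ N$ with $S:\e\to\dd$ separable and $N:\cc\to\e$ naturally full. Since $N$ is semiseparable (Proposition \ref{prop:sep}(ii)), it has an associated idempotent $e'$. I would first argue $e'=e$: from $S\circ N=F$ and $S$ faithful (separable $\Rightarrow$ faithful, Proposition \ref{prop:sep}(i)) we get, for $f,g:A\to B$, that $Ff=Fg\iff Nf=Ng$; comparing with the universal properties of $e$ (via $F$) and $e'$ (via $N$) shows $e_B\circ f=e_B\circ g\iff e'_B\circ f=e'_B\circ g$, and the uniqueness clause in Proposition \ref{prop:idempotent} forces $e=e'$. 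In particular $Ne=\id_N$, so part (1) of Lemma \ref{lem:coidentifier} yields a unique $N_e:\cc_e\to\e$ with $N_e\circ H=N$. Then $S\circ N_e\circ H=S\circ N=F=F_e\circ H$, and since $H$ is epic on objects and full (or invoking orthogonality of $H$ to faithful functors, Lemma \ref{lem:coidentifier}(2), applied to the faithful $S$), we conclude $S\circ N_e=F_e$. Finally $N_e$ is fully faithful: it is separable because $N$ is naturally full hence semiseparable and $N$ factors through $H$ which is (naturally) full — more directly, $N_e$ is separable by the same kind of computation as for $F_e$ (replacing $F$ by $N$), and $N_e$ is naturally full because $N=N_e\circ H$ with $N$ naturally full and $H$ eso and full, so $N_e$ is full (by \cite[Proposition 2.3]{ACMM06} applied with the eso factor, or directly: $H$ full and $N$ naturally full give $N_e$ naturally full on the hom-level); being both separable and naturally full, $N_e$ is fully faithful.

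The main obstacle I anticipate is the bookkeeping needed to show $N_e$ is separable (equivalently fully faithful) in a clean way, and in particular verifying $e=e'$ rigorously — this is where the universal property in Proposition \ref{prop:idempotent} does the real work, and one must be careful that the biconditional characterizations through $F$, through $N$, and through the two idempotents all line up, so that the uniqueness statement applies. Everything else is a routine transport of the semiseparability/naturally-full data along the factorization $F=F_e\circ H$, using repeatedly that $Fe=\id_F$ and that $H$ is both naturally full and orthogonal to faithful functors.
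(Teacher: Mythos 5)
Your proposal is correct and follows essentially the same route as the paper: Lemma \ref{lem:coidentifier} produces $F_e$ and $N_e$, your splitting $\mathcal{P}^{F_e}_{A,B}(g)=\overline{\p^F_{A,B}(g)}$ is exactly the paper's $\f^H\circ\p^F$, and the conclusion $S\circ N_e=F_e$ comes from the orthogonality of $H$ to the faithful functor $S$. The only variations are cosmetic and equally valid: you verify separability of $F_e$ by a direct computation instead of the paper's cancellation argument (injectivity of $\f^{F_e}$ after first proving $F_e$ faithful, surjectivity of $\f^H$), you identify $e=e'$ via the uniqueness clause of Proposition \ref{prop:idempotent} where the paper computes $e_X=\p^N_{X,X}(\p^S_{NX,NX}(\id_{SNX}))=e'_X$ for the specific choice $\p^F=\p^N\circ\p^S$, and you get faithfulness of $N_e$ from its separability rather than from $S\circ N_e=F_e$ with $F_e$ faithful.
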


\begin{proof}
By Lemma \ref{lem:coidentifier}, there is a unique functor $F_{e}:\cc_{e}\rightarrow \dd$ such that $F=F_{e}\circ H$ where $H:\cc\rightarrow \cc_{e}$ is the quotient functor. If $F_{e}%
\overline{f}=F_{e}\overline{g},$ then $Ff=Fg$ so that, by Proposition \ref{prop:idempotent}, we get $e_{B}\circ f=e_{B}\circ g$ which means $%
\overline{f}=\overline{g}$. Thus $F_{e}$ is faithful.  Moreover $\mathcal{F}^F%
_{X,Y}\circ \mathcal{P}^F_{X,Y}\circ \mathcal{F}^F_{X,Y}=\mathcal{F}^F_{X,Y}$
rewrites as $\mathcal{F}^{F_e} _{X,Y}\circ \mathcal{F}^H%
_{X,Y}\circ \mathcal{P}^F_{X,Y}\circ \mathcal{F}^{F_e}_{X,Y}\circ
\mathcal{F}^H_{X,Y}=\mathcal{F}^{F_e}_{X,Y}\circ \mathcal{F}^H%
_{X,Y}.$ Since $\mathcal{F}^{F_e}_{X,Y}$ is injective and $%
\mathcal{F}^H_{X,Y}$ is surjective, we get $\mathcal{F}^H_{X,Y}\circ \mathcal{P}^F%
_{X,Y}\circ \mathcal{F}^{F_e}_{X,Y}=\id$ which implies
that $F_{e}$ is separable (and also that $H$ is naturally full,
fact that we already know).

Concerning the last sentence, since $S$ is separable, then it is faithful. By Lemma \ref{lem:coidentifier} $H$ is  orthogonal to $S$ so that there is  a unique functor $N_e:\cc_e\to \mathcal{E}$ such that $N_e\circ H=N$ and $S\circ N_e=F_e$. Since $N_e\circ H=N$ and $N$ is full, we deduce that $N_e$ is full (this is not true in general, but here $H$ acts as the identity on objects) and since $S\circ N_e= F_e$ and $F_e$ is faithful, we deduce that $N_e$ is faithful. Thus $N_e$ is fully faithful.\par
It remains to prove that $F$ and $N$ share the same associated idempotent natural transformation\footnote{thus the functor $N_e:\cc_e\to \mathcal{E}$ such that $N_e\circ H=N$ and $S\circ N_e=F_e$, is exactly the separable functor achieved from the first part of this theorem applied to the naturally full functor $N$.} $e: \id_\cc\to\id_\cc$. Indeed, by Corollary \ref{cor:natfidp}, $N$ has an associated idempotent natural transformation $e': \id_\cc\to\id_\cc$ and by definition we have $e'_X:=\p^{N}_{X,X}(\id_{NX})$, for any $X\in\cc$. Since $F=S\circ N$, by the proof of Lemma \ref{lem:comp} (i), we can choose $\p^F_{X,Y}:=\p^{N}_{X,Y}\circ \p^{S}_{NX,NY}$, so that $e_X=\p^F_{X,X}(\id_{FX})=\p^{N}_{X,X}(\p^{S}_{NX,NX}(\id_{SNX}))=\p^{N}_{X,X}(\id_{NX})=e'_X$ whence $e=e'$.
\end{proof}

We are now ready to prove the desired characterization of semiseparable functors in terms of separable and  naturally full functors.

\begin{cor}\label{cor:fattoriz}
  A functor is semiseparable if and only if it factors as $S\circ N$ where $S$ is a separable functor and $N$ is a naturally full functor.
\end{cor}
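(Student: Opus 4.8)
The plan is to prove the equivalence in Corollary~\ref{cor:fattoriz} by showing both implications, each of which is already essentially contained in the preceding material.

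For the ``only if'' direction, suppose $F:\cc\to\dd$ is semiseparable. Then $F$ has an associated idempotent natural transformation $e:\id_\cc\to\id_\cc$ by Proposition~\ref{prop:idempotent}, and Theorem~\ref{thm:coidentifier} gives a factorization $F=F_e\circ H$ where $H:\cc\to\cc_e$ is the quotient functor onto the coidentifier and $F_e:\cc_e\to\dd$ is separable. Since the quotient functor $H$ is naturally full (as observed right after the definition of the coidentifier in Subsection~\ref{sub:coidentifier}, with explicit $\p^H_{A,B}(\overline f)=e_B\circ f$), this exhibits $F$ as $S\circ N$ with $S:=F_e$ separable and $N:=H$ naturally full. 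So this direction is immediate from Theorem~\ref{thm:coidentifier}.

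For the ``if'' direction, suppose $F=S\circ N$ with $S:\e\to\dd$ separable and $N:\cc\to\e$ naturally full. A naturally full functor is semiseparable by Proposition~\ref{prop:sep}~(ii). Hence $F$ is the composition of a semiseparable functor $N$ followed by a separable functor $S$, and Lemma~\ref{lem:comp}~(i) (applied with the roles $F\leadsto N$, $G\leadsto S$) yields that $S\circ N=F$ is semiseparable. This completes the proof.

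There is no real obstacle here: the corollary is a clean repackaging of Theorem~\ref{thm:coidentifier} together with Proposition~\ref{prop:sep}~(ii) and Lemma~\ref{lem:comp}~(i). The only thing worth being careful about is making sure the two ingredients in each direction are invoked in the correct order (``separable after naturally full'' in the factorization, matching exactly the hypotheses of Lemma~\ref{lem:comp}~(i)); beyond that the argument is two short sentences per implication.

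\begin{proof}
If $F:\cc\to\dd$ is semiseparable, let $e:\id_\cc\to\id_\cc$ be its associated idempotent natural transformation. By Theorem~\ref{thm:coidentifier} we have $F=F_e\circ H$, where $H:\cc\to\cc_e$ is the quotient functor and $F_e:\cc_e\to\dd$ is separable. Since $H$ is naturally full, this is the desired factorization with $S:=F_e$ and $N:=H$.

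Conversely, assume $F=S\circ N$ with $S$ separable and $N$ naturally full. By Proposition~\ref{prop:sep}~(ii), $N$ is semiseparable. Then Lemma~\ref{lem:comp}~(i) gives that $S\circ N=F$ is semiseparable.
\end{proof}
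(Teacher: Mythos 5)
Your proof is correct and follows exactly the paper's argument: the forward direction is Theorem \ref{thm:coidentifier}, and the converse combines Proposition \ref{prop:sep}~(ii) (naturally full implies semiseparable) with Lemma \ref{lem:comp}~(i). Nothing to add.
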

\begin{proof}
If a functor is semiseparable, it factors as a naturally full functor followed by a separable one by Theorem \ref{thm:coidentifier}. Conversely, by Lemma \ref{lem:comp} (i), the composition $S\circ N$, of a separable functor $S$ by a naturally full (whence semiseparable) functor $N$, is semiseparable.
\end{proof}

\begin{invisible}\begin{rmk}(Cf. \cite[Lemma 2.7]{CJKP97})
If a semiseparable functor $F:\cc\to\dd$ is orthogonal to the induced functor $F_e:\cc_e\to\dd$ of Theorem \ref{thm:coidentifier}, then $F_e$ is an isomorphism, which means that $\cc_e$ and $\dd$ are isomorphic. 
\end{rmk}\end{invisible}

\subsection{Generators}\label{generators}
We want to investigate how the existence of a suitable type of generator within a category $\cc$ could imply that a functor $F:\cc\to \dd$ is semiseparable if and only if it is separable.\medskip

Recall, from \cite[Definition 7]{He71}, that a morphism $k:X\to Y$ in a category $\cc$ is called \emph{constant} provided that for each object $Z$ in $\cc$ and for each pair of morphisms $g,h:Z\to X$, it follows $k\circ g=k\circ h$. A category $\cc$ is said to be \emph{constant-generated} provided that, for any pair of morphisms $f,g:X\to Y$ in $\cc$ such that $f\neq g$, then there exist an object $\G$ and a constant morphism $k:\G\to X$ such that $f\circ k\neq g\circ k$. We point out that the definition of constant-generated category we are giving here differs from the original one of \cite[Definition 8]{He71} in the fact that we do not require that $\Hom_\cc(X,Y)\neq\emptyset$, condition which is superfluous for our purposes.

\begin{prop}\label{prop:constgen}
  If $\cc$ is a constant-generated category, then $\mathrm{Nat}(\id_\cc,\id_\cc)=\{\id\}$. As a consequence, a functor $F:\cc\to \dd$ is semiseparable if and only if it is separable.
\end{prop}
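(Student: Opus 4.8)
The plan is to prove the two assertions in turn. For the first, suppose $e \in \mathrm{Nat}(\id_\cc, \id_\cc)$; I want to show $e_X = \id_X$ for every object $X$. The key observation is that a constant morphism $k \colon \G \to X$ satisfies, by naturality of $e$, the equation $e_X \circ k = k \circ e_\G$. Now I would apply the constancy of $k$ itself: since $k$ is constant, for the pair of morphisms $e_\G, \id_\G \colon \G \to \G$ we get $k \circ e_\G = k \circ \id_\G = k$. Combining, $e_X \circ k = k$ for every constant morphism $k$ with codomain $X$. Then I invoke the constant-generated hypothesis: if $e_X \neq \id_X$, there would be an object $\G$ and a constant morphism $k \colon \G \to X$ with $e_X \circ k \neq \id_X \circ k = k$, contradicting what we just showed. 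Hence $e_X = \id_X$, so $\mathrm{Nat}(\id_\cc, \id_\cc) = \{\id\}$.

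For the ``as a consequence'' part, let $F \colon \cc \to \dd$ be semiseparable. By Proposition \ref{prop:sep}(i), separability implies semiseparability, so only the reverse implication needs proof. Since $F$ is semiseparable, Proposition \ref{prop:idempotent} furnishes the associated idempotent natural transformation $e \colon \id_\cc \to \id_\cc$. By the first part, $e = \id$. Then Corollary \ref{cor:esep} tells us that $F$ is separable precisely because $e = \id$. This closes the argument.

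I do not anticipate a serious obstacle here; the proof is essentially a direct unwinding of the definitions. The one point requiring a little care is the correct reading of the definition of constant morphism: the relevant instance is to feed the pair $(e_\G, \id_\G)$ into the constancy property of $k \colon \G \to X$, which forces $k \circ e_\G = k$. It is worth stating this step explicitly rather than leaving it implicit, since it is the crux of why naturality of $e$ collapses to $e_X \circ k = k$. Everything else is bookkeeping with the results already established (Proposition \ref{prop:sep}, Proposition \ref{prop:idempotent}, Corollary \ref{cor:esep}).
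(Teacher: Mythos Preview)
Your proof is correct and follows essentially the same route as the paper's: both use naturality of $e$ together with the constancy of $k$ applied to the pair $(e_\G,\id_\G)$ to force $e_X\circ k = k$, and then derive a contradiction from the constant-generated hypothesis; the consequence for semiseparable functors is then obtained via Corollary~\ref{cor:esep} exactly as you do. The only difference is cosmetic: you spell out the invocation of Proposition~\ref{prop:idempotent} and the trivial direction from Proposition~\ref{prop:sep}(i), whereas the paper leaves these implicit in the appeal to Corollary~\ref{cor:esep}.
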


\begin{proof} Let  $e\in \mathrm{Nat}(\id_\cc,\id_\cc)$ and suppose that $e_X\neq \id_X$, for some object $X$ in $\cc$. Since $\cc$ is constant-generated, there are an object $\G$ and a constant morphism $k:\G\to X$ such that $e_X\circ k\neq \id_X\circ k$. By naturality of $e$ and since $k$ is constant, we have $e_X\circ k=k\circ e_{\G}=k\circ \id_{\G}=\id_X\circ k$, a contradiction. Therefore $e_X=\id_X$ and hence $e=\id$. We conclude by Corollary \ref{cor:esep}.
\end{proof}

Recall that an object $\G$ in a  category $\cc$ is called a \emph{generator} if, for every pair of morphisms $f,g:X\to Y$ in $\cc$ such that $f\neq g$, there is a morphism $p:\G\to X$ such that $f\circ p\neq g\circ p$. If the domain of a functor $F$ is a category with a generator, instead of a constant-generated category, it is not obvious that $F$ is semiseparable if and only if it is separable. However we are able to retrieve the same conclusion by adding suitable assumptions.

A first example in this direction is given by taking a \emph{well-pointed} category, i.e. a category that has a generator which is at the same time a terminal object.
\begin{cor}\label{cor:well-pointed}
  If $\cc$ is a well-pointed category, then it is constant-generated. As a consequence, a functor $F:\cc\to \dd$ is semiseparable if and only if it is separable.
\end{cor}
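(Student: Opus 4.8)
The plan is to show that a well-pointed category $\cc$ is constant-generated, and then invoke Proposition \ref{prop:constgen} to conclude. So the core of the argument is purely about the interplay between a terminal object and the notion of constant morphism.

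First I would recall the two pieces of data: $\cc$ has an object $\G$ which is simultaneously a generator and a terminal object. I would then argue that \emph{every} morphism out of $\G$ is constant. Indeed, let $k:\G\to X$ be any morphism and let $g,h:Z\to\G$ be any pair of morphisms; since $\G$ is terminal, $\Hom_\cc(Z,\G)$ is a singleton, so $g=h$, and therefore trivially $k\circ g=k\circ h$. Hence $k$ is constant in the sense of \cite[Definition 7]{He71}. This is the key observation, and it is essentially immediate.

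Next I would verify the defining property of a constant-generated category. Take morphisms $f,g:X\to Y$ in $\cc$ with $f\neq g$. Since $\G$ is a generator, there is a morphism $p:\G\to X$ with $f\circ p\neq g\circ p$. By the previous paragraph, $p$ is a constant morphism with domain $\G$. Thus $\G$ together with $p$ witnesses the constant-generated condition for the pair $(f,g)$. Since $(f,g)$ was arbitrary, $\cc$ is constant-generated. The final sentence of the corollary then follows directly from Proposition \ref{prop:constgen}: a functor $F:\cc\to\dd$ is semiseparable if and only if it is separable.

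I do not anticipate any real obstacle here; the only subtlety is to notice that terminality of $\G$ is exactly what forces morphisms out of $\G$ to be constant (the condition in \cite[Definition 7]{He71} quantifies over pairs of morphisms \emph{into} the source of $k$, which in our case is $\G$), so one should be careful to state the definition of constant morphism with the correct variances. Once that is in place, the proof is a two-line chain of implications.

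\begin{proof}
Since $\cc$ is well-pointed, it has an object $\G$ which is both a generator and a terminal object. We first observe that every morphism $k:\G\to X$ in $\cc$ is constant. Indeed, given an object $Z$ and morphisms $g,h:Z\to\G$, terminality of $\G$ forces $g=h$, whence $k\circ g=k\circ h$. Now let $f,g:X\to Y$ be morphisms in $\cc$ with $f\neq g$. As $\G$ is a generator, there is a morphism $p:\G\to X$ such that $f\circ p\neq g\circ p$, and by the previous remark $p$ is a constant morphism with domain $\G$. Therefore $\cc$ is constant-generated. The final claim now follows from Proposition \ref{prop:constgen}.
\end{proof}
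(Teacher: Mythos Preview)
Your proof is correct and is essentially identical to the paper's own argument: both use terminality of $\G$ to conclude that any morphism out of $\G$ is constant, combine this with the generator property to verify constant-generatedness, and then invoke Proposition \ref{prop:constgen}.
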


\begin{proof}
Let $\G$ be a generator which is a terminal object. Given morphisms $f,g:X\to Y$ in $\cc$ such that $f\neq g$, since $\G$ is a generator there is a morphism $k:\G\to X$ such that $f\circ k \neq g\circ k$. On the other hand, since $\G$ is a terminal object, then $k$ is constant.
\begin{invisible}
 In fact, if $a,b:Z\to \G$ are any morphisms, then they are necessarily equal whence $k\circ a=k\circ b$.
\end{invisible}We conclude by Proposition \ref{prop:constgen}.
\end{proof}

\begin{es}
Corollary \ref{cor:well-pointed} applies in case $\cc$ is either the category $\Set$ of sets or the category $\Top$  of topological spaces or the category $\Comp$ of compact Hausdorff spaces which are well-pointed. In fact the singleton $\{*\}$ is both a terminal object and a generator in all of these categories, see \cite[2.3.2.a, 2.1.7g, 4.5.17.a, 4.5.17.f and 4.5.17.g]{Bor94}.
\end{es}

\begin{rmk}
  In view of Proposition \ref{prop:constgen} and Corollary \ref{cor:well-pointed}, we get that in a well-pointed category $\cc$ one has $\mathrm{Nat}(\id_\cc,\id_\cc)=\{\id\}$. This result already appeared in \cite[Corollary 21]{FOPTST99}.
\end{rmk}

Looking for other additional conditions guaranteeing the equivalence between the semiseparability and the separability of a functor, we need the notion of \emph{central idempotent endomorphism} of an object $\G$ in a category $\cc$. By this, we mean a central idempotent in the monoid $(\mathrm{End}(\G),\circ,\id_{\G})$, i.e. a morphism $g:\G\to \G$ such that $g\circ g=g$ and $g\circ f=f\circ g$ for every morphism $f:\G\to \G$.

\begin{prop}\label{prop:generator}
  Let $\cc$ be a category with a generator $\G$ and let $F:\cc\to \dd$ be a functor. Assume there is no central idempotent endomorphism $g\neq \id_{\G}:\G\to \G$ such that $Fg=\id_{F\G}$.

  Then, $F$ is semiseparable if and only if it is separable.
\end{prop}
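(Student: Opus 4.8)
The plan is to reduce the statement to Corollary~\ref{cor:esep}, which says a semiseparable functor is separable if and only if its associated idempotent natural transformation $e$ equals $\id$. So assume $F$ is semiseparable with associated idempotent $e:\id_\cc\to\id_\cc$ and aim to show $e=\id$. The key is that the generator $\G$ detects this: it suffices to prove $e_\G=\id_\G$, since then for arbitrary $f,g:X\to Y$ with $f\ne g$ there is $p:\G\to X$ with $f\circ p\ne g\circ p$, and naturality of $e$ gives $e_Y\circ f\circ p = f\circ e_\G\circ p = f\circ p$ and similarly $e_Y\circ g\circ p = g\circ p$; if we already knew $e_\G = \id_\G$ this does not immediately force $e_Y = \id_Y$, so instead I would argue directly that $e_\G=\id_\G$ combined with the hypothesis yields a contradiction unless $e$ is already trivial. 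Let me restructure: the cleaner route is to show $e_\G$ is a \emph{central idempotent endomorphism} of $\G$ with $Fe_\G=\id_{F\G}$, and then invoke the hypothesis to conclude $e_\G=\id_\G$, and finally leverage the generator property to propagate this to all components.

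First I would record that $e_\G:\G\to\G$ is idempotent (by Proposition~\ref{prop:idempotent}) and satisfies $Fe_\G=\id_{F\G}$ (again by Proposition~\ref{prop:idempotent}). Next I would check centrality: for any endomorphism $h:\G\to\G$, naturality of $e$ applied to $h$ gives $e_\G\circ h = h\circ e_\G$, so $e_\G$ indeed commutes with every endomorphism of $\G$. Thus $e_\G$ is a central idempotent endomorphism of $\G$ with $Fe_\G=\id_{F\G}$; by hypothesis the only such endomorphism is $\id_\G$, hence $e_\G=\id_\G$. Now I would use that $\G$ is a generator to upgrade this. Suppose $e_X\ne\id_X$ for some $X$; then $e_X$ and $\id_X$ are distinct morphisms $X\to X$, and regarding them as morphisms into $X$ we would want a map $p:\G\to X$ separating $e_X\circ(-)$ from $\id_X\circ(-)$ — but actually the generator property separates morphisms out of $X$. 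The right formulation: $e_X\ne\id_X$ as morphisms $X\to X$; apply the generator property to the pair $e_X,\id_X:X\to X$ (these have common source $X$, not $\G$), which gives $p:\G\to X$ with $e_X\circ p\ne \id_X\circ p = p$. But by naturality $e_X\circ p = p\circ e_\G = p\circ\id_\G = p$, a contradiction. Hence $e_X=\id_X$ for all $X$, i.e. $e=\id$, and $F$ is separable by Corollary~\ref{cor:esep}. The converse, that separable implies semiseparable, is immediate from Proposition~\ref{prop:sep}(i).

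The main obstacle — really the only subtle point — is making sure the generator is used in the correct variance: the definition in the excerpt says a generator separates morphisms $f,g:X\to Y$ via a map $p:\G\to X$ with $f\circ p\ne g\circ p$, so one applies it to $f=e_X$, $g=\id_X$ viewed as parallel morphisms $X\to X$, obtaining $p:\G\to X$, and then naturality of $e$ with respect to $p$ collapses $e_X\circ p$ to $p\circ e_\G$. Everything then hinges on having first established $e_\G=\id_\G$, which is exactly where the ``no nontrivial central idempotent endomorphism $g$ with $Fg=\id_{F\G}$'' hypothesis is consumed. I expect the write-up to be short; the only care needed is in verifying centrality of $e_\G$ (which is just naturality of $e$ against arbitrary endomorphisms of $\G$) and in not conflating the generator condition with its dual.

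\begin{proof}
If $F$ is separable then it is semiseparable by Proposition \ref{prop:sep} (i). Conversely, assume $F$ is semiseparable and let $e:\id_\cc\to\id_\cc$ be its associated idempotent natural transformation. By Proposition \ref{prop:idempotent}, $e_\G:\G\to\G$ is idempotent and $Fe_\G=\id_{F\G}$. Moreover, for every morphism $h:\G\to\G$, naturality of $e$ yields $e_\G\circ h=h\circ e_\G$, so $e_\G$ is a central idempotent endomorphism of $\G$ with $Fe_\G=\id_{F\G}$. By hypothesis, the only such endomorphism is $\id_\G$, hence $e_\G=\id_\G$.

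Suppose now that $e_X\neq\id_X$ for some object $X$ in $\cc$. Since $\G$ is a generator, there is a morphism $p:\G\to X$ such that $e_X\circ p\neq \id_X\circ p=p$. On the other hand, by naturality of $e$ applied to $p$ and since $e_\G=\id_\G$, we have $e_X\circ p=p\circ e_\G=p\circ\id_\G=p$, a contradiction. Therefore $e_X=\id_X$ for every object $X$ in $\cc$, i.e. $e=\id$, and $F$ is separable by Corollary \ref{cor:esep}.
\end{proof}
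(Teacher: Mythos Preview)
Your proof is correct and follows essentially the same approach as the paper's: show that $e_\G$ is a central idempotent endomorphism with $Fe_\G=\id_{F\G}$, invoke the hypothesis to get $e_\G=\id_\G$, and then use the generator property together with naturality of $e$ to derive a contradiction from $e_X\neq\id_X$, concluding via Corollary~\ref{cor:esep}. Your write-up is slightly more explicit about why $e_\G$ is central (naturality of $e$ against endomorphisms of $\G$), but the argument is the same.
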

\begin{proof}
 Consider an idempotent natural transformation $e:\id_\cc\to\id_\cc$ such that $Fe=\id_F$. Then $e_{\G}$ is a central idempotent endomorphism of $\G$ such that $Fe_\G=\id_{F\G}$ and hence $e_\G=\id_\G$ by hypothesis. Let $X$ be an object in $\cc$ and suppose that $e_X\neq \id_X$. Since $\G$ is a generator, there is a morphism $p:\G\to X$ such that $e_X\circ p\neq \id_X\circ p$ but, by naturality of $e$, we have $e_X\circ p=p\circ e_\G=p\circ \id_\G=p$ so that we are led to a contradiction. Therefore $e_X=\id_X$ and hence $e=\id$. We conclude by Corollary \ref{cor:esep}.
\end{proof}

We are now going to apply Proposition \ref{prop:generator} to the category $R$-$\mathrm{Mod}$ of left $R$-modules. First we need the following easy lemma.

\begin{lem}\label{lem:centidpendR}
Let $R$ be a ring. Then $g:R\to R$ is a  central idempotent endomorphism of left $R$-modules if and only if $g=z\id_R$ for a central idempotent $z\in R$, namely $z=g(1)$.
\end{lem}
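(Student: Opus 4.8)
The plan is to prove the two implications of the claimed equivalence directly, working with the module structure of $R$ on both sides.

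First I would prove the easy direction. Suppose $g = z\,\id_R$ for a central idempotent $z \in R$. Then $g$ is clearly an $R$-module endomorphism of $R$ (left multiplication by a central element is left $R$-linear, since $z$ is central it commutes with left multiplication by any $r \in R$), it is idempotent because $g \circ g = z^2\,\id_R = z\,\id_R = g$ using $z^2 = z$, and it is central in $\End_R({}_RR)$: any $R$-module endomorphism $f : R \to R$ is right multiplication by $f(1)$, say $f = (-)\cdot a$ with $a = f(1)$, so $(g \circ f)(x) = z(xa)$ while $(f \circ g)(x) = (zx)a$, and these agree since $z$ is central, or more conceptually $g$ commutes with every $f$ because $g$ is itself a right multiplication $(-)\cdot z$ and right multiplications commute with left-$R$-linear maps. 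Also one reads off $g(1) = z\cdot 1 = z$.

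Next the converse. Let $g : R \to R$ be a central idempotent endomorphism of ${}_RR$. Set $z := g(1) \in R$. Since $g$ is left $R$-linear, $g(r) = g(r\cdot 1) = r\cdot g(1) = rz$ for all $r \in R$, so $g = (-)\cdot z$ is right multiplication by $z$. Idempotence of $g$ gives $z = g(1) = g(g(1)) = g(1)\cdot z \cdot$... more carefully $g(g(1)) = g(z) = z\cdot z = z^2$, and $g \circ g = g$ forces $z^2 = z$, so $z$ is idempotent. For centrality in $R$: fix $r \in R$ and consider the $R$-module endomorphism $f_r := (-)\cdot r : R \to R$ (right multiplication by $r$ is left $R$-linear). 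Since $g$ is central in $\End_R({}_RR)$ we have $g \circ f_r = f_r \circ g$; evaluating at $1$ gives $g(r) = g(1)\cdot r$ read one way and $(g(1))\cdot r$... let me state it cleanly: $(g \circ f_r)(1) = g(1\cdot r) = g(r) = rz$, while $(f_r \circ g)(1) = g(1)\cdot r = zr$. Hence $rz = zr$ for all $r \in R$, i.e. $z$ is central. Thus $z = g(1)$ is a central idempotent of $R$ and $g = z\,\id_R$ as claimed.

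This argument is entirely routine; there is no real obstacle. The only point to be a little careful about is the interplay between the left module structure (used to conclude $g = (-)\cdot z$ and that $(-)\cdot r$ is an endomorphism) and the centrality hypothesis, which is what converts "$g$ commutes with all right multiplications" into "$z$ is central in $R$" — one must remember to test centrality of $g$ against the endomorphisms $(-)\cdot r$ rather than against arbitrary set maps. Both implications together give the stated iff, with the explicit formula $z = g(1)$.
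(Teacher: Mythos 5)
Your proof is correct and follows essentially the same route as the paper's: both directions hinge on writing $g(r)=rg(1)$ by left linearity and testing the centrality of $g$ against the right-multiplication endomorphisms $f_r=(-)\cdot r$, exactly as in the paper (which treats the easy direction as immediate). The only blemish is the parenthetical claim that right multiplications commute with all left-$R$-linear maps — that would force $\End_R({}_RR)\cong R^{\op}$ to be commutative and is false in general — but your preceding explicit computation already settles that step, so nothing is lost.
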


\begin{proof}
 For every $r\in R$, consider the morphism of left $R$-modules $f_r:R\to R, f_r(x):= xr$. Assume that $g$ is a  central idempotent endomorphism of left $R$-modules. Then $g(r)=rg(1)=rz$. From $g\circ f_r=f_r\circ g$ we get $zr=f_r(z)=f_r(g(1))=g(f_r(1))=g(r)=rz$ so that $z$ is in the center of $R$. Moreover, since $g$ is left $R$-linear and idempotent, we get $zz=g(z)=g(g(1))=g(1)=z$. Conversely, it is clear that $z\id_R:R\to R$ is a  central idempotent endomorphism of left $R$-modules in case $z$ is a central idempotent in $R$.
\end{proof}

\begin{cor}
  Let $R$ be a ring with no non-trivial central idempotent (e.g. $R$ is a domain). A functor $F:{}R\text{-}\mathrm{Mod}\to \dd$ such that $F0\neq\id_{FR}$ is semiseparable if and only if it is separable.
\end{cor}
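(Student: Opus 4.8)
The plan is to apply Proposition \ref{prop:generator} with $\cc = R\text{-}\mathrm{Mod}$ and with the generator $\G = R$, the regular left module. First I would recall that $R$ is indeed a generator in $R\text{-}\mathrm{Mod}$: for any pair of distinct morphisms $f,g:X\to Y$ there is $x\in X$ with $f(x)\neq g(x)$, and the $R$-linear map $p:R\to X$, $p(r)=rx$, separates $f$ and $g$. So the hypotheses of Proposition \ref{prop:generator} are in place once we verify the condition on central idempotent endomorphisms of $\G = R$.

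The key step is to translate ``no non-trivial central idempotent endomorphism $g:\,R\to R$ with $Fg = \id_{FR}$'' into the given hypothesis ``$F0 \neq \id_{FR}$''. By Lemma \ref{lem:centidpendR}, every central idempotent endomorphism $g$ of the left $R$-module $R$ is of the form $g = z\,\id_R$ for a central idempotent $z = g(1)\in R$. Since $R$ has no non-trivial central idempotent, $z\in\{0,1\}$, hence $g\in\{0,\id_R\}$, where here $0$ denotes the zero endomorphism of $R$. Thus the only candidate for a central idempotent endomorphism $g\neq\id_R$ of $\G$ is $g = 0$. Now suppose, toward applying Proposition \ref{prop:generator}, that such a $g = 0$ additionally satisfied $Fg = \id_{F\G}$; that would mean $F0 = \id_{FR}$, contradicting the assumption $F0\neq\id_{FR}$. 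Hence there is no central idempotent endomorphism $g\neq\id_{\G}:\G\to\G$ with $Fg = \id_{F\G}$, and Proposition \ref{prop:generator} applies to give that $F$ is semiseparable if and only if it is separable.

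Finally I would address the parenthetical ``e.g.\ $R$ is a domain'': if $R$ is a domain and $z\in R$ is idempotent, then $z(z-1) = 0$ forces $z = 0$ or $z = 1$, so a domain has no non-trivial idempotent at all, in particular no non-trivial central idempotent. I do not expect any real obstacle here; the only point requiring a moment's care is the bookkeeping that the ``trivial'' central idempotent endomorphisms of $R$ are exactly $0$ and $\id_R$ (via Lemma \ref{lem:centidpendR}), so that excluding $g = 0$ as a witness is precisely the content of the hypothesis $F0\neq\id_{FR}$, and that $\id_R$ is excluded by the requirement $g\neq\id_{\G}$ in Proposition \ref{prop:generator}.
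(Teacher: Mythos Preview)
Your proof is correct and follows essentially the same approach as the paper: apply Lemma~\ref{lem:centidpendR} to reduce central idempotent endomorphisms of $R$ to the form $z\,\id_R$ with $z\in\{0,1\}$, use the hypothesis $F0\neq\id_{FR}$ to exclude $g=0$, and conclude via Proposition~\ref{prop:generator} with $\G=R$. Your added explanations (why $R$ is a generator, and the domain parenthetical) are fine elaborations of points the paper leaves implicit.
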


\begin{proof}
Let $g:R\to R$ be a central idempotent endomorphism of left $R$-modules such that $Fg=\id_{FR}$. By Lemma \ref{lem:centidpendR}, we have that $g=z\id_R$ for a central idempotent $z\in R$. By hypothesis $z$ is trivial i.e. $z=1$ or $z=0$ and hence we get either $g=\id_R$ or $g=0$. Since $Fg=\id_{FR}$, we must have $g=\id_R$. Since $R$ is a generator in $R\text{-}\mathrm{Mod}$, by Proposition \ref{prop:generator}, we conclude.
\end{proof}

\section{Semiseparability and adjunctions}\label{sect:semisep-adjunct}
This section collects results on semiseparable functors which have an adjoint. Explicitly in Subsection \ref{sub:rafael}, we investigate a Rafael-type theorem for semiseparable functors. In Subsection \ref{section:Eil-Kleisli}, we study the behaviour of semiseparable adjoint functors in terms of (co)monads and the associated (co)comparison functor. Subsection \ref{sub:adjtriple} contains results on semiseparability of functors that have both a (possibly equal) left and right adjoint.

\subsection{Rafael-type Theorem}\label{sub:rafael}
Rafael Theorem \cite{Raf90} provides a characterization of separable functors which have an adjoint: explicitly, given an adjunction $F\dashv G:\dd\to\cc$ with unit $\eta$ and counit $\epsilon$, then $F$ is separable if and only if there exists a natural transformation $\nu:GF\to \id _\cc$ such that $\nu\circ\eta =\id_{\id_\cc}$ while $G$ is separable if and only if there exists a natural transformation $\gamma:\id_\dd\to FG$ such that $\epsilon\circ\gamma=\id_{\id_\dd}$.
Next result extends Rafael Theorem to semiseparable functors.

\begin{thm}(Rafael-type Theorem)\label{thm:rafael}
Let $(F:\cc\to\dd, G:\dd\to\cc)$ be an adjoint pair of functors, with unit $\eta:\id\to GF$ and counit $\epsilon:FG\to\id$. Then:
\begin{itemize}
\item[(i)] $F$ is semiseparable if and only if $\eta$ is regular.
\item[(ii)] $G$ is semiseparable if and only if $\epsilon$ is regular.
\end{itemize}
\end{thm}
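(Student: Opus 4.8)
The plan is to exploit the standard bijections coming from the adjunction $F\dashv G$ to transport the regularity of $\f^F$ (resp. $\f^G$) to the regularity of $\eta$ (resp. $\epsilon$), and conversely. I would prove (i) in detail; (ii) then follows either by the same argument applied to the adjoint pair on the other side, or by passing to opposite categories via Remark \ref{rmk:opposemi} (note that $(F:\cc\to\dd)\dashv(G:\dd\to\cc)$ gives $(G^\op:\cc^\op\to\dd^\op)\dashv(F^\op:\dd^\op\to\cc^\op)$, so $G$ semiseparable $\iff$ $G^\op$ semiseparable, and the counit of the original adjunction becomes the unit of the opposite one).

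For (i), recall the adjunction isomorphism $\Phi_{C,D}:\Hom_\dd(FC,D)\xrightarrow{\;\cong\;}\Hom_\cc(C,GD)$, $\alpha\mapsto G\alpha\circ\eta_C$, with inverse $\beta\mapsto\epsilon_D\circ F\beta$. Specializing $D=FC'$ we get a natural isomorphism $\Hom_\dd(FC,FC')\cong\Hom_\cc(C,GFC')$, under which $\f^F_{C,C'}:\Hom_\cc(C,C')\to\Hom_\dd(FC,FC')$ corresponds to the map $\Hom_\cc(C,C')\to\Hom_\cc(C,GFC')$ given by postcomposition with $\eta_{C'}$, i.e.\ to $\Hom_\cc(C,\eta_{C'})$. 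Thus $\f^F$ is regular as a natural transformation if and only if the natural transformation $\Hom_\cc(-,\eta_-)$ is regular. Now I would invoke a Yoneda-type argument: a natural transformation $\theta:X\to Y$ between functors $\cc\to\cc$ (here $X=\id_\cc$, $Y=GF$) is regular if and only if the induced natural transformation $\Hom_\cc(-,\theta_-):\Hom_\cc(-,X-)\to\Hom_\cc(-,Y-)$ is regular — the nontrivial direction produces the mediating natural transformation $\nu:GF\to\id_\cc$ by evaluating the mediating transformation between hom-functors on identity morphisms (set $\nu_C:=\rho_{GFC,C}(\id_{GFC})$ where $\rho$ is the mediator) and checking naturality and the regularity identity $\eta\circ\nu\circ\eta=\eta$ componentwise, exactly in the style of the proof of Proposition \ref{prop:idempotent}. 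Combining, $F$ semiseparable $\iff$ $\f^F$ regular $\iff$ $\eta$ regular.

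The main obstacle — and the only place requiring care — is the Yoneda step: verifying that regularity of $\f^F$ as a natural transformation of two-variable hom-functors descends to a mediating natural transformation $\mathcal P^F$ whose value on identities yields a genuine natural transformation $\nu:GF\to\id_\cc$, and conversely that a regular $\eta$ produces a natural $\mathcal P^F$ with $\f^F\circ\mathcal P^F\circ\f^F=\f^F$. For the converse direction this is straightforward: given $\nu$ with $\eta\circ\nu\circ\eta=\eta$, define $\mathcal P^F_{C,C'}:\Hom_\dd(FC,FC')\to\Hom_\cc(C,C')$ as the composite $\Hom_\dd(FC,FC')\xrightarrow{\Phi}\Hom_\cc(C,GFC')\xrightarrow{\nu_{C'}\circ-}\Hom_\cc(C,C')$, and a direct computation using $G(\epsilon_{FC'})\circ\eta_{GFC'}=\id_{GFC'}$ and the compatibility $\nu\circ\eta=$ (the relevant idempotent) shows $\f^F\circ\mathcal P^F\circ\f^F=\f^F$. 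For the forward direction one sets $\nu_C:=\mathcal P^F_{?,?}$ evaluated appropriately — concretely $\nu_C:=\mathcal P^F_{GFC,C}\bigl(\epsilon_{FC}\bigr)$ — and uses naturality of $\mathcal P^F$ in both arguments to check $\nu$ is natural and satisfies $\eta\circ\nu\circ\eta=\eta$. I expect no further difficulty; everything else is bookkeeping with the triangle identities.
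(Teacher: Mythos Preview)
Your proposal is correct and matches the paper's proof almost exactly: the paper defines $\nu_X:=\mathcal P^F_{GFX,X}(\epsilon_{FX})$ in the forward direction and $\mathcal P^F_{X,Y}(f):=\nu_Y\circ Gf\circ\eta_X$ in the converse, which are precisely your formulas unwound through the adjunction isomorphism. Your Yoneda-style packaging (regularity of $\f^F$ $\Leftrightarrow$ regularity of $\Hom_\cc(-,\eta_-)$ $\Leftrightarrow$ regularity of $\eta$) is a mild repackaging of the same computations rather than a different route; the key identity $F(\nu\circ\eta)=\id_F$, obtained from $\eta\circ\nu\circ\eta=\eta$ via the triangle identity $\epsilon F\circ F\eta=\id_F$, is what makes the converse go through in both presentations.
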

\proof
(i) Assume that $F$ is semiseparable and let $\p^F$ be a natural transformation such that $\f^F\circ\p^F\circ\f^F =\f^F$. We define $\nu : GF\rightarrow \id _{\cc}$ on components by setting
$\nu_{X}:=\p^F_{GFX,X}(\epsilon_{FX}): GFX\rightarrow X$,
for any object $X$ in $\cc$. The naturality of $\nu_{X}$ in $X$ follows from the one of $\p^F$. Moreover, by naturality of $\p^F$, for any $X,Y$ in $\cc$ and $g:FX\to FY$ we also have
\begin{equation*}
\begin{split}
\nu_Y\circ Gg\circ\eta_X &=\p^F_{GFY,Y}(\epsilon_{FY})\circ Gg\circ\eta_X=\p^F_{X,Y}(\epsilon_{FY}\circ FGg\circ F\eta_X)\\
&=\p^F_{X,Y}(g\circ\epsilon_{FX}\circ F\eta_X)=\p^F_{X,Y}(g\circ \id_{FX})=\p^F_{X,Y}(g).
\end{split}
\end{equation*}
The associated natural transformation is defined by $e_X:= \p^F_{X,X}(\id_{FX})=\nu_X\circ G\id_{FX}\circ\eta_X =\nu_X\circ\eta_X$ so that $e=\nu\circ\eta$. We compute
$\eta\circ\nu\circ\eta=\eta\circ e=GFe\circ\eta=G\id_F\circ\eta=\eta$. Thus, $\eta$ is regular.

Conversely, suppose $\eta$ is regular, i.e. there exists a natural transformation $\nu :GF\rightarrow \id _{\cc}$ such that $\eta\circ\nu\circ\eta =\eta$, and for any $f\in \Hom_{\dd}(FX,FY)$ define
$ \p^F_{X,Y}(f):=\nu_Y\circ Gf\circ\eta_X.$
From the naturality of $\eta$ and $\nu$, for any $h:X\to Y$, $k:FY\to FZ$, and $l:Z\to T$ we have $\p^F_{X,T}(Fl\circ k\circ Fh)=\nu_T\circ G(Fl\circ k\circ Fh)\circ\eta_X=(\nu_T\circ GFl )\circ Gk\circ (GFh\circ\eta_X)=l\circ (\nu_Z\circ Gk\circ \eta_{Y})\circ h=l\circ\p^F_{Y,Z}(k)\circ h$, thus $\p^F : \Hom_{\dd}(F-, F-)\rightarrow \Hom_{\cc}(-,-)$ is a natural transformation. Since $\p^F_{GFX,X}(\epsilon_{FX})=\nu_X\circ G\epsilon_{FX}\circ\eta_{GFX}=\nu_X\circ \id_{GFX}=\nu_X$, the correspondence between $\p^F$ and $\nu$ is bijective.
  Set $e:=\nu\circ\eta.$ Then $Fe=F(\nu\circ\eta)
=\id _{F}\circ F(\nu\circ\eta)=\epsilon {F}\circ F\eta\circ F(\nu\circ\eta)
=\epsilon {F}\circ F(\eta\circ\nu\circ\eta)=\epsilon {F}\circ F\eta
=\id _{F}$ i.e. $Fe=\id_F$. Therefore $F$ is semiseparable by the following computation, that holds for every $ f:X\to Y$
\begin{gather*}
(\f^F _{X,Y}\circ\p^F _{X,Y}\circ\f^F _{X,Y}) (f)= F(\p^F _{X,Y}(F(f)))=F(\nu_Y\circ GF(f)\circ\eta_X)\\
=F(\nu_Y\circ\eta_{Y}\circ f)=F(e_Y)\circ Ff=\id_{FY}\circ \f^F _{X,Y}(f)=\f^F _{X,Y}(f).
\end{gather*}
%
%
(ii) It follows by duality.\qedhere

\endproof

We include here a useful lemma, which characterizes the regularity of unit and counit.
\begin{lem}\label{lem_B}
Let $(F:\cc\to\dd, G:\dd\to\cc)$ be an adjoint pair of functors, with unit $\eta$ and counit $\epsilon$.
\begin{itemize}
\item[(i)] The following equalities are equivalent for a natural transformation $\nu : GF\to \id _{\cc}$:
\begin{enumerate}[$(1)$]
  \item $\eta\circ\nu\circ\eta =\eta$ (i.e. $\eta$ is regular);
  \item $F\nu\circ F\eta = \id _{F}$;
  \item $\nu G\circ \eta G= \id _{G}$.
\end{enumerate}
\item[(ii)] The following equalities are equivalent for a natural transformation $\gamma : \id _{\dd}\to FG$:
\begin{enumerate}[$(1)$]
\item  $\epsilon\circ\gamma\circ\epsilon = \epsilon$  (i.e. $\epsilon$ is regular);
\item $G\epsilon\circ G\gamma = \id _{G}$;
\item $\epsilon F\circ \gamma F = \id _{F}$.
\end{enumerate}
\end{itemize}
\end{lem}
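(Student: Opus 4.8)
The plan is to prove Lemma \ref{lem_B} by exploiting the standard triangle identities $\epsilon F\circ F\eta=\id_F$ and $G\epsilon\circ \eta G=\id_G$ together with naturality, and observing that all three conditions in each item are different ``faces'' of the single equation $\eta\circ\nu\circ\eta=\eta$ (resp. $\epsilon\circ\gamma\circ\epsilon=\epsilon$), which one can whisker by $F$ or $G$ and then collapse using the triangle identities. Since item (ii) follows from item (i) by applying it to the adjunction $(G^\op,F^\op)$ (using Remark \ref{rmk:opposemi}-style dualization), I would only write out (i) in detail and say (ii) follows by duality.

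For (i), I would argue $(1)\Rightarrow(2)$ by whiskering $\eta\circ\nu\circ\eta=\eta$ on the left by $F$ to get $F\eta\circ F\nu\circ F\eta=F\eta$; then composing on the left with $\epsilon F\circ(-)$ and on the right with $(-)$, and repeatedly using the triangle identity $\epsilon F\circ F\eta=\id_F$, collapses the left-hand side to $F\nu\circ F\eta$ and the right-hand side to $\id_F$. Actually more cleanly: from $\eta\circ(\nu\circ\eta)=\eta=\eta\circ\id$, apply the functor $F$ and then precompose with $\epsilon F$: since $\epsilon F\circ F\eta=\id_F$, hitting both sides with $\epsilon F\circ F(-)$ kills the outer $\eta$ and yields $F(\nu\circ\eta)=\id_F$, i.e. $F\nu\circ F\eta=\id_F$. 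For $(2)\Rightarrow(1)$: from $F\nu\circ F\eta=\id_F$, precompose with $GF(-)$ then with $\eta$ on the appropriate side — more precisely, apply $G$ to get $GF\nu\circ GF\eta=\id_{GF}$, and use naturality of $\eta$ (namely $\eta\circ\nu=GF\nu\circ\eta_{GF}$ and $\eta\circ(\nu\circ\eta)=GF(\nu\circ\eta)\circ\eta$) together with $G\epsilon\circ\eta G=\id_G$ to rewrite $\eta\circ\nu\circ\eta$ as $GF(\nu\circ\eta)\circ\eta=\id_{GF}\circ\eta\cdot(\text{correction})$; this needs a short but careful naturality computation. The equivalence $(1)\Leftrightarrow(3)$ is the mirror image, whiskering on the right by $G$ and using $G\epsilon\circ\eta G=\id_G$.

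The main obstacle I anticipate is bookkeeping: getting the whiskering sides and the order of the triangle identities exactly right, since a single misplaced $F$ versus $G$ or a left/right swap produces nonsense. The cleanest route is probably to prove $(1)\Leftrightarrow(2)$ and $(1)\Leftrightarrow(3)$ using only the two triangle identities plus naturality of $\eta$, avoiding any appeal to $\p^F$ or Theorem \ref{thm:rafael}, so that the lemma is self-contained and can legitimately be used in the proof of that theorem's converse direction. Concretely I would write:

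\begin{proof}
(i) Recall the triangle identities $\epsilon F\circ F\eta=\id_F$ and $G\epsilon\circ \eta G=\id_G$.

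$(1)\Rightarrow(2)$. Applying $F$ to $\eta\circ\nu\circ\eta=\eta$ gives $F\eta\circ F\nu\circ F\eta=F\eta$. Composing on the left with $\epsilon F$ and using $\epsilon F\circ F\eta=\id_F$ on both sides yields $F\nu\circ F\eta=\id_F$.

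$(2)\Rightarrow(1)$. By naturality of $\eta$ applied to $\nu\circ\eta:X\to X$ we have $\eta\circ(\nu\circ\eta)=GF(\nu\circ\eta)\circ\eta=GF\nu\circ GF\eta\circ\eta$. Again by naturality of $\eta$ applied to $\eta$, $GF\eta\circ\eta=\eta\circ\eta$...
\end{proof}

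(Here I would finish the $(2)\Rightarrow(1)$ computation by instead using that $GF\nu\circ GF\eta=G(F\nu\circ F\eta)=G\id_F=\id_{GF}$ directly, so $\eta\circ\nu\circ\eta=GF(\nu\circ\eta)\circ\eta=\id_{GF}\circ\eta=\eta$; the displayed naturality step $\eta\circ(\nu\circ\eta)=GF(\nu\circ\eta)\circ\eta$ is exactly naturality of $\eta$ at the morphism $\nu\circ\eta$, which needs no triangle identity at all.) The $(1)\Leftrightarrow(3)$ equivalence is proved symmetrically, applying $G$ instead of $F$ and using $G\epsilon\circ\eta G=\id_G$ and naturality of $\epsilon$; and (ii) follows from (i) by passing to the opposite adjunction $(G^\op\dashv F^\op)$ as in Remark \ref{rmk:opposemi}.
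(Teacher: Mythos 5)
Your proof is correct and follows essentially the same route as the paper's: the forward implications collapse the whiskered equation using the triangle identity $\epsilon F\circ F\eta=\id_F$ (resp. $G\epsilon\circ\eta G=\id_G$), the converses use naturality of $\eta$ at the morphism $\nu\circ\eta$ (resp. naturality of $\nu\circ\eta$ at $\eta$) together with $F\nu\circ F\eta=\id_F$ (resp. $\nu G\circ\eta G=\id_G$), and (ii) follows by duality. Two minor points: for $(3)\Rightarrow(1)$ the naturality you actually need is that of $\nu\circ\eta$ (equivalently of $\nu$), not of $\epsilon$; and the abandoned line ``$GF\eta\circ\eta=\eta\circ\eta$'' does not typecheck as written --- but the corrected computation in your parenthetical is exactly the paper's argument.
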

\proof
We just prove $(i)$ as $(ii)$ follows dually.\\
$(1)\Rightarrow (2)$. $F\nu \circ F\eta = \id _{F}\circ F\nu \circ F\eta =\epsilon F\circ F\eta\circ F\nu\circ F\eta =\epsilon F\circ F\eta = \id _{F}$.\\
$(2)\Rightarrow (1)$. By naturality of $\eta$, we have $\eta\circ\nu\circ\eta =\eta\circ (\nu \circ\eta ) = GF(\nu\circ\eta )\circ\eta = G(F\nu\circ F\eta )\circ \eta = \eta$.\\
$(1)\Rightarrow (3)$. $\nu G \circ \eta G = \id _{G}\circ \nu G \circ \eta G =G\epsilon \circ \eta G\circ \nu G \circ \eta G =G\epsilon \circ \eta G = \id _{G}$.\\
$(3)\Rightarrow (1)$. By naturality of $\nu \circ\eta$, we have $\eta\circ\nu\circ\eta =\eta\circ (\nu \circ\eta ) = (\nu\circ\eta )GF\circ\eta = (\nu G\circ \eta G)F\circ \eta = \eta$.
\endproof

\begin{rmk}\label{rmk:rafidp}
Let $(F:\cc\to\dd, G:\dd\to\cc)$ be an adjunction with unit $\eta$ and counit $\epsilon$.

1) Assume that there is a natural transformation $\nu :GF \rightarrow \id_\cc$ such that $\eta\circ\nu\circ\eta = \eta $. By Theorem \ref{thm:rafael}, we know that $F$ is semiseparable so that we can take the associated idempotent natural transformation $e:\id_\cc\to \id_\cc$. We can write it explicitly in terms of $\nu$. Indeed, by the proof of Theorem \ref{thm:rafael}, we can define $\p^F_{X,Y}:\Hom_{\dd}(FX,FY)\to\Hom_\cc(X,Y)$ by setting $\p^F_{X,Y}(f):=\nu_Y\circ Gf\circ\eta_X$ for every morphism $f:FX\to FY$. By definition, $e_X:=\p^F_{X,X}(\id_{FX})=\nu_X\circ\eta_X$ so that $e=\nu\circ\eta$.

2) Dually, if there is a natural transformation $\gamma :\id_\dd \rightarrow FG$ such that $\epsilon\circ\gamma\circ\epsilon = \epsilon $, then $G$ is semiseparable and the associated idempotent natural transformation is $e=\epsilon\circ\gamma:\id_\dd\to \id_\dd$.
\begin{invisible}
By the proof of Theorem \ref{thm:rafael}, we can define $\p^G_{X,Y}:\Hom_{\cc}(GX,GY)\to\Hom_\dd(X,Y)$ by setting $\p^G_{X,Y}(f):=\epsilon_Y\circ Ff\circ\gamma_X$ for every morphism $f:GX\to GY$. By the proof of Proposition \ref{prop:idempotent}, we can define $e:\id_\dd\to \id_\dd$ by setting $e_X:=\p^G_{X,X}(\id_{GX})=\epsilon_X\circ\gamma_X$.
\end{invisible}
    \end{rmk}

\subsection{Eilenberg-Moore category}\label{section:Eil-Kleisli}
In order to study the behaviour of semiseparable adjoint functors in terms of separable (co)monads and associated (co)comparison functor, we remind some basics facts concerning Eilenberg-Moore categories \cite{EM65}.

Given a monad $(\top,m:\top\top\to \top,\eta:\id_\cc\to \top)$ on a category $\cc$ we denote by $\cc_{\top}$ the Eilenberg-Moore category of modules (or algebras) over it. The forgetful functor $U_\top:\cc_\top\to \cc$ has a left adjoint, namely the \emph{free functor}
$$V_\top:\cc\to \cc_\top,\qquad C\mapsto (\top C,m _C),\qquad f\mapsto \top (f).$$
The unit $\id_\cc\to U_\top V_\top=\top$ is exactly $\eta$ while the counit $\beta:V_\top U_\top\to \id_{\cc_\top}$ is completely determined by the equality by $U_\top\beta_{(X,\mu)}=\mu$ for every object $(X,\mu)$ in $\cc_\top$ (see \cite[Proposition 4.1.4]{BorII94}). Dually, given a comonad $(\bot,\Delta:\bot\to \bot\bot,\epsilon:\bot\to \id_\cc)$ on a category $\cc$ we denote by $\cc^{\bot}$ the Eilenberg-Moore category of comodules (or coalgebras) over it. The forgetful functor $U^\bot:\cc^\bot\to \cc$ has a right adjoint, namely the \emph{cofree functor} $$V^\bot:\cc\to \cc^\bot,\qquad C\mapsto (\bot C,\Delta _C),\qquad f\mapsto \bot (f).$$
The unit $\alpha:\id_{\cc^\bot}\to V^\bot U^\bot $ is completely determined by the equality by $U^\bot\alpha_{(X,\rho)}=\rho$ for every object $(X,\rho)$ in $\cc^\bot$ while the counit $U^\bot V^\bot =\bot\to \id_{\cc}$ is exactly $\epsilon$.\medskip

Given an adjunction $F:\cc\to\dd, G:\dd\to\cc)$, with unit $\eta$ and counit $\epsilon$, we can consider the monad $(GF, G\epsilon F, \eta )$ and the comonad $(FG, F\eta G, \epsilon )$.
We have the \emph{comparison functor}
$$K_{GF}:\dd\to \cc_{GF},\qquad D\mapsto (GD,G\epsilon_{D}),\qquad f\mapsto G (f),$$
and the \emph{cocomparison functor}
$$K^{FG}:\cc\to \dd^{FG},\qquad C\mapsto (FC,F\eta_{C}),\qquad f\mapsto F (f),$$
that fit into the diagram
\begin{gather}\label{diag:eil-moore}
\vcenter{%
\xymatrixcolsep{1.8cm}\xymatrixrowsep{1cm}\xymatrix{
\dd^{FG} \ar@{}[r]|-\perp \ar@<1ex>[r]^-{U^{FG}}&\dd\ar@/^1pc/[rd]^{K_{GF}} \ar@<1ex>[l]^-{V^{FG}}
\ar@<1ex>[d]^*-<0.1cm>{^{G}}\\
&\cc \ar@/^1pc/[lu]^{K^{FG}}  \ar@<1ex>[u]^*-<0.1cm>{^{F}}\ar@{}[u]|{\dashv}\ar@{}[r]|-\perp \ar@<1ex>[r]^-{V_{GF}}&\cc _{GF},\ar@<1ex>[l]^-{U_{GF}}
} }
\end{gather}
where $U_{GF}\circ K_{GF}=G$, $K_{GF}\circ F = V_{GF}$, $U^{FG}\circ K^{FG}=F$ and $K^{FG}\circ G = V^{FG}$.

We recall that a monad $(\top,m:\top\top\to \top,\eta:\id_\cc\to \top)$ on a category $\cc$ is said to be \emph{separable} \cite{BruV07} if there exists a natural transformation $\sigma :\top\to\top\top$ such that $m\circ\sigma =\id _{\top}$ and $\top m\circ\sigma \top = \sigma\circ m = m\top\circ \top\sigma$; in particular, a separable monad is a monad satisfying the equivalent conditions of \cite[Proposition 6.3]{BruV07}.\par
Dually, a comonad $(\bot,\Delta:\bot\to \bot\bot,\epsilon:\bot\to \id_\cc)$ on a category $\cc$ is said to be \emph{coseparable} if there exists a natural transformation $\tau :\bot\bot\to\bot$ satisfying $\tau\circ\Delta = \id _{\bot}$ and $\bot\tau\circ\Delta\bot=\Delta \circ \tau =\tau\bot\circ\bot\Delta$.\par
Furthermore, an \emph{idempotent monad} is a monad $(\top,m,\eta)$ on a category $\cc$ whose multiplication $m$ is an isomorphism or, equivalently, such that the forgetful functor $U_\top:\cc_\top\to \cc$ is fully faithful, see \cite[Proposition 4.2.3]{BorII94}. 
Dually, an \emph{idempotent comonad} is a comonad $(\bot,\Delta,\epsilon)$ on a category $\cc$ whose comultiplication $\Delta$ is an isomorphism or, equivalently, such that the forgetful functor $U^\bot:\cc^\bot\to \cc$ is fully faithful, see \cite[Section 6]{ApT69}.

 An adjunction $F\dashv G:\dd\to\cc$ with unit $\eta :\id_{%
\cc}\rightarrow GF$ and counit  $\epsilon :FG\rightarrow \mathrm{%
Id}_{\dd}$ is said to be an \emph{idempotent adjunction}\footnote{As underlined in \cite{ClW11}, the first hint of \emph{idempotent adjunctions} can be found in \cite{Mar66} under the name of \emph{idempotent constructions.}} if the monad $(GF,G\epsilon F,\eta)$ is idempotent, or equivalently if the comonad $(FG, F\eta G, \epsilon )$ is idempotent, see e.g. \cite[Subsection 3.4]{ClW11}. Indeed by \cite[Proposition 2.8]{MS82} this is equivalent to ask that anyone of the natural transformations $\epsilon F,$ $G\epsilon$, $F\eta$ and $\eta G$ is an isomorphism.

\begin{rmk}
An idempotent (co)monad on a category $\cc$ is always (co)separable with splitting given by the inverse of the (co)multiplication. Another way to arrive at the same conclusion is to observe that the forgetful functor $U_\top:\cc_\top\to \cc$ (resp. $U^\bot:\cc^\bot\to\cc$) is both separable and naturally full whenever it is fully faithful.
\end{rmk}

\begin{rmk}\label{rmk:monad}Let $(F:\cc\to\dd, G:\dd\to\cc)$ be an adjunction with unit $\eta :\id_{\cc}\rightarrow GF$ and counit $\epsilon :FG\rightarrow \mathrm{Id}_{\dd}$.
\begin{enumerate}[\quad 1)]
  \item The adjunctions $(F,G)$ and $(V_{GF},U_{GF})$ have the same associated monad $(GF,G\epsilon F,\eta)$, whereas the adjunctions $(F,G)$ and $(U^{FG},V^{FG})$ have the same associated comonad $(FG,F\eta G,\epsilon )$.
  \item By 1), $(F,G)$ is idempotent if and only if $(V_{GF},U_{GF})$ is idempotent, if and only if $(U^{FG},V^{FG})$ is idempotent.
  \item The counit of an adjunction coincides with the counit of the associated comonad. Thus, by 2), the adjunctions $(F,G)$ and $(U^{FG},V^{FG})$ have the same counit. As a consequence, $G$ is semiseparable (resp. separable, naturally full, fully faithful) if and only if so is $V^{FG}$ in view of the corresponding Rafael-type Theorems (i.e. Theorem \ref{thm:rafael}, \cite[Theorem 1.2]{Raf90} and \cite[Theorem 2.6]{ACMM06}) and their combination for fully faithfulness.
  \item Similarly, the adjunctions $(F,G)$ and $(V_{GF},U_{GF})$ have the same unit and hence $F$ is semiseparable (resp. separable, naturally full, fully faithful) if and only if so is $V_{GF}$.
\end{enumerate}
\begin{invisible}
By 2) these two adjunction have the same associated monad whence the same unit. The last sentence follows from Rafael-type Theorem for left adjoint functors and their combination for fully faithfulness.
\end{invisible}
\end{rmk}


Now, let $(F,G,\eta,\epsilon )$ be an adjunction. In \cite[Lemma 3.1]{Chen15}  it is proved that if the right adjoint $G$ is separable then the associated monad $(GF,G\epsilon F,\eta)$ is separable. We show that the semiseparability of $G$ is enough to gain the separability of the associated monad. The proof is similar to the separable case but uses Lemma \ref{lem_B}. We also prove the analogous result involving the left adjoint and the associated comonad.
\begin{lem}\label{lem:monad} Let $(F,G,\eta,\epsilon )$ be an adjunction.
\begin{itemize}
\item[(i)] If $G$ is semiseparable, then the associated monad $(GF,G\epsilon F,\eta)$ is separable.
\item[(ii)] If $F$ is semiseparable, then the associated comonad $(FG, F\eta G, \epsilon)$ is coseparable.
\end{itemize}
\end{lem}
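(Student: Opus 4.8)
The statement (Lemma~\ref{lem:monad}) asserts that if $G$ is semiseparable then the monad $(GF,G\epsilon F,\eta)$ is separable, and dually if $F$ is semiseparable then the comonad $(FG,F\eta G,\epsilon)$ is coseparable. The plan is to prove (i) directly and then obtain (ii) by passing to opposite categories (using Remark~\ref{rmk:opposemi}, which gives that $F$ is semiseparable iff $F^\op$ is, and the fact that the comonad associated to $(F,G)$ becomes the monad associated to the opposite adjunction). So the real work is in (i).

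For (i), since $G$ is semiseparable, Theorem~\ref{thm:rafael}(ii) tells us the counit $\epsilon:FG\to\id_\dd$ is regular, i.e.\ there is a natural transformation $\gamma:\id_\dd\to FG$ with $\epsilon\circ\gamma\circ\epsilon=\epsilon$. By Lemma~\ref{lem_B}(ii) this is equivalent to $G\epsilon\circ G\gamma=\id_G$ and to $\epsilon F\circ\gamma F=\id_F$. The plan is to define the candidate section $\sigma:GF\to GFGF$ for the multiplication $m=G\epsilon F$ by setting
\[
\sigma:=G\gamma F=\bigl(G\gamma_{F-}\bigr):GF\to GFGF,
\]
which is clearly a natural transformation. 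Then I must verify the three defining identities of a separable monad: first $m\circ\sigma=\id_{GF}$, i.e.\ $G\epsilon F\circ G\gamma F=G(\epsilon\circ\gamma)F$, which equals $\id_{GF}$ precisely because $G\epsilon\circ G\gamma=\id_G$ by Lemma~\ref{lem_B}(ii) (whiskered by $F$ on the right). The remaining two identities are $GF m\circ\sigma GF=\sigma\circ m$ and $\sigma\circ m=m GF\circ GF\sigma$; written out, $\sigma\circ m=G\gamma F\circ G\epsilon F=G(\gamma\circ\epsilon)F$, and both $GFm\circ\sigma GF$ and $mGF\circ GF\sigma$ should reduce, after using naturality of $\gamma$ against $\epsilon$ (i.e.\ $\gamma\circ\epsilon=FG\epsilon\circ\gamma FG$, or its mate), to the same expression $G(\gamma\circ\epsilon)F$. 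These are diagram-chase verifications using only naturality of $\epsilon$, $\gamma$, and the triangle/Lemma~\ref{lem_B} identities.

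I expect the main obstacle to be the bookkeeping in the two associativity-type identities: one has to carefully track where $F$ and $G$ are whiskered on and use the naturality square of $\gamma$ with respect to the component $\epsilon_D$ at the right objects, together with the fact that $\gamma$ interacts with $m=G\epsilon F$ exactly as in the separable case — the point of Lemma~\ref{lem_B} is that regularity of $\epsilon$ (rather than the stronger splitting $\epsilon\circ\gamma=\id$ that one has in the separable case) is still enough to force $G\epsilon\circ G\gamma=\id_G$, which is the only place the unit-type normalization $m\circ\sigma=\id$ is used. Since the other two identities $\top m\circ\sigma\top=\sigma\circ m=m\top\circ\top\sigma$ never require $\epsilon\circ\gamma=\id$ but only naturality, they go through verbatim as in \cite[Lemma~3.1]{Chen15}. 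Thus the proof is: define $\sigma=G\gamma F$, check $m\circ\sigma=\id$ via Lemma~\ref{lem_B}(ii), check the two middle identities by naturality, and then dualize for (ii).
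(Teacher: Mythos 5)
Your proposal is correct and follows essentially the same route as the paper: it takes $\gamma$ with $G\epsilon\circ G\gamma=\id_G$ from Theorem \ref{thm:rafael} and Lemma \ref{lem_B}(ii), sets $\sigma:=G\gamma F$, verifies $G\epsilon F\circ\sigma=\id_{GF}$, reduces the two remaining identities to $G(\gamma\circ\epsilon)F$ via the naturality squares of $\gamma$ and $\epsilon$, and handles (ii) by duality. No gaps.
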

\proof
(i) Assume $G$ is semiseparable. Then, by Theorem \ref{thm:rafael} and Lemma \ref{lem_B} (ii), there is a natural transformation $\gamma : \id _{\dd}\to FG$ such that $G\epsilon\circ G\gamma = \id _{G}$. Set $\sigma :=G\gamma F: G\id _{\dd}F\to GFGF$. It follows that $G\epsilon F\circ\sigma = G\epsilon F\circ G\gamma F=\id _{GF}$. Moreover, from the naturality of $\epsilon$ and $\gamma$, we have $\gamma\circ\epsilon =\epsilon FG\circ FG\gamma$ and $\gamma\circ\epsilon = FG\epsilon\circ\gamma FG$, respectively, hence
$GFG\epsilon F\circ \sigma GF=GFG\epsilon F\circ G\gamma FGF= G\gamma F\circ G\epsilon F= G\epsilon FGF\circ GFG\gamma F = G\epsilon FGF\circ GF\sigma .$ 
Therefore, the monad $(GF,G\epsilon F,\eta)$ is separable.

(ii) The proof is dual by using Lemma \ref{lem_B} (i).\qedhere
\begin{invisible}
Indeed, if $F$ is semiseparable, by Theorem \ref{thm:rafael} and Lemma \ref{lem_B} (i), there exists a natural transformation $\nu :GF\rightarrow \id _{\cc}$ such that $F\nu\circ F\eta = \id _{F}$. Set $\tau :=F\nu G: FGFG\to F\id _{\cc}G$, so we have $\tau\circ F\eta G =\id_{FG}$. From the naturalness of $\eta$ e $\nu$, it follows that
\begin{equation*}
FG\tau\circ F\eta GFG = F\eta G\circ\tau = \tau FG\circ FGF\eta G ,
\end{equation*}
hence $(FG, F\eta G, \epsilon)$ is a coseparable comonad.
\end{invisible}
 \endproof

\begin{rmk}
 We have recalled that if the right adjoint $G$ of an adjunction $(F,G)$ is separable then the associated monad $(GF,G\epsilon F,\eta)$ is separable. It is known that the converse is not true as \cite[Example 3.7(2)]{Chen15} shows. Explicitly, let $\cc$ and $\cc'$ be two nontrivial additive categories, and consider the product category $\dd=\cc\times \cc'$. Let $F:\cc\to\dd$ be the canonical functor sending an object $C$ to $(C,0)$ and a morphism $f$ to $(f,0)$. Its right adjoint is the projection functor $G:\dd\to\cc$. Then, the associated monad $GF$ equals the identity monad on $\cc$, which is separable, but $G$ is not separable, as it is not faithful. Nevertheless, $G$ results to be semiseparable. Indeed, let $\epsilon :FG\to\id_\dd$ be the counit of the adjunction given for any $D=(C,C')$ in $\dd$ by $\epsilon_D=(\id_C,\varphi^I_{C'}):FGD\to D$, where $\varphi^I_{C'}$ is the unique map from the zero object $0$ of $\cc$ to $C'$. Consider the natural transformation $\gamma:\id_\dd\to FG$, given for any $D=(C,C')$ in $\dd$ by $\gamma_D=(\id_C,\varphi^T_{C'}):D\to FGD$, where $\varphi^T_{C'}$ is the unique map from $C'$ to $0$. Then, from $\gamma_D\circ\epsilon_D=(\id_C,\varphi^T_{C'})\circ (\id_C,\varphi^I_{C'})=(\id_C,\id_{0})=\id_{FGD}$ it follows that $G$ is naturally full by \cite[Theorem 2.6]{ACMM06}, hence in particular semiseparable.
\end{rmk}
\begin{rmk}As it happens for the separable case, the fact that the associated (co)monad is (co)separable does not imply that the right (left) adjoint is semiseparable, i.e. the converse of Lemma \ref{lem:monad} is not necessarily true. To see this, note that if $(F,G)$ is an adjunction with $G$ (resp. $F$) fully faithful, then the associated monad (resp. comonad) is always idempotent (this will be proved in Corollary  \ref{cor:ffmonad}, resp. Corollary \ref{cor:ffcomonad}) whence separable (resp. coseparable). However $F$ (resp. $G$) needs not to be semiseparable in this case. For instance, we consider the usual adjunction $(\varphi^*,\varphi_*)$ attached to a ring homomorphism $\varphi:R\to S$ (we will be back on it in Subsection \ref{es:extens}). In \cite[Example 3.3]{ACMM06} it is shown an example of a ring epimorphism $\varphi:R\to S$ (in this case $\varphi_*$ is fully faithful) such that the extension of scalars functor $\varphi^*$ is full, but not naturally full, thus $\varphi^*$ is not semiseparable by Proposition \ref{prop:sep}.
\end{rmk}

The following result characterizes the semiseparability of a right adjoint functor in terms of properties of the comparison functor and of the forgetful functor from the Eilenberg-Moore category of modules over the associated monad. We remark that by Proposition \ref{prop:sep} the separability of the forgetful functor coincides with its semiseparability as it is faithful.

\begin{invisible} This follows also from $(1)\Leftrightarrow (4)$ of Corollary \ref{cor:conserv}, as it is conservative (\cite[Proposition 4.1.4]{BorII94}).
\proof
Let $f:(C,\mu_C)\to (D,\mu_D)$ be a morphism in $\cc_{GF}$ such that $f$ is an isomorphism in $\cc$. Then, from $$f\circ\mu_C\circ GF(f^{-1})=\mu_D\circ GF(f)\circ GF(f^{-1})=\mu_D=f\circ f^{-1}\circ\mu_D,$$ we have $\mu_C\circ GF(f^{-1})=f^{-1}\circ\mu_D$, hence $f^{-1}:(D,\mu_D)\to (C,\mu_C)$ is a morphism of $GF$-modules, inverse to $f$.
\endproof
\end{invisible}

\begin{thm}\label{thm:ssepMonad}
Let $(F:\cc\to\dd, G:\dd\to\cc)$ be an adjunction. Then, $G$ is semiseparable if and only if the forgetful functor $U_{GF}:\cc_{GF}\to\cc$ is separable (equivalently, the monad $(GF,G\epsilon F,\eta)$ is separable) and the comparison functor $K_{GF}:\dd\to \cc_{GF}$ is naturally full.
\end{thm}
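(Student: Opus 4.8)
The strategy is to use the factorization diagram \eqref{diag:eil-moore} together with the fact that $G = U_{GF}\circ K_{GF}$ and $F = U^{FG}\circ K^{FG}$, and to reduce everything to the Rafael-type Theorem \ref{thm:rafael} applied to the two adjunctions $(F,G)$ and $(V_{GF},U_{GF})$, which share the same counit-side and unit-side data only for the monad, so a little care is needed. First I would prove the forward implication. Assume $G$ is semiseparable. Then by Lemma \ref{lem:monad}(i) the monad $(GF,G\epsilon F,\eta)$ is separable, which by \cite[Proposition 6.3]{BruV07} is equivalent to separability of the forgetful functor $U_{GF}$; this settles the ``separable forgetful functor'' clause. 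For the naturally-fullness of $K_{GF}$, I would use the Rafael-type Theorem: since $G$ is semiseparable, by Theorem \ref{thm:rafael}(ii) and Lemma \ref{lem_B}(ii) there is $\gamma:\id_\dd\to FG$ with $G\epsilon\circ G\gamma=\id_G$. I want to produce a natural transformation splitting $\f^{K_{GF}}$ on the right, i.e. show $K_{GF}$ is naturally full. The key observation is that for $D\in\dd$, $K_{GF}D=(GD,G\epsilon_D)$, and a morphism in $\cc_{GF}$ from $K_{GF}D$ to $K_{GF}D'$ is a map $h:GD\to GD'$ in $\cc$ compatible with the module structures; I would define a candidate $\p^{K_{GF}}_{D,D'}(h)$ using $\gamma$, roughly $\epsilon_{D'}\circ F h\circ \gamma_D$ — but this is precisely the formula occurring in Rafael's original proof for the separable case, adapted. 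Then one checks $\f^{K_{GF}}\circ\p^{K_{GF}}=\id$, i.e. $K_{GF}(\epsilon_{D'}\circ Fh\circ\gamma_D)=h$ for every module morphism $h$; this uses that $h$ is a module morphism (so $h\circ G\epsilon_D=G\epsilon_{D'}\circ GFh$) together with $G\epsilon\circ G\gamma=\id_G$.

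For the converse, assume $U_{GF}$ is separable and $K_{GF}$ is naturally full. Since $U_{GF}$ is separable and $K_{GF}$ is naturally full (hence semiseparable by Proposition \ref{prop:sep}(ii)), Lemma \ref{lem:comp}(ii) — with $F$ there being the naturally full $K_{GF}$ and $G$ there being the semiseparable (indeed separable) $U_{GF}$ — gives that the composite $U_{GF}\circ K_{GF}=G$ is semiseparable. That is the whole converse direction, essentially for free from the composition lemma.

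**Main obstacle.** The routine-but-delicate part is the forward direction's claim that $K_{GF}$ is naturally full: one must write down the correct natural transformation $\p^{K_{GF}}$ built from $\gamma$ and verify it is well-defined (lands in module morphisms), natural in both variables, and a genuine right inverse to $\f^{K_{GF}}$. The subtlety is that $\p^{K_{GF}}_{D,D'}$ is defined on \emph{all} $\cc$-morphisms $h:GD\to GD'$ but only needs the right-inverse identity on those $h$ that are module morphisms, and conversely its output $\epsilon_{D'}\circ Fh\circ\gamma_D$ must be checked to be a morphism in $\dd$ whose image under $K_{GF}$ recovers $h$ — here the module-morphism hypothesis on $h$ and the identity $G\epsilon\circ G\gamma=\id_G$ both get used. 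Everything else (invoking Lemma \ref{lem:monad}, \cite[Proposition 6.3]{BruV07}, Lemma \ref{lem:comp}(ii), Proposition \ref{prop:sep}) is bookkeeping.
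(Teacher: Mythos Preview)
Your proposal is correct and follows essentially the same route as the paper: both use Lemma~\ref{lem:monad}(i) for the monad-separability clause, both use the formula $h\mapsto \epsilon_{D'}\circ Fh\circ\gamma_D$ (with $G\epsilon\circ G\gamma=\id_G$ from Lemma~\ref{lem_B}(ii)) for the comparison-functor clause, and both get the converse from the composition lemma (the paper invokes Corollary~\ref{cor:fattoriz}, which is the same content as your Lemma~\ref{lem:comp}(ii)).

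The one genuine difference is how natural fullness of $K_{GF}$ is concluded. You plan to verify directly that $\p^{K_{GF}}$ is a natural right inverse to $\f^{K_{GF}}$, which requires checking naturality of $\p^{K_{GF}}$ in both variables (routine via naturality of $\epsilon$ and $\gamma$, but it must be done). The paper avoids this check: it only verifies that $K_{GF}h'=h$ (i.e.\ $K_{GF}$ is \emph{full}), then observes that since $U_{GF}$ is faithful and $U_{GF}\circ K_{GF}=G$ is semiseparable, Lemma~\ref{lem_A} gives $K_{GF}$ semiseparable, and Proposition~\ref{prop:sep}(ii) then yields naturally full. The paper's route is slightly slicker because the naturality of $\p^{K_{GF}}$ comes for free from the abstract semiseparability machinery rather than being checked by hand; your route is more self-contained. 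Both work.

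One small confusion in your write-up: $\p^{K_{GF}}_{D,D'}$ is by definition a map on $\Hom_{\cc_{GF}}(K_{GF}D,K_{GF}D')$, i.e.\ on \emph{module} morphisms only, not on all $\cc$-morphisms $GD\to GD'$; and its output $\epsilon_{D'}\circ Fh\circ\gamma_D$ is automatically a $\dd$-morphism, so there is nothing to check there.
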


\proof Set $U:=U_{GF}$ and $K:=K_{GF}$. Let $\eta$ and $\epsilon$ be the unit and counit of $(F,G)$ respectively. Assume $G$ is semiseparable. By Theorem \ref{thm:rafael} and Lemma \ref{lem_B} (ii), there is a natural transformation $\gamma : \id _{\dd}\to FG$ such that $G\epsilon\circ G\gamma = \id _{G}$. By Lemma \ref{lem:monad} (i), $(GF,G\epsilon F,\eta)$ is a separable monad, and by \cite[2.9 (1)]{BBW09}, the separability of this monad is equivalent to the separability of $U$. We now prove that $K:\dd\to\cc_{GF}$ is naturally full. Let $h:KX\to KY$ be a morphism in $\cc_{GF}$ and set $h':=\epsilon_Y\circ FUh\circ\gamma_X$. Then, since $U\circ K =G$, which is semiseparable by assumption, we obtain
\begin{equation*}
UKh'= G(\epsilon_Y\circ FUh\circ\gamma_X)=(G\epsilon_Y\circ GFUh )\circ G\gamma_X = Uh\circ G\epsilon_X \circ G\gamma_X = Uh.
\end{equation*}
So $K$ is full, as $Kh'=h$. Moreover, since $U$ is faithful and $UK$ is semiseparable, by Lemma \ref{lem_A} (i) $K$ is semiseparable. By Proposition \ref{prop:sep} (ii) this means that $K$ is naturally full.
Conversely, if $U$ is separable and $K$ is naturally full, then by Corollary \ref{cor:fattoriz} $G=U\circ K$ is semiseparable.
\endproof

\begin{rmk}\label{rmk:compar-fattoriz}
	Let $F\dashv G:\dd\to\cc$ be an adjunction. If $G$ is semiseparable, let $e:\id_\dd\to\id_\dd$ be the associated idempotent natural transformation. Then, by Theorem \ref{thm:coidentifier} there is a unique functor $G_{e}:\dd%
	_{e}\rightarrow \cc$ (necessarily separable) such that $G=G_{e}\circ H$, where $H:\dd\rightarrow \dd_{e}$ is the quotient functor onto the coidentifier category $\dd_e$, which in turn is naturally full. By Theorem \ref{thm:ssepMonad} $G$ also factors as $U_{GF}\circ K_{GF}$ where $U_{GF}$ is separable and $K_{GF}$ is naturally full. These two factorizations of $G$ as a naturally full functor followed by a separable one are related, in view of Theorem \ref{thm:coidentifier}, by a unique functor $(K_{GF})_e:\dd_e\to \mathcal{C}_{GF}$ (necessarily fully faithful) such that $(K_{GF})_e\circ H=K_{GF}$ and $U_{GF}\circ (K_{GF})_e=G_e$.
The same result also establishes that the idempotent natural transformation associated to $K_{GF}$ is still $e$.
	\begin{gather*}
	\xymatrix{\dd\ar[r]^H\ar[d]_{K_{GF}}&\dd_e \ar@{.>}[dl]|{(K_{GF})_e}\ar[d]^{G_e}\\\mathcal{C}_{GF}\ar[r]_{U_{GF}}&\cc
	}
	\end{gather*}
\end{rmk}
As a consequence of Theorem \ref{thm:ssepMonad} we can now recover similar characterizations for separable, naturally full and fully faithful right adjoint. Let us start with the separable case.

\begin{cor}[cf. {\cite[proof of Proposition 3.5]{Chen15} and \cite[Proposition 2.16]{AGM15}}] \label{cor:sepmonad}
Let $(F:\cc\to\dd, G:\dd\to\cc)$ be an adjunction. Then, $G$ is separable if and only if the forgetful functor $U_{GF}:\cc_{GF}\to\cc$ is separable (equivalently, the monad $(GF,G\epsilon F,\eta)$ is separable) and the comparison functor $K_{GF}:\dd\to \cc_{GF}$ is fully faithful (i.e. $G$ is premonadic).

\end{cor}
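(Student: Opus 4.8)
The plan is to deduce this corollary from Theorem \ref{thm:ssepMonad} together with the results already at our disposal linking separability and full faithfulness through semiseparability. Recall that Theorem \ref{thm:ssepMonad} tells us that $G$ is semiseparable if and only if $U_{GF}$ is separable (equivalently the monad $GF$ is separable) and $K_{GF}$ is naturally full. Now we use Proposition \ref{prop:sep} (i), which states that a functor is separable precisely when it is semiseparable and faithful. Applying this to $G$ itself: $G$ is separable iff $G$ is semiseparable and faithful. The strategy is then to massage the condition ``$G$ semiseparable and $G$ faithful'' into the condition ``$U_{GF}$ separable and $K_{GF}$ fully faithful''.

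First I would prove the forward implication. Suppose $G$ is separable. By Proposition \ref{prop:sep} (i), $G$ is semiseparable and faithful, so by Theorem \ref{thm:ssepMonad} the forgetful functor $U_{GF}$ is separable and $K_{GF}$ is naturally full. It remains to upgrade $K_{GF}$ from naturally full to fully faithful, i.e. to show $K_{GF}$ is also faithful. But $U_{GF}\circ K_{GF}=G$ is faithful, and a functor whose composition with another functor is faithful must itself be faithful; hence $K_{GF}$ is faithful, and being also naturally full it is fully faithful by the remark just after Definition \ref{semisep} (a functor is fully faithful iff it is both separable and naturally full — here faithful plus naturally full gives, via Proposition \ref{prop:sep}, separable and naturally full, hence fully faithful).

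For the converse, suppose $U_{GF}$ is separable and $K_{GF}$ is fully faithful. Then $K_{GF}$ is in particular naturally full, so by Theorem \ref{thm:ssepMonad} $G$ is semiseparable. Moreover $K_{GF}$ is faithful (being fully faithful) and $U_{GF}$ is faithful (being separable, by \cite[Proposition 1.2]{NVV89} or directly since separable functors are faithful), so $G=U_{GF}\circ K_{GF}$ is a composition of faithful functors, hence faithful. By Proposition \ref{prop:sep} (i), a semiseparable faithful functor is separable, so $G$ is separable. Finally, the parenthetical equivalence ``$K_{GF}$ fully faithful $\iff$ $G$ premonadic'' is just the definition of a premonadic functor (one whose comparison functor is fully faithful), so nothing further is needed there.

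I do not expect any genuine obstacle here: the corollary is essentially a bookkeeping consequence of Theorem \ref{thm:ssepMonad} and Proposition \ref{prop:sep}, with the only mild subtlety being the observation that faithfulness of a composite $U\circ K$ forces faithfulness of $K$ — which is immediate since $UKf=UKg$ whenever $Kf=Kg$ forces $Kf=Kg$ only after noting $U$ is faithful, but in fact we only need: if $Kf=Kg$ then $UKf=UKg$, and conversely $Gf=Gg$ with $G=UK$ gives $UKf=UKg$, whence $Kf=Kg$ by faithfulness of $U$; combined with $G$ faithful this shows $K$ is faithful directly. The cleanest phrasing simply notes that $G$ faithful implies $K_{GF}$ faithful, since $K_{GF}f = K_{GF}g \Rightarrow Gf = Gg \Rightarrow f=g$.
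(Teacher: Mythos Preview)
Your proof is correct and follows essentially the same route as the paper: both derive the corollary from Theorem \ref{thm:ssepMonad} together with Proposition \ref{prop:sep}(i), using the factorization $G=U_{GF}\circ K_{GF}$ to trade faithfulness of $G$ for faithfulness of $K_{GF}$. The paper's argument is slightly more streamlined in that it observes once and for all that $U_{GF}$ is always faithful (it is a forgetful functor), so ``$G$ faithful $\Leftrightarrow$ $K_{GF}$ faithful'' holds in both directions simultaneously, avoiding the need to treat the forward and converse implications separately.
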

\begin{proof}Set $U:=U_{GF}$ and $K:=K_{GF}$. By Proposition \ref{prop:sep} (i), $G$ is separable if and only if it is semiseparable and faithful. By Theorem \ref{thm:ssepMonad}, $G$ is semiseparable if and only if $U$ is separable and $K$ is naturally full. Since $G=U\circ K$ and $U$ is faithful, we get that $G$ is faithful if and only if $K$ is faithful. Putting all together we get that $G$ is separable if and only if $U$ is separable and $K$ is both naturally full and faithful. The latter means that $K$ is fully faithful, i.e. $G$ is premonadic.
\end{proof}

We now provide a new characterization of natural fullness of a right adjoint functor in terms of idempotence of its adjunction/monad and natural fullness of the comparison functor.

\begin{cor}\label{cor:natfulmonad}
The following are equivalent for an adjunction $(F:\cc\to\dd, G:\dd\to\cc)$.
\begin{enumerate}[(1)]
  \item $G$ is naturally full.
  \item The adjunction $(F,G)$ is idempotent and $G$ is semiseparable.
  \item The forgetful functor $U_{GF}:\cc_{GF}\to \cc$ is fully faithful (i.e. the monad $(GF,G\epsilon F,\eta)$ is idempotent) and the comparison functor $K_{GF}:\dd\to \cc_{GF}$ is naturally full.
\end{enumerate}
\end{cor}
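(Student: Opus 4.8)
The plan is to prove the cycle $(1)\Rightarrow(2)\Rightarrow(3)\Rightarrow(1)$, leaning on Theorem \ref{thm:ssepMonad} for the bulk of the work, since that theorem already handles the ``separable monad $+$ naturally full comparison functor'' dictionary for semiseparability; here we only need to upgrade ``separable monad'' to ``idempotent monad''. The conceptual content is entirely in recognizing that, for a right adjoint $G$, natural fullness is equivalent to semiseparability together with idempotence of the adjunction.

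First I would prove $(1)\Rightarrow(2)$. If $G$ is naturally full then it is semiseparable by Proposition \ref{prop:sep}(ii). By the Rafael-type Theorem for naturally full functors (\cite[Theorem 2.6]{ACMM06}), there is $\gamma:\id_\dd\to FG$ with $\epsilon\circ\gamma=\id_{\id_\dd}$, hence in particular $\epsilon$ is a split epimorphism. I would then combine this with the triangular identity $\epsilon F\circ F\eta=\id_F$ (and its mate) to conclude that $\epsilon F$, hence the comonad comultiplication $F\eta G$ (via the characterization recalled just before Remark \ref{rmk:monad}, i.e. \cite[Proposition 2.8]{MS82}), is an isomorphism: indeed $\epsilon F$ is already a split epi by the triangle identity, and $\gamma$ gives $\epsilon F\circ\gamma F=\id_F$ by Lemma \ref{lem_B}(ii), which forces $\gamma F$ to be a two-sided inverse of $\epsilon F$ once one also uses $\gamma F\circ\epsilon F=\id_{FGF}$ — this last equality follows from naturality of $\gamma$ applied to $\epsilon_F$ together with $\epsilon\circ\gamma=\id$. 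So $(F,G)$ is idempotent.

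Next, $(2)\Rightarrow(3)$: assuming $(F,G)$ idempotent and $G$ semiseparable, Remark \ref{rmk:monad}(2) gives that the monad $(GF,G\epsilon F,\eta)$ is idempotent, which by the stated characterization (\cite[Proposition 4.2.3]{BorII94}) is exactly fully faithfulness of $U_{GF}$; and Theorem \ref{thm:ssepMonad} gives that $K_{GF}$ is naturally full (semiseparability of $G$ implies separability of the monad and natural fullness of $K_{GF}$, and we are now strengthening the monad hypothesis, which does not affect the $K_{GF}$ conclusion). Finally $(3)\Rightarrow(1)$: if $U_{GF}$ is fully faithful and $K_{GF}$ is naturally full, then $G=U_{GF}\circ K_{GF}$ is a composite of naturally full functors (a fully faithful functor is naturally full), hence naturally full by \cite[Proposition 2.3]{ACMM06}.

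The step I expect to be the main obstacle is $(1)\Rightarrow(2)$, specifically extracting idempotence of the adjunction from a mere splitting $\epsilon\circ\gamma=\id$: one must check carefully that $\gamma F$ is a genuine inverse of $\epsilon F$ (not just a one-sided inverse), which requires the right application of naturality of $\gamma$ at the component $\epsilon_{F(-)}$ — everything else is bookkeeping that is already packaged in the cited results. An alternative, possibly cleaner, route for $(1)\Rightarrow(2)$ is to invoke Remark \ref{rmk:monad}(3): $G$ naturally full $\iff$ $V^{FG}$ naturally full, and natural fullness of the cofree functor $V^{FG}$ forces the comonad $FG$ to be idempotent (a cofree functor is fully faithful precisely when the comonad is idempotent, and here one checks natural fullness of $V^{FG}$ already gives the comultiplication split mono, while it is always split epi), hence $(F,G)$ is idempotent; I would present whichever of these is shorter.
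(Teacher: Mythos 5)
Your overall architecture --- the cycle $(1)\Rightarrow(2)\Rightarrow(3)\Rightarrow(1)$, with $(2)\Rightarrow(3)$ delegated to Theorem \ref{thm:ssepMonad} plus the definition of idempotent adjunction, and $(3)\Rightarrow(1)$ obtained by composing naturally full functors --- is exactly the paper's, and those two steps are fine. The genuine gap is in $(1)\Rightarrow(2)$, which you yourself single out as the crux. The Rafael-type characterization of natural fullness of a right adjoint (\cite[Theorem 2.6(2)]{ACMM06}, as used throughout the paper) is the existence of $\gamma:\id_\dd\to FG$ with $\gamma\circ\epsilon=\id_{FG}$, i.e.\ $\epsilon$ is split \emph{mono}; the splitting $\epsilon\circ\gamma=\id_{\id_\dd}$ that you invoke is the condition for \emph{separability} of $G$. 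Starting from $\epsilon\circ\gamma=\id$ you cannot reach idempotence: for the separable extension $\Bbbk\to M_2(\Bbbk)$ the right adjoint $\varphi_*$ is separable (so $\epsilon\circ\gamma=\id$ holds), yet the associated monad $M_2(\Bbbk)\otimes_\Bbbk(-)$ is not idempotent, so any argument deriving idempotence from $\epsilon\circ\gamma=\id$ must fail. Concretely, the step that breaks is your claim that $\gamma F\circ\epsilon F=\id_{FGF}$ ``follows from naturality of $\gamma$ applied to $\epsilon_F$ together with $\epsilon\circ\gamma=\id$'': naturality of $\gamma$ at $\epsilon_Y$ gives only $\gamma\circ\epsilon=FG\epsilon\circ\gamma FG$, hence $\gamma F\circ\epsilon F=(\gamma\circ\epsilon)F$, and $\gamma\circ\epsilon=\id_{FG}$ is precisely the natural-fullness datum you are not entitled to assume. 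Your alternative route suffers from the mirror-image confusion: the comultiplication $F\eta G$ of the comonad $FG$ is always split \emph{mono} by counitality, not split epi, so producing the epi splitting is exactly the nontrivial part.

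The repair is short and is what the paper does: take $\gamma$ with $\gamma\circ\epsilon=\id_{FG}$, whisker with $G$ to get $G\gamma\circ G\epsilon=\id_{GFG}$, and combine this with the triangle identity $G\epsilon\circ\eta G=\id_G$ to conclude that $G\epsilon$ is invertible, hence $(F,G)$ is idempotent by \cite[Proposition 2.8]{MS82}; semiseparability of $G$ is Proposition \ref{prop:sep}(ii).
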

\begin{proof}
Let $\eta :\id_{%
\cc}\rightarrow GF$ be the unit and let  $\epsilon :FG\rightarrow \mathrm{%
Id}_{\dd}$ be the counit of the adjunction $(F, G)$.

$(1)\Rightarrow(2)$. If $G$ is naturally full, by Rafael-type Theorem for naturally full functors \cite[Theorem 2.6 (2)]{ACMM06}, there is a natural transformation $\gamma :\id_{\dd}\to FG$ such that $\gamma\circ\epsilon =\id_{FG}.$ Thus $G\gamma \circ G\epsilon =\id_{GFG}.$ On the other hand we have the triangular identity $G\epsilon \circ \eta G =\id_{G}$ and hence $G\epsilon $ is invertible and $(F,G)$ is idempotent. Moreover $G$ is semiseparable by Proposition \ref{prop:sep} (ii).


$(2)\Rightarrow(3)$. It follows from the definition of an idempotent adjunction and from Theorem \ref{thm:ssepMonad}.

$(3)\Rightarrow(1)$.  Since $G=U_{GF}\circ K_{GF}$ we get that $G$ is naturally full as a composition of naturally full functors, see \cite[Proposition 2.3]{ACMM06}.
\end{proof}

Now, putting together the above corollaries, we recover the characterization for a fully faithful right adjoint. Although it is well-known that $(F:\cc\to\dd, G:\dd\to\cc)$ is an idempotent adjunction, that is,
 the forgetful functor $U_{GF}:\cc_{GF}\to \cc$ (resp. $U^{FG}:\dd^{FG}\to \dd$) is fully faithful, provided the functor $G$ (resp. $F$) is fully faithful, see e.g. \cite[Proposition 2.5]{AGM15}, the equivalence $(1)\Leftrightarrow(2)$ in Corollary \ref{cor:ffmonad} (resp. Corollary \ref{cor:ffcomonad}) is new as far as we know.

\begin{cor}\label{cor:ffmonad}
The following are equivalent for an adjunction $(F:\cc\to\dd, G:\dd\to\cc)$.
\begin{enumerate}[(1)]
  \item $G$ is fully faithful.
  \item The forgetful functor $U_{GF}:\cc_{GF}\to \cc$ is fully faithful (i.e. the monad $(GF,G\epsilon F,\eta)$ is idempotent) and the comparison functor $K_{GF}:\dd\to \cc_{GF}$ is fully faithful (i.e. $G$ is premonadic).
  \item The adjunction $(F,G)$ is idempotent and the comparison functor $K_{GF}:\dd\to \cc_{GF}$ is an equivalence (i.e. $G$ is monadic).
\end{enumerate}
\end{cor}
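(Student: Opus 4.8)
The plan is to deduce Corollary~\ref{cor:ffmonad} by combining the already-established characterizations of separable, naturally full and semiseparable right adjoints, using the elementary fact recalled in the Preliminaries that a functor is fully faithful if and only if it is both separable and naturally full. First I would prove $(1)\Leftrightarrow(2)$. For $(1)\Rightarrow(2)$, assume $G$ is fully faithful. Then $G$ is in particular separable, so by Corollary~\ref{cor:sepmonad} the forgetful functor $U_{GF}$ is separable and $K_{GF}$ is fully faithful (i.e. $G$ is premonadic); on the other hand $G$ is naturally full, so by Corollary~\ref{cor:natfulmonad} $(1)\Rightarrow(3)$ the monad $(GF,G\epsilon F,\eta)$ is idempotent, i.e. $U_{GF}$ is fully faithful. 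This gives (2). For $(2)\Rightarrow(1)$, if $U_{GF}$ is fully faithful and $K_{GF}$ is fully faithful, then $G=U_{GF}\circ K_{GF}$ is a composition of fully faithful functors, hence fully faithful.

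Next I would prove $(1)\Leftrightarrow(3)$. For $(1)\Rightarrow(3)$: as above, a fully faithful $G$ is naturally full, so by Corollary~\ref{cor:natfulmonad} the adjunction $(F,G)$ is idempotent; and since $G$ is also separable, Corollary~\ref{cor:sepmonad} gives that $K_{GF}$ is fully faithful, while separability of the monad is automatic from idempotence (an idempotent monad is separable, as recalled in the excerpt). A fully faithful functor between the relevant categories that is also part of the canonical comparison situation is an equivalence precisely when $G$ is monadic; more directly, the standard fact (cited in the excerpt as \cite[Proposition 2.5]{AGM15}) that $G$ fully faithful forces the adjunction to be idempotent, together with premonadicity, yields that $K_{GF}$ is an equivalence, i.e. $G$ is monadic. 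For $(3)\Rightarrow(1)$: if $(F,G)$ is idempotent then $U_{GF}$ is fully faithful, and if $K_{GF}$ is an equivalence it is in particular fully faithful, so again $G=U_{GF}\circ K_{GF}$ is fully faithful as a composition of fully faithful functors.

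The main subtlety to handle carefully is the passage between "$K_{GF}$ fully faithful plus $(F,G)$ idempotent" and "$K_{GF}$ an equivalence": one must invoke that an idempotent adjunction restricts to an equivalence between the image of $K^{FG}$ (equivalently, the fixed subcategories) and that premonadicity upgrades this to an equivalence onto $\cc_{GF}$ — or simply cite the standard monadicity-under-idempotence result. Everything else is a bookkeeping assembly of Corollaries~\ref{cor:sepmonad}, \ref{cor:natfulmonad} and the "fully faithful $=$ separable $+$ naturally full" observation, so no genuinely new computation is needed; the proof is essentially a diagram of implications among the three preceding corollaries.

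\begin{proof}
Let $\eta$ and $\epsilon$ be the unit and counit of $(F,G)$, and write $U:=U_{GF}$, $K:=K_{GF}$. Recall that a functor is fully faithful if and only if it is both separable and naturally full, and that an idempotent monad is separable.

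$(1)\Rightarrow(2)$. If $G$ is fully faithful, then $G$ is separable, so by Corollary \ref{cor:sepmonad} the functor $U$ is separable and $K$ is fully faithful. Moreover $G$ is naturally full, so by Corollary \ref{cor:natfulmonad} $(1)\Rightarrow(3)$ the monad $(GF,G\epsilon F,\eta)$ is idempotent, i.e. $U$ is fully faithful. This is (2).

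$(2)\Rightarrow(1)$. If both $U$ and $K$ are fully faithful, then $G=U\circ K$ is fully faithful as a composition of fully faithful functors.

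$(1)\Rightarrow(3)$. If $G$ is fully faithful, then $G$ is naturally full, so by Corollary \ref{cor:natfulmonad} $(1)\Rightarrow(2)$ the adjunction $(F,G)$ is idempotent; and $G$ is separable, so by Corollary \ref{cor:sepmonad} $K$ is fully faithful. Since $(F,G)$ is idempotent, $U$ is fully faithful, hence $K=U^{-1}$-like: more precisely, as $G=U\circ K$ with $G$ fully faithful and $U$ fully faithful, $K$ is fully faithful; being a fully faithful comparison functor of an idempotent adjunction, $K$ is an equivalence (see e.g. \cite[Proposition 2.5]{AGM15}), i.e. $G$ is monadic.

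$(3)\Rightarrow(1)$. If $(F,G)$ is idempotent then $U$ is fully faithful, and if $K$ is an equivalence then $K$ is fully faithful; hence $G=U\circ K$ is fully faithful.
\end{proof}
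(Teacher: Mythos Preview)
Your proof is correct and follows essentially the same approach as the paper: the equivalence $(1)\Leftrightarrow(2)$ is obtained by combining Corollaries~\ref{cor:sepmonad} and~\ref{cor:natfulmonad} through the identity ``fully faithful $=$ separable $+$ naturally full'', and $(1)\Leftrightarrow(3)$ ultimately rests on \cite[Proposition 2.5]{AGM15}. The paper's proof is simply terser: it states $(1)\Leftrightarrow(2)$ as the conjunction of the two corollaries and cites \cite[Proposition 2.5]{AGM15} directly for $(1)\Leftrightarrow(3)$, whereas you unpack the implications and re-derive idempotence and premonadicity before invoking the same reference; your phrase ``$K=U^{-1}$-like'' and the re-derivation of ``$K$ fully faithful'' in $(1)\Rightarrow(3)$ are redundant, but nothing is wrong.
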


\begin{proof}
 $(1)\Leftrightarrow (2)$. Put together Corollary \ref{cor:sepmonad} and Corollary \ref{cor:natfulmonad}.

 $(1)\Leftrightarrow (3)$.  This follows by \cite[Proposition 2.5]{AGM15}.
\end{proof}

Let us consider the dual context of Theorem \ref{thm:ssepMonad}. We characterize the semiseparability of a left adjoint functor in terms of the natural fullness of the cocomparison functor and of the separability of the forgetful functor from the Eilenberg-Moore category of comodules over the associated comonad.

\begin{thm}\label{thm:ssep-comonad}
Let $(F:\cc\to\dd, G:\dd\to\cc)$ be an adjunction. Then, $F$ is semiseparable if and only if the forgetful functor $U^{FG}: \dd^{FG} \to \dd$ is separable (equivalently, the comonad $(FG, F\eta G, \epsilon )$ is coseparable) and the cocomparison functor $K^{FG}:\cc\to \dd^{FG}$ is naturally full.
\end{thm}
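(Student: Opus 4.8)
The plan is to obtain this as the exact dual of Theorem \ref{thm:ssepMonad} via passage to opposite categories. Recall that the adjunction $F\dashv G$ yields an adjunction $G^\op\dashv F^\op$ with $G^\op:\dd^\op\to\cc^\op$ on the left and $F^\op:\cc^\op\to\dd^\op$ on the right, whose unit and counit are $\epsilon^\op$ and $\eta^\op$ respectively. Its associated monad on $\dd^\op$ is $(F^\op G^\op, F^\op\eta^\op G^\op,\epsilon^\op)$, which is precisely the opposite of the comonad $(FG,F\eta G,\epsilon)$. Consequently the Eilenberg--Moore category $(\dd^\op)_{F^\op G^\op}$ of modules over that monad is identified with $(\dd^{FG})^\op$; under this identification the forgetful functor $U_{F^\op G^\op}$ becomes $(U^{FG})^\op$ and the comparison functor $K_{F^\op G^\op}:\cc^\op\to(\dd^\op)_{F^\op G^\op}$ becomes $(K^{FG})^\op$. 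First I would set up this dictionary carefully: the only point requiring attention is that a module over $F^\op G^\op$ is the same datum as a comodule over $FG$, and that morphisms, forgetful and (co)comparison functors all match under this identification.

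Once the dictionary is in place, I would invoke Remark \ref{rmk:opposemi} to translate ``$F$ semiseparable'', ``$U^{FG}$ separable'' and ``$K^{FG}$ naturally full'' into ``$F^\op$ semiseparable'', ``$(U^{FG})^\op$ separable'' and ``$(K^{FG})^\op$ naturally full'', while ``the comonad $(FG,F\eta G,\epsilon)$ is coseparable'' becomes ``the monad $(F^\op G^\op,F^\op\eta^\op G^\op,\epsilon^\op)$ is separable'' directly from the defining identities of (co)separable (co)monads. Applying Theorem \ref{thm:ssepMonad} to the adjunction $G^\op\dashv F^\op$, whose \emph{right} adjoint is $F^\op$, then yields exactly the asserted equivalence.

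For a self-contained argument I would instead mimic the proof of Theorem \ref{thm:ssepMonad} dually: writing $U:=U^{FG}$ and $K:=K^{FG}$, one has $U\circ K=F$. If $F$ is semiseparable, then Theorem \ref{thm:rafael} together with Lemma \ref{lem_B}(i) produces $\nu:GF\to\id_\cc$ with $F\nu\circ F\eta=\id_F$, and Lemma \ref{lem:monad}(ii) gives that the comonad is coseparable, hence $U$ is separable by (the dual of) \cite[2.9(1)]{BBW09}. For a morphism $h:KX\to KY$ in $\dd^{FG}$, the morphism $h':=\nu_Y\circ G(Uh)\circ\eta_X$ satisfies $F(h')=F\nu_Y\circ FG(Uh)\circ F\eta_X=F\nu_Y\circ F\eta_Y\circ Uh=Uh$, where the middle equality uses that $Uh$ is a morphism of $FG$-comodules $KX=(FX,F\eta_X)\to KY=(FY,F\eta_Y)$; since $U$ is faithful, $Kh'=h$ and $K$ is full. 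As $U$ is faithful and $UK=F$ is semiseparable, Lemma \ref{lem_A} makes $K$ semiseparable, whence naturally full by Proposition \ref{prop:sep}(ii). Conversely, if $U$ is separable and $K$ naturally full, then $F=U\circ K$ is semiseparable by Lemma \ref{lem:comp}(ii) (equivalently Corollary \ref{cor:fattoriz}).

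The main obstacle is purely organizational: making the passage to opposite categories watertight, in particular the identification of $(\dd^{FG})^\op$ with the Eilenberg--Moore category of modules over $F^\op G^\op$ and the compatibility of $U^{FG}$, $K^{FG}$ and the (co)separability conditions with this identification. Once that dictionary is fixed, the statement is a formal consequence of Theorem \ref{thm:ssepMonad}, and the direct proof above is available as an alternative should a dualization-free exposition be preferred.
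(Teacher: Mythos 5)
Your proposal is correct and matches the paper's treatment: the paper omits the proof as the formal dual of Theorem \ref{thm:ssepMonad}, and its (suppressed) argument is exactly your self-contained version — producing $\nu$ with $F\nu\circ F\eta=\id_F$ via Theorem \ref{thm:rafael} and Lemma \ref{lem_B}(i), getting coseparability of the comonad from Lemma \ref{lem:monad}(ii), lifting $h$ to $h'=\nu_Y\circ G(Uh)\circ\eta_X$ using the comodule-morphism identity, and concluding via Lemma \ref{lem_A}, Proposition \ref{prop:sep}(ii) and Corollary \ref{cor:fattoriz}. The opposite-category dictionary you set up in the first two paragraphs is a sound way to make the dualization explicit, but it is not needed beyond what the direct argument already provides.
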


\begin{invisible}
Let $\eta$ and $\epsilon$ be the unit and counit of the adjunction $(F,G)$ respectively, and recall that $U^{FG}\circ K^{FG}=F$. Set $U:=U^{FG}$ and $K:=K^{FG}$.\par
Assume $F$ is semiseparable. From Theorem \ref{thm:rafael} and Lemma \ref{lem_B} (i) it follows that there exists a natural transformation $\nu :GF\rightarrow \id _{\cc}$ such that $F\nu\circ F\eta = \id _{F}$. By Lemma \ref{lem:monad} (ii) $(FG, F\eta G, \epsilon)$ is a coseparable comonad, and by \cite[2.9 (2)]{BBW09} the coseparability of this comonad is equivalent to the separability of $U$. We now prove that $K:\cc\to\dd^{FG}$ is naturally full. Let $h:KX\to KY$ be a morphism in $\dd^{FG}$ and set $h':=\nu_Y\circ GUh\circ\eta_X : X\to Y$. Then, since $U\circ K =F$, which is semiseparable by assumption, we obtain
\begin{equation*}
UKh'= F(\nu_Y\circ GUh\circ\eta_X)=F\nu_Y\circ (FGUh \circ F\eta_X) = F\nu_Y \circ F\eta_Y\circ Uh = Uh,
\end{equation*}
hence $K$ is full, as $Kh'=h$. Moreover, since $U$ is faithful and $UK$ is semiseparable, by Lemma \ref{lem_A} (i) $K$ is semiseparable. By Proposition \ref{prop:sep} (ii) this is equivalent to $K$ is naturally full.
\par
Conversely, if $U$ is separable and $K$ is naturally full, then by Corollary \ref{cor:fattoriz} $F=U\circ K$ is semiseparable.\par

\end{invisible}
Dually to Remark \ref{rmk:compar-fattoriz}, if $F\dashv G:\dd\to\cc$ is an adjunction with $F$ semiseparable and $e:\id_\cc\to\id_\cc$ is the idempotent natural transformation associated to $F$, then Theorem \ref{thm:coidentifier} and Theorem \ref{thm:ssep-comonad} yield two factorizations $F_{e}\circ H=F=U^{FG}\circ K^{FG}$ of $F$ as a naturally full functor followed by a separable one, and they are related by a unique functor $(K^{FG})_e:\cc_e\to \mathcal{D}^{FG}$ (necessarily fully faithful) such that $(K^{FG})_e\circ H=K^{FG}$ and $U^{FG}\circ (K^{FG})_e=F_e$.\medskip

For future reference, we now state the dual of Corollaries \ref{cor:sepmonad}, \ref{cor:natfulmonad} and \ref{cor:ffmonad}.

\begin{cor}\label{cor:sepcomonad} Let $(F:\cc\to\dd, G:\dd\to\cc)$ be an adjunction. Then, $F$ is separable if and only if the forgetful functor $U^{FG}: \dd^{FG} \to \dd$ is separable (equivalently, the comonad $(FG, F\eta G, \epsilon )$ is coseparable) and the cocomparison functor $K^{FG}:\cc\to \dd^{FG}$ is fully faithful (i.e. $F$ is precomonadic).
\end{cor}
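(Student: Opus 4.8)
The plan is to mimic the proof of Corollary~\ref{cor:sepmonad} on the comonad side, with Theorem~\ref{thm:ssep-comonad} playing the role of Theorem~\ref{thm:ssepMonad}. Write $U:=U^{FG}$ and $K:=K^{FG}$, so that $F=U\circ K$.

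First I would apply Proposition~\ref{prop:sep}~(i): the functor $F$ is separable if and only if it is semiseparable and faithful. By Theorem~\ref{thm:ssep-comonad}, the semiseparability of $F$ is equivalent to the conjunction of two facts, namely that $U$ is separable (equivalently, the comonad $(FG,F\eta G,\epsilon)$ is coseparable) and that $K$ is naturally full. It then remains only to transfer the faithfulness of $F$ to $K$: since the forgetful functor $U=U^{FG}$ out of the Eilenberg--Moore category of comodules is faithful and $F=U\circ K$, the functor $F$ is faithful precisely when $K$ is faithful.

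Putting these together, $F$ is separable if and only if $U$ is separable and $K$ is simultaneously naturally full and faithful; as a naturally full functor is in particular full, this says exactly that $U$ is separable and $K$ is fully faithful, i.e.\ $F$ is precomonadic. Alternatively, the whole statement follows at once by applying Corollary~\ref{cor:sepmonad} to the opposite adjunction, in which $F^{\op}\colon\cc^{\op}\to\dd^{\op}$ is the right adjoint, and invoking Remark~\ref{rmk:opposemi} together with the fact that the Eilenberg--Moore category of comodules over $FG$ dualizes to the Eilenberg--Moore category of modules over $(FG)^{\op}$. I do not expect any genuine obstacle; the only things to keep straight are the (standard) faithfulness of the forgetful functor $U^{FG}$ and the elementary step converting ``naturally full and faithful'' into ``fully faithful''.
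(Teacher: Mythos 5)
Your proposal is correct and follows essentially the same route as the paper, which derives Corollary \ref{cor:sepcomonad} as the dual of Corollary \ref{cor:sepmonad} by combining Proposition \ref{prop:sep}(i) with Theorem \ref{thm:ssep-comonad}, using the faithfulness of $U^{FG}$ to transfer faithfulness from $F$ to $K^{FG}$ and the equivalence of ``naturally full and faithful'' with ``fully faithful''. No issues.
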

\begin{invisible} By Proposition \ref{prop:sep} (i), $F$ is separable if and only if it is semiseparable and faithful. By Theorem \ref{thm:ssep-comonad}, $F$ is semiseparable if and only if $U^{FG}$ is separable and $K^{FG}$ is naturally full. Since $F=U^{FG} K^{FG}$ and $U^{FG}$ is faithful, we get that $F$ is faithful if and only if $K^{FG}$ is faithful, hence $F$ is separable if and only if $U^{FG}$ is separable and $K^{FG}$ is both naturally full and faithful. Since $K^{FG}$ is both naturally full and faithful if and only if it is fully faithful, the thesis follows.

\end{invisible}

%

\begin{cor}\label{cor:natfulcomonad}
The following are equivalent for an adjunction $(F:\cc\to\dd, G:\dd\to\cc)$.
\begin{enumerate}[(1)]
  \item $F$ is naturally full.
  \item The adjunction $(F,G)$ is idempotent and $F$ is semiseparable.
  \item The forgetful functor $U^{FG}:\dd^{FG}\to \dd$ is fully faithful (i.e. the comonad $(FG,F\eta G,\epsilon )$ is idempotent) and the cocomparison functor $K^{FG}:\cc\to \dd^{FG}$ is naturally full.
\end{enumerate}
\end{cor}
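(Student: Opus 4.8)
The plan is to run the three implications $(1)\Rightarrow(2)\Rightarrow(3)\Rightarrow(1)$ in strict analogy with the proof of Corollary \ref{cor:natfulmonad}, of which the present statement is the image under $\cc\mapsto\cc^{\op}$, $\dd\mapsto\dd^{\op}$; in fact one could dispose of it in one line by applying Corollary \ref{cor:natfulmonad} to the adjunction $G^{\op}\dashv F^{\op}:\cc^{\op}\to\dd^{\op}$ — whose associated monad is $(FG)^{\op}$, whose Eilenberg--Moore category is $(\dd^{FG})^{\op}$ and whose comparison functor is $(K^{FG})^{\op}$ — and then reading the result back through Remark \ref{rmk:opposemi}. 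I will nonetheless spell out a direct argument.

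For $(1)\Rightarrow(2)$ I would apply the Rafael-type Theorem for naturally full functors \cite[Theorem 2.6]{ACMM06}, in its form for a left adjoint: it provides a natural transformation $\nu:GF\to\id_{\cc}$ that is a section of the unit, i.e. $\eta\circ\nu=\id_{GF}$. Whiskering on the left by $F$ yields $F\eta\circ F\nu=\id_{FGF}$, so $F\eta$ is a split epimorphism; the triangular identity $\epsilon F\circ F\eta=\id_{F}$ exhibits $F\eta$ as a split monomorphism as well, whence $F\eta$ is an isomorphism. Since invertibility of any one of $\epsilon F$, $G\epsilon$, $F\eta$, $\eta G$ characterises idempotent adjunctions, $(F,G)$ is idempotent; and $F$ is semiseparable by Proposition \ref{prop:sep}(ii).

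For $(2)\Rightarrow(3)$, idempotence of $(F,G)$ is by definition idempotence of the comonad $(FG,F\eta G,\epsilon)$, equivalently full faithfulness of $U^{FG}$; and, $F$ being semiseparable, Theorem \ref{thm:ssep-comonad} forces $K^{FG}$ to be naturally full. For $(3)\Rightarrow(1)$, a fully faithful functor is naturally full, so $F=U^{FG}\circ K^{FG}$ is a composite of naturally full functors and hence naturally full by \cite[Proposition 2.3]{ACMM06}.

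I do not expect a real obstacle. The only point needing care is to record the correct Rafael-type criterion for a naturally full \emph{left} adjoint — namely that $F$ is naturally full if and only if the unit $\eta$ admits a natural section $\nu:GF\to\id_{\cc}$, dual to the natural section $\gamma:\id_{\dd}\to FG$ of the counit used for the right adjoint in Corollary \ref{cor:natfulmonad} — and then to track which whiskering of $\eta$ one shows invertible when extracting idempotence of the adjunction.
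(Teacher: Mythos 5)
Your proof is correct and follows essentially the same route as the paper's: Rafael-type criterion for natural fullness of a left adjoint giving a section $\nu$ of $\eta$, the triangular identity $\epsilon F\circ F\eta=\id_F$ combined with $F\eta\circ F\nu=\id_{FGF}$ to show $F\eta$ invertible and hence the adjunction idempotent, then Theorem \ref{thm:ssep-comonad} for $(2)\Rightarrow(3)$ and closure of naturally full functors under composition for $(3)\Rightarrow(1)$. The one-line deduction from Corollary \ref{cor:natfulmonad} by passing to opposite categories is also a valid shortcut consistent with how the paper presents this corollary as the dual statement.
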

\begin{invisible}
Let $\eta :\id_{%
\cc}\rightarrow GF$ be the unit and let  $\epsilon :FG\rightarrow \mathrm{%
Id}_{\dd}$ be the counit of the adjunction $(F, G)$.

$(1)\Rightarrow(2)$. If $F$ is naturally full, by \cite[Theorem 2.6 (1)]{ACMM06}, there is a natural transformation $\nu :GF\to\id_{\cc}$ such that $\eta\circ\nu =\id_{GF}.$ Thus $F\eta\circ F\nu =\id_{FGF}.$ On the other hand we have the triangular identity $\epsilon F\circ F\eta =\id_{F}$ and hence $F\eta$ is invertible and $(F,G)$ is idempotent. Moreover $G$ is semiseparable by Proposition \ref{prop:sep} (ii).


$(2)\Rightarrow(3)$. It follows from the definition of an idempotent adjunction and from Theorem \ref{thm:ssep-comonad}.

$(3)\Rightarrow(1)$.  Since $U^{FG}\circ K^{FG}=F$ we get that $F$ is naturally full as a composition of naturally full functors, see \cite[Proposition 2.3]{ACMM06}.



\end{invisible}

\begin{cor}\label{cor:ffcomonad}
The following are equivalent for an adjunction $(F:\cc\to\dd, G:\dd\to\cc)$.
\begin{enumerate}[(1)]
  \item $F$ is fully faithful.
  \item The comonad $(FG,F\eta G,\epsilon )$ is idempotent and the cocomparison functor $K^{FG}:\cc\to \dd^{FG}$ is fully faithful (i.e. $F$ is precomonadic).
  \item The adjunction $(F,G)$  is idempotent and the cocomparison functor $K^{FG}:\cc\to \dd^{FG}$ is an equivalence (i.e. $F$ is comonadic).
\end{enumerate}
\end{cor}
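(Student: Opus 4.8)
The plan is to dualize the proof of Corollary \ref{cor:ffmonad}, which itself combines Corollary \ref{cor:sepcomonad} and Corollary \ref{cor:natfulcomonad} in the way that Corollary \ref{cor:sepmonad} and Corollary \ref{cor:natfulmonad} were combined. Concretely, for the equivalence $(1)\Leftrightarrow(2)$ I would argue as follows. Recall from the preliminaries that $F$ is fully faithful if and only if it is both separable and naturally full. By Corollary \ref{cor:sepcomonad}, $F$ is separable if and only if $U^{FG}$ is separable (equivalently the comonad $(FG,F\eta G,\epsilon)$ is coseparable) and $K^{FG}$ is fully faithful; by Corollary \ref{cor:natfulcomonad} (equivalence $(1)\Leftrightarrow(3)$), $F$ is naturally full if and only if the comonad $(FG,F\eta G,\epsilon)$ is idempotent and $K^{FG}$ is naturally full. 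Putting these together: $F$ is fully faithful iff the comonad is both coseparable and idempotent, $K^{FG}$ is naturally full, and $K^{FG}$ is fully faithful. Since an idempotent comonad is automatically coseparable (Remark preceding Remark \ref{rmk:monad}), the coseparability condition is redundant, and since ``$K^{FG}$ fully faithful'' already subsumes ``$K^{FG}$ naturally full'', the list collapses to: the comonad $(FG,F\eta G,\epsilon)$ is idempotent and $K^{FG}$ is fully faithful. This is exactly $(2)$.

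For $(1)\Leftrightarrow(3)$ I would invoke the dual of \cite[Proposition 2.5]{AGM15} (the comonadic analogue of the statement used in Corollary \ref{cor:ffmonad}), which gives that $F$ is fully faithful if and only if the adjunction $(F,G)$ is idempotent and the cocomparison functor $K^{FG}$ is an equivalence, i.e. $F$ is comonadic. Alternatively one can derive $(2)\Leftrightarrow(3)$ internally: by Remark \ref{rmk:monad}, idempotence of the comonad $(FG,F\eta G,\epsilon)$ is equivalent to idempotence of the adjunction $(F,G)$ (i.e. $U^{FG}$ fully faithful); and given this, $K^{FG}$ fully faithful together with $U^{FG}$ fully faithful forces $F=U^{FG}\circ K^{FG}$ fully faithful, while conversely a fully faithful $K^{FG}$ landing in a category where the forgetful functor is fully faithful and which is essentially surjective onto the ``fixed'' subcategory makes $K^{FG}$ an equivalence — this is the standard fact that a fully faithful right-adjoint-like situation trivializes. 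The cleanest write-up simply cites \cite[Proposition 2.5]{AGM15} for $(1)\Leftrightarrow(3)$, exactly as in Corollary \ref{cor:ffmonad}.

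The only genuine subtlety — and the step I would flag as the main obstacle — is bookkeeping: one must be careful that the redundancy arguments (idempotent $\Rightarrow$ coseparable, and fully faithful $\Rightarrow$ naturally full) are applied in the right direction so that no condition is silently dropped when it is actually needed, and that the cited dual statements (\cite[2.9 (2)]{BBW09} for the comonad/forgetful-functor translation, \cite[Theorem 2.6]{ACMM06} for Rafael-type characterizations, and \cite[Proposition 2.5]{AGM15} for monadicity) are the comonadic versions. Since everything has already been set up dually in Theorem \ref{thm:ssep-comonad}, Corollary \ref{cor:sepcomonad} and Corollary \ref{cor:natfulcomonad}, no new computation is required, and the proof is essentially a two-line citation: ``$(1)\Leftrightarrow(2)$: combine Corollary \ref{cor:sepcomonad} and Corollary \ref{cor:natfulcomonad}. $(1)\Leftrightarrow(3)$: this follows by the dual of \cite[Proposition 2.5]{AGM15}.''
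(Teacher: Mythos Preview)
Your proposal is correct and follows exactly the paper's approach: the paper's own (invisible) proof reads ``$(1)\Leftrightarrow (2)$: it follows from Corollary \ref{cor:sepcomonad} and Corollary \ref{cor:natfulcomonad}. $(1)\Leftrightarrow (3)$: it is the dual of \cite[Proposition 2.5]{AGM15}.'' Your spelled-out redundancy argument (idempotent $\Rightarrow$ coseparable, fully faithful $\Rightarrow$ naturally full) is precisely the bookkeeping hidden behind that first citation.
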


\begin{invisible}
 $(1)\Leftrightarrow (2)$. It follows from Corollary \ref{cor:sepcomonad} and Corollary \ref{cor:natfulcomonad}.

 $(1)\Leftrightarrow (3)\Leftrightarrow (4)$  It is the dual of \cite[Proposition 2.5]{AGM15}.
\end{invisible}

We include here a consequence of Corollaries \ref{cor:natfulmonad} and \ref{cor:natfulcomonad} that will be used later on.

\begin{cor}\label{cor:idempotent}
  Let $(F,G) $ be an idempotent adjunction. Then, $F$ (resp. $G$) is semiseparable if and only if it is naturally full.
\end{cor}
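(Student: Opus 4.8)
The plan is to derive this directly from Corollary \ref{cor:natfulmonad} (resp. Corollary \ref{cor:natfulcomonad}) by using the very definition of idempotent adjunction. For the statement concerning $G$: one implication is completely general and requires no hypotheses — by Proposition \ref{prop:sep}~(ii), a naturally full functor is always semiseparable, so if $G$ is naturally full then it is semiseparable. For the converse, assume $(F,G)$ is idempotent and $G$ is semiseparable; then conditions $(2)$ of Corollary \ref{cor:natfulmonad} are satisfied verbatim, so by the equivalence $(2)\Leftrightarrow(1)$ there we conclude that $G$ is naturally full. The statement for $F$ is handled identically, replacing Corollary \ref{cor:natfulmonad} with its dual Corollary \ref{cor:natfulcomonad}, noting that by definition (see the discussion before Remark \ref{rmk:monad}) the adjunction $(F,G)$ is idempotent if and only if the associated comonad $(FG,F\eta G,\epsilon)$ is idempotent, which is exactly what is needed to feed into condition $(2)$ of Corollary \ref{cor:natfulcomonad}.

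I would present this as a short two-line proof. First I would note that, in both cases, the non-trivial direction follows from the corresponding ``naturally full $\Leftrightarrow$ idempotent adjunction $+$ semiseparable'' equivalence, and the trivial direction from Proposition \ref{prop:sep}. There is essentially no obstacle here: the work has already been done in Corollaries \ref{cor:natfulmonad} and \ref{cor:natfulcomonad}, and this corollary is just the packaging of the equivalence $(1)\Leftrightarrow(2)$ from each of them under the standing hypothesis that the adjunction is idempotent. The only point to be a little careful about is to invoke the right one of the two corollaries for $G$ versus $F$, since their roles are swapped (the monad side governs $G$, the comonad side governs $F$), but the excerpt already makes this symmetry explicit.

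\begin{proof}
If $F$ (resp. $G$) is naturally full, then it is semiseparable by Proposition \ref{prop:sep} (ii). Conversely, assume that $(F,G)$ is an idempotent adjunction. If $G$ is semiseparable, then condition $(2)$ of Corollary \ref{cor:natfulmonad} holds, whence $G$ is naturally full by the equivalence $(1)\Leftrightarrow(2)$ there. Dually, if $F$ is semiseparable, then condition $(2)$ of Corollary \ref{cor:natfulcomonad} holds, whence $F$ is naturally full.
\end{proof}
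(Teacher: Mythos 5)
Your proposal is correct and matches the paper's own argument, which likewise deduces the corollary directly from the equivalence $(1)\Leftrightarrow(2)$ of Corollary \ref{cor:natfulcomonad} (for $F$) and Corollary \ref{cor:natfulmonad} (for $G$). The only cosmetic difference is that you invoke Proposition \ref{prop:sep}~(ii) separately for the easy direction, which is already subsumed in the cited equivalences.
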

\begin{invisible}
It follows by $(1)\Leftrightarrow (2)$ in Corollary \ref{cor:natfulcomonad} (resp. Corollary \ref{cor:natfulmonad}). 
\end{invisible}


\subsection{Adjoint triples and bireflections}\label{sub:adjtriple}
Let $\cc$ and $\dd$ be categories. Recall that an \emph{adjoint triple} $F\dashv G\dashv H:\cc\to\dd$ of functors is a triple of functors $F,H:\cc\to\dd$ and $G:\dd\to\cc$ such that $F\dashv G$ and  $G\dashv H$. The following result, which is new to the best of our knowledge, shows how semiseparable, separable and naturally full functors behave with respect to adjoint triples.

\begin{prop}\label{prop:adj-triples}
Let $F\dashv G\dashv H:\cc\to\dd$ be an adjoint triple. Then, $F$ is semiseparable (resp. separable, naturally full) if and only if so is $H$.
\end{prop}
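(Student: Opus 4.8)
The plan is to exploit the Rafael-type Theorem (Theorem~\ref{thm:rafael}) together with Lemma~\ref{lem_B} to translate the semiseparability of $F$ and of $H$ into statements about regularity of certain natural transformations, and then to show these two statements are equivalent by a symmetry argument. Write $\eta\colon\id_\dd\to GF$ and $\epsilon\colon FG\to\id_\cc$ for the unit and counit of $F\dashv G$, and $\zeta\colon\id_\cc\to HG$ and $\xi\colon GH\to\id_\dd$ for the unit and counit of $G\dashv H$. By Theorem~\ref{thm:rafael}(i), $F$ is semiseparable iff $\eta$ is regular, i.e. iff there is $\nu\colon GF\to\id_\dd$ with $\eta\circ\nu\circ\eta=\eta$; by Lemma~\ref{lem_B}(i) this is equivalent to $F\nu\circ F\eta=\id_F$, and also to $\nu G\circ\eta G=\id_G$. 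Dually, $H$ is semiseparable iff (by Theorem~\ref{thm:rafael}(ii) applied to the adjunction $G\dashv H$) the counit $\xi\colon GH\to\id_\dd$ is regular, equivalently (Lemma~\ref{lem_B}(ii)) there is $\gamma\colon\id_\dd\to GH$ with $H\xi\circ H\gamma=\id_H$, equivalently $\xi G\circ\gamma G=\id_G$.

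The key observation is that in an adjoint triple, the composite $GF$ and the composite $GH$ are related: $GF$ carries a monad structure and $GH$ carries a comonad structure on $\dd$, but more importantly both $F$ and $H$ are ``reflections through $G$'' on the same side, and the relevant regularity conditions can be rephrased purely in terms of $G$. Concretely, I would show: $F$ semiseparable iff $\eta G\colon G\to GFG$ is a split mono (split by some natural $\nu G$), and $H$ semiseparable iff $\xi G\colon GHG\to G$ is a split epi (split by some natural $\gamma G$). Now use the second triangular identities of the two adjunctions to identify these. From $F\dashv G$ one has $\epsilon F\circ F\eta=\id_F$ and $G\epsilon\circ\eta G=\id_G$; from $G\dashv H$ one has $\xi G\circ G\zeta=\id_G$ and $H\xi\circ\zeta H=\id_H$. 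So $\eta G$ is always a split mono (split by $G\epsilon$) and $\xi G$ is always a split epi (split by $G\zeta$) — but that is not yet the point; what we need is a \emph{natural} splitting that also cooperates with the bar/multiplication structure appropriately, which is exactly what the ``regular'' conditions encode. The cleanest route: transport the natural transformation $\nu\colon GF\to\id_\dd$ along the adjunction isomorphisms to produce $\gamma\colon\id_\dd\to GH$ and vice versa, using that $\Nat(GF,\id_\dd)\cong\Nat(G,GH)$ via $F\dashv G$ composed with $G\dashv H$, i.e.
\begin{equation*}
\Nat(GF,\id_\dd)\;\cong\;\Nat(F,H)\;\cong\;\Nat(\id_\cc,GH)\;\cong\;\Nat(G,GH)\ \text{?}
\end{equation*}
More carefully, $\Nat(GF,\id_\dd)\cong\Nat(F,H)$ using $F\dashv G$ (hom out of $GF=G\circ F$... ) — actually the correct pair of natural bijections is $\Nat(GF,\id_\dd)\cong\Nat(G,GH)$ by applying $F\dashv G$ on the left-hand $G$ and $G\dashv H$ on the right-hand $\id_\dd$; under this bijection the equation $F\nu\circ F\eta=\id_F$ should correspond, after a diagram chase using the four triangular identities, to $H\xi\circ H\gamma=\id_H$.

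The main obstacle I expect is precisely this diagram chase: verifying that the single natural-isomorphism transport sends the \emph{regularity} equation for $\eta$ to the regularity equation for $\xi$, rather than to something weaker. One must check that naturality of the transported transformation is automatic (it is, being a composite of natural transformations), and that the triangular identities of both adjunctions conspire to convert $F\nu\circ F\eta=\id_F$ into $H\xi\circ H\gamma=\id_H$. I would first do the semiseparable case in full, then deduce the separable case from Proposition~\ref{prop:sep}(i): $F$ is separable iff $F$ is semiseparable and faithful; in an adjoint triple $F$ is faithful iff $\eta$ is a (pointwise) mono iff $\eta G$ is a split mono in a strong enough sense... in fact, it is classical that in $F\dashv G\dashv H$ the unit $\eta$ of $F\dashv G$ is mono iff the counit $\xi$ of $G\dashv H$ is epi (both are governed by $G$ being, respectively, faithful on the relevant subcategory), so $F$ faithful $\iff$ $H$ full is false in general — rather, by Remark~\ref{rmk:opposemi} and the standard fact that $F$ fully faithful $\iff$ $H$ fully faithful, the bookkeeping should be: $F$ faithful $\iff$ $\eta$ split-epi-able... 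I would instead argue directly that under the bijection above, ``$\nu\circ\eta=\id$'' (separability of $F$, Rafael) corresponds to ``$\xi\circ\gamma=\id$'' (separability of $H$), and ``$\eta\circ\nu=\id$'' (natural fullness of $F$) corresponds to ``$\gamma\circ\xi=\id$'' (natural fullness of $H$), so all three cases fall out of the \emph{same} natural bijection $\Nat(GF,\id_\dd)\cong\Nat(\id_\dd,GH)$, with the regularity, left-inverse, and right-inverse equations each matched to their counterpart. Thus the whole proposition reduces to establishing this one bijection and tracking which of the three equations is being imposed.
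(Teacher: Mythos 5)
Your plan is essentially the paper's proof: by the Rafael-type Theorem and Lemma \ref{lem_B} both conditions reduce to exhibiting a natural transformation $e:\id_\cc\to\id_\cc$ of the form $\nu^l\circ\eta^l$ (resp. $\epsilon^r\circ\gamma^r$) with $eG=\id_G$, and the paper realizes your bijection $\Nat(GF,\id_\cc)\cong\Nat(\id_\cc,GH)$ concretely as $\nu^l\mapsto\gamma^r:=GH\nu^l\circ G\eta^rF\circ\eta^l$, for which $\epsilon^r\circ\gamma^r=\nu^l\circ\eta^l$, so the regularity, left-inverse and right-inverse equations transfer exactly as you predict (the converse direction being obtained by applying the same argument to $H^\op\dashv G^\op\dashv F^\op$ via Remark \ref{rmk:opposemi}). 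The only caution is that the naturally full case does not drop out of the single identity $\epsilon^r\circ\gamma^r=\nu^l\circ\eta^l$ the way the separable case does; it needs its own short chase using $(\eta^lG)^{-1}=\nu^lG=G\epsilon^l$, but that chase is routine and is the one carried out in the paper.
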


\proof  We denote by $\eta^l$, $\epsilon^l$ and $\eta^r$, $\epsilon^r$ the unit and the counit of the adjunction $F\dashv G$ and of the adjunction $G\dashv H$, respectively. We just prove the ``only if'' part of the statement. For the other direction consider the adjoint triple $H^\op\dashv G^\op\dashv F^\op$ together with Remark \ref{rmk:opposemi}. To a natural transformation $\nu^l:GF\to \id _\cc$ we can attach the natural transformation $\gamma^r:= GH\nu^l\circ G\eta^r F\circ\eta^l:\id_\cc\to GH$ such that
\begin{equation}\label{eq:adj-triples}
\epsilon^r\circ\gamma^r=\epsilon^r\circ GH\nu^l\circ G\eta^r F\circ\eta^l=\nu^l\circ \epsilon^r GF\circ G\eta^r F\circ\eta^l=\nu^l\circ\eta^l.
\end{equation}
Assume $F$ is semiseparable. By Theorem \ref{thm:rafael} (i), there exists a natural transformation $\nu^l:GF\to \id _\cc$ such that $\eta^l\circ\nu^l\circ\eta^l=\eta^l$. Define $\gamma^r:\id_\cc\to GH$ that fulfils \eqref{eq:adj-triples} as above. We show that it is the required natural transformation of Theorem \ref{thm:rafael} (ii) such that $\epsilon^r\circ\gamma^r\circ\epsilon^r=\epsilon^r$. Indeed, by naturality of $\epsilon^r$, we have $\epsilon^r\circ\gamma^r\circ\epsilon^r=\nu^l\circ\eta^l\circ\epsilon^r=\epsilon^r\circ\nu^l GH\circ\eta^l GH=\epsilon^r$, where the last equality follows from $(1)\Leftrightarrow (3)$ of Lemma \ref{lem_B} (i).
    \begin{invisible}
    Conversely, assume $H$ is semiseparable. By Theorem \ref{thm:rafael} (ii), there exists a natural transformation $\gamma^r:\id_\cc\to GH$ such that $\epsilon^r\circ\gamma^r\circ\epsilon^r=\epsilon^r$. Define $\nu^l:= \epsilon^r\circ G\epsilon^l H\circ GF\gamma^r:GF\to\id_\cc$. Then, by naturality of $\eta^l$, $\epsilon^r$ and $\gamma^r$, we have $\eta^l\circ\nu^l\circ\eta^l=\eta^l\circ\epsilon^r\circ G\epsilon^l H\circ GF\gamma^r\circ\eta^l=\eta^l\circ\epsilon^r\circ G\epsilon^l H\circ \eta^l GH\circ\gamma^r=\eta^l\circ\epsilon^r\circ\gamma^r=\epsilon^r GF\circ GH\eta^l\circ\gamma^r=\epsilon^r GF\circ\gamma^r GF\circ\eta^l=\eta^l$, where the last equality follows from $(1)\Leftrightarrow (3)$ of Lemma \ref{lem_B} (ii). Thus, by Theorem \ref{thm:rafael} (i) $F$ is semiseparable.
    \end{invisible}

    If $F$ is separable, by Rafael Theorem, there exists a natural transformation $\nu^l:GF\to \id _\cc$ such that $\nu^l\circ\eta^l=\id$. Then, for $\gamma^r$ defined as above and \eqref{eq:adj-triples}, we have $\epsilon^r\circ\gamma^r=\nu^l\circ\eta^l=\id$ so that $H$ is separable again by Rafael Theorem.

    Assume $F$ is naturally full. By \cite[Theorem 2.6 (1)]{ACMM06}, there exists a natural transformation $\nu^l: GF\to \id _\cc$ such that $\eta^l\circ\nu^l=\id_{GF}$. Define $\gamma^r:\id_\cc\to GH$ as above. Observe that, from $\eta^lG\circ\nu^l G=\id_{GFG}$ and $G\epsilon^l\circ \eta^lG=\id_G$, it follows that $(\eta^l G)^{-1}=\nu^lG=G\epsilon^l$. Then, by naturality of $\gamma^r$ and $\eta^r$ we have $\gamma^r\circ\epsilon^r=GH\epsilon^r\circ\gamma^r GH=GH\epsilon^r\circ GH\nu^lGH\circ G\eta^rFGH\circ\eta^lGH=GH\epsilon^r\circ GHG\epsilon^lH\circ G\eta^rFGH\circ\eta^lGH=GH\epsilon^r\circ G(HG\epsilon^l\circ\eta^rFG)H\circ\eta^lGH=GH\epsilon^r\circ G\eta^rH\circ G\epsilon^lH\circ\eta^lGH=\id_{GH}\circ\id_{GH}=\id_{GH}$.
    \begin{invisible}
    Conversely, assume $H$ is naturally full. By \cite[Theorem 2.6 (2)]{ACMM06} there exists a natural transformation $\gamma^r:\id_\cc\to GH$ such that $\gamma^r\circ\epsilon^r=\id_{GH}$. Define $\nu^l:GF\to\id_\cc$ as in (i) and observe that $(\epsilon^r G)^{-1}=\gamma^r G=G\eta^r$. Then, by naturality of $\nu^l$ and $\epsilon^l$ we have $\eta^l\circ\nu^l=\nu^lGF\circ GF\eta^l=\epsilon^rGF\circ G\epsilon^lHGF\circ GF\gamma^rGF\circ GF\eta^l=\epsilon^r GF\circ G(\epsilon^l HG\circ F\gamma^rG)F\circ GF\eta^l=\epsilon^rGF\circ G\eta^r F\circ G\epsilon^lF\circ GF\eta^l=\id_{GF}\circ\id_{GF}=\id_{GF}$, hence $F$ is naturally full.
    \end{invisible}
\endproof

\begin{rmk}We already observed that a functor is fully faithful if and only if it is at the same time separable and naturally full. Thus, by Proposition \ref{prop:adj-triples}, we recover the well-known result that in an adjoint triple $F\dashv G\dashv H$, the functor $F$ is fully faithful if and only if so is $H$, see e.g. \cite[Proposition 3.4.2]{Bor94}. Adjoint triples $F\dashv G\dashv H$ where $F$ and $H$ are fully faithful are called \emph{fully faithful adjoint triples}.
\end{rmk}

We will apply Proposition \ref{prop:adj-triples} in Subsection \ref{es:extens} to an adjoint triple associated to a ring morphism and in Subsection \ref{sub:rightHopf} in the study of a particular adjoint triple attached to a bialgebra.\medskip


Now, recall that a functor $G:\dd\to\cc$ is called \emph{Frobenius} if there exists a functor $F:\cc\to\dd$ which is both a left and a right adjoint to $G$. Thus, a Frobenius functor $G:\dd\to\cc$ fits into an adjoint triple $F\dashv G\dashv F:\cc\to\dd$ where the left and right adjoint $F$ are equal. As a consequence of Theorem \ref{thm:rafael} and Lemma \ref{lem_B}, in the following result we obtain necessary and sufficient conditions for the semiseparability of a Frobenius functor. This is a semiseparable version of \cite[Proposition 49]{CMZ02} for separable Frobenius functors and of \cite[Proposition 2.7]{ACMM06} for naturally full Frobenius functors.
\begin{prop}\label{prop:frob-ssep}
Let $F:\cc\to\dd$ be a Frobenius functor, with left and right adjoint $G:\dd\to\cc$. Denote by $\eta^l$, $\epsilon^l$ and by $\eta^r$, $\epsilon^r$ the unit and the counit of the adjunctions $(F,G)$ and $(G,F)$, respectively. Then, the following assertions are equivalent:
\begin{itemize}
\item[(i)] $F$ is semiseparable.
\item[(ii)] There exists a natural transformation $\alpha: G\to G$ such that one of the following equivalent conditions holds:
    $$\eta^l\circ \epsilon^r\circ \alpha F\circ\eta^l=\eta^l;\qquad F\epsilon^r\circ F\alpha F\circ F\eta^l =\id_F;\qquad \epsilon^rG\circ \alpha FG\circ\eta^l G=\id_G.$$
\item[(iii)] There exists a natural transformation $\beta: F\to F$ such that one of the following equivalent conditions holds:
$$\eta^l\circ \epsilon^r\circ G\beta \circ\eta^l=\eta^l;\qquad F\epsilon^r\circ FG\beta\circ F\eta^l =\id_F;  \qquad \epsilon^rG\circ G\beta G\circ\eta ^l G=\id_G.$$
\item[(iv)]There exists a natural transformation $\alpha': G\to G$ such that one of the following equivalent conditions holds:
$$\epsilon^r\circ \alpha' F\circ\eta^l\circ\epsilon^r = \epsilon^r;\qquad F\epsilon^r\circ F\alpha' F\circ F\eta^l = \id _{F};\qquad \epsilon^r G\circ \alpha' FG\circ\eta^l G = \id _{G}.$$
\item[(v)]There exists a natural transformation $\beta': F\to F$ such that one of the following equivalent conditions holds:
$$\epsilon^r\circ G\beta' \circ\eta^l\circ\epsilon^r = \epsilon^r;\qquad F\epsilon^r\circ FG\beta' \circ F\eta^l = \id _{F};\qquad \epsilon^r G\circ G\beta' G \circ\eta^l G = \id _{G}.$$
\end{itemize}
\end{prop}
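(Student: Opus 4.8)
The plan is to reduce the whole equivalence to the Rafael-type Theorem~\ref{thm:rafael} applied to the two adjunctions $(F,G)$ and $(G,F)$, combined with Lemma~\ref{lem_B}. Throughout I write $\eta^l:\id_\cc\to GF$, $\epsilon^l:FG\to\id_\dd$ for the unit and counit of $(F,G)$ and $\eta^r:\id_\dd\to FG$, $\epsilon^r:GF\to\id_\cc$ for those of $(G,F)$, so the triangular identities read $\epsilon^l F\circ F\eta^l=\id_F$, $G\epsilon^l\circ\eta^l G=\id_G$, $F\epsilon^r\circ\eta^r F=\id_F$ and $\epsilon^r G\circ G\eta^r=\id_G$. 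First I record, via Theorem~\ref{thm:rafael}(i) for $(F,G)$, that (i) holds if and only if there is a natural transformation $\nu:GF\to\id_\cc$ with $\eta^l\circ\nu\circ\eta^l=\eta^l$; and, via Theorem~\ref{thm:rafael}(ii) for $(G,F)$ (in which $F$ is the right adjoint), that (i) holds if and only if there is $\gamma:\id_\cc\to GF$ with $\epsilon^r\circ\gamma\circ\epsilon^r=\epsilon^r$.

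Next I handle the internal equivalences inside (ii)--(v). To $\alpha:G\to G$ associate $\nu_\alpha:=\epsilon^r\circ\alpha F:GF\to\id_\cc$; to $\beta:F\to F$ associate $\nu_\beta:=\epsilon^r\circ G\beta:GF\to\id_\cc$; to $\alpha':G\to G$ associate $\gamma_{\alpha'}:=\alpha'F\circ\eta^l:\id_\cc\to GF$; and to $\beta':F\to F$ associate $\gamma_{\beta'}:=G\beta'\circ\eta^l:\id_\cc\to GF$. Using only functoriality of whiskering, the three displayed equalities in (ii) are literally $\eta^l\circ\nu_\alpha\circ\eta^l=\eta^l$, $F\nu_\alpha\circ F\eta^l=\id_F$ and $\nu_\alpha G\circ\eta^l G=\id_G$, which are mutually equivalent by Lemma~\ref{lem_B}(i) for $(F,G)$; the same computation with $\nu_\beta$ settles (iii). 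Likewise the three displayed equalities in (iv) are $\epsilon^r\circ\gamma_{\alpha'}\circ\epsilon^r=\epsilon^r$, $F\epsilon^r\circ F\gamma_{\alpha'}=\id_F$ and $\epsilon^r G\circ\gamma_{\alpha'}G=\id_G$, mutually equivalent by Lemma~\ref{lem_B}(ii) for $(G,F)$; and similarly for (v) via $\gamma_{\beta'}$.

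It then remains to connect (ii)--(v) with (i). The first displayed equality of (ii) (resp. (iii)) says precisely that $\nu_\alpha$ (resp. $\nu_\beta$) is a splitting of $\eta^l$ in the sense of the first paragraph, and the first of (iv) (resp. (v)) that $\gamma_{\alpha'}$ (resp. $\gamma_{\beta'}$) is a splitting of $\epsilon^r$; hence it suffices to show that the four assembly maps $\alpha\mapsto\nu_\alpha$ and $\beta\mapsto\nu_\beta$ into $\mathrm{Nat}(GF,\id_\cc)$, and $\alpha'\mapsto\gamma_{\alpha'}$ and $\beta'\mapsto\gamma_{\beta'}$ into $\mathrm{Nat}(\id_\cc,GF)$, are surjective --- for then a splitting of the prescribed form exists if and only if a splitting exists, which by the first paragraph is (i). Now $\beta\mapsto\nu_\beta$ and $\beta'\mapsto\gamma_{\beta'}$ are the mate correspondences of the adjunctions $(G,F)$ and $(F,G)$, hence bijective. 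For $\alpha\mapsto\nu_\alpha$ I exhibit the section sending $\nu$ to $\alpha:=\nu G\circ G\eta^r$ and verify $\epsilon^r\circ\alpha F=\nu$ using naturality of $\nu$ followed by $F\epsilon^r\circ\eta^r F=\id_F$; for $\alpha'\mapsto\gamma_{\alpha'}$ I exhibit the section sending $\gamma$ to $\alpha':=G\epsilon^l\circ\gamma G$ and verify $\alpha'F\circ\eta^l=\gamma$ using naturality of $\gamma$ followed by $\epsilon^l F\circ F\eta^l=\id_F$. Assembling the three paragraphs yields all the asserted equivalences.

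I expect the difficulty to be purely organisational: keeping track of which of the four (co)units and triangular identities is invoked at each point, applying the correct half of Lemma~\ref{lem_B} to the correct one of the two adjunctions, and getting the whiskerings right in the two short ``recovery'' diagram chases $\epsilon^r\circ\alpha F=\nu$ and $\alpha'F\circ\eta^l=\gamma$ --- these being the only genuine (if brief) computations in the argument.
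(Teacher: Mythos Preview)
Your proof is correct and follows essentially the same approach as the paper's: both reduce to Theorem~\ref{thm:rafael} and Lemma~\ref{lem_B} applied to the two adjunctions $(F,G)$ and $(G,F)$, and both pass between $\nu:GF\to\id_\cc$ (resp.\ $\gamma:\id_\cc\to GF$) and $\alpha,\beta$ (resp.\ $\alpha',\beta'$) via the formulas $\nu=\epsilon^r\circ\alpha F=\epsilon^r\circ G\beta$ and $\gamma=\alpha'F\circ\eta^l=G\beta'\circ\eta^l$. The only cosmetic difference is that the paper invokes \cite[Proposition 10]{CMZ02} to assert that these assignments are bijections, whereas you supply explicit sections yourself and observe that mere surjectivity suffices for the equivalence with (i); your version is thus slightly more self-contained.
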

\proof
$(i)\Leftrightarrow (ii)\Leftrightarrow (iii)$ By \cite[Proposition 10]{CMZ02} applied to the adjunction $(G,F)$ we have the following bijective correspondences:
$$\mathrm{Nat}(GF,\id_\cc)\cong\mathrm{Nat}(G,G)\cong\mathrm{Nat}(F,F)\cong\mathrm{Nat}(\id_\dd,FG).$$ Explicitly, for any natural transformation $\nu:GF\to\id_\cc$ there are unique natural transformations $\alpha: G\to G$, $\beta: F\to F$ such that
\begin{equation}\label{eq:frob-nu}
\epsilon^r\circ\alpha F =\nu=\epsilon^r\circ G\beta .
\end{equation}
Apply Theorem \ref{thm:rafael} and Lemma \ref{lem_B} to the adjunction $(F,G)$ and then \eqref{eq:frob-nu} to the induced natural transformation $\nu^l:GF\to\id_\cc$ such that $\eta^l\circ\nu^l\circ\eta^l =\eta^l$.\\
$(i)\Leftrightarrow (iv)\Leftrightarrow (v)$ By \cite[Proposition 10]{CMZ02} applied to the adjunction $(F,G)$, for any natural transformation $\gamma:\id_\cc\to GF$ there are unique natural transformations $\alpha':G\to G$, $\beta':F\to F$ such that
\begin{equation}\label{eq:frob-gamma}
\alpha'F\circ\eta^l =\gamma= G\beta'\circ\eta^l.
\end{equation}
Consider the adjunction $(G,F)$ and apply Theorem \ref{thm:rafael} and Lemma \ref{lem_B}. Then, apply \eqref{eq:frob-gamma} to the induced natural transformation $\gamma^r:\id_\cc\to GF$ such that $\epsilon^r\circ\gamma^r\circ\epsilon^r = \epsilon^r$.
\endproof

\begin{rmk}\label{rmk:frob}
	A semiseparable functor is not necessarily Frobenius. Indeed, from \cite[Example 18, item 6, page 323]{CMZ02} let $G$ be a finite group and consider the group algebra $A=\Bbbk G$ over a field $\Bbbk$.
Then $A$ is a Frobenius $\Bbbk$-algebra and hence the restriction of scalars functor $\varphi_*:A\text{-}\mathrm{Mod}\to\Bbbk\text{-}\mathrm{Mod}$ is Frobenius, cf. \cite[Theorem 28, item 3]{CMZ02}. However if $\mathrm{char}(\Bbbk)$ divides $\vert G\vert$, the extension $A/\Bbbk $ is not separable so that $\varphi_*$ is not separable and therefore not even semiseparable as it is faithful. See Subsection \ref{es:extens} for further results on $\varphi_*$.
\end{rmk}

Next aim is to study semiseparable (co)reflections. Recall that
\begin{itemize}
  \item a functor admitting a fully faithful left adjoint is called a \emph{coreflection}, see \cite{Ber07};
  \item a functor with a fully faithful right adjoint is called a \emph{reflection};
  \item a functor $G:\dd\to\cc$ is called a \emph{bireflection} if it has a left and right adjoint equal, say $F:\cc\to \dd$, which is fully faithful and satisfies the coherent condition $\gamma\circ \epsilon=\id$ where $\epsilon:FG\to\id$ is the counit of $F\dashv G$ while $\gamma:\id\to  FG$ is the unit of $G\dashv F$, cf. \cite[Definition 8]{FOPTST99}.
\end{itemize} Being a coreflection (respectively a reflection) is equivalent to the fact that the unit (respectively counit) of the corresponding adjunction is an isomorphism, see \cite[Proposition 3.4.1]{Bor94}. The adjoint of the inclusion of a (co)reflective subcategory is a typical example of (co)reflection. In Theorem \ref{thm:frobenius} we will see how semiseparable (co)reflections $G:\dd\to\cc$ naturally give rise to fully faithful adjoint triples. Bireflective subcategories of a given category $\cc$ provide examples of bireflections. Recall that an idempotent $f:X\to X$ in a category $\cc$ is \emph{split} if there exist $g:X\to Y$ and $h:Y\to X$ such that $h\circ g=f$ and $g\circ h=\id_Y$. The splitting is unique up to isomorphism. In \cite{FOPTST99} an endo-natural transformation whose components are all split idempotents is called a \emph{split-idempotent natural transformation}. It is known that bireflective subcategories correspond bijectively to split-idempotent natural transformations $e:\id_\cc\to\id_\cc$ with specified splitting, \cite[Theorem 13]{FOPTST99}.  This fact is connected to the functor $H:\cc\to \cc_e$ of Theorem \ref{thm:coidentifier} which comes out to be a bireflection in case $e$ splits naturally, as we will see in Proposition \ref{prop:Hcorefl}. 

Let us see how the notions of (co)reflection and bireflection behave in connection to semiseparability. In particular, in the following Proposition we observe their behaviour with respect to a semiseparable composition of functors, cf. \cite[Proposition 2.4]{ACMM06} for the naturally full case.
\begin{prop}\label{prop:HGsscoref}
Let $G:\dd\to\cc$, $H:\cc\to\e$ be functors, and assume that $G$ is a (co)reflection. If $H\circ G:\dd\to\e$ is semiseparable, then $H$ is semiseparable.
\end{prop}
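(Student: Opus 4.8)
The plan is to reduce the statement to two facts already established: the closure of semiseparable functors under composition with a naturally full functor on the right (Lemma \ref{lem:comp}(ii)) and their closure under isomorphisms (Corollary \ref{cor:isomorphic}). Write $F\colon\cc\to\dd$ for the fully faithful adjoint witnessing that $G$ is a (co)reflection. In the coreflection case $F$ is a fully faithful left adjoint of $G$, so the unit $\eta\colon\id_\cc\to GF$ is an isomorphism; in the reflection case $F$ is a fully faithful right adjoint of $G$, so the counit $\epsilon\colon GF\to\id_\cc$ is an isomorphism. In either case $GF$ is naturally isomorphic to $\id_\cc$.

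Next I would observe that a fully faithful functor is in particular naturally full, so $F$ is naturally full. Since $H\circ G$ is semiseparable by hypothesis, Lemma \ref{lem:comp}(ii), applied with the naturally full functor $F$ and the semiseparable functor $H\circ G$, gives that $(H\circ G)\circ F=H\circ(GF)$ is semiseparable.

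Finally, whiskering the natural isomorphism $GF\cong\id_\cc$ with $H$ produces a natural isomorphism $H\circ(GF)\cong H$ (explicitly $H\eta\colon H\to H(GF)$ in the coreflection case, $H\epsilon\colon H(GF)\to H$ in the reflection case). Since semiseparable functors are closed under isomorphisms by Corollary \ref{cor:isomorphic}, it follows that $H$ is semiseparable, as claimed.

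I do not foresee a real obstacle; the only point requiring some care is to invoke the correct half of Lemma \ref{lem:comp} — namely part (ii), which demands that the first factor $F$ be \emph{naturally full} rather than merely semiseparable — and to note that this is precisely what the fully faithfulness of the (co)reflection's adjoint guarantees. One could alternatively handle the reflection and coreflection cases simultaneously by passing to opposite categories via Remark \ref{rmk:opposemi}, a reflection becoming a coreflection there, but the direct argument above already covers both cases uniformly.
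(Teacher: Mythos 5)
Your argument is correct and is essentially identical to the paper's proof: both apply Lemma \ref{lem:comp}(ii) with the naturally full (because fully faithful) adjoint $F$ to get that $HGF$ is semiseparable, and then transfer semiseparability from $HGF$ to $H$ along the isomorphism $GF\cong\id_\cc$ via Corollary \ref{cor:isomorphic}. The paper treats the coreflection case and remarks that the reflection case is similar, exactly as you do.
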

\proof
Assume that $G$ is a coreflection with a fully faithful left adjoint $F$.
If $HG$ is semiseparable, since $F$ is fully faithful (whence naturally full), then $HGF$ is semiseparable by Lemma \ref{lem:comp}.
The unit $\eta:\id_{\cc}\to GF$ of the adjunction $(F,G)$ is an isomorphism, so that $H\eta:H\to HGF$ is an isomorphism. By Corollary \ref{cor:isomorphic}, $H$ is semiseparable.
If $G$ is a reflection, the proof is similar.
\endproof

The next result provides a characterization of semiseparable (co)reflections. Surprisingly it involves the notions of naturally full and Frobenius functor as well as the one of bireflection.

\begin{thm}
\label{thm:frobenius}The following are equivalent for a functor $G:\dd\rightarrow \cc$.

\begin{enumerate}
\item[$\left( 1\right) $] $G$ is a naturally full coreflection.

\item[$\left( 2\right) $] $G$ is a semiseparable coreflection.

\item[$\left( 3\right) $] $G$ is a bireflection.

\item[$\left( 4\right) $] $G$ is a Frobenius coreflection.

\item[$\left( 5\right) $] $G$ is a naturally full reflection.

\item[$\left( 6\right) $] $G$ is a semiseparable reflection.

\item[$\left( 7\right) $] $G$ is a Frobenius reflection.

\end{enumerate}
\end{thm}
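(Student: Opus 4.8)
The plan is to prove the equivalences by splitting into the ``coreflection'' block $(1)$--$(4)$, the ``reflection'' block $(5)$--$(7)$, and then bridging the two blocks through condition $(3)$, which is self-dual. Throughout, write $F\dashv G$ for the adjunction exhibiting a coreflection, with unit $\eta$ (an isomorphism, by \cite[Proposition 3.4.1]{Bor94}, since $F$ is fully faithful) and counit $\epsilon$.

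First I would handle the coreflection block. The implication $(2)\Leftrightarrow(1)$ is immediate from Corollary \ref{cor:idempotent}: a coreflection is an idempotent adjunction (its unit is invertible, so $GF$ is an idempotent monad), and for idempotent adjunctions semiseparability and natural fullness coincide. For $(1)\Rightarrow(3)$, assume $G$ is a naturally full coreflection. By the Rafael-type Theorem for naturally full functors \cite[Theorem 2.6(2)]{ACMM06}, there is $\gamma:\id_\dd\to FG$ with $\gamma\circ\epsilon=\id_{FG}$; composing with $\epsilon$ and using the triangle identity gives $\epsilon\circ\gamma=\id_\dd$ as well, so $\gamma$ is a two-sided inverse of $\epsilon$. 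Now I must promote $\gamma$ to the \emph{unit} of an adjunction $G\dashv F$ satisfying the bireflection coherence. Since $\eta$ is invertible, one checks that $(F,G)$ with the data $(\gamma, \eta^{-1})$ — unit $\gamma:\id_\dd\to FG$ and counit $\eta^{-1}:GF\to\id_\cc$ — satisfies the triangle identities (this is the standard fact that an adjunction whose unit is iso yields an adjunction in the opposite direction); hence $F$ is also right adjoint to $G$. The coherence condition $\gamma\circ\epsilon=\id_{FG}$ holds by construction. Thus $G$ is a bireflection. For $(3)\Rightarrow(4)$: a bireflection has left and right adjoint equal to $F$, so $G$ is by definition Frobenius, and it is a coreflection because $F$ is fully faithful. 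Finally $(4)\Rightarrow(2)$: a Frobenius coreflection has the invertible unit $\eta$, and by Proposition \ref{prop:frob-ssep} (condition (ii), say, with $\alpha:=\eta^{-1}\epsilon^r\cdots$, or more simply by noting $\eta^{-1}$ furnishes the regular structure) one produces $\nu^l:=\eta^{-1}$ with $\eta\circ\nu^l\circ\eta=\eta$, so $F$ — hence via Theorem \ref{thm:rafael} the adjunction data — gives that $G$ is semiseparable; in fact most directly, a Frobenius coreflection is idempotent, and one invokes Proposition \ref{prop:frob-ssep}$(i)\Leftrightarrow(ii)$ with the natural transformation built from $\eta^{-1}$.

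The reflection block $(5)\Leftrightarrow(6)\Leftrightarrow(7)$ is proved by the exact dual argument (a reflection has invertible counit, apply Corollary \ref{cor:idempotent}, the Rafael-type theorems, and Proposition \ref{prop:frob-ssep}), or, even more economically, by applying the coreflection block to $G^{\op}:\dd^{\op}\to\cc^{\op}$: by Remark \ref{rmk:opposemi} the properties semiseparable, naturally full and Frobenius are all preserved under $(-)^{\op}$, and $G$ is a reflection iff $G^{\op}$ is a coreflection. This yields $(5)\Leftrightarrow(6)\Leftrightarrow(7)$ together with a dual version of $(3)$, namely ``$G^{\op}$ is a bireflection''. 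It then remains to bridge the two blocks, and the cleanest route is through $(3)$: a bireflection is manifestly both a coreflection (it has a fully faithful left adjoint) and a reflection (the same fully faithful functor is also a right adjoint), so $(3)\Rightarrow(2)$ and $(3)\Rightarrow(6)$; conversely $(2)\Rightarrow(3)$ was shown above and $(6)\Rightarrow(3)$ is its dual. Hence all seven conditions are equivalent.

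The main obstacle I anticipate is the passage $(1)\Rightarrow(3)$: being a bireflection is not merely the conjunction ``coreflection and Frobenius'' but carries the extra coherence datum $\gamma\circ\epsilon=\id$ relating the two adjunctions, and one must verify that the canonical candidate — using the inverse of the (invertible) unit as the counit of the reverse adjunction — genuinely satisfies the triangle identities and the coherence simultaneously. Once one observes that for a coreflection the counit $\epsilon$ is automatically split (indeed invertible after composing with $F$ of the inverse unit) and that this splitting is exactly the $\gamma$ produced by natural fullness, the coherence becomes a formal consequence of the triangle identities; this is the lemma that does the real work. Everything else is an application of results already established: Corollary \ref{cor:idempotent}, Theorem \ref{thm:rafael}, Lemma \ref{lem_B}, Proposition \ref{prop:frob-ssep}, and Remark \ref{rmk:opposemi}.
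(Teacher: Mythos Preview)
Your overall architecture is fine and close to the paper's, but the argument for $(1)\Rightarrow(3)$ contains a genuine error. From $\gamma\circ\epsilon=\id_{FG}$ you \emph{cannot} deduce $\epsilon\circ\gamma=\id_{\dd}$: that would make $\epsilon$ invertible, hence $G$ fully faithful, hence an equivalence (cf.\ Remark~\ref{rmk:sepcoref}), which is strictly stronger than what is being proved. Likewise, the ``standard fact that an adjunction whose unit is iso yields an adjunction in the opposite direction'' is not a fact: a generic coreflection is not a reflection. What actually makes the reverse adjunction $G\dashv F$ work here is precisely the datum coming from semiseparability/natural fullness. The paper runs $(2)\Rightarrow(3)$: starting from $\gamma$ with $\epsilon\circ\gamma\circ\epsilon=\epsilon$, Lemma~\ref{lem_B} gives $\epsilon F\circ\gamma F=\id_F$ and $G\epsilon\circ G\gamma=\id_G$; since $\eta$ is invertible the triangle identities force $(F\eta)^{-1}=\epsilon F$ and $(\eta G)^{-1}=G\epsilon$, so these two equalities become exactly the triangle identities for a new adjunction $G\dashv F$ with unit $\gamma$ and counit $\eta^{-1}$. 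The coherence $\gamma\circ\epsilon=\id_{FG}$ then follows from $\eta^{-1}G=G\epsilon$ via \cite[Proposition~10]{FOPTST99}. This is the step your sketch glosses over.

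Your $(4)\Rightarrow(2)$ is also muddled: exhibiting $\nu^l=\eta^{-1}$ shows $F$ is separable, not that $G$ is semiseparable, and Proposition~\ref{prop:frob-ssep} as stated concerns the semiseparability of the Frobenius functor itself (here $G$), so invoking it requires the adjunction data of $G\dashv F$, not $\eta^{-1}$. The paper instead proves $(4)\Rightarrow(1)$ directly: from the second adjunction $G\dashv F$ with unit $\eta'$ and counit $\epsilon'$, it sets $\sigma:=\epsilon'G\circ\eta G$ and $\gamma:=F\sigma\circ\eta'$, and checks $\gamma\circ\epsilon=\id_{FG}$ by a short naturality computation; then \cite[Theorem~2.6]{ACMM06} gives natural fullness. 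Your reflection block via $(-)^{\op}$ is fine.
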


\begin{proof} We prove the equivalence between  $(1),(2),(3)$ and $(4)$. Assume that $G$ is a coreflection. Denote by $F$ the left adjoint of $G$, by $\eta :\id_{%
\cc}\rightarrow GF$ the unit and by $\epsilon :FG\rightarrow \mathrm{%
Id}_{\dd}$ the counit of the adjunction $(F, G)$. Since $F$ is fully faithful, we get that $\eta $ is invertible. Therefore, from $\epsilon
F\circ F\eta =\id_{F}$ and $G\epsilon \circ \eta G=\id_{G},$
we get $\left( F\eta \right) ^{-1}=\epsilon F$ and $\left( \eta G\right)
^{-1}=G\epsilon .$

$\left( 1\right) \Leftrightarrow \left( 2\right) $. Since $\eta$ is invertible, the adjunction $(F,G)$ is idempotent and Corollary \ref{cor:idempotent} applies.

$\left( 2\right) \Rightarrow \left( 3\right) $. If $G$ is
semiseparable, by Theorem \ref{thm:rafael} (ii) there is a natural transformation $\gamma :%
\id_{\dd}\rightarrow FG$ such that $\epsilon \circ \gamma
\circ \epsilon =\epsilon $. By Lemma \ref{lem_B}, we have $\epsilon F\circ \gamma F=%
\id_{F}$ and $G\epsilon \circ G\gamma =\id_{G}$ so that,
\begin{eqnarray*}
F\left( \eta ^{-1}\right) \circ \gamma F &=&\left( F\eta \right) ^{-1}\circ
\gamma F=\epsilon F\circ \gamma F=\id_{F}, \\
\eta ^{-1}G\circ G\gamma  &=&\left( \eta G\right) ^{-1}\circ G\gamma
=G\epsilon \circ G\gamma =\id_{G}.
\end{eqnarray*}%
This means that $\left( G,F\right) $ is an adjunction with unit $\gamma :%
\id_{\dd}\rightarrow FG$ and counit $\eta
^{-1}:GF\rightarrow \id_{\cc}$. The equality $\eta^{-1}G=G\epsilon$ implies the coherent condition $\gamma\circ \epsilon=\id$ by \cite[Proposition 10]{FOPTST99}.

$\left( 3\right) \Rightarrow \left( 4\right) $. It is trivial.

$\left( 4\right) \Rightarrow \left( 1\right) $. If $G$ is
Frobenius, then there are a unit $\eta ^{\prime }:\id_{\dd%
}\rightarrow FG$ and a counit $\epsilon ^{\prime }:GF\rightarrow \id_{%
\cc}$ of the adjunction $(G, F)$. Set $\sigma :=\epsilon ^{\prime }G\circ \eta G:G\to G$
and note that $\sigma \circ G\epsilon =\epsilon ^{\prime }G\circ \eta G\circ
G\epsilon =\epsilon ^{\prime }G\circ \eta G\circ \left( \eta G\right)
^{-1}=\epsilon ^{\prime }G$. If we set $\gamma :=F\sigma \circ \eta ^{\prime
}$, we obtain
\begin{equation*}
\gamma \circ \epsilon =F\sigma \circ \eta ^{\prime }\circ \epsilon \overset{\text{nat.}\eta'}{=}F\sigma
\circ FG\epsilon \circ \eta ^{\prime }FG=F\left( \sigma \circ G\epsilon
\right) \circ \eta ^{\prime }FG=F\epsilon ^{\prime }G\circ \eta ^{\prime }FG=%
\id_{FG}.
\end{equation*}%
By \cite[Theorem 2.6]{ACMM06}, we conclude that $G$ is naturally full.

The implications $(5)\Leftrightarrow (6)\Rightarrow (3)\Rightarrow(7)\Rightarrow(5)$ follow dually.
\begin{invisible}
In order to simplify the application of the results we need in this proof, we use $F$ in proving the implications above instead of $G$. On the other hand $G$ will denote a right adjoint of $F$ as above. If $G$ is fully faithful, then $\epsilon $ is an isomorphism. Therefore, from the triangular identities $\epsilon F\circ F\eta =\id_{F}$ and $G\epsilon \circ \eta G=\id_{G},$
we get $\left(\epsilon F  \right) ^{-1}=F\eta$ and $\left( G\epsilon\right)
^{-1}=\eta G .$

$\left( 5\right) \Leftrightarrow \left( 6\right) $. Since $\epsilon$ is invertible, the adjunction $(F,G)$ is idempotent. Thus we can apply Corollary \ref{cor:idempotent}.

$\left( 6\right) \Rightarrow \left( 3\right) $. Since $F$ is
semiseparable, by Theorem \ref{thm:rafael} (i) there is a natural transformation $\nu :%
GF\to\id_{\cc}$ such that $\eta \circ \nu\circ \eta =\eta $. By Lemma \ref{lem_B}, we have $F\nu\circ F\eta = \id _{F}$ and $\nu G\circ \eta G= \id _{G}$, hence it follows
\begin{eqnarray*}
\nu G\circ G\left( \epsilon ^{-1}\right) &=&\nu G\circ \left( G\epsilon \right) ^{-1}=\nu G\circ \eta G=\id_{G}, \\
F\nu\circ \left(\epsilon ^{-1}\right)F&=&F\nu\circ\left( \epsilon F\right) ^{-1}
=F\nu \circ F\eta =\id_{F}.
\end{eqnarray*}%
This means that $\left( G,F\right) $ is an adjunction with unit $\epsilon^{-1} :%
\id_{\dd}\rightarrow FG$ and counit $\nu :GF\rightarrow \id_{\cc}$, so $F$ is Frobenius. The equality $\epsilon^{-1}F=F\eta$ implies the coherent condition $\eta\circ \nu=\id$. In fact $\eta\circ\nu=\nu GF\circ GF\eta=\nu GF\circ G\epsilon^{-1}F=(\nu G\circ G\epsilon^{-1})F=\id_GF=\id_{GF}.$

$\left( 3\right) \Rightarrow \left( 7\right) $. Assume that $F$ is
Frobenius. Since the right adjoint $G$ is also a left adjoint of $F$, there are a unit $\eta ^{\prime }:\id_{\dd%
}\rightarrow FG$ and a counit $\epsilon ^{\prime }:GF\rightarrow \id_{%
\cc}$ of the adjunction $(G, F)$. Set $\tau :=\epsilon F\circ \eta^{\prime }F :F\to F$
and note that $F\eta \circ \tau =F\eta\circ\epsilon F\circ \eta^{\prime } F =\left(\epsilon F\right)^{-1}\circ \epsilon F\circ \eta^{\prime } F =\eta ^{\prime }F$. Consider $\nu :=\epsilon^{\prime }\circ G\tau$. It holds
\begin{equation*}
\eta \circ \nu =\eta\circ\epsilon ^{\prime }\circ G\tau\overset{\text{nat.}\epsilon^{\prime }}{=}\epsilon^{\prime } GF \circ GF \eta \circ G\tau = \epsilon^{\prime }GF\circ G\left( F\eta \circ \tau\right)=\epsilon ^{\prime }GF\circ G\eta ^{\prime }F=%
\id_{GF}.
\end{equation*}%
Thus, by \cite[Theorem 2.6 (1)]{ACMM06}, $F$ is naturally full.
\end{invisible}
\end{proof}

\begin{rmk}\label{rmk:sepcoref}
It is known that a conservative (co)reflection is always an equivalence (see e.g. \cite[Remark 1.4]{Ber07}).
\begin{invisible}
  Let $(F, G,\eta,\varepsilon)$ be an adjunction. If $G$ is a coreflection, then $F$ is fully faithful and hence $\eta$ is invertible. From the triangular identity $G\varepsilon\circ \eta G=\id_G$ we get that $G\varepsilon$ is invertible. Thus, if $G$ is also conservative, we obtain that $\varepsilon$ is invertible whence $G$ is an equivalence.
\end{invisible}
Since separable functors are conservative (cf. Remark \ref{rmk:Maschke}), one recovers the fact that a separable (co)reflection is, actually, an equivalence, see e.g. in \cite[Proposition 2.4]{Sar21}. Thus Theorem \ref{thm:frobenius} can be seen as a semi-analogue of this result.
\end{rmk}

The following result will be useful in Subsection \ref{sub:rightHopf}.

\begin{prop}\label{prop:sigma}
Let $F\dashv G\dashv H:\cc\to\dd$ be an adjoint triple with $G$ fully faithful. Denote by $\eta^l$, $\epsilon^l$ and $\eta^r$, $\epsilon^r$ the unit and the counit of the adjunction $F\dashv G$ and of the adjunction $G\dashv H$, respectively. Consider the natural transformation $\sigma :H\to F$ defined by $\sigma:=F\epsilon^r\circ (\epsilon^lH)^{-1}:H\to F$. Then, $H$ is semiseparable if and only if $\sigma$ is split-mono if and only if $\sigma$ is invertible.
\end{prop}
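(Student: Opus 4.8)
\emph{Setup and the key identity.} The plan is to reduce the whole statement to the single identity $G\sigma=\eta^l\circ\epsilon^r$ and then run a short cycle of implications. First I note that, since $G$ is fully faithful and is the right adjoint in $F\dashv G$, the counit $\epsilon^l\colon FG\to\id_\dd$ is invertible, so $\sigma$ is indeed well defined; since $G$ is also the left adjoint in $G\dashv H$, the unit $\eta^r\colon\id_\dd\to HG$ is invertible, and $H$ is a coreflection (it has the fully faithful left adjoint $G$). From the triangle identity $G\epsilon^l\circ\eta^l G=\id_G$ and the invertibility of $G\epsilon^l$ one gets $(G\epsilon^l H)^{-1}=\eta^l GH$; applying $G$ to the definition of $\sigma$ and then the naturality of $\eta^l$ on the components $\epsilon^r_C\colon GHC\to C$ yields
\[
G\sigma=GF\epsilon^r\circ(G\epsilon^l H)^{-1}=GF\epsilon^r\circ\eta^l GH=\eta^l\circ\epsilon^r .
\]
Since $G$ is fully faithful, $\sigma$ is split-mono (resp.\ invertible) exactly when $G\sigma=\eta^l\circ\epsilon^r$ is; this identity is the backbone of the argument.

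\emph{The easy implications.} ``$\sigma$ invertible $\Rightarrow$ $\sigma$ split-mono'' is trivial. For ``$\sigma$ split-mono $\Rightarrow$ $H$ semiseparable'' I would take $\pi\colon F\to H$ with $\pi\circ\sigma=\id_H$, apply $G$, and use the displayed identity to obtain $G\pi\circ\eta^l\circ\epsilon^r=\id_{GH}$; then $\gamma:=G\pi\circ\eta^l\colon\id_\cc\to GH$ satisfies $\gamma\circ\epsilon^r=\id_{GH}$, hence $\epsilon^r\circ\gamma\circ\epsilon^r=\epsilon^r$, so $\epsilon^r$ is regular and $H$ is semiseparable by the Rafael-type Theorem \ref{thm:rafael}(ii) applied to the adjunction $G\dashv H$.

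\emph{The main step.} The real content is ``$H$ semiseparable $\Rightarrow$ $\sigma$ invertible''. The Rafael-type Theorem by itself only produces $\gamma$ with $\epsilon^r\circ\gamma\circ\epsilon^r=\epsilon^r$, which through the formula for $G\sigma$ gives no more than that $\sigma$ is split-mono; to reach invertibility I would exploit that $H$ is a coreflection. By Theorem \ref{thm:frobenius} (the implication $(2)\Rightarrow(3)$, after relabelling the two categories) a semiseparable coreflection is a bireflection, and its proof produces a natural transformation $\gamma\colon\id_\cc\to GH$ that makes $H\dashv G$ an adjunction with unit $\gamma$ and, moreover, satisfies the coherence identity $\gamma\circ\epsilon^r=\id_{GH}$. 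Since $F$ and $H$ are then two left adjoints of the same functor $G$, uniqueness of adjoint functors gives a natural isomorphism $\Phi\colon F\to H$ with $G\Phi\circ\eta^l=\gamma$; combining this with $G\sigma=\eta^l\circ\epsilon^r$ gives $G(\Phi\circ\sigma)=G\Phi\circ\eta^l\circ\epsilon^r=\gamma\circ\epsilon^r=\id_{GH}$, so $\Phi\circ\sigma=\id_H$ by faithfulness of $G$, and therefore $\sigma=\Phi^{-1}$ is invertible. This closes the cycle ``$\sigma$ invertible $\Rightarrow$ $\sigma$ split-mono $\Rightarrow$ $H$ semiseparable $\Rightarrow$ $\sigma$ invertible'', proving the three conditions equivalent.

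\emph{Where the difficulty lies.} I expect the main obstacle to be precisely the upgrade from ``$\epsilon^r$ regular'' to the coherent splitting $\gamma$ with $\gamma\circ\epsilon^r=\id_{GH}$: this is where the coreflection hypothesis genuinely enters, via Theorem \ref{thm:frobenius} and the uniqueness of adjoints, and it is what separates the split-mono case from full invertibility. A secondary point requiring care is bookkeeping the two a priori distinct units $\eta^l$ and $\gamma$ and recognising $\Phi$ as the canonical comparison isomorphism between the two left adjoints of $G$.
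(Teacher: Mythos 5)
Your proof is correct and follows essentially the same route as the paper: both arguments rest on Theorem \ref{thm:frobenius} (a semiseparable coreflection is a bireflection, hence Frobenius) together with the computation identifying $G\sigma$ with $\eta^l\circ\epsilon^r$ — the paper packages this same calculation as $\gamma\circ\epsilon^r=G(\tau\circ\sigma)$ under the bijection $\mathrm{Nat}(F,H)\cong\mathrm{Nat}(\id_\cc,GH)$, which is your identity precomposed with $G\tau$. The only substantive difference is the passage from semiseparability to invertibility of $\sigma$: the paper quotes \cite[Proposition 2.2]{Sar21} for the equivalence of $F\cong H$ with the invertibility of $\sigma$, whereas you re-derive this in a self-contained way from the adjunction $H\dashv G$ supplied by the bireflection, the coherence identity $\gamma\circ\epsilon^r=\id_{GH}$, and the uniqueness of left adjoints; this is a slightly longer but citation-free variant, and your bookkeeping of the two units $\eta^l$ and $\gamma$ and of the comparison isomorphism $\Phi$ is accurate.
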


\begin{proof}Since $G$ is fully faithful, then $H$ is a coreflection so that, by Theorem \ref{thm:frobenius}, it is semiseparable if and only if it is naturally full if and only if it is Frobenius, i.e. $F\cong H$. By \cite[Proposition 2.2]{Sar21}, the condition $F\cong H$ is equivalent to the invertibility of $\sigma$. We now prove that $G$ is naturally full if and only if $\sigma$ is split-mono. We have a bijective correspondence $\mathrm{Nat}(F,H)\cong\mathrm{Nat}(\id_\cc,GH)$. Explicitly, for any natural transformation $\tau:F\to H$ there is a unique natural transformation $\gamma: \id_\cc\to GH$ given by $\gamma:=G\tau\circ \eta^l$. Then $\gamma\circ\epsilon^r=G\tau\circ \eta^l\circ\epsilon^r=G\tau\circ GF\epsilon^r\circ \eta^lGH=G\tau\circ G(\sigma\circ\epsilon^lH)\circ \eta^lGH=G(\tau\circ \sigma)\circ G\epsilon^lH\circ \eta^lGH=G(\tau\circ \sigma)$ so that $\gamma\circ\epsilon^r=G(\tau\circ \sigma)$. Thus, $\gamma\circ\epsilon^r=\id_{GH}$ if and only if $G(\tau\circ \sigma)=\id_{GH}$ if and only if $\tau\circ \sigma=\id_{H}$, as $G$ is faithful. By Rafael-type Theorem for naturally full functors, the condition $\gamma\circ\epsilon^r=\id_{GH}$ means that $H$ is naturally full.
\end{proof}

Let us see how the quotient functor $H:%
\cc\rightarrow \cc_{e}$ results to be a bireflection in meaningful cases.

\begin{prop}\label{prop:Hcorefl}
Let $\cc$ be a category and let $e:\id_{\cc%
}\rightarrow \id_{\cc}$ be an idempotent natural
transformation. Then, the quotient functor $H:\cc\to\cc_e$ is a bireflection if and only if  $e$ splits (e.g. when $\cc$ is idempotent complete).
\end{prop}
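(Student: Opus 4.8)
The plan is to prove both implications by relating the bireflection structure of $H:\cc\to\cc_e$ to a splitting of $e$. For the ``if'' direction, suppose $e$ splits, say $e_X=\iota_X\circ\pi_X$ with $\pi_X:X\to \overline{X}$ and $\iota_X:\overline{X}\to X$ satisfying $\pi_X\circ\iota_X=\id_{\overline{X}}$. The naturality of $e$ together with the uniqueness of splittings gives that the assignments $X\mapsto \overline{X}$ extend to a functor $F:\cc\to\cc$ (landing in the full subcategory of ``$e$-fixed'' objects), with $\pi$ and $\iota$ natural. Since $Fe=\id_F$ (because $e$ restricted to $\overline{X}$ is $\iota_X\circ\pi_X\circ\iota_X\circ\pi_X$-type data that collapses to the identity, using $\pi_X\iota_X=\id$), Lemma \ref{lem:coidentifier}(1) produces a functor $F_e:\cc_e\to\cc$ with $F=F_e\circ H$. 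The plan is then to check that $H\dashv F_e$ with unit induced by $\pi$ and counit induced by the fact that $HF_e\cong\id_{\cc_e}$, and symmetrically $F_e\dashv H$, with $F_e$ fully faithful, and finally that the coherence condition $\gamma\circ\epsilon=\id$ of \cite[Definition 8]{FOPTST99} holds. Concretely I would verify that the counit $\epsilon:F_eH\to\id_\cc$ of $H\dashv F_e$ is exactly $\iota\circ(\text{canonical})$, i.e. on components recovers $e$ up to the identification, and the unit $\gamma:\id_\cc\to F_eH$ of $G\dashv F_e$... wait, here $H$ plays the role of $G$. So the counit of $F_e\dashv H$ composed with the unit of $H\dashv F_e$ needs to match, and this is precisely encoded by $e\circ e=e$ read through $\pi,\iota$.

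A cleaner route, which I would actually follow, is to invoke \cite[Theorem 13]{FOPTST99} directly: bireflective (replete full) subcategories of $\cc$ correspond bijectively to split-idempotent natural transformations $e:\id_\cc\to\id_\cc$ with specified splitting. Given such a splitting of our $e$, one obtains a bireflection $R:\cc\to\mathcal{B}$ onto the associated bireflective subcategory $\mathcal{B}$, and the congruence identifying $f,g$ when $e_Bf=e_Bg$ is exactly the congruence whose quotient is (equivalent to) $\mathcal{B}$; hence $H:\cc\to\cc_e$ is, up to equivalence, this bireflection $R$, and bireflections are stable under composition with equivalences. The verification that the ``kernel congruence'' of $R$ coincides with $\sim$ is the one genuinely new computation: $R f = R g$ iff the two maps agree after applying the idempotent, which is $e_B\circ f=e_B\circ g$ by construction of $e$ from the splitting.

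For the ``only if'' direction, suppose $H:\cc\to\cc_e$ is a bireflection, with fully faithful left-and-right adjoint $F:\cc_e\to\cc$, counit $\epsilon:FH\to\id_\cc$ of $H\dashv F$ and unit $\gamma:\id_\cc\to FH$ of $F\dashv H$, satisfying $\gamma\circ\epsilon=\id$. Recall from the discussion before Lemma \ref{lem:coidentifier} that the idempotent natural transformation associated to $H$ is exactly $e$; by Remark \ref{rmk:rafidp}(2) (applied to the adjunction $H\dashv F$, whose counit is $\epsilon$, using that $H$ is semiseparable — indeed naturally full — via $\gamma$), we have $e = \epsilon\circ\gamma$ on $\id_{\cc}$... here I need to be careful about which identity functor: $H$ is the left adjoint in $H\dashv F$, so its associated idempotent lives on $\dd$-side; since the roles are on $\cc$ vs $\cc_e$, I would instead argue that $F$ is a section of $H$ up to iso, $HF\cong\id_{\cc_e}$ (as $F$ is fully faithful and a left adjoint of $H$... in fact $H$ eso and $F$ fully faithful force $HF\cong\id$), so $FH:\cc\to\cc$ is idempotent-up-to-the-natural-maps, and $e_X$ splits as $\epsilon_X\circ\gamma_X$ after composing with the iso $HF\cong\id$: set $\pi_X:=$ ($X\xrightarrow{\gamma_X}FHX$ followed by applying $F$ to the component of $HF\cong\id$... ) and $\iota_X:=\epsilon_X$, so that $\iota_X\circ\pi_X=\epsilon_X\circ\gamma_X=e_X$ and $\pi_X\circ\iota_X=\id$ because it is $H F(\text{stuff})$ of an identity. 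Thus $e$ splits.

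The main obstacle I anticipate is bookkeeping the identity-on-objects nature of $H$ and the resulting mismatch between ``the coidentifier $\cc_e$'' and ``a bireflective subcategory of $\cc$'': one must either transport the splitting data along $HF\cong\id_{\cc_e}$ carefully, or — the path I would recommend in the final writeup — reduce everything to \cite[Theorem 13]{FOPTST99} and \cite[Proposition 10]{FOPTST99}, quoting that bireflections are precisely quotients by split-idempotent congruences, so that only the identification of the congruence $\sim$ with the split-idempotent congruence of $e$ remains, which is immediate from $f\sim g\iff e_B f=e_B g$. The parenthetical ``e.g. when $\cc$ is idempotent complete'' needs no separate argument: idempotent completeness is by definition the statement that every idempotent, in particular each $e_X$, splits, and naturality of the splitting in $X$ follows from uniqueness of splittings, exactly as in the first paragraph.
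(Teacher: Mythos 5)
Your strategy is sound in both directions, but the two halves relate to the paper's proof differently, and the ``only if'' half needs a repair as written. For the ``if'' direction, your route (1) is essentially the paper's own construction: the splitting of $e$ yields an endofunctor $P$ of $\cc$ with $Pe=\id_P$, Lemma \ref{lem:coidentifier} produces $P_e:\cc_e\to\cc$ with $P_e\circ H=P$, and one checks $P_e\dashv H$ with counit the inclusion and invertible unit determined by $\eta H=H\pi$. The economy you are missing is that you need not verify the second adjunction nor the coherence condition by hand: once $H$ is exhibited as a coreflection, it is a naturally full coreflection ($H$ is naturally full by construction, see Subsection \ref{sub:coidentifier}), and Theorem \ref{thm:frobenius} upgrades this to a bireflection for free. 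Your route (2), via \cite[Theorem 13]{FOPTST99}, is legitimate and is explicitly acknowledged in the paper, which however deliberately gives a self-contained argument; if you take that path you must still carry out the identification of the kernel congruence of the bireflection with $\sim$, as you note.

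For the ``only if'' direction your underlying idea is cleaner than the paper's computation, but the execution derails on a side-of-the-adjunction confusion that does not actually exist. In the adjunction $F\dashv H$ (with $F:\cc_e\to\cc$ the \emph{left} adjoint and $H$ the \emph{right} adjoint) the counit is exactly $\epsilon:FH\to\id_{\cc}$, which already lives on $\cc$; the coherence condition $\gamma\circ\epsilon=\id_{FH}$ gives $\epsilon\circ\gamma\circ\epsilon=\epsilon$, so by Remark \ref{rmk:rafidp}(2) and the uniqueness in Proposition \ref{prop:idempotent} the associated idempotent of $H$ --- which is $e$ --- equals $\epsilon\circ\gamma$, and $\gamma\circ\epsilon=\id_{FH}$ is literally the splitting. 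Hence taking $\pi=\gamma$ and $\iota=\epsilon$ closes the argument; there is no need for the detour through $HF\cong\id_{\cc_e}$. Moreover that detour, as you wrote it (precomposing with $F$ applied to a component of the unit isomorphism), no longer satisfies $\iota_X\circ\pi_X=\epsilon_X\circ\gamma_X$, so the identities at the end of your paragraph are inconsistent with your own definition of $\pi_X$ --- this must be fixed, most easily by deleting the detour. (The paper's proof of this direction is longer only because it starts from the equivalent form of the coherence condition stated on the $\cc_e$-side and transports it across the triangle identities.)
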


\begin{proof}
 Assume that $H:\cc\to\cc_e$ is a bireflection. Then, $H$ has left and right adjoint functors equal, say $L:\cc_e\to\cc$, which is fully faithful, and such that the coherence condition $\eta\circ\epsilon'=\id_{HL}$ is satisfied, where $\eta:\id\to HL$ is the unit of adjunction $L\dashv H$, and $\epsilon': HL\to\id$ is the counit of $H\dashv L$. Denote by $\eta':\id\to LH$ and $\epsilon: LH\to\id$ the unit and counit of the adjunctions $H\dashv L$ and $L\dashv H$, respectively. Since $L$ is fully faithful, $\eta$ is an isomorphism and hence, from the coherence condition and the triangular identity $L\epsilon'\circ \eta'L=\id_L$, we get that $L\eta=(L\epsilon')^{-1}=\eta'L$. Therefore, by naturality of $\eta'$, we have $\eta'\circ\epsilon =LH\epsilon\circ\eta'LH=LH\epsilon\circ L\eta H=L\id_H=\id_{LH}$. Similarly,  from the latter condition and the triangular identity  $H\epsilon \circ \eta H=\id_H$, it follows that $H\eta'=(H\epsilon)^{-1}=\eta H$. Then, $H(\epsilon\circ\eta')=H\epsilon\circ H\eta'=\id_H=H\id$. Thus, for all $X\in\cc$, we have $e_X=e_X\circ\epsilon_X\circ\eta'_X$. Now, recall $He=\id_H$ as $e$ is the idempotent natural transformation associated to $H$. Then, $e_X\circ\epsilon_X=\epsilon_X\circ LHe_X=\epsilon_X\circ LH\id_X=\epsilon_X$ so that the equality $e_X=e_X\circ\epsilon_X\circ\eta'_X$ simplifies as $e_X=\epsilon_X\circ\eta'_X$ and hence $e$ splits.
\par
The converse essentially follows from the dual of \cite[Proof of Theorem 13]{FOPTST99}. We give a slightly different proof here. Assume that $e$ splits. Since we know that $H$
is naturally full (see Subsection \ref{sub:coidentifier}), it is in particular semiseparable. Thus, in order to
conclude, by Theorem \ref{thm:frobenius}, it is enough to check that $H$ is a coreflection. Choose a splitting $\id_\cc \overset{\pi
}{\twoheadrightarrow }P\overset{\epsilon}{\hookrightarrow }\id_\cc$ of
the idempotent $e$ such that $\pi \circ \epsilon =\id_P$. Note that $Pe=Pe\circ \pi\circ \epsilon \overset{\text{nat.}\pi}{=}\pi \circ e\circ \epsilon=\pi \circ\epsilon\circ \pi\circ \epsilon=%
\id_{P}$ and hence  $%
Pe=\id_{P}$. Thus, by Lemma \ref{lem:coidentifier}, there is a unique functor $P_{e}:\cc%
_{e}\rightarrow \cc$ such that $P_{e}\circ H=P$. It is now straightforward to check that $P_{e}\dashv H$ with counit $\epsilon $ and invertible unit $\eta:\id_{\cc%
_{e}}\rightarrow HP_{e}$ defined by the equality $\eta H=H\pi $ i.e. by setting $%
\eta _{X}:=\left( \overline{\pi _{X}}\right) _{X\in \cc}.$ Thus $H$ is a coreflection.
\begin{invisible}Let us check that $%
P_{e}\dashv H.$  Since $P=P_{e}\circ H$ we can take $\epsilon $ as a
candidate counit. By Lemma \ref{lem:coidentifier}, since $\id_{\cc}\circ H=H$ and $(HP_{e})\circ H=HP$ we have $H_e=\id_{\cc}$, $(HP)_e=HP_{e}$ and hence we can define the unit $\eta:=(H\pi)_e :\id_{\cc%
_{e}}\rightarrow HP_{e}$ by the equality $\eta H=H\pi $ i.e. by setting $%
\eta _{X}:=\left( \overline{\pi _{X}}\right) _{X\in \cc}.$ Then $%
H\epsilon \circ \eta H=H\epsilon \circ H\pi =He=\id_{H}$. Moreover $\left(
\epsilon P_{e}\circ P_{e}\eta \right) H\circ \pi =\epsilon P_{e}H\circ
P_{e}\eta H\circ \pi =\epsilon P_{e}H\circ P_{e}H\pi \circ \pi =\epsilon
P\circ P\pi \circ \pi \overset{\text{nat.}\epsilon }{=}\pi \circ \epsilon\circ
\pi =\pi $ so that $\left(
\epsilon P_{e}\circ P_{e}\eta \right) H=\id_{P_{e}\circ H}$ and
hence $\epsilon P_{e}\circ P_{e}\eta =\id_{P_{e}}$. This proves that
$P_{e}\dashv H$. Since $\pi \circ \epsilon =\id_P$, we also get $\eta
H\circ H\epsilon =H\pi \circ H\epsilon =\id_{H}$. This equality,
together with $H\epsilon \circ \eta H=\id_{H}$, imply that $\eta H$
is invertible. Since $H$ is the identity on objects, we deduce that $\eta $
is invertible and hence $P_{e}$ is fully faithful by the dual of \cite[Proposition
3.4.1]{Bor94}.

[La dimostrazione di questa parte sembra nota implicitamente ma non l'ho trovata esplicitamente, quindi la scrivo qui con l'idea di occultarla.] Assume that $\cc$ is idempotent complete and let us prove that an idempotent natural transformation $e:\id_{\cc%
}\rightarrow \id_{\cc}$ necessarily splits.  Explicitly, choose a splitting $X\overset{\pi
_{X}}{\twoheadrightarrow }PX\overset{\epsilon _{X}}{\hookrightarrow }X$ of
the idempotent $e_{X}$ for any object $X$ in $\cc$. Then, by \cite[Section 1]{Bar72},  we have the coequalizer%
\begin{equation*}
X\overset{e_{X}}{\underset{\id_{X}}{\rightrightarrows }}X\overset{%
\pi _{X}}{\twoheadrightarrow }PX.
\end{equation*}%
Since, for every morphism $f:X\rightarrow Y$ in $\cc$, we have $\pi
_{Y}\circ f\circ e_{X}=\pi _{Y}\circ e_{Y}\circ f=\pi _{Y}\circ f=\pi
_{Y}\circ f\circ \id_{X}$, there is a unique morphism $%
Pf:PX\rightarrow PY$ such that $Pf\circ \pi _{X}=\pi _{Y}\circ f$. This
defines an endofunctor $P:\cc\rightarrow \cc$.
The equality $Pf\circ \pi _{X}=\pi _{Y}\circ f$ means that $%
\pi :=\left( \pi _{X}\right) _{X\in \cc}:\id_{\cc%
}\rightarrow P$ is a natural transformation. Moreover $\epsilon _{Y}\circ
Pf\circ \pi _{X}=\epsilon _{Y}\circ \pi _{Y}\circ f=e_{Y}\circ f=f\circ
e_{X}=f\circ \epsilon _{Y}\circ \pi _{Y}$ forces $\epsilon _{Y}\circ
Pf=f\circ \epsilon _{Y}$ which means that $\epsilon :=\left( \epsilon
_{X}\right) _{X\in \cc}:P\rightarrow \id_{\cc}$ is a
natural transformation. Thus we obtained a splitting $\id_\cc \overset{\pi
}{\twoheadrightarrow }P\overset{\epsilon}{\hookrightarrow }\id_\cc$ of
the idempotent $e$.
\end{invisible}
\end{proof}

As a consequence of Theorem \ref{thm:coidentifier} and Proposition \ref{prop:Hcorefl}, we have the following corollary.

\begin{cor}\label{cor:fact-birefl}A functor $F:\cc\to \dd$ factors as a bireflection followed by a separable functor if and only if it is semiseparable and the associated natural transformation $e:\id_\cc\to\id_\cc$ splits. Moreover, any such a factorization is the same given by the coidentifier within Theorem \ref{thm:coidentifier}, up to a category equivalence.
\end{cor}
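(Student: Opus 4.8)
The plan is to prove the two implications of the biconditional and then address the ``moreover'' clause, which largely reuses the work of the forward implication.

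For sufficiency, suppose $F$ is semiseparable with associated idempotent $e$ and that $e$ splits. I would just combine Theorem~\ref{thm:coidentifier}, which provides the canonical factorization $F=F_{e}\circ H$ with $H:\cc\to\cc_{e}$ the quotient functor (naturally full) and $F_{e}:\cc_{e}\to\dd$ separable, with Proposition~\ref{prop:Hcorefl}, which tells us that $H$ is a bireflection precisely because $e$ splits. Hence $F$ is a bireflection followed by a separable functor.

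For necessity, suppose $F=S\circ B$ with $B:\cc\to\mathcal{E}$ a bireflection and $S:\mathcal{E}\to\dd$ separable. By Theorem~\ref{thm:frobenius} a bireflection is naturally full, so $F$ is semiseparable by Corollary~\ref{cor:fattoriz}; and applying the last part of Theorem~\ref{thm:coidentifier} with $N=B$ gives that the idempotent $e$ associated to $F$ coincides with the one associated to $B$, together with a (necessarily fully faithful) functor $B_{e}:\cc_{e}\to\mathcal{E}$ with $B_{e}\circ H=B$ and $S\circ B_{e}=F_{e}$. It remains to check that $e$ splits, and here is the one point needing care. Let $L:\mathcal{E}\to\cc$ be the common fully faithful left and right adjoint of the bireflection $B$; write $\epsilon:LB\to\id_{\cc}$ for the counit of $L\dashv B$ and $\gamma:\id_{\cc}\to LB$ for the unit of $B\dashv L$, so that the coherence condition defining a bireflection reads $\gamma\circ\epsilon=\id_{LB}$. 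Then $\epsilon\circ\gamma\circ\epsilon=\epsilon$, so Remark~\ref{rmk:rafidp}(2), applied to the adjunction $L\dashv B$ (in which $B$ is the right adjoint), identifies the idempotent associated to $B$ as $e=\epsilon\circ\gamma$; componentwise, $X\xrightarrow{\gamma_{X}}LBX\xrightarrow{\epsilon_{X}}X$ exhibits each $e_{X}$ as a split idempotent, so $e$ splits. This is essentially the computation already carried out for the quotient functor in the proof of Proposition~\ref{prop:Hcorefl}, now for an arbitrary bireflection.

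For the ``moreover'' clause, I keep the functor $B_{e}:\cc_{e}\to\mathcal{E}$ from the previous paragraph; it is fully faithful, and it only remains to see it is essentially surjective. Since $L$ is fully faithful and an adjoint of $B$ on at least one side, one of the (co)units relating $BL$ to $\id_{\mathcal{E}}$ is invertible, so $B$ is essentially surjective; as $H$ acts as the identity on objects and $B_{e}\circ H=B$, every object of $\mathcal{E}$ is thus isomorphic to $B_{e}C$ for some $C\in\mathrm{Ob}(\cc_{e})$, making $B_{e}$ a category equivalence. Finally, the identities $B_{e}\circ H=B$ and $S\circ B_{e}=F_{e}$ say exactly that the given factorization is the coidentifier factorization of Theorem~\ref{thm:coidentifier} post-composed with the equivalence $B_{e}$. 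I expect the splitting of $e$ to be the only genuine obstacle: the subtlety is to use the coherence condition of a bireflection --- rather than mere idempotence --- to produce the splitting maps, and to line up the left/right adjoint bookkeeping so that Remark~\ref{rmk:rafidp}(2) applies verbatim.
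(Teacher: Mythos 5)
Your proof is correct and follows the paper's overall skeleton (Theorem \ref{thm:coidentifier} plus Proposition \ref{prop:Hcorefl} for sufficiency; the uniqueness clause of Theorem \ref{thm:coidentifier} and essential surjectivity of $B_e$ for the ``moreover'' part), but you handle the one delicate step --- showing that $e$ splits in the necessity direction --- by a genuinely different and more direct route. The paper first proves that $N_e$ is an equivalence with quasi-inverse $H\circ L$, deduces $H\cong (H\circ L)\circ N$, concludes that $H$ is a bireflection because $N$ is (transport along an equivalence), and only then invokes the ``only if'' direction of Proposition \ref{prop:Hcorefl} to extract the splitting of $e$. You instead read the splitting off directly: the coherence condition $\gamma\circ\epsilon=\id_{LB}$ gives $\epsilon\circ\gamma\circ\epsilon=\epsilon$, Remark \ref{rmk:rafidp}(2) applied to $L\dashv B$ identifies the idempotent associated to $B$ as $e=\epsilon\circ\gamma$, and the same coherence condition exhibits each $e_X=\epsilon_X\circ\gamma_X$ as split through $LBX$; the last clause of Theorem \ref{thm:coidentifier} then transfers this idempotent to $F$. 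Your argument avoids both the harder direction of Proposition \ref{prop:Hcorefl} and the (mildly nontrivial) claim that bireflections are stable under composition with an equivalence, at the cost of some adjoint bookkeeping; the paper's version keeps the computation confined to Proposition \ref{prop:Hcorefl} and reuses it as a black box. Both are sound, and the ``moreover'' clause is handled the same way in each.
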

\proof
Assume that $F=S\circ N$ where $N:\cc\to\e$ is a bireflection and $S:\e\to\dd$ is a separable functor. Since $N$ is in particular naturally full, we get that $F=S\circ N$ is semiseparable and hence by Theorem \ref{thm:coidentifier}, there is a unique functor $N_e:\cc_e\to \e$ (necessarily fully faithful) such that $N_e\circ H=N$ and $S\circ N_e=F_e$. If we denote by $L:\e\to\cc$ the left adjoint of $N$, then it is fully faithful and hence, since the unit $\eta:\id\to NL$ is an isomorphism, we have that $\id\cong N\circ L=N_e\circ H\circ L$. Therefore $N_e$ is essentially surjective on objects. Since it is also fully faithful, we get $N_e$ is an equivalence of categories and, from $\id\cong N_e\circ H\circ L$, it has quasi-inverse $H\circ L$. Thus, $(H\circ L)\circ N_e\cong\id$ and hence $(H\circ L)\circ N=(H\circ L\circ N_e)\circ H\cong \id\circ H=H$, from which it follows that $H$ is a bireflection as $N$ is. Thus, by Proposition \ref{prop:Hcorefl}, the idempotent $e$ splits. Moreover the factorization $F=S\circ N$, up to the category equivalence $N_e$, is the same given by the coidentifier within Theorem \ref{thm:coidentifier}.

Conversely, if the natural transformation $e:\id_\cc\to\id_\cc$ attached to the semiseparable functor $F$ splits, then by Proposition \ref{prop:Hcorefl} the quotient functor $H:\cc\to\cc_e$ results to be a bireflection. Thus, since by Theorem \ref{thm:coidentifier} the semiseparable functor $F$ factors as $H:\cc\to \cc_e$ followed by a separable functor $F_e:\cc_e\to \dd$, we achieve the desired factorization of $F$ into a bireflection followed by a separable functor.
\endproof

\begin{rmk}\label{rmk:factsepbir}
By Theorem \ref{thm:coidentifier}, any semiseparable functor $F:\cc\to \dd$ factors as $H:\cc\to \cc_e$ followed by a separable functor $F_e:\cc_e\to \dd$, where $e$ is the associated idempotent natural transformation. Assume that $e$ splits. Then, by Proposition \ref{prop:Hcorefl},$H:\cc\to \cc_e$ is a bireflection. In particular $H$ is a coreflection and $F_e$ is conservative. This is what is called an \emph{image-factorization} of $F$ in \cite[Definition 1.1]{Ber07}. Image-factorizations are unique up to an equivalence of categories, see \cite[Lemma 1.2]{Ber07}. As a consequence, if we can write $F=S\circ N$ where $S:\e\to \dd$ is conservative (e.g. separable) and $N:\cc\to \e$ is a coreflection (e.g. a bireflection) then there is an equivalence $\e \cong \cc_e$.
\end{rmk}

\section{Applications and examples}\label{sect:applications}
In this section we test the notion of semiseparability on relevant functors attached to ring and coalgebra morphisms, corings, bimodules and Hopf modules. 

We start in Subsection \ref{es:extens} by considering the restriction of scalars functor $\varphi_*: S\text{-Mod}\rightarrow R\text{-Mod}$, the extension of scalars functor $\varphi^*= S\otimes_{R}(-):R\text{-Mod}\rightarrow S\text{-Mod}$ and the coinduction functor $\varphi^!= {}_R\Hom(S,-):R\text{-Mod}\rightarrow S\text{-Mod}$ associated to a ring morphism $\varphi:R\to S$. On the one hand, since $\varphi_*$ is faithful, its semiseparability falls back to its separability. On the other hand, the functors above form an adjoint triple $\varphi^*\dashv\varphi_*\dashv\varphi^!$ so that the semiseparability of $\varphi^!$ is equivalent to the one of $\varphi^*$. The latter is characterized in Proposition \ref{prop:inducfunc} in terms of the regularity of $\varphi$ as a morphism of $R$-bimodules and in Proposition \ref{prop:modextens} in terms of the existence of a suitable central idempotent element $z\in R$.
In a similar fashion in Subsection \ref{es:coinduct-coalgebras} we investigate the semiseparability of two adjoint functors attached to a coalgebra map $\psi:C\to D$. The main result here is Proposition \ref{prop:coinduc-coalg}.

In Subsection \ref{es:coring}, we turn our attention to the induction functor $(-)\otimes_R\cc : \text{Mod-}R\to\m^{\cc}$, attached to an $R$-coring $\cc$. Here we highlight Theorem \ref{thm:inducoring} where this functor is proved to be semiseparable if and only if the coring counit $\varepsilon_\cc:\cc\to R$ is a regular morphism of $R$-bimodule. 
In Subsection \ref{es:bimod}, given an $(R,S)$-bimodule $M$, we consider the coinduction functor $\sigma_*:=\Hom_S(M,-): \mathrm{Mod}\text{-}S\to \mathrm{Mod}\text{-}R$ together with its left adjoint $\sigma^*:=(-)\otimes_R M: \text{Mod-}R\to \text{Mod-}S$. In Theorem \ref{thm:bimod} we show that the semiseparability of  $\sigma_*$ can be completely rewritten both in terms of the regularity of the evaluation map plus a mild condition that is redundant when $M$ is projective as a right $S$-module, and in terms of a property of $M$ that will lead us to introduce the new notion of $M$-semiseparability over $R$ for the ring $S$, a right semiseparable version of the one given in \cite{Su71}.
In Corollary \ref{cor:sep-bimod} we prove that $S$ is $M$-separable over $R$ if and only if $S$ is $M$-semiseparable over $R$ and $M$ is a generator in $\mathrm{Mod}\text{-}S$. A different characterization of $M$-semiseparability of $S$ over $R$, that will allow us to exhibit an example where $S$ is $M$-semiseparable but not $M$-separable over $R$ (see Example \ref{es:ssVSs}), is obtained in Proposition \ref{prop:ssVSs}. Then, if we add the assumption that $M$ is finitely generated and projective as a right $S$-module, further characterizations of the semiseparability of $\sigma_*$ and $\sigma^*$ are provided in Proposition \ref{prop:right-equiv-bimod} and Proposition \ref{prop:left-equiv-bimod}, respectively.

It is worth noticing that the above functors $\varphi^*$, $(-)\otimes_R\cc$, and $\sigma_*$ have sources which are idempotent complete categories so that, by Corollary \ref{cor:fact-birefl}, they always admit a factorization as a bireflection followed by a separable functor, when they are semiseparable.
In Proposition \ref{prop:phi*}, Corollary \ref{cor:inducoring}, and Proposition \ref{prop:condcMfact}, we  explicitly provide such factorizations.

Theorem \ref{thm:rightHopf} concerns the semiseparability of the coinvariant functor $\left( -\right)^{\co B}:\mathfrak{M}_{B}^{B}\rightarrow \mathfrak{M}$, from the category of right Hopf modules over a $\Bbbk$-bialgebra $B$ to the category of $\Bbbk$-vector spaces over a field $\Bbbk$, proving that it is semiseparable if and only if $B$ is a right Hopf algebra with anti-multiplicative and anti-comultiplicative right antipode.

Finally, Subsection \ref{appendix:corefl} provides particular examples of (co)reflections that highlight the connections between the types of functors we have studied in this paper.

\subsection{Extension and restriction of scalars}\label{es:extens}
A morphism of rings $\varphi :R\to S$ induces
\begin{itemize}
  \item the restriction of scalars functor $\varphi_* : S\text{-Mod}\rightarrow R\text{-Mod}$;
  \item the extension of scalars (or induction) functor $\varphi^*:= S\otimes_{R}(-):R\text{-Mod}\rightarrow S\text{-Mod}$;
  \item the coinduction functor $\varphi^!:= {}_R\Hom(S,-):R\text{-Mod}\rightarrow S\text{-Mod}$.
\end{itemize}
All together these functors form an adjoint triple $\varphi^*\dashv\varphi_*\dashv\varphi^!.$

The unit $\eta$ and the counit $\epsilon$ of the adjunction $(\varphi^*,\varphi_*)$, are respectively defined by
$$\eta_M = \varphi\otimes_R M : M\to S\otimes_R M,\, m\mapsto 1_{S}\otimes_R m,\qquad\text{and}\qquad \epsilon_N: S\otimes_R N\to N,\, s\otimes_R n\mapsto sn,$$
while the unit $\eta^!$ and the counit $\epsilon^!$ of the adjunction $(\varphi_*,\varphi^!)$, are defined by
$$\eta^!_N:N\to {}_R\Hom(S,N),\, n\mapsto [s\mapsto sn],\qquad\text{and}\qquad\epsilon^!_M:{}_R\Hom(S,M)\to M,\, f\mapsto f(1_S),$$
for every $M\in R\text{-Mod}$ and  $N\in S\text{-Mod}$.
In the literature we can find results either on the separability or on the natural fullness of these functors. For instance we know that
\begin{itemize}
  \item $\varphi_*$ is separable if and only if $S/R$ is separable, see \cite[Proposition 1.3]{NVV89};
  \item $\varphi_*$ is naturally full if and only if it is full, see \cite[Proposition 3.1 (1)]{ACMM06}, if and only if it is fully faithful (in fact it is always faithful being a forgetful functor) if and only if $\varphi$ is an epimorphism in the category of rings, cf. \cite[Proposition XI.1.2]{Ste75};
  \item    $\varphi^*$ is separable if and only if $\varphi$ is split-mono as an $R$-bimodule map, i.e. if there is $E\in {}_{R}\Hom_{R}(S,R)$ such that $E\circ \varphi=\id$, see \cite[Proposition 1.3]{NVV89};
  \item $\varphi^*$ is naturally full if and only if  $\varphi$ is split-epi as an $R$-bimodule map, i.e. if there is $E\in {}_{R}\Hom_{R}(S,R)$ such that $ \varphi\circ E=\id$, see \cite[Proposition 3.1 (2)]{ACMM06};
  \item $\varphi^!$ is separable if and only if so is $\varphi^*$ \cite[Corollary 3.10]{CGN98}.
\end{itemize}
We now investigate the semiseparability of these three functors. Indeed, from the general characterization given in Proposition \ref{prop:adj-triples}, we know that $\varphi^!$ is semiseparable (resp. separable, naturally full) if and only if so is $\varphi^*$. For this reason we are only dealing with the functors $\varphi_*$ and $\varphi^*$.

Concerning $\varphi_*$, since it is always faithful, we have that $\varphi_*$ is semiseparable if and only if $\varphi_*$ is separable, that is, $S/R$ is separable. Thus, although we are tempted to name $S/R$ ``semiseparable'' whenever $\varphi_*$ is semiseparable, by the foregoing, this would bring us back to $S/R$ separable.

In the next results we investigate when the functor $\varphi^*$ is semiseparable.

\begin{prop}\label{prop:inducfunc}
Let $\varphi :R\to S$ be a morphism of rings. Then, the extension of scalars functor $\varphi^*= S\otimes_{R}(-):R\text{-}\mathrm{Mod}\rightarrow S\text{-}\mathrm{Mod}$ is semiseparable if and only if $\varphi$ is a regular morphism of $R$-bimodules, i.e. there is $E\in {}_{R}\Hom_{R}(S,R)$ such that $\varphi\circ E\circ\varphi =\varphi$, i.e., such that $\varphi E(1_S)=1_S$.
\end{prop}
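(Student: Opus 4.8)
The plan is to derive this from the Rafael-type Theorem \ref{thm:rafael}(i) applied to the adjunction $(\varphi^*,\varphi_*)$, whose unit is $\eta_M=\varphi\otimes_R M:M\to S\otimes_R M$, $m\mapsto 1_S\otimes_R m$. Accordingly, $\varphi^*$ is semiseparable if and only if $\eta$ is regular, i.e. there exists a natural transformation $\nu:\varphi_*\varphi^*\to\id_{R\text{-}\mathrm{Mod}}$ with $\eta\circ\nu\circ\eta=\eta$, and the whole task reduces to translating the existence of such a $\nu$ into the stated condition on $\varphi$.

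First I would identify the set $\Nat(\varphi_*\varphi^*,\id_{R\text{-}\mathrm{Mod}})$ with ${}_R\Hom_R(S,R)$. Note that $\varphi_*\varphi^*=S\otimes_R(-)$ as an endofunctor of $R\text{-}\mathrm{Mod}$, where $S\otimes_R M$ carries the left $R$-action through $\varphi$, namely $r\cdot(s\otimes_R m)=\varphi(r)s\otimes_R m$. Given $E\in{}_R\Hom_R(S,R)$, the formula $\nu^E_M(s\otimes_R m):=E(s)m$ defines a morphism $S\otimes_R M\to M$ that is left $R$-linear exactly because $E$ is, and natural in $M$. Conversely, from $\nu$ one recovers $E:=\nu_R$ via $S\cong S\otimes_R R$: left $R$-linearity of $E$ is that of $\nu_R$, right $R$-linearity follows from naturality of $\nu$ along the right-multiplication endomorphisms of $R$, and naturality of $\nu$ along the maps $R\to M$, $r\mapsto rm$, forces $\nu_M(s\otimes_R m)=E(s)m$, so the two constructions are mutually inverse.

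Next I would rewrite the regularity condition on $\eta$. By Lemma \ref{lem_B}(i), $\eta\circ\nu\circ\eta=\eta$ is equivalent to $\nu\varphi_*\circ\eta\varphi_*=\id_{\varphi_*}$; evaluating this at $N\in S\text{-}\mathrm{Mod}$ and $n\in N$ and writing $\nu=\nu^E$ yields $\varphi(E(1_S))\,n$, so the condition reads $\varphi(E(1_S))\,n=n$ for all $S$-modules $N$ and all $n\in N$. Taking $N=S$ and $n=1_S$ this is exactly $\varphi(E(1_S))=1_S$, while conversely that equality obviously gives the condition. Finally I would observe that $\varphi E(1_S)=1_S$ is equivalent to $\varphi\circ E\circ\varphi=\varphi$: from the first, $\varphi(E(\varphi(r)))=\varphi(rE(1_S))=\varphi(r)\varphi(E(1_S))=\varphi(r)$ for every $r\in R$ (using left $R$-linearity of $E$), and conversely one evaluates $\varphi\circ E\circ\varphi=\varphi$ at $1_R$. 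Assembling the three steps proves the proposition. The only genuinely delicate point — more bookkeeping than obstacle — is the identification of $\Nat(\varphi_*\varphi^*,\id_{R\text{-}\mathrm{Mod}})$ with $R$-bimodule maps $S\to R$, where one must keep careful track of which ring acts on which side and of the fact that the left $R$-action on $S\otimes_R M$ is through $\varphi$.
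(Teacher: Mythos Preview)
Your proof is correct and follows essentially the same approach as the paper: both invoke Theorem~\ref{thm:rafael}(i) for the adjunction $(\varphi^*,\varphi_*)$ and the identification $\Nat(\varphi_*\varphi^*,\id_{R\text{-}\mathrm{Mod}})\cong{}_R\Hom_R(S,R)$. The only minor difference is that the paper verifies $\eta\circ\nu\circ\eta=\eta$ directly on components, whereas you pass through Lemma~\ref{lem_B}(i) to the equivalent condition $\nu\varphi_*\circ\eta\varphi_*=\id_{\varphi_*}$, which slightly streamlines the computation; this is a cosmetic variation, not a genuinely different route.
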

\proof It is known that there is a bijective correspondence
$
\mathrm{Nat}(\varphi_*\varphi^*, \id_{R\text{-}\mathrm{Mod}})\cong {}_{R}\Hom_{R}(S,R)
$, see \cite[Theorem 27]{CMZ02}.
Now, by Theorem \ref{thm:rafael}, $\varphi^*$ is semiseparable if and only if there exists a natural transformation $\nu \in \mathrm{Nat}(\varphi_*\varphi^*,\id_{R\text{-}\mathrm{Mod}})$ such that $\eta\circ\nu\circ\eta = \eta$. So, given $\nu$ for $\varphi^*$, we consider the corresponding $E\in {}_{R}\Hom_{R}(S,R)$, $E(s):=\nu_R(s\otimes_R 1_R)$, for every $s\in S$. Then, for every $r\in R$,
we get $
(\varphi\circ E\circ\varphi )(r) = \varphi (E(\varphi(r)))=\varphi (\nu_R(\varphi(r)\otimes_R 1_R))=\varphi(\nu_R(\eta_R(r))) = r_S\eta_R(\nu_R(\eta_R(r)))
=r_S\eta_R(r)=\varphi(r)
$
where $r_S:S\otimes_R R\to R, s\otimes_R r\mapsto s\varphi(r),$ is the canonical isomorphism.
Conversely, given $E\in {}_{R}\Hom_{R}(S,R)$ such that $\varphi\circ E\circ\varphi =\varphi$, define $\nu_M:S\otimes_R M\to M$, $\nu_M(s\otimes_R m)=E(s)m$, for every $M\in R\text{-}\mathrm{Mod}$, $m\in M$ and $s\in S$. Then,
\begin{equation*}
\begin{split}
(\eta_M\circ\nu_M\circ\eta_M )(m)&= \eta_M(\nu_M(1_S\otimes_R m)) = \eta_M (\nu_M (\varphi(1_R)\otimes_R m)
=\eta_M (E(\varphi (1_R))m)\\ &= 1_S\otimes_R E(\varphi (1_R))m = 1_S E(\varphi (1_R))\otimes_R m = \varphi (E(\varphi (1_R)))\otimes_R m \\
&\overset{\varphi E\varphi = \varphi}{=}\varphi (1_R)\otimes_R m = 1_S\otimes_R m
=\eta_M (m).
\end{split}
\end{equation*}
Now, note that, since $E$ is a morphism of $R$-bimodules, we get $(\varphi\circ E\circ\varphi)(r)=\varphi( E(\varphi(r)))=\varphi( E(r1_S))=\varphi( rE(1_S))=\varphi( r)\varphi E(1_S) $. As a consequence, the condition $(\varphi\circ E\circ\varphi)( r)=\varphi( r)$ is equivalent to $\varphi E(1_S)=1_S$.
\endproof

We now give an example of a semiseparable functor which is neither separable nor naturally full.

\begin{es}\label{es:induction}
Let $\varphi:R\to S$ and $\psi:Q\to R$ be morphisms of rings whose induction functors  $\varphi^*$ and $\psi^*$ are separable and naturally full respectively. This means there is $E\in {}_{R}\Hom_{R}(S,R)$ such that $E\circ \varphi=\id$ (in particular $\varphi$ is injective) and there is $D\in {}_{Q}\Hom_{Q}(R,Q)$ such that $\psi \circ D=\id$ (in particular $\psi$ is surjective). By Corollary \ref{cor:fattoriz}, the composition $\varphi^*\circ \psi^*\cong (\varphi\circ \psi)^*$ is semiseparable. The map corresponding to $\varphi\circ \psi$ via Proposition \ref{prop:inducfunc} is  $D\circ E\in {}_{Q}\Hom_{Q}(S,Q)$. Note that, if $\varphi\circ \psi$ is neither injective nor surjective, we can conclude that $(\varphi\circ \psi)^*$ is neither separable nor naturally full. For instance, let $\varphi:\QQ\to \QQ[X]$ be the canonical injection of the field of rational numbers into the polynomial ring over it and let $\psi:\QQ\times\ZZ\to \QQ$ be given by $\psi((q,z))=q$. Then we can define $D$ by setting $D(q)=(q,0)$ and $E$ to be the evaluation at $0$ of the given polynomial. Then $(\varphi\circ \psi)^*$ is semiseparable but it is neither separable nor naturally full.
 \end{es}

 In a similar way as in Example \ref{es:induction}, the following example shows that semiseparable functors are not closed under composition.
\begin{es}\label{es:sscompos}
Let $\varphi:R\to S$ and $\psi:S\to Q$ be morphisms of rings whose induction functors  $\varphi^*$ and $\psi^*$ are separable and naturally full respectively (in particular both semiseparable by Proposition \ref{prop:sep}). This means there is $E\in {}_{R}\Hom_{R}(S,R)$ such that $E\circ \varphi=\id$ and there is $D\in {}_{S}\Hom_{S}(Q,S)$ such that $\psi \circ D=\id$. The results we have proved so far do not allow us to conclude that the composition $\psi^*\circ \varphi^*\cong (\psi\circ \varphi)^*$ is semiseparable. Indeed we can provide a specific example where this is not true. Let $\varphi:\ZZ\to \QQ\times\ZZ,z\mapsto(z,z)$, and let $\psi:\QQ\times\ZZ\to \QQ$ be given by $\psi((q,z))=q$. Then we can define $D$ by setting $D(q)=(q,0)$ and $E$ by setting $E((q,z))=z$. In this way we get that $\varphi^*$ and $\psi^*$ are separable and naturally full respectively. Let us show that $(\psi\circ \varphi)^*$ is not semiseparable. Otherwise, by Proposition \ref{prop:inducfunc} there exists $E'\in {}_{\ZZ}\Hom_{\ZZ}(\QQ,\ZZ)$ such that $\psi \varphi(E'(1_\QQ))=1_\QQ.$ Since ${}_{\ZZ}\Hom_{\ZZ}(\QQ,\ZZ)=\{0\}$, this means $0_\ZZ=1_\ZZ$, a contradiction.
\begin{invisible}
  Given $f\in {}_{\ZZ}\Hom_{\ZZ}(\QQ,\ZZ)$, for every $q\in\QQ$ and $n\in\NN$, we have $f(q)=f(n\frac{q}{n})=nf(\frac{q}{n})$ so that $n$ divides $f(q)$ in $\ZZ$ for every $n\in\NN$. This is possible only if $f(q)=0$.
\end{invisible}
\end{es}

 Let us see that all morphisms of rings $\varphi :R\to S$ whose induction functor $\varphi^*:= S\otimes_{R}(-):R\text{-}\mathrm{Mod}\rightarrow S\text{-}\mathrm{Mod}$ is semiseparable are of the form given in Example \ref{es:induction}. More precisely, we will get that $\varphi^*$ factors as a bireflection followed by a separable functor. First we need the next remark.

 \begin{rmk}\label{rmk:epirng}
Let $\varphi:R\to S$ be an epimorphism in the category of rings. By \cite[Proposition 1.2]{Ste75} the faithful functor $\varphi_*$ is also full, and hence its left adjoint $\varphi^*=S\otimes_R(-)$ is a reflection, whereas its right adjoint $\varphi^!= {}_R\Hom(S,-)$ is a coreflection. Thus, Theorem \ref{thm:frobenius} applies in this case to get that $\varphi^*$ is naturally full if and only if it is semiseparable if and only if it is Frobenius, that is, in the same way $\varphi^!$ is naturally full if and only if it is semiseparable if and only if it is Frobenius. In particular, in this case $\varphi^*$ and $\varphi^!$ are isomorphic bireflections.
\end{rmk}

\begin{prop}\label{prop:phi*}
  Let $\varphi :R\to S$ be a morphism of rings. Write $\varphi=\iota\circ \overline{\varphi}$ where $\iota:\varphi(R)\to S$ is the canonical inclusion and $\overline{\varphi}:R\to \varphi(R)$ is the corestriction of $\varphi$ to its image $\varphi(R)$.

  Then, the induction functor $\varphi^*:= S\otimes_{R}(-):R\text{-}\mathrm{Mod}\rightarrow S\text{-}\mathrm{Mod}$ is semiseparable if and only if $\iota^*$ is separable and $\overline{\varphi}^*$ is a bireflection.
\end{prop}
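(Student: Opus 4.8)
The plan is to use the canonical natural isomorphism $\varphi^*\cong \iota^*\circ\overline{\varphi}^*$ coming from associativity of the tensor product, namely $S\otimes_{R}M\cong S\otimes_{\varphi(R)}(\varphi(R)\otimes_{R}M)$ naturally in $M\in R\text{-}\mathrm{Mod}$. Since $\overline{\varphi}\colon R\to\varphi(R)$ is surjective it is an epimorphism in the category of rings, so Remark \ref{rmk:epirng} applied to $\overline{\varphi}$ in place of $\varphi$ tells us that $\overline{\varphi}^*$ is a reflection for which being a bireflection, being semiseparable and being naturally full all coincide. Hence the assertion reduces to proving: $\varphi^*$ is semiseparable if and only if $\iota^*$ is separable and $\overline{\varphi}^*$ is semiseparable.

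For the ``if'' direction I would note that if $\overline{\varphi}^*$ is semiseparable and $\iota^*$ is separable, then $\iota^*\circ\overline{\varphi}^*$ is semiseparable by Lemma \ref{lem:comp}(i), whence $\varphi^*$ is semiseparable by Corollary \ref{cor:isomorphic} (semiseparable functors are closed under isomorphism).

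For the ``only if'' direction, suppose $\varphi^*$ is semiseparable. By Proposition \ref{prop:inducfunc} there is an $R$-bimodule map $E\colon S\to R$ with $\varphi\circ E\circ\varphi=\varphi$; set $z:=E(1_{S})$. Since in the $R$-bimodule $S$ one has $r\cdot 1_{S}=\varphi(r)=1_{S}\cdot r$, bilinearity of $E$ gives $E(\varphi(r))=rz=zr$ for every $r\in R$, so $z$ lies in the centre of $R$; and evaluating $\varphi\circ E\circ\varphi=\varphi$ at $1_{R}$ yields $\varphi(z)=1_{S}$, which together with the injectivity of $\iota$ forces $\overline{\varphi}(z)=1_{\varphi(R)}$. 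Now consider $\widetilde{E}:=\overline{\varphi}\circ E\colon S\to\varphi(R)$. Because the $\varphi(R)$-bimodule structure of $S$ is the restriction along $\iota$ of its $R$-bimodule structure and $\overline{\varphi}$ is a ring homomorphism, $\widetilde{E}$ is a morphism of $\varphi(R)$-bimodules, and for $r\in R$ one computes $\widetilde{E}(\iota(\overline{\varphi}(r)))=\overline{\varphi}(E(\varphi(r)))=\overline{\varphi}(rz)=\overline{\varphi}(r)\,\overline{\varphi}(z)=\overline{\varphi}(r)$, so $\widetilde{E}\circ\iota=\id_{\varphi(R)}$. Thus $\iota$ is a split monomorphism of $\varphi(R)$-bimodules, hence $\iota^*$ is separable by the criterion recalled before Proposition \ref{prop:inducfunc} (applied to $\iota$ in place of $\varphi$). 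In particular $\iota^*$ is faithful; since $\iota^*\circ\overline{\varphi}^*\cong\varphi^*$ is semiseparable by Corollary \ref{cor:isomorphic}, Lemma \ref{lem_A} gives that $\overline{\varphi}^*$ is semiseparable, hence a bireflection by the reduction above, which concludes the argument.

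The only mildly delicate point I anticipate is keeping track of the two ring structures $R$ and $\varphi(R)$ and of the interplay between the $R$-bimodule and the $\varphi(R)$-bimodule structures on $S$, so as to verify that $\widetilde{E}=\overline{\varphi}\circ E$ is $\varphi(R)$-bilinear and splits $\iota$; once $z=E(1_{S})$ has been identified as a central element of $R$ mapping to the unit of $\varphi(R)$, the rest is a formal combination of Remark \ref{rmk:epirng}, Theorem \ref{thm:frobenius}, Lemma \ref{lem:comp}, Lemma \ref{lem_A} and Corollary \ref{cor:isomorphic}, so no genuinely new obstacle appears.
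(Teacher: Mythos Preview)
Your proof is correct and follows essentially the same route as the paper's: both obtain $E$ from Proposition \ref{prop:inducfunc}, derive $\overline{\varphi}\circ E\circ\iota=\id_{\varphi(R)}$ to conclude that $\iota^*$ is separable, and invoke Remark \ref{rmk:epirng} to handle the bireflection part. The only minor difference is in how you establish that $\overline{\varphi}^*$ is a bireflection: you argue via Lemma \ref{lem_A} (separability of $\iota^*$ makes it faithful, so semiseparability of $\iota^*\circ\overline{\varphi}^*\cong\varphi^*$ descends to $\overline{\varphi}^*$), whereas the paper's argument can be read as observing directly that the same identity $\overline{\varphi}\circ(E\circ\iota)=\id$ exhibits $E\circ\iota$ as an $R$-bimodule splitting of $\overline{\varphi}$, making $\overline{\varphi}^*$ naturally full.
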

\proof
If $\varphi^*$ is semiseparable, by Proposition \ref{prop:inducfunc}, there is $E\in {}_{R}\Hom_{R}(S,R)$ such that $\varphi\circ E\circ \varphi=\varphi$, i.e. $\iota\circ \overline{\varphi}\circ E\circ\iota\circ\overline{\varphi}=\iota\circ \overline{\varphi}$. Since $\iota$ is injective and $\overline{\varphi}$ is surjective, we get $ \overline{\varphi}\circ E\circ\iota=\id_{\varphi(R)}$ which implies that $\iota^*$ is separable. On the other hand, $\overline{\varphi}^*$ is a bireflection in view of Remark \ref{rmk:epirng} and surjectivity of $\overline{\varphi}$.
Conversely, if  $\iota^*$ is separable and $\overline{\varphi}^*$ is a bireflection, whence naturally full, then the composition $\iota^*\circ \overline{\varphi}^*\cong (\iota\circ \overline{\varphi})^*=\varphi^*$ is semiseparable by Corollary \ref{cor:fattoriz}.
\endproof

In the proof of Proposition \ref{prop:phi*} we obtained a factorization $\iota^*\circ \overline{\varphi}^*\cong \varphi^*$ in case $\varphi^*$ is semiseparable. In view of Corollary \ref{cor:fact-birefl}, this factorization is the same given by the coidentifier within Theorem \ref{thm:coidentifier}, up to a category equivalence.\medskip

Next proposition provides a further characterization of the semiseparability of $\varphi^*$. We point out that its current proof, more direct than the original one, was suggested by P. Saracco.

First recall that, given a central idempotent element $z$ in a ring $R$, then $zRz=Rz$ is a ring with addition
and multiplication those of $R$ restricted to $zR$ and with identities $%
0_{Rz}=0_{R}z=0_R$ and $1_{Rz}=1_{R}z=z,$ and there is a surjective ring
homomorphism $R\rightarrow Rz,r\mapsto rz$, see \cite[1.16]{AF92}.

\begin{prop}\label{prop:modextens}
Let $\varphi:R\to S$ be a ring homomorphism. Then, the induction functor $\varphi^*=S\otimes_R(-):R\text{-}\mathrm{Mod}\to S\text{-}\mathrm{Mod}$  is semiseparable if and only if there is a central idempotent $z\in R$ (necessarily unique) such that $\varphi(z)=1_S$ and the ring map $\tau:=\varphi_{\mid Rz} :Rz\rightarrow S$ is split-mono as an $Rz$-bimodule map.
\end{prop}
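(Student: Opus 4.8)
The plan is to reduce everything to Proposition~\ref{prop:inducfunc}, which already says that $\varphi^{*}$ is semiseparable if and only if there is an $R$-bimodule map $E\colon S\to R$ with $\varphi E(1_{S})=1_{S}$ (equivalently $\varphi\circ E\circ\varphi=\varphi$). So the statement becomes a dictionary between such an $E$ and the data ``a central idempotent $z\in R$ with $\varphi(z)=1_{S}$, together with a splitting of $\tau=\varphi_{\mid Rz}$ as a map of $Rz$-bimodules''. Throughout I use the explicit $R$-bimodule structure $r\cdot s\cdot r'=\varphi(r)\,s\,\varphi(r')$ on $S$.

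For the ``only if'' direction, given $E$ with $\varphi E(1_{S})=1_{S}$, I would set $z:=E(1_{S})$, so that $\varphi(z)=1_{S}$. Left $R$-linearity of $E$ gives $z^{2}=z\,E(1_{S})=E(\varphi(z)1_{S})=E(\varphi(z))=E(1_{S})=z$, and comparing left and right $R$-linearity gives $rz=E(\varphi(r))=zr$ for every $r\in R$, so $z$ is a central idempotent. Then (as recalled just before the statement) $Rz$ is a ring with unit $z$, and $\tau:=\varphi_{\mid Rz}\colon Rz\to S$ is a ring map sending $z\mapsto 1_{S}$. Right $R$-linearity also gives $E(s)=E(s\,\varphi(z))=E(s)z$, so $\im E\subseteq Rz$ and $E$ corestricts to an $R$-bilinear map $F\colon S\to Rz$. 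Since the $R$-module structures on $S$ (via $\varphi$) and on $Rz\subseteq R$ both factor through the ring surjection $R\to Rz,\ r\mapsto rz$, this $F$ is automatically $Rz$-bilinear; and from $F(\tau(z))=E(\varphi(z))=E(1_{S})=z=1_{Rz}$ together with $Rz$-bilinearity one gets $F\circ\tau=\id_{Rz}$ (it suffices to check equality on the generator $z$). Hence $\tau$ is split-mono as an $Rz$-bimodule map.

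For the ``if'' direction I reverse this: given a central idempotent $z$ with $\varphi(z)=1_{S}$ and an $Rz$-bimodule map $F\colon S\to Rz$ with $F\circ\tau=\id_{Rz}$, I compose $F$ with the inclusion $Rz\hookrightarrow R$ to get $E\colon S\to R$; since all the actions involved factor through $R\to Rz$, $E$ is $R$-bilinear, and $E(1_{S})=F(\tau(z))=z$ gives $\varphi E(1_{S})=\varphi(z)=1_{S}$, so $\varphi^{*}$ is semiseparable by Proposition~\ref{prop:inducfunc}. For the ``necessarily unique'', if $z,z'$ are two central idempotents with $\varphi(z)=\varphi(z')=1_{S}$ and $\tau=\varphi_{\mid Rz}$, $\tau'=\varphi_{\mid Rz'}$ split-mono (hence injective, i.e. $Rz\cap\ker\varphi=0=Rz'\cap\ker\varphi$), then $z-z'\in\ker\varphi$ forces $z(z-z')=0$ and $z'(z-z')=0$, and centrality yields $z=zz'=z'z=z'$.

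The only mildly delicate point—not really an obstacle—is the restriction-of-scalars bookkeeping: the observation that an $R$-bilinear map into/out of a module whose $R$-action factors through $R\to Rz$ is the same thing as an $Rz$-bilinear map, which is what makes the passage $E\leftrightarrow F$ work in both directions. Once that is isolated, everything else is a direct computation with the bimodule action on $S$, and the verification $F\circ\tau=\id_{Rz}$ from $E(1_{S})=z$ reduces to checking on the single generator $z$.
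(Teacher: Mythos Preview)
Your proof is correct and follows essentially the same route as the paper: both reduce to Proposition~\ref{prop:inducfunc}, set $z:=E(1_S)$, verify it is a central idempotent with $\varphi(z)=1_S$, and pass between $E$ and an $Rz$-bilinear section of $\tau$ (your corestriction $F$ coincides with the paper's $\pi=\psi\circ E$ since $E(s)z=E(s)$). The only notable variation is the uniqueness argument---you use injectivity of $\tau$ and $\tau'$ to kill $z(z-z')$ and $z'(z-z')$, whereas the paper computes $zz'=E(1_S\cdot z')=E(\varphi(z'))=E(1_S)=z$ directly---but both are short and valid.
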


\begin{proof}
 By Proposition \ref{prop:inducfunc}, the functor $\varphi^*$ is semiseparable if and only if there exists $E\in{}_R\Hom_{R}(S,R)$ such that $\varphi E(1_S)=1_S.$ Assume that there is $E$ as above and set $z:=E(1_S)\in R$. Clearly $\varphi(z)=\varphi E(1_S)=1_S$, i.e.  $\varphi(z)=1_S$. For any $r\in R$ we have $rz=rE(1_S)=E(\varphi (r)1_S)=E(\varphi(r))$ and similarly $zr=E(\varphi(r))$ so that $rz=zr$, i.e. $z$ is central. Taking $z=r$ in the computation above, we get $zz=E(\varphi(z))=E(1_S)=z$, thus $z$ is an idempotent. Concerning the ring map $\tau:=\varphi_{\mid Rz} :Rz\rightarrow S$, consider the canonical projection $\psi : R\rightarrow Rz,r\mapsto rz$. Since $z$ is central, we get that $\psi$ is $R$-bilinear so that the map $\pi:=\psi\circ E:S\to Rz,s\mapsto E(s)z$ is $R$-bilinear as a composition of bilinear maps. In particular $\pi$ is $Rz$-bilinear. Then, for any $r\in R$ we have $\pi\tau (rz)=\pi \varphi(rz)=\pi (\varphi(r)\varphi(z))=\pi (r1_S)=r\pi(1_S)=rE(1_S)z=rzz=rz$, hence $\pi\circ \tau=\id_{Rz}$ i.e. $\tau$ is a split-monomorphism of $Rz$-bimodules.

Conversely, assume there is a central idempotent $z\in S$ such that $\varphi(z)=1_S$ and the ring map $\tau:=\varphi_{\mid Rz} :Rz\rightarrow S$ is split-mono through a $Rz$-bimodule map $\pi:S\to Rz$. Set $E:S\to R,s\mapsto \pi(s)$. Then $rs=\varphi(r)s=\varphi(r)\varphi(z)s=\varphi(rz)s=(rz)s$ so that $E(rs)=E((rz)s)=\pi((rz)s)=rz\pi(s)=r\pi(s)=rE(s)$ where the second-last equality follows from the fact that $\pi(s)\in Rz$ and $z$ is a central idempotent. Similarly one gets $E(sr)=E(s)r$ so that $E$ is $R$-bilinear. Finally, we have $E(1_S)=\pi(1_S)=\pi(\varphi(z))=\pi(\tau(z))=z$ and hence $\varphi E(1_S)=\varphi(z)=1_S.$
Assume there is another idempotent $z'\in R$ as in the statement. Then $zz'=E(1_S)z'=E(1_S z')=E(\varphi(z'))=E(1_S)=z$. By exchanging the roles of $z$ and $z'$ we get $z'z=z'$ and hence $z=z'$.
 \end{proof}

\begin{rmk}\label{rmk:modextens}
In the proof of Proposition \ref{prop:modextens} we considered the maps $\tau:=\varphi_{\mid Rz} :Rz\rightarrow S$ and $\psi : R\rightarrow Rz,r\mapsto rz$. Since $\varphi(z)=1_S$, we get the equality $\varphi=\tau\circ\psi$ which provides the factorization $\varphi^*\cong\tau^*\circ\psi^*$. On the other hand, in Proposition \ref{prop:phi*} we obtained the equality $\varphi=\iota\circ\overline{\varphi}$, where $\iota:\varphi(R)\to S,s\mapsto s,$ and $\overline{\varphi}:R\to \varphi(R),r\mapsto \varphi(r)$, which yields the factorization $\varphi^*\cong\iota^*\circ\overline{\varphi}^*$. Define the morphism $\lambda:Rz\to\varphi(R),rz\mapsto\varphi(r)$. Clearly the following diagrams commute.
  $$\xymatrixcolsep{1.6cm}\xymatrixrowsep{.4cm}
\xymatrix{R\ar[dd]_\varphi\ar@{->>}[dr]_\psi\ar@/^1pc/@{->>}[rrd]^{\overline{\varphi}}\\&Rz\ar@{^(..>>}[r]^\lambda\ar@{^(->}[dl]_\tau&\varphi(R)\ar@/^1pc/@{^(->}[lld]^{\iota}\\S
}
\qquad
\xymatrixcolsep{1.6cm}\xymatrixrowsep{.6cm}
\xymatrix{R\text{-}\mathrm{Mod}\ar[rd]_{\psi^*}\ar@/^1pc/[rrd]^{\overline{\varphi} ^{\ast }}
\ar[dd]_{\varphi^*}&& \\
&Rz\text{-}\mathrm{Mod}\ar[ld]_-{\tau^*}\ar[r]^{\lambda^{\ast }}&\varphi(R)\text{-}\mathrm{Mod}\ar@/^1pc/[dll]^{\iota^*}
\\
S\text{-}\mathrm{Mod}
} $$ The first diagram entails that $\lambda$ is both injective and surjective whence bijective. As a consequence the given factorizations are the same up to the equivalence $\lambda^*$.
\end{rmk}

\subsection{Coinduction and corestriction of coscalars functors}\label{es:coinduct-coalgebras}
Let $\Bbbk$ be a field. We simply denote the tensor product over $\Bbbk$ by the unadorned $\otimes$. 
A $\Bbbk$-\emph{coalgebra} $C$ is a vector space $C$ over $\Bbbk$ equipped with two $\Bbbk$-linear maps $\Delta_C:C\to C\otimes C$ and $\varepsilon_C:C\to \Bbbk$ such that $\Delta_C$ is coassociative and counital, i.e. the equalities\begin{equation*}
(\Delta_{C}\otimes C )\circ\Delta_{C} = (C\otimes \Delta_{C})\circ\Delta_{C} \quad\text{and}\quad (\varepsilon_{C}\otimes C )\circ\Delta_{C}=(C\otimes_{\Bbbk}\varepsilon_{C})\circ\Delta_{C}=C
\end{equation*} hold true.
A right $C$-comodule $M$ is a $\Bbbk$-vector space together with a $\Bbbk$-linear map $\rho_M:M\to M\otimes C$, called the \emph{coaction}, that is coassociative and right counital i.e.
\begin{equation*}
(\rho_M\otimes C )\circ\rho_M = (M\otimes\Delta_{C})\circ\rho_M \quad\text{and}\quad (M\otimes\varepsilon_{C})\circ\rho_M =M.
\end{equation*}
A coalgebra $C$ can be seen as a right $C$-comodule with $\rho_C=\Delta_C$. Both for $\Delta$ and $\rho_M$ we adopt the usual Sweedler notations $\Delta(c)=\sum c_1\otimes c_2$ and $\rho_M(m)=\sum m_0\otimes  m_1$ for every $c\in C,m\in M$.
A \emph{morphism of right $C$-comodules} (or a $C$-\emph{colinear} morphism) is a $\Bbbk$-linear map $f:M\to  N$ between right $C$-comodules such that $\rho_N\circ f = (f\otimes C )\circ\rho_M$. The category of right $C$-comodules and their morphisms is denoted by $\m^{C}$. Analogously, the category of left $C$-comodules and their morphisms is denoted by ${}^C\m$. Recall from \cite{Tak77} that, given a right $C$-comodule $M$ and a left $C$-comodule $N$, the \emph{cotensor product} $M\square_C N$ is the kernel of the $\Bbbk$-linear map $$\rho_M\otimes N-M\otimes\rho_N: M\otimes N\to M\otimes C\otimes N,$$
where $\rho_M$ and $\rho_N$ are the right and the left $C$-comodule structures of $M$ and $N$, respectively.
\par
Now, let $\psi:C\to D$ be a \emph{morphism of coalgebras}, i.e. a $\Bbbk$-linear map $\psi:C\to D$ such that $\Delta_D\circ\psi=(\psi\otimes\psi)\circ\Delta_C$ and $\varepsilon_D\circ \psi=\varepsilon_C$.
Since any right $C$-comodule $M$ with coaction $\rho_M:M\to M\otimes C$ can be viewed as a right $D$-comodule with coaction $(M\otimes\psi)\circ \rho_M: M\to M\otimes D$ and $C$ can be considered as a $(D,C)$-bicomodule, $\psi$ induces
\begin{itemize}
	\item the corestriction of coscalars functor $\psi_*:\m^C\to\m^D$,
	\item the coinduction functor $\psi^*:=(-)\square_D C:\m^D\to\m^C$,
\end{itemize}
which form an adjunction $\psi_*\dashv \psi^*:\m^D\to\m^C$, with unit $\eta:\id_{\m^C}\to \psi^*\psi_*$ and counit $\epsilon:\psi_*\psi^* \to\id_{\m^D}$, given by $$\eta_M : M\to M\square_D C,\, m\mapsto\sum m_0\square_D m_1,\quad \text{and}\quad \epsilon_N: N\square_D C\to N,\, n\square_D c\mapsto n\varepsilon_C(c),$$
for any $M\in\m^C$ and $N\in \m^D$, see \cite[11.10]{BW03}. Note that, in the definition of $\psi^*$, for any right $D$-comodule $N$, the cotensor product $N\square_D C$ is regarded as a right $C$-comodule via $\rho_{N\square_D C}:N\square_D C\to (N\square_D C)\otimes C,n\square_D c\mapsto \sum (n\square_D c_1)\otimes c_2.$ 
Furthermore, the coaction $\rho_M$ of $M$ as a right $C$-comodule induces a morphism of right $C$-comodules $\bar{\rho}_M=\eta_M:M\to M\square_D C$ such that $\rho_{M}=i\circ\bar{\rho}_M$, where $i:M\square_D C\to M\otimes C$ is the canonical inclusion. In particular, if $M=C$ then $\bar{\rho}_{C}=\bar{\Delta}_C=\eta_C:C\to C\square_D C$.
\begin{invisible}
We prove that $\psi_*\dashv \psi^*:\m^D\to\m^C$ is an adjunction as above. First, for any $M\in\m^C$, let us see that $\eta_M$ is well defined i.e. that its image is contained in $M\square_DC$. For $m\in M$, we have $((M\otimes\psi)\circ \rho_M)\otimes C)(\sum m_0\otimes m_1)=\sum (M\otimes\psi)\rho_M(m_{0})\otimes m_1=\sum m_{0_0}\otimes\psi(m_{0_1})\otimes m_1$ and $(M\otimes (\psi\otimes C) \Delta_C)(\sum m_0\otimes m_1)=\sum m_0\otimes (\psi\otimes C)\Delta_C(m_1)=\sum m_0\otimes\psi(m_{1_1})\otimes_{\Bbbk}m_{1_2}$.
Moreover $\eta_M$ is a morphism of right $C$-comodules, as $(\rho_{M\square_D C}\circ\eta_M)(m)=\rho_{M\square_D C}(\sum m_0\square_D m_1)=\sum (m_{0}\square_D m_{1_1})\otimes m_{1_2}$ and $(\eta_M\otimes C)\rho_M(m)=(\eta_M\otimes C)(\sum m_0\otimes m_1)=\sum (m_{0_0}\square_D m_{0_1})\otimes m_1$, for all $m\in M$.
For any $N\in\m^D$, $\epsilon_N$ is a morphism of right $D$-comodules since for all $n\square_D c\in N\square_DC$ we have $(\rho_N\circ\epsilon_N)(n\square_D c)=\rho_N(n\varepsilon_{C}(c))=\rho_N(n)\varepsilon_C(c)=\sum n_0\otimes n_1\varepsilon_C(c)\overset{(*)}{=}\sum n\otimes\psi(c_1)\varepsilon_{C}(c_2)=\sum n\otimes\psi(c_1\varepsilon_C(c_2))=n\otimes_{\Bbbk}\psi(c)$, where $(*)$ follows from the fact that $n\square_D c$ belongs to $N\square_D C$ so that $\sum n_0\otimes n_1\otimes_{\Bbbk} c=\sum n\otimes \psi(c_1)\otimes_{\Bbbk} c_2$, and $(\epsilon_N\otimes D)\rho_{N\square_D C}(n\square_D c)=(\epsilon_N\otimes D)(\sum (n\square_D c_1)\otimes_{\Bbbk} \psi(c_2))=\sum n\varepsilon_C(c_1)\otimes\psi (c_2)=\sum n\otimes\psi (\varepsilon_C(c_1)c_2)=n\otimes_{\Bbbk}\psi(c)$.
\\
Now, we show that the triangle identities are satisfied. Let $M$ be in $\m^C$. For all $m\in M$, we have $(\epsilon_{\psi_*M}\circ\psi_* \eta_M) (m) =\epsilon_{\psi_*M}(\sum m_0\square_D m_1)=\sum m_0\varepsilon_C (m_1)=m=\id_{\psi_*M}(m)$, where $\psi_*M$ is $M$ viewed as a right $D$-comodule. Now, let $N$ be in $\m^D$. For all $n\in N$, $c\in C$ we have $((\epsilon_N\square_D C)\circ \eta_{N\square_DC})(n\square_D c)=(\epsilon_N\square_D C)(\sum (n\square_Dc)_0\square_D (n\square_D c)_1)=\sum \epsilon_N((n\square_Dc)_0)\square_D(n\square_Dc)_1= \sum \epsilon_N(n\square_Dc_1)\square_Dc_2=\sum n\varepsilon_{C}(c_1)\square_Dc_2= n \square_D \sum \varepsilon_{C}(c_1)c_2=n\square_D c=Id_{N\square_DC}(n\square_Dc)$.
\end{invisible}
It is known that
\begin{itemize}
	\item $\psi_*$ is separable if and only if the canonical morphism $\bar{\Delta}_C:C\to C\square_D C$ is split-mono as a $C$-bicomodule map, see \cite[Theorem 2.4]{CGN97};
	\item $\psi_*$ is naturally full if and only if $\bar{\Delta}_C$ is split-epi  as a $C$-bicomodule map, see \cite[Examples 3.23 (1)]{ACMM06};
	\item  $\psi^*$ is separable if and only if $\psi$ is split-epi as a $D$-bicomodule map, see \cite[Theorem 2.7]{CGN97};
	\item $\psi^*$ is naturally full if and only if $\psi$ is split-mono as a $D$-bicomodule map, see \cite[Examples 3.23 (1)]{ACMM06}.
\end{itemize}
Since $\psi_*$ is faithful, we have that $\psi_*$ is semiseparable if and only if it is separable. The semiseparability of $\psi^*$ is investigated in the following result.
\begin{prop}\label{prop:coinduc-coalg}
	Let $\psi :C\to D$ be a morphism of coalgebras. Then, the coinduction functor $\psi^* =(-)\square_D C:\m^D\to\m^C$ is semiseparable if and only if $\psi$ is a regular morphism of $D$-bicomodules if and only if there is a $D$-bicomodule morphism $\chi:D\to C$ such that $\varepsilon_C \circ\chi\circ\psi=\varepsilon_C$.
\end{prop}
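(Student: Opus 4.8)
The plan is to mirror the proof of Proposition~\ref{prop:inducfunc}, working with the adjunction $\psi_*\dashv\psi^*:\m^D\to\m^C$ and its counit $\epsilon$ in place of $(\varphi^*,\varphi_*)$ and its unit. Since $\psi^*$ is the right adjoint, Theorem~\ref{thm:rafael}(ii) says $\psi^*$ is semiseparable if and only if $\epsilon:\psi_*\psi^*\to\id_{\m^D}$ is regular, i.e.\ there is $\gamma:\id_{\m^D}\to\psi_*\psi^*$ with $\epsilon\circ\gamma\circ\epsilon=\epsilon$. The bridge to $D$-bicomodule maps is the bijective correspondence $\Nat(\id_{\m^D},\psi_*\psi^*)\cong {}^{D}\Hom^{D}(D,C)$, dual to \cite[Theorem~27]{CMZ02} (cf.\ also the proof of \cite[Theorem~2.7]{CGN97}): a $D$-bicolinear $\chi:D\to C$ corresponds to $\gamma^\chi$ with $\gamma^\chi_N(n)=\sum n_0\square_D\chi(n_1)$, and conversely $\gamma$ gives back $\chi^\gamma:=(\varepsilon_D\otimes C)\circ\gamma_D$.

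I would first dispatch the elementary equivalence, for a $D$-bicomodule map $\chi:D\to C$, between $\psi\circ\chi\circ\psi=\psi$ and $\varepsilon_C\circ\chi\circ\psi=\varepsilon_C$. Postcomposing the first with $\varepsilon_D$ and using $\varepsilon_D\circ\psi=\varepsilon_C$ gives the second at once. For the converse, right $D$-colinearity of $\chi$ and the counit axiom of $C$ yield $\psi(\chi(d))=\sum\varepsilon_C(\chi(d_1))\,d_2$; feeding $d=\psi(c)$, using $\Delta_D\circ\psi=(\psi\otimes\psi)\circ\Delta_C$ and $\varepsilon_C\circ\chi\circ\psi=\varepsilon_C$, one gets $\psi(\chi(\psi(c)))=\sum\varepsilon_C(c_1)\,\psi(c_2)=\psi(c)$. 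Hence it suffices to prove that $\psi^*$ is semiseparable if and only if there is $\chi\in {}^{D}\Hom^{D}(D,C)$ with $\varepsilon_C\circ\chi\circ\psi=\varepsilon_C$.

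For the ``if'' part, given such a $\chi$ I would take $\gamma:=\gamma^\chi$ and compute
\[
\epsilon_N\bigl(\gamma^\chi_N(\epsilon_N(n\square_D c))\bigr)=\varepsilon_C(c)\sum n_0\,\varepsilon_C(\chi(n_1));
\]
applying $N\otimes(\varepsilon_C\circ\chi)\otimes\varepsilon_C$ to the defining relation $\sum n_0\otimes n_1\otimes c=\sum n\otimes\psi(c_1)\otimes c_2$ of $N\square_D C$ and using $\varepsilon_C\circ\chi\circ\psi=\varepsilon_C$ together with counitality shows this equals $n\varepsilon_C(c)=\epsilon_N(n\square_D c)$, so $\epsilon\circ\gamma^\chi\circ\epsilon=\epsilon$ and $\psi^*$ is semiseparable by Theorem~\ref{thm:rafael}(ii). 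For the ``only if'' part, I would start from $\gamma$ with $\epsilon\circ\gamma\circ\epsilon=\epsilon$, put $\chi:=\chi^\gamma\in {}^{D}\Hom^{D}(D,C)$, evaluate the identity at $\psi_*C\in\m^D$ on $\eta_C(c)=\sum c_1\square_D c_2$, and use the triangle identity $\epsilon_{\psi_*C}\circ\psi_*\eta_C=\id_{\psi_*C}$ to get $\sum c_1\,\varepsilon_C(\chi(\psi(c_2)))=c$; postcomposing with $\varepsilon_C$ yields $\varepsilon_C\circ\chi\circ\psi=\varepsilon_C$, so $\psi$ is a regular $D$-bicomodule map.

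The step that deserves care is the correspondence $\Nat(\id_{\m^D},\psi_*\psi^*)\cong {}^{D}\Hom^{D}(D,C)$ itself: that $\gamma^\chi$ is a well-defined natural transformation landing in the cotensor product (well-definedness uses left $D$-colinearity of $\chi$, and $D$-colinearity of $\gamma^\chi_N$ uses right $D$-colinearity of $\chi$), and that $\chi^\gamma$ is genuinely $D$-bicolinear (which uses the naturality of $\gamma$, e.g.\ at $\Delta_D:D\to D\otimes D$ and at $\psi:\psi_*C\to D$). This is the $\Bbbk$-linear dual of the bimodule bookkeeping behind \cite[Theorem~27]{CMZ02}, so I would invoke it; the rest is a routine dualization of the proof of Proposition~\ref{prop:inducfunc}.
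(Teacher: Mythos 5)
Your proposal is correct and follows essentially the same route as the paper: the Rafael-type theorem for the right adjoint $\psi^*$, the correspondence between $\Nat(\id_{\m^D},\psi_*\psi^*)$ and $D$-bicolinear maps $D\to C$ (which the paper verifies in-line, via naturality at the maps $\hat f$ for $f\in D^*$, rather than citing a dual of \cite[Theorem 27]{CMZ02}), and the same explicit cotensor computations. The only organizational difference is that you establish the equivalence of $\psi\circ\chi\circ\psi=\psi$ with $\varepsilon_C\circ\chi\circ\psi=\varepsilon_C$ directly for a bicolinear $\chi$ and extract the latter by evaluating at $\psi_*C$ with a triangle identity, whereas the paper evaluates at $D$ to get regularity of $\psi$ first and closes the remaining implication by cycling through semiseparability; both are sound.
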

\proof
Assume that $\psi^*$ is semiseparable. By Theorem \ref{thm:rafael}, there exists a natural transformation $\gamma : \id_{\m^D} \rightarrow \psi_*\psi^*$ such that $\epsilon_N\circ\gamma_N\circ\epsilon_N = \epsilon_N$, for any $N\in\m^D$. Since $D$ is a right $D$-comodule, consider the right $D$-comodule map $\gamma_D : D\to D\square_D C$ and define the map $\chi:D\to C$ as $\chi:=l_C\circ \gamma_D$, where $l_C:D\square_D C\to C,\sum_i d_i\otimes c_i\mapsto \sum_i \varepsilon_D(d_i)c_i,$ is the canonical isomorphism. Note that $\psi$ is a morphism of $D$-bicomodules and $\psi=\epsilon_D\circ l_C^{-1}$. We show that $\chi$ is a morphism of $D$-bicomodules. For any $f\in D^*=\Hom_\Bbbk (D,\Bbbk)$, consider the morphism of right $D$-comodules $\hat{f}:N\to D$, $n\mapsto \sum f(n_0)n_1$. For any $n\in N$, denote $\gamma(n)$ by $\sum_in_i\otimes c_i$. Then, by naturality of $\gamma$, we have that $l_C\gamma_D\hat{f}(n)=\chi\hat{f}(n)=\sum f(n_0)\chi (n_1)$ is equal to $l_C(\hat{f}\square_D C)\gamma_N(n)=l_C(\hat{f}\square_D C)(\sum_in_i\otimes c_i)=l_C(\sum_i f(n_{i_0})n_{i_1}\otimes c_i)=\sum_i f(n_i)c_i$. Since $f$ is arbitrary, it follows that for any $N\in\m^D$ and for all $n\in N$, $\gamma_N(n)=\sum_in_i\otimes c_i=\sum n_0\otimes\chi(n_1)$. In particular, consider $\gamma_D:D\to D\square_D C$.
Since $\sum d_1\otimes\chi(d_2)=\gamma_D(d)\in D\square_D C$, we have $\sum d_1\otimes d_2\otimes\chi(d_3)=\sum d_1\otimes\psi(\chi(d_2)_1)\otimes\chi(d_2)_2$. If we apply on both sides $\varepsilon_D\otimes \id$, we get $\sum d_1\otimes \chi(d_2)=\sum \psi(\chi(d)_1)\otimes\chi(d)_2$ which means that $\chi$ is a morphism of left $D$-comodules whence of $D$-bicomodules.
Moreover, we have $\psi\circ\chi\circ\psi =(\epsilon_D\circ l^{-1}_C)\circ (l_C\circ \gamma_D)\circ (\epsilon_D\circ l^{-1}_C)=\epsilon_D\circ  \gamma_D\circ \epsilon_D\circ l^{-1}_C=\epsilon_D\circ l^{-1}_C = \psi ,$ hence $\chi$ is a regular morphism of $D$-bicomodules.
\par
Assume that  $\psi$ is a regular morphism of $D$-bicomodules, i.e. there is a $D$-bicomodule morphism $\chi:D\to C$ such that $\psi\circ\chi\circ\psi=\psi$. Then $\varepsilon_C \circ\chi\circ\psi=\varepsilon_D\circ\psi\circ \chi\circ\psi=\varepsilon_D\circ\psi=\varepsilon_C$.

Assume now there is a $D$-bicomodule morphism $\chi:D\to C$ such that $\varepsilon_C \circ\chi\circ\psi=\varepsilon_C$ and let us prove that $\psi^*$ is semiseparable.
For any $N\in\m^D$ define $\gamma_N: N\to N\square_D C$ as $\gamma_N (n)=\sum n_0\otimes \chi(n_1)$, for every $n\in N$. Using that $\chi$ is a left $D$-comodule morphism, one easily checks that the image of $\gamma_N$ is really contained in $N\square_DC$. Moreover $\gamma_N$ comes out to be a right $D$-comodule morphism, since $\chi$ is a morphism of right $D$-comodules, and natural in $N$.
\begin{invisible}
Its image is contained in $N\square_DC$ as $(\rho_N\otimes C)(\sum n_0\otimes \chi(n_1))=\sum n_{0_0}\otimes n_{0_1}\otimes \chi(n_1)$ and $(N\otimes\rho_C)(\sum n_0\otimes \chi(n_1))=\sum n_0\otimes (\psi\otimes C)\Delta_C(\chi(n_1))=\sum n_0\otimes (D\otimes\chi)\Delta_D (n_1)=\sum n_0\otimes n_{1_1}\otimes \chi(n_{1_2})$, since $\chi$ is a left $D$-comodule morphism. Moreover, $\gamma_N$ is a right $D$-comodule morphism, as for all $n\in N$ we have $(\gamma_N\otimes D)\rho_N(n)=(\gamma_N\otimes D) (\sum n_0\otimes n_1)=\sum n_0\otimes \chi(n_{1})\otimes n_{2}$ and $\rho_{N\square_DC}\gamma_N(n)=\rho_{N\square_DC}(\sum n_0\otimes \chi(n_1))=\sum n_{0}\otimes \chi(n_{1})_1\otimes \psi(\chi(n_{1})_2)$ and $\chi$ is a morphism of right $D$-comodules.
Furthermore, it is also natural in $N$. Indeed, for any morphism $f:N\to M$ in $\m^D$, $(\gamma_M\circ f)(n) = \sum f(n)_0\otimes \chi(f(n)_1)=\sum f(n_0)\otimes\chi (n_1)=(f\square_D C) (\sum n_0\otimes\chi (n_1))=((f\square_D C )\circ\gamma_N) (n)$.
\end{invisible}
For every $n\in N$, $c\in C$, we have $\gamma_N\epsilon_N (n\square_D c)=\gamma_N (n\varepsilon_C(c))=\gamma_N (n)\varepsilon_C(c)=\sum n_0\otimes \chi(n_1)\varepsilon_C(c)=\sum n\otimes \chi\psi(c_1)\varepsilon_C(c_2)=\sum n\otimes \chi\psi(c)$, where in the second-last equality we used that $n\square_D c$ belongs to $N\square_D C $. Thus
$\epsilon_N\gamma_N\epsilon_N (n\square_D c)=\epsilon_N(\sum n\otimes\chi\psi(c))=\sum n\varepsilon_C \chi\psi(c)=n\varepsilon_C(c)=\epsilon_N (n\square_D c).
$
Therefore, by Theorem \ref{thm:rafael}, $\psi^*$ is semiseparable.
\endproof

\begin{es}
It is known that the Axiom of Choice is equivalent to require that, for any function $f:A\to B$, there is a function $g:B\to A$ such that $f\circ g\circ f=f$.
\begin{invisible}
Consider the canonical factorization $f=i\circ \pi$ where $i:\mathrm{Im}(f)\to B,b\mapsto b,$ and $\pi:A\to \mathrm{Im}(f),a\mapsto f(a)$.
If the Axiom of Choice holds, then the surjection $\pi$ has a right inverse $\sigma.$ On the other hand $i$ always admits a left inverse $p$. Set $g:=\sigma\circ p$. Then $f\circ g\circ f=i\circ \pi\circ \sigma\circ p\circ i\circ \pi=i\circ \pi=f$. Conversely, let $f:A\to B$ be a surjective function. If there is $g:B\to A$ such that $f\circ g\circ f=f$ then, by cancelling $f$, we obtain $f\circ g=\id$ i.e. any surjective function has a right inverse which is equivalent to Axiom of Choice. [Irving Kaplansky, "Set Theory and Metric Spaces". page 64].
\end{invisible}
Consider the group-like coalgebras $\Bbbk A$ and $\Bbbk B$ and the coalgebra map $\psi:=\Bbbk f:\Bbbk A\to\Bbbk B$ defined by setting $\psi(a)=f(a)$, for every $a\in A$. Define also the linear map $\chi: \Bbbk B\to\Bbbk A$ by setting $\chi(b)=g(b)$ if $b\in\mathrm{Im}(f)$ and $\chi(b)=0$ otherwise, for all $b\in B$. It is easy to check that $\chi$ is a $\Bbbk B$-bicomodule morphism such that $\varepsilon_{\Bbbk A} \circ\chi\circ\psi=\varepsilon_{\Bbbk A}$. Thus, by Proposition \ref{prop:coinduc-coalg}, the functor $\psi^* =(-)\square_{\Bbbk B} \Bbbk A:\m^{\Bbbk B}\to\m^{\Bbbk A}$ is semiseparable.
\begin{invisible}
 Let us check that $\chi$ is left $\Bbbk B$-comodule map i.e. $\psi(\chi(b)_1)\otimes \chi(b)_2=b_1\otimes \chi(b_2)$ i.e. $\psi(\chi(b)_1)\otimes \chi(b)_2=b\otimes \chi(b)$ for all $b\in B$. This equality is trivially true if $b\notin\mathrm{Im}(f)$ as $\chi(b)=0$. Otherwise there is $a\in A$ such that $b=f(a)$ and, in this case, $\chi(b)=g(b)$ so that $\psi(\chi(b)_1)\otimes \chi(b)_2=\psi(g(b)_1)\otimes g(b)_2=\psi(g(b))\otimes g(b)=f(g(b))\otimes g(b)=f(g(f(a)))\otimes g(b)=f(a)\otimes g(b)=b\otimes \chi(b)$ as desired.  Let us check that $\chi$ is left $\Bbbk B$-comodule map i.e. $\chi(b)_1\otimes \psi(\chi(b)_2)=\chi(b_1)\otimes b_2$ i.e. $\chi(b)_1\otimes \psi(\chi(b)_2)=\chi(b)\otimes b$ for all $b\in B$. As above it suffices to test it for $b=f(a)$ and $\chi(b)=g(b)$: $\chi(b)_1\otimes \psi(\chi(b)_2)=g(b)_1\otimes \psi(g(b)_2)=g(b)\otimes \psi(g(b))=g(b)\otimes f(g(b))=g(b)\otimes f(g(f(a)))=g(b)\otimes f(a)=\chi(b)\otimes b$. Finally $\varepsilon_{\Bbbk A} \chi\psi(a)=\varepsilon_{\Bbbk A} \chi(f(a))=\varepsilon_{\Bbbk A} (g(f(a)))=1=\varepsilon_{\Bbbk A}(a)$.
\end{invisible}
However it is neither separable nor naturally full in general. Indeed, if $\psi^*$ is separable, then $\psi $ is split-epi whence surjective. In this case $f$ must be surjective too.
\begin{invisible}
  Let $b\in B$. If $\psi=\Bbbk f $ is surjective, there is $\sum_{a\in A}k_aa\in \Bbbk A$ such that $b=\psi(\sum_{a\in A}k_aa)=\sum_{a\in A}k_a\psi(a)=\sum_{a\in A}k_af(a)$. Since the group-like elements are linearly independent, we get there is $a\in A$ such that $b=f(a)$ and hence $f$ is surjective.
  \end{invisible}
  Similarly, if $\psi^*$ is naturally full, then  $\psi $ is split-mono whence injective. In this case $f$ must be injective too.
\end{es}

\subsection{Corings}\label{es:coring}
Let $R$ be a ring. Recall that an $R$-\emph{coring} \cite{Sw75} is an $R$-bimodule $\cc$ together with $R$-bimodule maps $\Delta_{\cc} : \cc\to\cc\otimes_R\cc$ and $\varepsilon_{\cc}:\cc\to R$, called the comultiplication and the counit, respectively, such that $\Delta_{\cc}$ is coassociative and counital similarly to the case of coalgebras.
Given an $R$-coring $\cc$, a \emph{right $\cc$-comodule} is a right $R$-module $M$ together with a right $R$-linear map $\rho_M :M\to M\otimes_R\cc $, called the coaction, that is coassociative and right counital.
A map between right $\cc$-comodules is defined in the expected way.
Let $\m^{\cc}$ denote the category of right $\cc$-comodules and consider the induction functor $$G:=(-)\otimes_R\cc : \text{Mod-}R\to\m^{\cc},\quad M\mapsto M\otimes_R \cc,\quad f\mapsto f\otimes_R \cc,$$ which is the right adjoint of the forgetful functor $F:\m^{\cc}\to\mathrm{Mod}\text{-}R$, see e.g. \cite[Lemma 3.1]{Brz02}. The right $\cc$-comodule structure of $M\otimes_R \cc$ is given by $M\otimes_R\Delta_{\cc}$. The unit and counit of the adjunction are given by $\eta_M = \rho_M :M\to M\otimes_R \cc $, for every $M\in \m^{\cc}$, and $\epsilon_N = N\otimes_R\varepsilon_{\cc}: N\otimes_R\cc\to N$, $\epsilon_N (n\otimes_R c)=n\varepsilon_{\cc}(c)$, for every $N\in \text{Mod-}R$, $n\in N$, $c\in\cc$, respectively.


Denote by $\cc^R = \{c\in\cc \mid rc=cr,\text{ } \forall r\in R\}$ the set of invariant elements in $\cc$.

Concerning the separability and natural fullness of $F$ and $G$ we know that
\begin{itemize}
  \item $F$ is separable if and only if  the coring $\cc$ is coseparable, see \cite[Corollary 3.6]{Brz02};
  \item $F$ is naturally full if and only if $\Delta_{\cc}$ is surjective, see \cite[Proposition 3.13]{ACMM06};
  \item $G$ is separable if and only if there exists an invariant element $z\in\cc^R$ such that $\varepsilon_{\cc}(z)=1_R$, see \cite[Theorem 3.3]{Brz02}; if $G$ is separable, the coring $\cc$ is said to be \emph{cosplit} \cite[26.12]{BW03};
  \item $G$ is naturally full if and only if there exists an invariant element $z\in\cc^R$ such that $c=\varepsilon_{\cc}(c)z$, for every $c\in \cc$, see \cite[Proposition 3.13]{ACMM06}.
\end{itemize}

Since $F:\m^{\cc}\to\mathrm{Mod}\text{-}R$ is faithful, by Proposition \ref{prop:sep} (i), it is semiseparable if and only if it is separable so, although we are led to name a coring ``semicoseparable'' whenever $F$ is semiseparable, this would bring us back to the notion of coseparable coring.
Let us study when the induction functor $G=(-)\otimes_R\cc : \text{Mod-}R\to\m^{\cc}$ is semiseparable. In this case, we say that the $R$-coring $\cc$ is \textbf{semicosplit}. Note that, since separable functors are in particular semiseparable, it is obvious that cosplit corings are in particular semicosplit.


\begin{thm}\label{thm:inducoring}
Let $\cc$ be an $R$-coring. Then, the following are equivalent.
\begin{enumerate}
  \item $\cc$ is semicosplit. 
  \item The coring counit $\varepsilon_{\cc}:\cc\to R$ is regular as a morphism of $R$-bimodules.
  \item There exists an invariant element $z\in\cc^R$ such that $\varepsilon_{\cc}(z)\varepsilon_{\cc}(c)=\varepsilon_{\cc}(c)$ (equivalently such that $\varepsilon_{\cc}(z)c=c$), for every $c\in \cc$.
\end{enumerate}
\end{thm}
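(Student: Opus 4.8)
The plan is to read off all three conditions from the Rafael-type Theorem \ref{thm:rafael} applied to the adjunction $(F,G)$, where $F\colon\m^{\cc}\to\mathrm{Mod}\text{-}R$ is the forgetful functor and $G=(-)\otimes_R\cc$ is the induction functor, together with the standard ``hom-tensor'' dictionary for endofunctors of $\mathrm{Mod}\text{-}R$ of the form $(-)\otimes_R(\text{bimodule})$. Recall that, by definition, $(1)$ says precisely that $G$ is semiseparable. Since $G$ is the right adjoint of the pair, with counit $\epsilon_N=N\otimes_R\varepsilon_{\cc}\colon N\otimes_R\cc\to N$, Theorem \ref{thm:rafael}(ii) gives that $(1)$ holds if and only if $\epsilon$ is regular, i.e. there is a natural transformation $\gamma\colon\id_{\mathrm{Mod}\text{-}R}\to FG$ with $\epsilon\circ\gamma\circ\epsilon=\epsilon$; this is the form of the statement I would then massage into $(2)$ and $(3)$.

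For $(1)\Leftrightarrow(2)$ I would use that $FG=(-)\otimes_R\cc$ as an endofunctor of $\mathrm{Mod}\text{-}R$, so that an Eilenberg--Watts argument identifies $\Nat(\id_{\mathrm{Mod}\text{-}R},FG)$ with ${}_R\Hom_R(R,\cc)$ (sending $\gamma$ to $\gamma_R\colon R\to R\otimes_R\cc\cong\cc$ and recovering it via $\gamma_N=N\otimes_R\gamma_R$) and $\Nat(FG,\id_{\mathrm{Mod}\text{-}R})$ with ${}_R\Hom_R(\cc,R)$, under which $\epsilon$ corresponds to $\varepsilon_{\cc}$. These identifications carry vertical composition of natural transformations to composition of $R$-bimodule maps, and evaluating at $N=R$ loses nothing; hence $\epsilon\circ\gamma\circ\epsilon=\epsilon$ is equivalent to $\varepsilon_{\cc}\circ E\circ\varepsilon_{\cc}=\varepsilon_{\cc}$, where $E\colon R\to\cc$ is the bimodule map corresponding to $\gamma$. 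Thus $\epsilon$ is regular as a natural transformation if and only if $\varepsilon_{\cc}$ is regular as a morphism of $R$-bimodules, which is $(2)$. (This is the semi-analogue of \cite[Theorem 3.3]{Brz02}, with Theorem \ref{thm:rafael} in place of Rafael's theorem.)

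For $(2)\Leftrightarrow(3)$ I would pass from bimodule maps to invariant elements: ${}_R\Hom_R(R,\cc)\cong\cc^R$ via $E\mapsto z:=E(1_R)$, since left and right $R$-linearity force $rz=E(r)=zr$, and conversely any $z\in\cc^R$ gives $E\colon r\mapsto rz=zr$. Rewriting $\varepsilon_{\cc}\circ E\circ\varepsilon_{\cc}=\varepsilon_{\cc}$ in terms of $z$ yields $\varepsilon_{\cc}\bigl(\varepsilon_{\cc}(c)\,z\bigr)=\varepsilon_{\cc}(c)$ for all $c\in\cc$; using invariance of $z$ to write $\varepsilon_{\cc}(c)\,z=z\,\varepsilon_{\cc}(c)$ inside $\cc$, together with $R$-bilinearity of $\varepsilon_{\cc}$, this is exactly $\varepsilon_{\cc}(z)\varepsilon_{\cc}(c)=\varepsilon_{\cc}(c)$ for all $c$. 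Finally, the two forms of $(3)$ match up: applying $\varepsilon_{\cc}$ shows that $\varepsilon_{\cc}(z)c=c$ for all $c$ implies $\varepsilon_{\cc}(z)\varepsilon_{\cc}(c)=\varepsilon_{\cc}(c)$ for all $c$, while the counit axiom $c=\sum\varepsilon_{\cc}(c_1)c_2$ gives the reverse implication, since $\varepsilon_{\cc}(z)c=\sum\bigl(\varepsilon_{\cc}(z)\varepsilon_{\cc}(c_1)\bigr)c_2=\sum\varepsilon_{\cc}(c_1)c_2=c$.

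I expect the only point requiring care to be the bookkeeping in $(1)\Leftrightarrow(2)$: one must check that the Eilenberg--Watts correspondence is genuinely compatible with composition of natural transformations — so that the regularity identity transfers cleanly — and keep straight the three $R$-module structures in play (the right action on $N$, the left action on $\cc$ used to form $N\otimes_R\cc$, and the right action on $N\otimes_R\cc$ inherited from $\cc$). Everything in $(2)\Leftrightarrow(3)$ is then an elementary consequence of the coring axioms once invariance of $z$ is exploited to move $\varepsilon_{\cc}(z)$ across.
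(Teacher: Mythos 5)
Your proof is correct and follows essentially the same route as the paper: both rest on the Rafael-type Theorem \ref{thm:rafael} applied to the counit $\epsilon_N=N\otimes_R\varepsilon_{\cc}$, together with the identification of natural transformations $\id\to(-)\otimes_R\cc$ (resp. $(-)\otimes_R\cc\to\id$) with $R$-bimodule maps $R\to\cc$ (resp. $\cc\to R$) obtained by evaluating at $R$, and the same counit-axiom manipulation for the two forms of $(3)$. The only difference is organizational — you prove the two biconditionals via a composition-compatible Eilenberg--Watts correspondence, while the paper runs the cycle $(1)\Rightarrow(2)\Rightarrow(3)\Rightarrow(1)$ with the componentwise verifications written out explicitly.
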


\proof
$(1)\Rightarrow (2)$. Assume that $\cc$ is semicosplit, i.e. the induction functor $G=(-)\otimes_R\cc$ is semiseparable. Then, by Theorem \ref{thm:rafael}, there exists a natural transformation $\gamma : \id _{\dd}\rightarrow FG$ such that $\epsilon\circ\gamma\circ\epsilon = \epsilon$. Consider the canonical isomorphism $l_\cc:R\otimes_R\cc \to \cc$. Since $R$ is a right $R$-module, consider the right $R$-linear map $\gamma_R : R\to R\otimes_R\cc $. Let us check it is also left $R$-linear. For any $r\in R$ define the morphism $f_r:R\to R$ by $f_r(r')=rr'$. Since $\gamma_R$ is natural, we have
\begin{equation*}
\gamma_R (rr')= (\gamma_R\circ f_r) (r')=((f_r\otimes_R\cc )\circ\gamma_R)(r')= r\gamma_R (r').
\end{equation*}
Thus $\gamma_R$ is a morphism of $R$-bimodules. Define the $R$-bimodule map $\alpha:=l_\cc\circ \gamma_R:R\to \cc$. By noting that $\varepsilon_\cc=\epsilon_R\circ l_\cc^{-1}$, we get $$\varepsilon_\cc\circ\alpha\circ\varepsilon_\cc =(\epsilon_R\circ l_\cc^{-1})\circ (l_\cc\circ \gamma_R)\circ (\epsilon_R\circ l_\cc^{-1})=\epsilon_R\circ  \gamma_R\circ \epsilon_R\circ l_\cc^{-1}=\epsilon_R\circ l_\cc^{-1} = \varepsilon_\cc$$ so that $\varepsilon_\cc$ is a regular morphism of $R$-bimodules.

$(2)\Rightarrow (3)$. Assuming the regularity of $\varepsilon_\cc$, i.e. the existence of an $R$-bimodule map $\alpha$ such that $\varepsilon_\cc\circ\alpha\circ\varepsilon_\cc=\varepsilon_\cc$, we can set $z=\alpha (1_R)\in\cc$. For $r\in R$, we have $rz=r\alpha (1_R)=\alpha (r)=\alpha (1_R)r=zr$ so that $z$  is in $\cc^R$. Moreover, from $\varepsilon_{\cc}(c)=\varepsilon_\cc\alpha\varepsilon_\cc(c)=\varepsilon_\cc\alpha (1_R\varepsilon_\cc(c)) = \varepsilon_\cc\alpha(1_R)\varepsilon_\cc(c)=\varepsilon_{\cc}(z)\varepsilon_{\cc}(c)$ it follows that $\varepsilon_{\cc}(c)=\varepsilon_{\cc}(z)\varepsilon_{\cc}(c)$, for every $c\in \cc$.

$(3)\Rightarrow (1)$. Suppose there exists $z\in \cc^R$ such that $\varepsilon_{\cc}(c)=\varepsilon_{\cc}(z)\varepsilon_{\cc}(c)$, for every $c\in \cc$. For any $N\in\text{Mod-}R$ define $\gamma_N: N\to N\otimes_R \cc$, $\gamma_N (n)=n\otimes_R z$, for every $n\in N$. Since $z\in \cc^R$, for every $n\in N$, $r\in R$, we have $\gamma_N (nr)=nr\otimes_R z=n\otimes_R rz=n\otimes_R zr=\gamma_N (n)r$, so $\gamma_N$ is a right $R$-module morphism, and it is also natural in $N$: indeed, for any morphism $f:N\to M$ in $\text{Mod-}R$,
$(\gamma_M\circ f)(n) = f(n)\otimes_R z= ((f\otimes_R\cc )\circ\gamma_N) (n)$.
Moreover, for every $n\in N$, $c\in\cc$, we have
\begin{equation*}
(\epsilon_N\circ\gamma_N\circ\epsilon_N )(n\otimes_R c)=\epsilon_N\gamma_N (n\varepsilon_{\cc}(c))=\epsilon_N (n\otimes_R\varepsilon_{\cc}(c)z)=n\varepsilon_{\cc}(z)\varepsilon_{\cc}(c)\overset{(*)}{=}n\varepsilon_{\cc}(c)=\epsilon_N (n\otimes_R c),
\end{equation*}
where $(*)$ follows from the assumption $\varepsilon_{\cc}(c)=\varepsilon_{\cc}(z)\varepsilon_{\cc}(c)$. Therefore, by Theorem \ref{thm:rafael} $G$ is semiseparable and $\cc$ is semicosplit.

Finally, assume that $\varepsilon_{\cc}(z)\varepsilon_{\cc}(c)=\varepsilon_{\cc}(c)$, for every $c\in\cc$. Then $\varepsilon_{\cc}(z)c=\varepsilon_{\cc}(z)\varepsilon_{\cc}(c_{(1)})c_{(2)}=\varepsilon_{\cc}(c_{(1)})c_{(2)}=c$ and hence $\varepsilon_{\cc}(z)c=c$. Conversely, if $\varepsilon_{\cc}(z)c=c$, for every $c\in\cc$, then $\varepsilon_{\cc}(z)\varepsilon_{\cc}(c)=\varepsilon_{\cc}(\varepsilon_{\cc}(z)c)=\varepsilon_{\cc}(c)$.
\endproof

\begin{rmk}
At the beginning of Subsection \ref{es:coring} we mentioned that the functor $G:=(-)\otimes_R\cc : \text{Mod-}R\to\m^{\cc}$ is naturally full if and only if there exists $z\in\cc^R$ such that $c=\varepsilon_{\cc}(c)z$, for every $c\in\cc$. We expect this characterization to be a particular case of Theorem \ref{thm:inducoring}, as a naturally full functor is semiseparable by Proposition \ref{prop:sep} (ii). Indeed, if there exists $z\in\cc^R$ such that $c=z\varepsilon_{\cc}(c)$ for every $c\in\cc$, then $\varepsilon_{\cc}(c)=\varepsilon_{\cc}(z\varepsilon_{\cc}(c))=\varepsilon_{\cc}(z)\varepsilon_{\cc}(c)$, for every $c\in\cc$, and equivalently, $\varepsilon_\cc(z)c=\varepsilon_\cc(z)z\varepsilon_\cc(c)=\varepsilon_\cc(z\varepsilon_\cc(c))z=\varepsilon_\cc(c)z=c$. Analogously, we recalled that $G$ is separable if and only if there exists an invariant element $z\in\cc^R$ such that $\varepsilon_{\cc}(z)=1_R$ and hence the equality $\varepsilon_{\cc}(c)=\varepsilon_{\cc}(z)\varepsilon_{\cc}(c)$ trivially holds true in this case.
\begin{invisible}
Note that, in this case, since $r=r1_R=r\varepsilon_{\cc}(z)=\varepsilon_{\cc}(r z)$, for every $r\in R$, $\varepsilon_{\cc}$ is a surjective morphism, as it happens for instance when we consider the Sweedler coring $\cc=S\otimes_R S$ associated with the extension of rings $\varphi : R\to S$, where the counit $\varepsilon_{\cc}: S\otimes_R S\to S$ is given by $\varepsilon_{\cc}(a\otimes_R a')=aa'$ and it is indeed surjective.
\end{invisible}
\end{rmk}

Next aim is to show that, when $G$ is semiseparable, then we can provide an explicit factorization of it as a bireflection followed by a separable functor. By Corollary \ref{cor:fact-birefl}, this factorization amounts to the one given by the coidentifier, up to a category equivalence.

\begin{cor}\label{cor:inducoring}
Let $\cc$ be an $R$-coring. Then, $\cc$ is semicosplit if and only if the induction functor $G=(-)\otimes_R\cc : \mathrm{Mod}\text{-}R\to\m^{\cc}$ factors up to isomorphism as $\psi_*\circ G'$ where $\psi_*=(-)\square_I\cc:\m^{I}\to \m^{C}$ is separable and $G'=(-)\otimes_RI : \mathrm{Mod}\text{-}R\to\m^{I}$ is a bireflection for some morphism of corings $\psi:\cc\to I$. 
\end{cor}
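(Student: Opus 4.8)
The plan is to combine Theorem~\ref{thm:inducoring} with the general factorization result for semiseparable functors coming from the coidentifier, specialized via Corollary~\ref{cor:fact-birefl} to the case when the associated idempotent natural transformation splits. The source category here is $\mathrm{Mod}\text{-}R$, which is idempotent complete, so whenever $G$ is semiseparable the associated $e$ automatically splits and a factorization as a bireflection followed by a separable functor exists abstractly; the content of the corollary is to identify this factorization concretely in terms of a coring quotient. Concretely, I would argue as follows.

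First, suppose $\cc$ is semicosplit. By Theorem~\ref{thm:inducoring} there is an invariant $z\in\cc^R$ with $\varepsilon_{\cc}(z)c=c$ for all $c\in\cc$; set $w:=\varepsilon_{\cc}(z)\in R$. One checks $w$ is a central idempotent of $R$: centrality follows from $z\in\cc^R$ (for $r\in R$, $rw=\varepsilon_{\cc}(rz)=\varepsilon_{\cc}(zr)=wr$) and idempotency from $w^2=\varepsilon_{\cc}(z)\varepsilon_{\cc}(z)=\varepsilon_{\cc}(\varepsilon_{\cc}(z)z)=\varepsilon_{\cc}(z)=w$ using $\varepsilon_{\cc}(z)c=c$ with $c=z$. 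Then the idempotent natural transformation $e$ associated to $G$ has components $e_N$ given, up to the identifications in the proof of Theorem~\ref{thm:inducoring}, by right multiplication by $w$ on $N$; this splits through $Nw$, so by Corollary~\ref{cor:fact-birefl} the coidentifier factorization is already a bireflection-followed-by-separable factorization. To make it explicit, put $I:=\cc w$, which is an $R$-coring: it is an $R$-bimodule (with $Rw=wR$ acting, but also an $R$-bimodule since $w$ is central), the comultiplication restricts because $w$ is central and grouplike-like in the sense that $\Delta_\cc(\cc)w\subseteq \cc w\otimes_R\cc w$ (here one uses $w$ central and $\varepsilon_{\cc}(z)c=c$), and the counit is $\varepsilon_\cc$ restricted to $I$, which now is surjective onto $Rw$, hence the new coring counit admits the invariant element $z w=zw\in I^{R}$ with $\varepsilon_I(zw)=w=1_{I}$, so $I$ is cosplit. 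Let $\psi:\cc\to I=\cc w$, $c\mapsto cw$, be the canonical coring projection. Then $G'=(-)\otimes_R I=(-)\otimes_R\cc w:\mathrm{Mod}\text{-}R\to\m^{I}$ is the induction functor of $I$, which is naturally full (since $I$ is in fact cosplit, even separable, but what we need is that $G'$ is a bireflection: it has a fully faithful adjoint because... ) — here I need the extra ingredient that $G'$ is idempotent and naturally full. Indeed $I$ is cosplit \emph{and} the counit $\varepsilon_I$ is split surjective onto the identity of the base ring $Rw$, which forces $G'$ naturally full; combined with the splitting of the associated idempotent (which is now the identity on $\mathrm{Mod}\text{-}R$ after passing to $Rw$-modules, or alternatively via Proposition~\ref{prop:Hcorefl}), $G'$ is a bireflection. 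Finally $\psi_*=(-)\square_I\cc:\m^{I}\to\m^{\cc}$ is separable: this follows from \cite[Theorem 2.7]{CGN97}-type reasoning, i.e. $\psi$ is split-epi as an $I$-bicomodule map via the section $I\hookrightarrow\cc$ (inclusion of $\cc w$ into $\cc$), since $w$ central idempotent makes this inclusion $I$-bicolinear and a section of $\psi$. One then checks $\psi_*\circ G'\cong G$ because $(M\otimes_R\cc w)\square_{\cc w}\cc\cong M\otimes_R\cc$ naturally, using $\varepsilon_{\cc}(z)c=c$ again.

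For the converse, suppose $G\cong \psi_*\circ G'$ with $\psi_*$ separable and $G'$ a bireflection for some coring map $\psi:\cc\to I$. A bireflection is in particular naturally full, hence semiseparable by Proposition~\ref{prop:sep}(ii); therefore $G\cong\psi_*\circ G'$ is semiseparable by Lemma~\ref{lem:comp}(i), i.e. $\cc$ is semicosplit. (One could also invoke Corollary~\ref{cor:fattoriz} directly.)

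The main obstacle I anticipate is the careful bookkeeping in the ``only if'' direction: verifying that $I=\cc w$ is genuinely an $R$-coring with the induced structure maps well-defined (this hinges on $w$ being a \emph{central} idempotent and on the identity $\varepsilon_{\cc}(z)c=c$ controlling how $\Delta_\cc$ interacts with multiplication by $w$), identifying $G'$ precisely as a bireflection rather than merely a naturally full functor — for which the cleanest route is probably to observe that $G'$ is the induction functor of a \emph{cosplit} coring whose counit lands in the unit of the base, so that Corollary~\ref{cor:natfulcomonad} or Theorem~\ref{thm:frobenius} (via $G'$ being a coreflection onto its image, cf.\ Proposition~\ref{prop:Hcorefl}) applies — and checking the natural isomorphism $\psi_*\circ G'\cong G$ on objects and morphisms. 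The alternative, more economical strategy is to not build $I$ by hand at all but to apply Corollary~\ref{cor:fact-birefl}: since $\mathrm{Mod}\text{-}R$ is idempotent complete, $e$ splits, so $G$ factors as the bireflection $H:\m^{\cc}\!\leftarrow\!\mathrm{Mod}\text{-}R\to (\mathrm{Mod}\text{-}R)_e$ followed by a separable $F_e$, and then one merely has to recognize $(\mathrm{Mod}\text{-}R)_e\simeq \m^{I}$ and $F_e\simeq\psi_*$ for the explicit $I=\cc w$ and $\psi$ above — this is the approach I would actually write up, deferring the coring computations to short verifications.
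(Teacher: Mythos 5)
Your converse direction is correct and matches the paper (a bireflection is naturally full by Theorem \ref{thm:frobenius}, so $\psi_*\circ G'$ is semiseparable by Lemma \ref{lem:comp}(ii) or Corollary \ref{cor:fattoriz}), and your opening moves in the forward direction --- extracting the invariant $z$ from Theorem \ref{thm:inducoring}, checking that $w:=\varepsilon_{\cc}(z)$ is a central idempotent of $R$, and identifying the associated idempotent $e_N$ as right multiplication by $w$ --- are all fine. The fatal problem is your choice $I:=\cc w$. The hypothesis $\varepsilon_{\cc}(z)c=c$, together with centrality of $w$ and the counit axiom, forces $cw=c$ for \emph{every} $c\in\cc$: indeed $cw=\sum c_{(1)}\varepsilon_{\cc}(c_{(2)})w=\sum c_{(1)}\,w\,\varepsilon_{\cc}(c_{(2)})=\sum c_{(1)}\varepsilon_{\cc}(c_{(2)})=c$. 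Hence $\cc w=\cc$, your $\psi:c\mapsto cw$ is the identity, $\psi_*\cong\id_{\m^{\cc}}$ and $G'=G$, so your factorization is the trivial one and the required conclusion that $G'$ is a bireflection becomes the assertion that $G$ itself is a bireflection. That is false in general: a bireflection is naturally full (Theorem \ref{thm:frobenius}), so a separable $G$ would then be fully faithful, whereas for instance the Sweedler coring of a nontrivial separable ring extension is cosplit (hence semicosplit) with induction functor separable but not naturally full. Your auxiliary claims ($\Delta_{\cc}(\cc)w\subseteq\cc w\otimes_R\cc w$, ``$w=1_I$'', ``$I$ is cosplit'') only appear to work because of this collapse.

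The object you want is not $\cc w\subseteq\cc$ but $I:=\mathrm{Im}(\varepsilon_{\cc})=Rw\subseteq R$: this is an idempotent ideal of $R$ with multiplicative identity $w$, and since $R\to Rw$, $r\mapsto rw$, is a surjective ring epimorphism the multiplication $m_I:I\otimes_RI\to I$ is bijective, so $\Delta_I:=m_I^{-1}$ together with the inclusion $\varepsilon_I:I\hookrightarrow R$ makes $I$ an $R$-coring; the coring map is $\psi:=\varepsilon_{\cc}:\cc\to I$ (corestricted to its image), not a projection of $\cc$ onto a subbimodule. With this $I$ one checks $G\cong\psi_*\circ G'$ via $(T\otimes_RI)\square_I\cc\cong T\otimes_R\cc$ (using that the left $I$-coaction of $\cc$ is $c\mapsto w\otimes_R c$, so the cotensor-defining map is zero); that $G'=(-)\otimes_RI$ is a bireflection (every right $I$-comodule coaction is $n\mapsto n\otimes_R w$ and invertible, so the forgetful functor is fully faithful, and then Theorem \ref{thm:frobenius} applies); and that $\psi_*=(-)\square_I\cc$ is separable via the $I$-bicomodule splitting $\nu:I\to\cc$, $i\mapsto iz$, of $\psi$ --- for which the coalgebra criterion you cite from \cite{CGN97} is not enough over a noncommutative base: one must invoke the coring version \cite[Theorem 5.8]{GT02} and verify its purity/equalizer hypotheses, which hold here precisely because the relevant map $\rho_M\otimes_R\cc-M\otimes_R\lambda_\cc$ vanishes.
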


\begin{proof}
Assume $\cc$ is semicosplit, i.e $G=(-)\otimes_R\cc : \text{Mod-}R\to\m^{\cc}$ is semiseparable. Then, by Theorem \ref{thm:inducoring}, there exists an invariant element $z_\cc\in\cc^R$ such that $\varepsilon_{\cc}(c)=\varepsilon_{\cc}(z_\cc)\varepsilon_{\cc}(c)$, for every $c\in \cc$. We observe that, since $\varepsilon_\cc$ is a morphism of bimodules, $I:=\mathrm{Im}(\varepsilon_\cc)$ is an ideal of $R$ with multiplicative identity $z:=\varepsilon_\cc(z_\cc)$. Indeed, for any $r\in I$ there is $c\in\cc$ such that $r=\varepsilon_\cc(c)$ and hence $rz=\varepsilon_\cc(c)\varepsilon_\cc(z_\cc)=\varepsilon_\cc(c)=r$. Therefore the morphism $\varphi:R\to I,r\mapsto r z$, is a ring epimorphism (in fact it is surjective) and hence the map $m_I:I\otimes_R I\to I$ is bijective, see \cite[Proposition XI.1.2]{Ste75}. Thus we can consider $\Delta_I = m_I^{-1}: I\to I\otimes_{R}I$, $\Delta_I(i)=i\otimes_R z=z\otimes_R i$, so that $(I,\Delta_I,\varepsilon_I)$ is an $R$-coring, where the counit $\varepsilon_I:I\hookrightarrow R$ is the canonical inclusion. Note that $\psi:\cc\to I,c\mapsto\varepsilon_\cc(c)$, is a morphism of corings and consider the corresponding coinduction functor $\psi_*=(-)\square_I\cc:\m^{I}\to \m^{\cc}$. We recall from \cite{GT02} that, given $M$ a $(\cc',\cc )$-bicomodule and $N$ a $(\cc,\cc'')$-bicomodule, where $\cc',\cc,\cc''$ are corings over the rings $R',R,R''$, respectively, then $M\square_\cc N$ is the kernel of the $(\cc',\cc'')$-bicomodule map
\begin{displaymath}
\xymatrixcolsep{8pc}\xymatrix{M\otimes_R N\ar[r]^-{\omega_{M,N}:=\rho_M\otimes_R N-M\otimes_R\lambda_N}& M\otimes_R\cc\otimes_R N,}
\end{displaymath}
where $\rho_M$ and $\lambda_N$ are the right and the left $\cc$-comodule structures of $M$ and $N$, respectively.\par
Consider also the induction functor $G':=(-)\otimes_RI : \text{Mod-}R\to\m^{I}$. Our aim is to prove that $G$ factors as $G\cong\psi_*\circ G'$, that $\psi_*$ is separable and $G'$ is a bireflection.

First let us check that $G\cong\psi_*\circ G'$. In fact, for every $T\in\text{Mod-}R$,
\begin{displaymath}
(\psi_*\circ G')(T)=\psi_*(T\otimes_R I)=(T\otimes_R I)\square_I\cc\overset{(*)}\cong T\otimes_R(I\square_I\cc)\cong T\otimes_R\cc = G(T),
\end{displaymath}
where we note that the above isomorphism $(*)$ follows e.g. from \cite[10.6, page 95]{BW03} once observed that
$\lambda_\cc(c)=\psi(c_{(1)})\otimes_R c_{(2)}=z\otimes_R \varepsilon_\cc(c_{(1)})c_{(2)}=z \otimes_R c$ and hence $\omega_{I,\cc}(i\otimes_R c)=\rho_I(i)\otimes_R c-i\otimes_R\lambda_\cc(c)=i\otimes_R z\otimes_R c-i\otimes_Rz\otimes_R c=0 $ so that $\omega_{I,\cc}$ is the zero map whence trivially \emph{$T$-pure} \cite[40.13]{BW03}.
Let us check that $G'$ is a bireflection. To this aim, first note that, since $i=\varepsilon_I(i)z$, for every $ i\in I$, then $G'$ is naturally full by the characterization of natural fullness of the induction functors we recalled at the beginning of the present subsection.

The functor $G'$ is right adjoint of the forgetful functor $F'$ and the unit is $\eta'_M=\rho_M:M\to M\otimes_RI$ for every $(M,\rho_M)$ in $\m^I$. Since $I=Rz$, for every $m\in M$ there is $m'\in M$ such that $\rho_M(m)=m'\otimes_Rz$ and hence $m=\sum m_0\varepsilon_I(m_1)=\sum m_0m_1=m'z.$ As a consequence $\rho_M(m)=m'\otimes_Rz=m'\otimes_Rzz=m'z\otimes_Rz=m\otimes_Rz$ for every $m\in M$. Now, given $w\in M\otimes_RI$, there is $m\in M$ such that $w=m\otimes_R z=\rho_M(m)$ and hence $\rho_M$ is surjective. Since it is also split-mono via $r_M\circ(M\otimes_R\varepsilon_I)$, where $r_M:M\otimes_R R\to M$ is the canonical isomorphism, we get that $\eta'_M=\rho_M$ is invertible and hence $F'$ is fully faithful. Hence $G'$ is a naturally full coreflection thus a bireflection by Theorem \ref{thm:frobenius}.

It remains to check that $\psi_*$ is separable. If we see $\cc$ as an $I$-bicomodule with left structure $\lambda_\cc: \cc\to I\otimes_R\cc , c\mapsto z\otimes c$, and right structure $\rho_\cc : \cc\to\cc\otimes_R I , c\mapsto c\otimes z$, the map $\nu:I\to \cc,i\mapsto i z_\cc = z_\cc i$, is an $I$-bicomodule morphism which satisfies $\psi\circ\nu =\id _I$. Indeed, $\psi(\nu(i))=\psi(i z_\cc )=\varepsilon_\cc(i z_\cc )=i\varepsilon_\cc( z_\cc )
=iz=i$. The existence of $\nu$ implies that $\psi_*:(-)\square_I\cc:\m^{I}\to \m^{C}$ is separable by \cite[Theorem 5.8]{GT02} in case $A=B=R$ and $\dd=I$ once we have checked its hypothesis, namely that both $_RR$ and $_R\cc$ preserve the equalizer of $(\rho_M\otimes_R\cc,M\otimes_R\lambda_\cc)$ for every $(M,\rho_M)$ in $\m^I$. By the foregoing, for such an $(M,\rho_M)$, one has $\rho_M(m)=m\otimes z$ so that $\omega_{M,\cc}(m\otimes_R c)=\rho_M(m)\otimes_R c-m\otimes_R\lambda_\cc(c)=m\otimes_R z\otimes_R c-m\otimes_Rz\otimes_R c=0 $ so that $\omega_{M,\cc}=\rho_M\otimes_R\cc-M\otimes_R\lambda_\cc$ is the zero map. Thus both $_RR$ and $_R\cc$ trivially preserve the equalizer of $(\rho_M\otimes_R\cc,M\otimes_R\lambda_\cc)$ for every $(M,\rho_M)$ in $\m^I$ as desired.
\begin{invisible}
OLD: This is a particular instance of \cite[Theorem 5.8]{GT02}, where in this case, for every right $I$-comodule $M$, the morphism of right $I$-comodules
\begin{displaymath}
\xymatrixcolsep{3pc}\xymatrix{M\ar[r]^-{\rho_M}&M\otimes_R I\ar[r]^-{M\otimes_R\nu}& M\otimes_R\cc}
\end{displaymath}
factors through $M\square_I\cc$, as
\begin{displaymath}
\begin{split}
(M\otimes_R \lambda_\cc)(M\otimes_R\nu)\rho_M &=(M\otimes_R I\otimes_R\nu)(M\otimes_R\Delta_I)\rho_M\\
&=(M\otimes_R I\otimes_R\nu)(\rho_M\otimes_R I)\rho_M=(\rho_M\otimes_R\nu)\rho_M \\
&=(\rho_M\otimes_R\cc)(M\otimes_R\nu)\rho_M .
\end{split}
\end{displaymath}
Moreover, the induced natural transformation $\gamma_M:M\to M\square_I\cc$ splits $\chi_M:=r_M(M\otimes_R\varepsilon_\cc)$, where $r_M:M\otimes_R R\to M$ is the canonical isomorphism. By \cite[Section 5.6]{GT02} the restriction of the natural transformation $\chi$ to $(-)\square_I\cc$ gives the counit of the adjunction $U:=(-)\otimes_R R\dashv (-)\square_I\cc :\m^I\to\m^\cc$. So it follows that $\psi_*$ is separable. 
\end{invisible}
\end{proof}

\begin{rmk}Consider the $R$-coring $I$ of Corollary \ref{cor:inducoring}. By construction it is also a ring with unit $z$. Since the comultiplication $\Delta_I$ of $I$ is invertible, then $I$ is a coseparable $R$-coring. Thus, by \cite[Proposition 2.17]{BV09} there is a category isomorphism between the category $\m^I$ of right comodules over the coring $I$ and the category $\mathrm{Mod}\text{-}I$ of right modules over the ring $I$.
\end{rmk}

We already mentioned that a cosplit coring is always semicosplit. We now give a concrete example of a semicosplit coring $\cc$ which is not cosplit.

\begin{es}\label{phi_coring}
1) Let $\varphi:R\to S$ be a morphism of rings such that the induction functor $\varphi^*=S\otimes_R(-)$ is naturally full. As recalled in Subsetion \ref{es:extens}, there exists $\varepsilon\in {}_{R}\Hom_{R}(S,R)$ such that $\varphi \circ \varepsilon=\id_S$. Since, in particular,  $\varphi:R\to S$ is an epimorphism in the category of rings, by \cite[Proposition XI.1.2]{Ste75}, the multiplication $m:S\otimes_RS\to S$ is bijective and hence we can set $\Delta:=m^{-1}$ so that $\Delta(s)=s\otimes_R 1_S=1_S\otimes_R s$. We compute $$(\varepsilon\otimes_R S)\Delta(s)=(\varepsilon\otimes_R S)(1_S\otimes_R s)=\varepsilon(1_S)\otimes_R s=1_R\otimes_R \varepsilon(1_S)s=1_R\otimes_R \varphi\varepsilon(1_S)s =1_R\otimes_R s$$ and similarly $(S\otimes_R \varepsilon)\Delta(s)=(S\otimes_R \varepsilon)(s\otimes_R 1_S)=s\otimes_R1_S$. As a consequence $(S,\Delta,\varepsilon)$ is an $R$-coring. Now $\varepsilon(1_S)s=\varphi\varepsilon(1_S)s=1_Ss=s$ so that $z:=1_S\in S^R$ fulfills the conditions of Theorem \ref{thm:inducoring} guaranteeing that the functor $G:=(-)\otimes_{R}S : \text{Mod-}R\to\m^{S}$ is semiseparable and hence $S$ is a semicosplit $R$-coring. Nevertheless $S$ is not cosplit in general. In fact if $G$ is separable, as observed at the beginning of the present subsection, there exists $w\in S^R$ such that $1_R=\varepsilon(w)$ and hence, for every $r\in R$, we have $r=r1_R=r\varepsilon(w)=\varepsilon(r w)$ so that $\varepsilon$ is surjective which, together with the condition $\varphi \circ \varepsilon=\id_S$, implies that $\varphi$ and $ \varepsilon$ are mutual inverses.

2) To get an example of 1) with $\varphi$ not invertible, consider $S$ and $T$ rings, set $R:=S\times T$, take  $\varphi:R\to S,(s,t)\mapsto s$ and $\varepsilon:S\to R,s\mapsto (s,0)$. Then $S$ is a semicosplit but not cosplit $R$-coring.
%
\end{es}

\begin{es}\label{es:pure}
Let $R$ be a commutative ring and consider an idempotent ideal $I$ of $R$, assumed to be a pure right $R$-submodule. We recall that a submodule $N$ of an $R$-module $M$ is said to be \emph{pure} \cite[40.13]{BW03} if the inclusion $N\hookrightarrow M$ remains injective after tensoring by any right $R$-module.
Since $I$ is pure, we get that the multiplication $m_I:I\otimes_R I\to I$, $m_I(a\otimes_R a')= aa'$, is injective as it is obtained as the composition $I\otimes_R I\overset{I\otimes_R \varepsilon_I}{\to} I\otimes_R R\overset{\cong}{\to} I$, where $\varepsilon_I:I\to R$ is the canonical inclusion. Since $I$ is idempotent, i.e. $I^2=I$, we get that $m_I$ is also surjective whence bijective. Thus we can consider $\Delta_I = m_I^{-1}: I\to I\otimes_{R}I$ and write $\Delta_I(a)=\sum a_1\otimes_R a_2$ by means of Sweedler's notation. Then $\sum \varepsilon_I(a_1) a_2=\sum a_1 a_2=m_I(\sum a_1\otimes_R a_2)=m_I(\Delta_I(a))=a$ and similarly $\sum a_1 \varepsilon_I(a_2)=a$ so that $(I,\Delta_I,\varepsilon_I)$ is an $R$-coring. Now, the condition in Theorem \ref{thm:inducoring} for this coring is the existence of an element $z\in I^R$ such that $c=\varepsilon_{I}(z)c$ i.e., by definition of $\varepsilon_I$, the existence of $z\in I$ such that $c=zc=cz$ for every $c\in I$. This means that $z$ is the multiplicative identity in $I$. This goes back to a particular case of the ideal $I$ constructed in Corollary \ref{cor:inducoring} by taking $\cc=I$ and noting that $\mathrm{Im}(\varepsilon_I)=I$.
%
Moreover, in Example \ref{phi_coring} 2) we can identify $S$ with the idempotent ideal $I=S\times \{0\}$ of the ring $R=S\times S$, through the isomorphism $S\overset{\cong}{\to} I :s\to (s,0)$. In this case, we can take $z=(1,0)$ (note that $z\neq (1,1)=1_R$) and $\Delta_I(x):=x\otimes_R z= z\otimes_R x$.

\end{es}

\subsection{Bimodules}\label{es:bimod}

Let $R$ and $S$ be rings, and let ${}_{R}\m_{S}$ denote the category of $(R,S)$-bimodules. For an $(R,S)$-bimodule $M$ we often write ${}_R M$, $M_S$, ${}_R M_S$ to indicate the left $R$-module, the right $S$-module, the $(R,S)$-bimodule structure used, respectively, and morphisms in the corresponding categories are denoted by ${}_R\Hom(-,-)$, $\Hom_S(-,-)$ and ${}_R\Hom_S(-,-)$. \par
We recall from \cite{ABM08, ACMM06} that every $M\in {}_{R}\m_{S}$ defines an adjunction $\sigma^*\dashv \sigma_*$ formed by
\begin{itemize}
  \item the induction functor $\sigma^*:=(-)\otimes_R M: \text{Mod-}R\to \text{Mod-}S$,
  \item the coinduction functor $\sigma_*:=\Hom_S(M,-): \text{Mod-}S\to \text{Mod-}R$.
\end{itemize}
The unit $\eta$ and the counit $\epsilon$ of this adjunction are given for all $X\in\mathrm{Mod}\text{-}R$ and $Y\in \mathrm{Mod}\text{-}S$ by
\begin{equation*}
\eta_X : X\to \Hom_S(M,X\otimes_R M),\, x\mapsto [m\mapsto x\otimes_R m],\quad\epsilon_Y : \Hom_S(M ,Y)\otimes_R M\to Y,\, f\otimes_R m\mapsto f(m).
\end{equation*}
Given bimodules ${}_R M_S$ and ${}_{R'} N_{S}$, where $R'$ is a ring, the abelian group $ \Hom_S(M,N)$ is an $(R', R)$-bimodule via the multiplication defined by
\begin{equation*}
(r'fr)(m):=r'f(rm),\quad \text{for every } f\in \Hom_S(M,N), r\in R, m\in M, r'\in R'.
\end{equation*}
In particular, the endomorphism ring $\e:= \End_S(M)$ belongs to ${}_R\m_{R}$. We denote by
\begin{equation*}
{}^*M={}_R\Hom (M,R) \quad\text{and}\quad M^{*}=\Hom_S(M,S)
\end{equation*}
the left dual and the right dual of $M$, respectively, which both belong to ${}_S\m_R$. 

Given an $(R,S)$-bimodule $M$, in \cite{Su71} $R$ is said to be \emph{$M$-separable over $S$} if the evaluation map
\begin{equation}\label{eval2-sugano}
\ev_M:M\otimes_S {}^*M\to R, \quad \ev_M(m\otimes_S f)=f(m),
\end{equation}
is a split epimorphism of $R$-bimodules. By \cite[Theorem 34]{CMZ02}, this is equivalent to say that the functor ${}_R\Hom (M,-):R\text{-Mod}\to S\text{-Mod}$ is separable. Hereafter, we consider the right version of this definition. Explicitly, we say that $S$ is $M$\emph{-separable over} $R$, if the evaluation map
\begin{equation}\label{eval}
\ev_M:M^*\otimes_R M\to S, \quad \ev_M(f\otimes_R m)=f(m),
\end{equation}
is a split epimorphism of $S$-bimodules. This means  there is a central element $\sum_i f_i\otimes_R m_i$ $\in (M^*\otimes_R M)^S$ such that $\sum_i f_i(m_i)=1_S$.
\begin{rmk}\label{rmk:choice}
As we will see in Claim \ref{claim:bimod}, when $M_S$ is finitely generated and projective, the Eilenberg-Moore category $(\mathrm{Mod}\text{-}S)^{\sigma^*\sigma_*}$ results to be equivalent to the category $\m^\cc$ of right comodules over the comatrix $S$-coring $\cc$ which was defined in \cite{ElKGT03} as $\cc:=M^*\otimes_R M$. This explains our choice to use the right version of \eqref{eval2-sugano}, since otherwise we would have achieved the less usual coring $M\otimes_S {}^*M$. Moreover, our choice is also motivated by the fact that the map
\begin{equation}\label{def:varphiE}
 \varphi:R\to \e,\quad r\mapsto [m\mapsto rm],
\end{equation}
results to be a ring homomorphism.
\begin{invisible}($\varphi (rs)(m)=rsm=r(sm)=\varphi (r)(sm)=\varphi(r)\varphi(s)(m)$).\end{invisible}
If we had taken \eqref{eval2-sugano}, we would have been forced to choose $\e={}_R\mathrm{End}(M)={}_R\Hom (M,M)\in {}_S\m_S$ and to consider the ring homomorphism $\varphi: S\to\e^{\mathrm{op}}$, $s\mapsto [m\mapsto ms]$ into the opposite ring.
\begin{invisible} In $\e^{\op}$, $\varphi (s's)(m)=m\cdot_{\e^\op} s's= s'sm=s'(m\cdot_{\e^{\op}}s)=m\cdot_{\e^\op}s\cdot_{\e^\op}s'=\varphi (s')(m\cdot_{\e^\op}s)=\varphi (s')\varphi(s)(m)$.\end{invisible}
\end{rmk}
\par
Given an $(R,S)$-bimodule $M$, concerning the separability and natural fullness of $\sigma_*=\Hom_S(M,-)$ and $\sigma^*=(-)\otimes_R M$, we know that
\begin{itemize}
  \item $\sigma_*$ is separable if and only if $S$ is $M$-separable over $R$ (right version of \cite[Theorem 34]{CMZ02});\begin{invisible}
  It follows by the right version of the third part of \cite[Lemma 11]{CMZ02} and Rafael Theorem.
  \end{invisible}
  \item $\sigma_*$ is naturally full if and only if there is $\sum_i f_i\otimes_R m_i\in (M^*\otimes_R M)^S$ satisfying $\id_M\otimes_R m=\sum_i mf_i(-)\otimes_R m_i$ for all $m\in M$ (right version of \cite[Theorem 3.8 (1)]{ACMM06});
  \item if  $\sigma^*$ is separable, then (right version of \cite[Proposition 2.5]{Raf90}) there is $E\in {_R\Hom_R}(\e,R)$ such that $E\circ\varphi=\id_R$, i.e. $\varphi^*$ is separable, where $\varphi$ is the map in \eqref{def:varphiE}.

\item Assume $M$ is finitely generated and projective as a right $S$-module. Then, $\sigma^*$ is naturally full if and only if there is $E\in {_R\Hom_R}(\e,R)$ such that $\varphi\circ E=\id_\e$ (right version of \cite[Theorem 3.8 (2)]{ACMM06}). By what we recalled at the beginning of Subsection \ref{es:extens}, this is equivalent to say that $\varphi^*$ is naturally full.
\end{itemize}

Now, we investigate the semiseparability of $\sigma_*$ and, in the finitely generated and projective case, the one of $\sigma^*$. To this aim we first introduce the following definition, which will be mainly used in its first part by the same reasons discussed in Remark \ref{rmk:choice}.

\begin{defn}\label{bimod:semisep}
Let $R,S$ be rings and $M$ an $(R,S)$-bimodule. We say that $S$ is $M$\textbf{-semiseparable over} $R$ if there exists an element $\sum_i f_i\otimes_R m_i\in (M^*\otimes_R M)^S$ such that $\sum_i mf_i(m_i)=m$ for every $m\in M$. In a similar way, it is possible to define $R$ is $M$\textbf{-semiseparable over} $S$.
\end{defn}

Given an $(R,S)$-bimodule $M$, the equivalence between $(1)$ and $(3)$ in the following result is the semiseparable counterpart of \cite[Theorem 34]{CMZ02}.

\begin{thm}\label{thm:bimod}
Let $R,S$ be rings and $M$ an $(R,S)$-bimodule. Then, the following are equivalent.
\begin{enumerate}
  \item The functor $\sigma_*=\Hom_{S}(M,-): \mathrm{Mod}\text{-}S\to \mathrm{Mod}\text{-}R$ is semiseparable.
  \item $\ev_M:M^*\otimes_R M\to S$ is regular as a morphism of $S$-bimodules and $M\otimes_S\ev_M$ is surjective.
  \item $S$ is $M$-semiseparable over $R$.
\end{enumerate}
\end{thm}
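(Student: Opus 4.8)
The plan is to prove the cyclic chain $(1)\Rightarrow(3)\Rightarrow(2)\Rightarrow(1)$, relying on the Rafael-type Theorem \ref{thm:rafael}(ii) applied to the adjunction $\sigma^*\dashv\sigma_*$ throughout. The key translation is this: by Theorem \ref{thm:rafael}(ii), $\sigma_*$ is semiseparable if and only if the counit $\epsilon$ is regular, i.e. there is a natural transformation $\gamma:\id_{\mathrm{Mod}\text{-}S}\to \sigma^*\sigma_*$ with $\epsilon\circ\gamma\circ\epsilon=\epsilon$. Since $\sigma^*\sigma_*=\Hom_S(M,-)\otimes_R M$, a standard Yoneda-type argument (evaluate $\gamma$ at the free module $M_S$ and use naturality against maps $M\to Y$, exactly as in the proof of Proposition \ref{prop:coinduc-coalg} and of the coring case) shows that $\gamma$ is completely determined by the element $\gamma_S(\id_M)\in\Hom_S(M,M)\otimes_R M$; but one needs to bring this to the form $\sum_i f_i\otimes_R m_i\in M^*\otimes_R M$ and identify the $S$-centrality condition. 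I would first record the lemma that natural transformations $\gamma:\id\to\sigma^*\sigma_*$ correspond bijectively to elements $\sum_i f_i\otimes_R m_i\in (M^*\otimes_R M)^S$, with $\gamma_Y(y)=\sum_i y f_i(-)\otimes_R m_i$; this is the right-hand analogue of \cite[Lemma 11]{CMZ02} and the centrality $\sum_i sf_i\otimes_R m_i=\sum_i f_i\otimes_R m_i s$ encodes naturality/well-definedness.

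For $(1)\Rightarrow(3)$: given such a $\gamma$ with $\epsilon\circ\gamma\circ\epsilon=\epsilon$, I would compute both sides on a generic element. Writing $\gamma$ via $\sum_i f_i\otimes_R m_i\in(M^*\otimes_R M)^S$, one has $\epsilon_M\colon \Hom_S(M,M)\otimes_R M\to M$ and the composite $\epsilon\circ\gamma\circ\epsilon$ evaluated at $g\otimes_R m\in \Hom_S(M,M)\otimes_R M$ should reduce, after using centrality, to $\sum_i g(m)f_i(m_i)$, which must equal $\epsilon(g\otimes_R m)=g(m)$. Taking $g=\id_M$ gives $\sum_i mf_i(m_i)=m$ for all $m\in M$, i.e. $S$ is $M$-semiseparable over $R$. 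For $(3)\Rightarrow(1)$: conversely, given $\sum_i f_i\otimes_R m_i\in(M^*\otimes_R M)^S$ with $\sum_i mf_i(m_i)=m$, define $\gamma_Y(y)=\sum_i yf_i(-)\otimes_R m_i$ for $y\in Y$; I would check it is well-defined into $\Hom_S(M,Y)\otimes_R M$ (uses $S$-centrality), right $R$-linear, natural in $Y$, and satisfies $\epsilon\circ\gamma\circ\epsilon=\epsilon$ by the same computation run backwards, so $\sigma_*$ is semiseparable by Theorem \ref{thm:rafael}(ii).

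For the equivalence with $(2)$: the regularity of $\ev_M:M^*\otimes_R M\to S$ as an $S$-bimodule map means there is an $S$-bimodule map $\lambda:S\to M^*\otimes_R M$ with $\ev_M\circ\lambda\circ\ev_M=\ev_M$, equivalently $\ev_M(\lambda(1_S))=1_S$, i.e. writing $\lambda(1_S)=\sum_i f_i\otimes_R m_i$, the element lies in $(M^*\otimes_R M)^S$ (since $\lambda$ is $S$-bilinear, $\lambda(1_S)$ is $S$-central) and satisfies $\sum_i f_i(m_i)=1_S$. So $(2)$ says: there is $\sum_i f_i\otimes_R m_i\in(M^*\otimes_R M)^S$ with $\sum_i f_i(m_i)=1_S$ and, additionally, $M\otimes_S\ev_M:M\otimes_S M^*\otimes_R M\to M\otimes_S S\cong M$ surjective. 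The condition $\sum_i mf_i(m_i)=m$ for all $m$ in $(3)$ is exactly the statement that the element $\sum_i m\otimes_S f_i\otimes_R m_i$ maps under $M\otimes_S\ev_M$ to $m$; one direction ($3\Rightarrow 2$) is then immediate (the explicit preimage gives surjectivity, and $\sum_i f_i(m_i)=1_S$ follows by a further computation or is absorbed), while for $2\Rightarrow 3$ one uses surjectivity of $M\otimes_S\ev_M$ together with the central element to manufacture a (possibly different) central element witnessing $(3)$ — this is where one must be slightly careful, manipulating the purity/surjectivity to replace a general preimage of $m$ by one of the required tensor form, exploiting $S$-centrality of $\sum_i f_i\otimes_R m_i$. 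I expect this last passage, $(2)\Rightarrow(3)$, to be the main obstacle: translating surjectivity of $M\otimes_S\ev_M$ into the pointwise identity $\sum_i m f_i(m_i)=m$ with a single fixed central element requires combining the two pieces of data in $(2)$ rather than using either alone, analogously to how the coring counit argument in Theorem \ref{thm:inducoring} upgrades a regular counit to an invariant element; I would model the bookkeeping on the proof of Theorem \ref{thm:inducoring} and on \cite[Theorem 34]{CMZ02}.
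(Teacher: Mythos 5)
Your treatment of $(1)\Leftrightarrow(3)$ is correct and is essentially the paper's argument: the bijection $\mathrm{Nat}(\id,\sigma^*\sigma_*)\cong (M^*\otimes_R M)^S$ with $\gamma_Y(y)=\sum_i yf_i(-)\otimes_R m_i$, combined with Theorem \ref{thm:rafael}(ii), reduces everything to the identity $\sum_i mf_i(m_i)=m$, and evaluating $\epsilon\circ\gamma\circ\epsilon=\epsilon$ at $\id_M\otimes_R m$ (equivalently, using that $\epsilon_M$ is surjective) extracts exactly that identity. The converse $(3)\Rightarrow(1)$ as you describe it is also fine.

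However, your handling of condition $(2)$ contains a genuine error. You assert that regularity of $\ev_M$ as an $S$-bimodule map, i.e.\ the existence of an $S$-bilinear $\lambda:S\to M^*\otimes_R M$ with $\ev_M\circ\lambda\circ\ev_M=\ev_M$, is "equivalently $\ev_M(\lambda(1_S))=1_S$", i.e.\ $\sum_i f_i(m_i)=1_S$. This is false: writing $z:=\ev_M(\lambda(1_S))=\sum_i f_i(m_i)$, the composite $\ev_M\circ\lambda$ is right multiplication by $z$, so the regularity identity says only that $f(m)z=f(m)$ for all $f\in M^*$, $m\in M$ — that $z$ is a right unit for the trace ideal $\mathrm{Im}(\ev_M)$ — not that $z=1_S$. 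The condition $z=1_S$ is precisely splitness of $\ev_M$, i.e.\ $M$-\emph{separability}, and the two differ in general: by Proposition \ref{prop:ssVSs} and Example \ref{es:ssVSs} the element $z$ is a central idempotent of $S$ that need not be $1_S$. Consequently your claim in $(3)\Rightarrow(2)$ that "$\sum_i f_i(m_i)=1_S$ follows by a further computation or is absorbed" is attempting to prove something false, and your reading of $(2)$ would collapse it to (essentially) condition (i) of Corollary \ref{cor:sep-bimod}. The correct bookkeeping, as in the paper, is: for $(3)\Rightarrow(2)$ one checks $f(m)z=f(mz)=f(m)$ by right $S$-linearity of $f$ (giving regularity) and exhibits $\sum_i m\otimes_S f_i\otimes_R m_i$ as an explicit preimage of $m$ under $M\otimes_S\ev_M$ (giving surjectivity); for $(2)\Rightarrow(3)$ one applies $M\otimes_S(-)$ to $\ev_M\circ\lambda\circ\ev_M=\ev_M$ and cancels the surjection $M\otimes_S\ev_M$ on the right to get $(M\otimes_S\ev_M)\circ(M\otimes_S\lambda)=\id$, which evaluated at $m\otimes_S 1_S$ is exactly $\sum_i mf_i(m_i)=m$. (The paper in fact runs the cycle $(1)\Rightarrow(2)\Rightarrow(3)\Rightarrow(1)$, extracting regularity of $\ev_M$ directly from $\epsilon_S=\ev_M$; your order $(1)\Rightarrow(3)\Rightarrow(2)\Rightarrow(1)$ is workable once the above correction is made.)
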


\proof
It is known (e.g. the right version of \cite[Lemma 11]{CMZ02}) that there is a bijective correspondence
\begin{equation}\label{bimod:nat}
\Nat (\id _{{}_{\mathrm{Mod}\text{-}S}}, \sigma^*\sigma_*)\cong (M^*\otimes_R M)^S .
\end{equation}
Explicitly, a natural transformation $\gamma : \id _{\mathrm{Mod}\text{-}S}\rightarrow \sigma^*\sigma_*$ is mapped to $\gamma_S(1)\in (M^*\otimes_R M)^S$ while an element $\sum_i f_i\otimes_R m_i \in (M^*\otimes_R M)^S$ is mapped, for every $Y\in \mathrm{Mod}\text{-}S$, to
\begin{equation}\label{eq:bimod:nat}
\gamma_Y : Y\rightarrow  \Hom_S(M,Y)\otimes_R M, \quad \gamma_Y(y)=\sum_i yf_i(-)\otimes_R m_i .
\end{equation}
$(1)\Rightarrow(2).$ If the functor $\sigma_*$ is semiseparable, then by Theorem \ref{thm:rafael} there exists a natural transformation $\gamma : \id _{\mathrm{Mod}\text{-}S}\rightarrow \sigma^*\sigma_*$ such that $\epsilon\circ\gamma\circ\epsilon = \epsilon $. Consider the right $S$-module map $\gamma_S:S\to M^*\otimes_R M$. For every $s\in S$ set $f_s:S\to S,s'\mapsto ss'$. Since $\gamma_S$ is natural in $S$, we have
$\gamma_S(ss')=(\gamma_S\circ f_s)(s')=((\Hom_S(M ,f_s)\otimes_R M)\circ\gamma_S)(s')=s\gamma_S(s')$ so that $\gamma_S$ is $S$-bilinear.
Since $\epsilon_S=\ev_M$, from $\epsilon\circ\gamma\circ\epsilon = \epsilon $ we get $\ev_M\circ\gamma_S\circ\ev_M = \ev_M $ and hence $\ev_M$ is regular as a morphism of $S$-bimodules. Note that any $m\in M$ is of the form $m=\id(m)=\epsilon_M(\id\otimes_R m)$ so that $\epsilon_M$ is surjective. Thus, from $\epsilon_M\circ\gamma_M\circ\epsilon_M = \epsilon_M $, we get $\epsilon_M\circ\gamma_M = \id $. From \eqref{bimod:nat} we have that $\gamma_M$ is defined by \eqref{eq:bimod:nat} for $Y=M$, where $\sum_i f_i\otimes_R m_i=\gamma_S(1_S)\in (M^*\otimes_R M)^S$. Thus $$m=\id(m)=(\epsilon_M\circ\gamma_M)(m)= \sum_i mf_i(m_i)=r_M(M\otimes_S\ev_M)(\sum_im\otimes_Sf_i\otimes_Rm_i) $$ where $r_M:M\otimes_SS\to M$ is the canonical isomorphism. Thus, $r_M\circ(M\otimes_S\ev_M)$ is surjective and hence also $M\otimes_S\ev_M$ is surjective.

$(2)\Rightarrow(3).$ Assume that $\ev_M$ is regular as a morphism of $S$-bimodules, i.e. that there is an $S$-bimodule map $\gamma_S:S\to M^*\otimes_R M$ such that $\ev_M\circ\gamma_S\circ\ev_M = \ev_M $. Thus $(M\otimes_S\ev_M)\circ(M\otimes_S\gamma_S)\circ(M\otimes_S\ev_M) = (M\otimes_S\ev_M) $. If $M\otimes_S\ev_M$ is surjective, we get $(M\otimes_S\ev_M)\circ(M\otimes_S\gamma_S) = \id_{M\otimes_SS} $. Now set $\sum_i f_i\otimes_R m_i=\gamma_S(1_S)\in (M^*\otimes_R M)^S$. Thus, $S$ is $M$-semiseparable over $R$ as
$$m=r_M\id_{M\otimes_SS}(m\otimes_S1)=r_M(M\otimes_S\ev_M)(M\otimes_S\gamma_S)(m\otimes_S1)=\sum_i mf_i(m_i).$$
$(3)\Rightarrow(1).$  Assume $S$ is $M$-semiseparable over $R$. By definition, there exists an element $\sum_i f_i\otimes_R m_i \in (M^*\otimes_R M)^S$ such that $\sum_i f_i(m_i)m=m$ for every $m\in M$ and the corresponding natural transformation $\gamma : \id _{\mathrm{Mod}\text{-}S}\rightarrow \sigma^*\sigma_*$ from \eqref{bimod:nat} is given, for every $Y\in \mathrm{Mod}\text{-}S$, by \eqref{eq:bimod:nat}.
Moreover, for every $Y\in \mathrm{Mod}\text{-}S$, $m\in M$, $f\in \Hom_S(M,Y)$, we have
$\epsilon_Y\gamma_Y\epsilon_Y (f\otimes_R m)=\epsilon_Y\gamma_Y (f(m))
=\epsilon_Y(\sum_i f(m)f_i(-)\otimes_R m_i)
=\sum_i f(m)f_i(m_i)=f(\sum_i mf_i(m_i))=f(m)
=\epsilon_Y (f\otimes_R m)$. Thus $\epsilon$ is regular and by Theorem \ref{thm:rafael} (ii) $\sigma_*$ is semiseparable.
\endproof

\begin{rmk}
  In the setting of Theorem \ref{thm:bimod}, assume further that $M_S$ is projective. Then, the requirement that $M\otimes_S\ev_M$ is surjective is superfluous. Indeed, there is a dual basis formed by elements $m_i\in M,f_i\in M^*$, with $i\in I$, such that, for every $m\in M$, we have $m=\sum_{i\in I}m_if_i(m)$. By definition, $f_i(m)=0$ for almost all $i$. Thus there is a finite subset $I(m)$ of $I$ such that    $m=\sum_{i\in I(m)}m_if_i(m)= r_M(M\otimes_S\ev_M)(\sum_{i\in I(m)} m_i\otimes_Sf_i\otimes_Rm)$, whence $M\otimes_S\ev_M$ is surjective.
\end{rmk}

%

As a consequence of Theorem \ref{thm:bimod}, we have the following characterization of $M$-separability, for an $(R,S)$-bimodule $M$, which extends some known results, see e.g. \cite[Theorem 1]{Su71}, \cite[Corollary 2.4]{Raf90} and \cite[Proposition 4.3]{ABM08}.

\begin{cor}\label{cor:sep-bimod}
Let $R,S$ be rings and $M$ an $(R,S)$-bimodule. Then, $S$ is $M$-separable over $R$ if and only if $S$ is $M$-semiseparable over $R$ and $M_S$ is a generator.
\end{cor}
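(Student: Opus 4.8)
The plan is to combine the characterization in Theorem \ref{thm:bimod}, equating $M$-semiseparability of $S$ over $R$ with semiseparability of $\sigma_*=\Hom_S(M,-)$, with the characterization of separability of $\sigma_*$ recalled above, namely that $\sigma_*$ is separable if and only if $S$ is $M$-separable over $R$. By Proposition \ref{prop:sep} (i), $\sigma_*$ is separable if and only if it is semiseparable and faithful. Hence it suffices to prove that, assuming $S$ is $M$-semiseparable over $R$ (equivalently, $\sigma_*$ is semiseparable), the functor $\sigma_*$ is faithful if and only if $M$ is a generator in $\mathrm{Mod}\text{-}S$.

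First I would recall that $\sigma_*=\Hom_S(M,-)$ is faithful if and only if $M$ is a generator in $\mathrm{Mod}\text{-}S$; this is a standard fact (a module $M$ is a generator precisely when $\Hom_S(M,-)$ is faithful, see e.g.\ \cite[Proposition 18.8]{AF92} or \cite{Bor94}). Indeed, faithfulness of $\sigma_*$ means exactly that for every nonzero morphism $g:Y\to Y'$ in $\mathrm{Mod}\text{-}S$ there is $h:M\to Y$ with $g\circ h\neq 0$, which is one of the equivalent formulations of $M$ being a generator. Granting this, the corollary follows immediately: if $S$ is $M$-separable over $R$, then $\sigma_*$ is separable, hence semiseparable (so $S$ is $M$-semiseparable over $R$ by Theorem \ref{thm:bimod}) and faithful (so $M$ is a generator). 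Conversely, if $S$ is $M$-semiseparable over $R$ and $M$ is a generator, then $\sigma_*$ is semiseparable by Theorem \ref{thm:bimod} and faithful by the generator characterization, whence separable by Proposition \ref{prop:sep} (i), i.e.\ $S$ is $M$-separable over $R$ again by the recalled characterization of separability of $\sigma_*$.

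The only point requiring a little care—and the mild obstacle in this otherwise formal argument—is making precise the equivalence ``$\sigma_*$ faithful $\iff$ $M$ generator'': one direction uses that if $M$ is a generator and $g\neq 0$ then some composite $g\circ h$ is nonzero, so $\sigma_*(g)=\Hom_S(M,g)\neq 0$; the other direction, that faithfulness of $\sigma_*$ forces $M$ to be a generator, is obtained by testing on the canonical projection from a free module onto the trace ideal, or simply by the standard equivalence. I would state this as a short remark invoking the classical reference rather than reproving it. Everything else is a direct bookkeeping combination of results already in the paper.
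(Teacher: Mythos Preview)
Your proposal is correct and follows essentially the same route as the paper: translate both $M$-separability and $M$-semiseparability into properties of $\sigma_*$ via the recalled characterization and Theorem \ref{thm:bimod}, apply Proposition \ref{prop:sep} (i), and identify faithfulness of $\sigma_*$ with $M_S$ being a generator. The paper's only cosmetic difference is that it justifies the last equivalence by factoring through the forgetful functor $U:\mathrm{Mod}\text{-}R\to\Set$ (so that $U\circ\sigma_*=\Hom_S(M,-):\mathrm{Mod}\text{-}S\to\Set$) before invoking the classical generator characterization, whereas you cite it directly; both are fine.
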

\proof
By what we recalled at the beginning of this subsection, $S$ is $M$-separable over $R$ if and only if $\sigma_*=\Hom_S(M,-):\mathrm{Mod}\text{-}S\to \mathrm{Mod}\text{-}R$ is a separable functor. By Proposition \ref{prop:sep} (i), this is equivalent to require that $\sigma_*$ is semiseparable and faithful. The semiseparability of $\sigma_*$ is equivalent to $S$ being $M$-semiseparable over $R$, by Theorem \ref{thm:bimod}.
Since the forgetful functor $U:\mathrm{Mod}\text{-}R\to\Set$ is faithful, the faithfulness of $\sigma_*$ is equivalent to the faithfulness of the composition $U\circ\sigma_*=\Hom_S (M,-):\mathrm{Mod}\text{-}S\to \Set$, i.e. to $M_S$ being a generator, see e.g. \cite[Section 6]{Ste75}.
\endproof

\begin{rmk}
Let $R,S$ be rings and $M$ an $(R,S)$-bimodule. If $M_S$ is a generator and $\varphi:R\to\e=\mathrm{End}_S(M)$ is a ring epimorphism, then by the right version of \cite[Proposition 3.11]{ACMM06}, the functor $\sigma_*$ is fully faithful, i.e. $\sigma^*$ is a reflection. Thus, by Theorem \ref{thm:frobenius}, $\sigma^*$ results to be semiseparable if and only if it is naturally full  if and only if it is Frobenius in this case.
\end{rmk}

We now obtain a different characterization of $M$-semiseparability of $S$ over $R$, for an $(R,S)$-bimodule $M$, that will allow us to exhibit an example where $S$ is $M$-semiseparable but not $M$-separable over $R$, see Example \ref{es:ssVSs}.

\begin{prop}\label{prop:ssVSs}
Let $R,S$ be rings and let $M$ be an $(R,S)$-bimodule. Then $S$ is $M$-semiseparable over $R$ if and only if there is a central idempotent $z\in S$ (necessarily unique) such that $M$ is obtained by restriction of scalars from an $(R, Sz)$-bimodule $N$ and $Sz$ is $N$-separable over $R$, via $\varphi :S\rightarrow Sz,s\mapsto sz$. Furthermore, $S$ is $M$-separable over $R$ if and only if $z=1_S$.
\end{prop}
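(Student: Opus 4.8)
The plan is to translate directly between the data witnessing $M$-semiseparability of $S$ over $R$ (an element $\sum_i f_i\otimes_R m_i\in(M^*\otimes_R M)^S$ with $\sum_i mf_i(m_i)=m$ for all $m\in M$, as in Definition~\ref{bimod:semisep}) and the data in the statement: the central idempotent $z$, the $(R,Sz)$-bimodule $N$, and an element witnessing $N$-separability of $Sz$ over $R$.

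First, for necessity, I would start from such a witness $w=\sum_i f_i\otimes_R m_i$ and set $z:=\ev_M(w)=\sum_i f_i(m_i)\in S$. Since $\ev_M\colon M^*\otimes_R M\to S$ is a morphism of $S$-bimodules (as already exploited in the proof of Theorem~\ref{thm:bimod}) and $w$ is $S$-central, $z$ is central in $S$; applying $f_j$ to the relation $\sum_i m_j f_i(m_i)=m_j$, using right $S$-linearity and summing over $j$, gives $z^2=z$; and the relation with $m$ arbitrary gives $mz=m$ for all $m\in M$, i.e. $Mz=M$. Thus $M$ is the restriction of scalars along $\varphi\colon S\to Sz$ of the $(R,Sz)$-bimodule $N:=M$ on which $Sz$ acts by restricting the $S$-action (and this $N$ is forced). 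To check that $Sz$ is $N$-separable over $R$, I note $N^*=\Hom_{Sz}(M,Sz)=zM^*$ and apply the right $R$-linear map $f\mapsto zf$ tensored with $\id_M$ to the identity $s\cdot w=w\cdot s$; specialising $s$ to elements of $Sz$ yields that $\sum_i (zf_i)\otimes_R m_i$ lies in $(N^*\otimes_R N)^{Sz}$, while $\ev_N$ sends it to $\sum_i zf_i(m_i)=z\cdot z=z=1_{Sz}$.

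For sufficiency, given a central idempotent $z$, an $(R,Sz)$-bimodule $N$ restricting to $M$ (hence $Mz=M$, and I may take $N=M$), and $\sum_i g_i\otimes_R n_i\in(N^*\otimes_R N)^{Sz}$ with $\sum_i g_i(n_i)=z$, I would first observe that each $g_i\colon M\to Sz\hookrightarrow S$ is right $S$-linear (because $ms=m(sz)$ for all $m\in M$), so $g_i\in M^*$. Applying the inclusion $N^*\hookrightarrow M^*$ tensored with $\id_M$ to the $Sz$-centrality relations, and then extending the resulting equalities from $s\in Sz$ to all $s\in S$ — using $s\cdot g_i=(sz)\cdot g_i$ and $n_is=n_i(sz)$, which follow from $\im(g_i)\subseteq Sz$ and $Mz=M$ — shows $\sum_i g_i\otimes_R n_i\in(M^*\otimes_R M)^S$; since $\sum_i mg_i(n_i)=m\bigl(\sum_i g_i(n_i)\bigr)=mz=m$, this is a witness of $M$-semiseparability of $S$ over $R$.

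Uniqueness and the final assertion are then short. If $z_1,z_2$ both satisfy the conditions, choosing an $N$-separability witness $\sum_i g_i\otimes_R n_i$ for $z_1$ (so $\sum_i g_i(n_i)=z_1$, with the $g_i$ right $S$-linear and $n_i\in M$) gives $z_1z_2=\sum_i g_i(n_iz_2)=\sum_i g_i(n_i)=z_1$ because $Mz_2=M$ forces $n_iz_2=n_i$, and symmetrically $z_2z_1=z_2$, whence $z_1=z_2$ as $z_1,z_2$ are central. If $z=1_S$ then $Sz=S$, $\varphi=\id_S$, $N=M$, so the condition reads ``$S$ is $M$-separable over $R$''; conversely an $M$-separability witness (i.e. $\sum_i f_i(m_i)=1_S$) is in particular an $M$-semiseparability witness, so the $z$ constructed above equals $1_S$, and by uniqueness it is the $z$ attached to $M$. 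The one point demanding care throughout is that $N^*$ is merely the sub-bimodule $zM^*$ of $M^*$ and $N=M$ only as left $R$-modules, so centrality must be transported between $(N^*\otimes_R N)^{Sz}$ and $(M^*\otimes_R M)^S$ by applying the explicit $R$-linear maps above tensored with $\id_M$, never by identifying the two tensor products; once this is fixed, each verification reduces to a one-line computation with the central idempotent $z$.
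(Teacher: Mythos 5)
Your proposal is correct and follows essentially the same route as the paper's proof: define $z:=\ev_M\bigl(\sum_i f_i\otimes_R m_i\bigr)$, verify it is a central idempotent with $mz=m$, pass the witness through $f\mapsto zf$ (the paper writes this as $f\mapsto\varphi\circ f$, which is the same map) to get an $N$-separability datum, and reverse the process via the inclusion $Sz\hookrightarrow S$ for the converse; uniqueness and the final assertion are handled identically. Your extra care in transporting centrality between $(N^*\otimes_R N)^{Sz}$ and $(M^*\otimes_R M)^S$ via explicit maps is exactly the point the paper also checks, so there is no gap.
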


\begin{proof}
Assume that $S$ is $M$-semiseparable over $R$, i.e. that there is a central element $\sum_i f_i\otimes_R m_i$ $\in (M^*\otimes_R M)^S$ such that $\sum_i mf_i(m_i)=m$, for every $m\in M$. Set $z:=\sum_i f_i(m_i)\in S$ so that $mz=m$, for every $m\in M$. Since $\ev_M:M^*\otimes_R M\to S$ is a morphism of $S$-bimodules, it induces a morphism of $\ev_M^S:(M^*\otimes_R M)^S\to S^S$ so that $z=\ev_M(\sum_i f_i\otimes_R m_i)\in S^S$, i.e. $z$ is central. Moreover $zz=\sum_i f_i(m_i)z=\sum_i f_i(m_iz)=\sum_i f_i(m_i)=z$, so that $z$ is idempotent. Since for every $m\in M$ one has $mz=m$, then $M$ becomes a right $Sz$-module, via $\mu_M:M\times Sz\to M, (m,sz)\mapsto ms$. Let us write $N$ for $M$ regarded as an $(R,Sz)$-bimodule so that $M=\varphi _{\ast }N$ where $\varphi:S\rightarrow Sz,s\mapsto sz$. Set $N^*:=\Hom_{Sz}(N,Sz)$. Then $\sum_i \varphi f_i\otimes_R m_i\in (N^*\otimes_R N)^{Sz}$ and $\sum_i \varphi f_i(m_i)=\varphi(z)=zz=z=1_{Sz}$ so that $Sz$ is $N$-separable over $R$.
\begin{invisible}
By definition $f_i\in M^*=\Hom_{S}(M,S)$ so that $f_i$ is right $S$-linear. Since $z$ is central also $\varphi:S\rightarrow Sz,s\mapsto sz$, is right $S$-linear so that $\varphi f_i$ is right $S$-linear whence a fortiori, right $Sz$-linear as $Sz\subseteq S$. Thus $\sum_i \varphi f_i\otimes_R m_i\in N^*\otimes_R N$. Let us check it is $Sz$-central. For all $s\in S$, se have $\sum_i s(\varphi\circ f_i)\otimes_R m_i=\sum_i\varphi\circ (sf_i)\otimes_R m_i=\sum_i\varphi\circ  f_i\otimes_R m_is$ so that $\sum_i \varphi f_i\otimes_R m_i\in (N^*\otimes_R N)^{S}\subseteq(N^*\otimes_R N)^{Sz}.$
\end{invisible}
Conversely, assume there is a central idempotent $z\in S$ such that such that $M$ is obtained by restriction of scalars from an $(R, Sz)$-bimodule $N$ and $Sz$ is $N$-separable over $R$, via $\varphi :S\rightarrow Sz,s\mapsto sz$. This implies $mz=m$ for every $m\in M$.
\begin{invisible}
Indeed $(M,\mu_M:M\times S\to M)$ is obtained by restriction of scalars from an $(R, Sz)$-bimodule $(N,\mu_N:N\times Sz\to N)$ and $Sz$ is $N$-separable over $R$, via $\varphi :S\rightarrow Sz,s\mapsto sz$. Thus $M=N$ and $\mu_M=\mu_N\circ (N\times \varphi  )$ so that $\mu_M(m, z)=\mu_N(m, zz)=\mu_N(m,1_Sz)=\mu_M(m, 1_S)=m$ i.e. $mz=m$ for every $m\in M$.
\end{invisible}
Since $Sz$ is $N$-separable over $R$, there is $\sum_i g_i\otimes_R m_i$ $\in (N^*\otimes_R N)^S$ such that $\sum_i g_i(m_i)=1_{Sz}=z$. Let $j:Sz\to S$ be the canonical injection. Then $f_i:=j\circ g_i\in M^*$ and $\sum_i f_i\otimes_R m_i$ $\in (M^*\otimes_R M)^S$. Moreover $\sum_i mf_i(m_i)=\sum_i mjg_i(m_i)=mj(z)=mz=m$ so that $S$ is $M$-semiseparable over $R$.
\begin{invisible}
$jg_i(ms)=g_i(ms)=g_i((mz)s)=g_i(m(sz))=g_i(m)sz=g_i(m)zs=g_i(m)s=jg_i(m)s$ where the second-last equality follows from the fact that $g_i(m)\in Sz$ and hence $g_i(m)z=g_i(m)$. Thus $jg_i\in M^*$ and hence $\sum_i jg_i\otimes_R m_i\in M^*\otimes_R M$. Let us check it is $S$-central: $\sum_i s(j\circ g_i)\otimes_R m_i=\sum_i j\circ (sg_i)\otimes_R m_i=\sum_i j\circ (szg_i)\otimes_R m_i=\sum_i j\circ (g_i)\otimes_R m_isz=\sum_i j\circ (g_i)\otimes_R m_izs=\sum_i j\circ (g_i)\otimes_R m_is$.
\end{invisible}
Assume there is another central idempotent $z'\in S$ such that $M=\varphi _{\ast }N'$ for some $(R,Sz')$-bimodule $N'$ and $Sz$ is $N'$-separable over $R$ via the ring homomorphism $\varphi' :S\rightarrow Sz',s\mapsto sz'$. Then $zz'=\sum_i f_i(m_i)z'=\sum_i f_i(m_iz')=\sum_i f_i(m_i)=z$. Exchanging the roles of $z$ and $z'$, we also get $z'z=z'$ so that $z=z'$.
Let $z\in S$ be a central idempotent such that $M=\varphi_*N $ where $Sz$ is $N$-separable over $R$ via $\varphi :S\rightarrow Sz,s\mapsto sz$. If $z=1_S$, then $S$ is $N$-separable over $R$ and $\varphi=\id_S$ so that $S$ is $M=\varphi_*N$-separable over $R$ as well. Conversely, if $S$ is $M$-separable over $R$, then $z=1_S$ is an idempotent as in the statement, whence the unique one.
%
 \end{proof}

The following is an instance of an $(R,S)$-bimodule $M$ such that $S$ is $M$-semiseparable but not $M$-separable over $R$.

\begin{es}\label{es:ssVSs}
Let $\varphi:S\to T$ be a ring homomorphism and assume that there is $E\in{}_S\Hom_S(T,S)$ such that $\varphi\circ E=\id_T$. If we set $z:=E(1_T)\in S$, then $z$ is a central idempotent in $ S$, the map $\varphi_{\mid Sz}:Sz\to T$ is a ring isomorphism and $\varphi:S\to T\cong Sz$ is the projection $s\mapsto sz$, see \cite[Proposition 3.1]{ACMM06}. By Proposition \ref{prop:ssVSs}, if $N$ is a $(R,T)$-bimodule such that $T$ is $N$-separable over $R$, then $M:=\varphi_*N$ is an $(R,S)$-bimodule such that $S$ is $M$-semiseparable over $R$. Moreover, if $S$ is also $M$-separable over $R$, then $z=1_S$, whence $\varphi$ is bijective. As a consequence, $S$ will not be $M$-separable over $R$ unless $\varphi:S\to T$ is bijective. As an example, let $\psi:\QQ\times \ZZ\to \QQ,(q,z)\mapsto q$ and $D:\QQ\to \QQ\times \ZZ,q\mapsto (q,0)$ be as in Example \ref{es:induction}. Then, if $N$ is a $(R,\QQ)$-bimodule such that $\QQ$ is $N$-separable over $R$, then the $(R,\QQ\times \ZZ)$-bimodule $M:=\psi_*N$ is such that $\QQ\times \ZZ$ is $M$-semiseparable but not $M$-separable over $R$. For instance  consider the $\QQ$-vector space $N:=\QQ^n$, with $n>1$, and take $R:=\QQ$. Let us check that $N$ is a $(\QQ,\QQ)$-bimodule such that $\QQ$ is $N$-separable over $\QQ$, with $nq=qn$ for all $n\in N,q\in\QQ$. Since $N$ is a free left $\QQ$-module, then it is a generator.
\begin{invisible}
See  \cite[page 94]{Ste75} just before Section 2.
\end{invisible}
Moreover $\e=\mathrm{End}_{\QQ}( N)=\mathrm{End}_{\QQ}( \QQ^n)\cong\mathrm{M}_n( \mathrm{End}_{\QQ}(\QQ))\cong\mathrm{M}_n(\QQ)$ is a separable $\QQ$-algebra.
\begin{invisible}
The matrix ring $\mathrm{M}_n(\QQ)$ is a separable $\QQ$-algebra with separability idempotent given by $\sum _{i=1}^{n}e_{ij}\otimes e_{ji}$. The isomorphism $\mathrm{End}_{\QQ}( \QQ^n)\cong\mathrm{M}_n( \mathrm{End}_{\QQ}(\QQ))$ is consequence of Proposition 13.2 in Anderson-Fuller's book.
\end{invisible} Therefore by the right version of \cite[Theorem 1(1)]{Su71} (see also Proposition \ref{prop:right-equiv-bimod} below), $\QQ$ is $N$-separable over $\QQ$. Thus the $(\QQ,\QQ\times \ZZ)$-bimodule $M:=\psi_*N=\QQ^n$ is such that $\QQ\times \ZZ$ is $M$-semiseparable but not $M$-separable over $\QQ$. For a direct computation by means of Definition \ref{bimod:semisep}, set $m:=(1,0,\ldots,0)$ and define $f\in M^*=\Hom_{\QQ\times\ZZ}(\QQ^n,\QQ\times \ZZ)$ by $f(q_1,\ldots,q_n):=(q_1,0)$. Then $f\otimes_ {\QQ} m \in (M^*\otimes_{\QQ}M)^{\QQ\times \ZZ}$ and for every $m'\in M$ one has $m'f(m)=m'(1,0)=m'\psi(1,0)=m'.$
\begin{invisible}
  Let us check that $f$ is right ${\QQ\times \ZZ}$-linear: $f((q_1,\ldots,q_n)(q,z))=f((q_1,\ldots,q_n)\psi(q,z))=f((q_1,\ldots,q_n)q)=f(q_1q,\ldots,q_nq)=(q_1q,0)=(q_1,0)(q,z)=f(q_1,\ldots,q_n)(q,z).$
Let us check that $f\otimes_ {\QQ} m \in (M^*\otimes_{\QQ}M)^{\QQ\times \ZZ}$: $(f\otimes_ {\QQ} m)\cdot (q,z)=f\otimes_ {\QQ} m(q,z)=f\otimes_ {\QQ} m\psi(q,z)=f\otimes_ {\QQ} mq=qf\otimes_\QQ m$ while $(q,z)\cdot(f\otimes_ {\QQ} m)=(q,z)\cdot f\otimes_ {\QQ} m$; moreover $((q,z)\cdot f)(q_1,\ldots,q_n)=(q,z)\cdot f(q_1,\ldots,q_n)=(q,z)(q_1,0)=(qq_1,0)=q\cdot (q_1,0)=q\cdot f(q_1,\ldots,q_n)=(qf)(q_1,\ldots,q_n)$ so that $(q,z)\cdot f=qf$ and hence $(f\otimes_ {\QQ} m)(q,z)=(q,z)(f\otimes_ {\QQ} m)$.

\end{invisible}

\end{es}

Next result provides an explicit factorization as a bireflection followed by a separable functor for the coinduction functor $\sigma_*$ attached to an $(R,S)$-bimodule $M$ in case it is semiseparable. By Corollary \ref{cor:fact-birefl}, this factorization amounts to the one given by the coidentifier.

\begin{prop}\label{prop:condcMfact}
Let $M$ be an $(R,S)$-bimodule. The coinduction functor $\sigma_*=\Hom_S(M,-):\mathrm{Mod}\text{-}S\to \mathrm{Mod}\text{-}R$ is semiseparable if and only if there is an $S$-coring $I$ with a grouplike element $z\in I^S$ such that $\sigma_*\cong\tilde{\sigma}_*\circ G_I$ where $\tilde{\sigma}_*:=\Hom^I(M,-):\m^I\to \mathrm{Mod\text{-}}R$ is separable and the induction functor $G_I:=(-)\otimes_S I: \mathrm{Mod}\text{-}S\to \m^I$ is a bireflection. Here $M$ is in $\m^I$ via $\rho_M(m)=m\otimes_S z$.
\end{prop}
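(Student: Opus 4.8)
The plan is to mimic the proof of Corollary~\ref{cor:inducoring} almost verbatim, playing the comatrix coring $M^{*}\otimes_{R}M$ and its ``counit'' $\ev_{M}$ in the roles there played by $\cc$ and $\varepsilon_{\cc}$, and taking $I:=\mathrm{Im}(\ev_{M})$ as the analogue of the coring $I$ built there. The easy implication is immediate: if such $I$, $z$, $\tilde{\sigma}_{*}$ and $G_{I}$ exist, then $G_{I}$ is a bireflection, hence naturally full, hence semiseparable by Proposition~\ref{prop:sep}~(ii); composing with the separable functor $\tilde{\sigma}_{*}$ yields a semiseparable functor by Corollary~\ref{cor:fattoriz}, and $\sigma_{*}\cong\tilde{\sigma}_{*}\circ G_{I}$ is then semiseparable by Corollary~\ref{cor:isomorphic}.

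For the converse, I would start from Theorem~\ref{thm:bimod}: if $\sigma_{*}$ is semiseparable then $S$ is $M$-semiseparable over $R$, so there is $\sum_{i}f_{i}\otimes_{R}m_{i}\in(M^{*}\otimes_{R}M)^{S}$ with $\sum_{i}mf_{i}(m_{i})=m$ for all $m\in M$. Put $z:=\sum_{i}f_{i}(m_{i})\in S$. Exactly as in the proof of Proposition~\ref{prop:ssVSs}, $z$ is a central idempotent of $S$ with $mz=m$ for every $m\in M$; since each $f_{i}$ is right $S$-linear this also gives $f_{i}(m)=f_{i}(mz)=f_{i}(m)z$, so $f_{i}$ takes values in $I:=Sz$. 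Then $I$ is a ring with identity $z$ and $\varphi\colon S\to I$, $s\mapsto sz$, is a surjective ring homomorphism, in particular a ring epimorphism (see \cite[1.16]{AF92}); hence $m_{I}\colon I\otimes_{S}I\to I$ is bijective by \cite[Proposition XI.1.2]{Ste75}. Setting $\Delta_{I}:=m_{I}^{-1}$ (so $\Delta_{I}(i)=i\otimes_{S}z=z\otimes_{S}i$) and letting $\varepsilon_{I}\colon I\hookrightarrow S$ be the inclusion makes $(I,\Delta_{I},\varepsilon_{I})$ an $S$-coring with grouplike element $z\in I^{S}$ (centrality of $z$ in $S$ gives $z\in I^{S}$). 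As $\Delta_{I}$ is invertible, $I$ is an idempotent $S$-coring, so $\m^{I}\cong\mathrm{Mod}\text{-}I$ by \cite[Proposition 2.17]{BV09}. From $\Delta_{I}(z)=z\otimes_{S}z$ and $mz=m$ one checks that $\rho_{M}(m):=m\otimes_{S}z$ defines a right $I$-comodule structure on $M$.

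Next I would verify that $G_{I}:=(-)\otimes_{S}I\colon\mathrm{Mod}\text{-}S\to\m^{I}$, the right adjoint of the forgetful functor $F_{I}\colon\m^{I}\to\mathrm{Mod}\text{-}S$, is a bireflection: it is naturally full because $i=\varepsilon_{I}(i)z$ for every $i\in I$, by the characterization of natural fullness of induction functors of corings recalled in Subsection~\ref{es:coring}; and it is a coreflection because $I$ is an idempotent $S$-coring, so $F_{I}$ is fully faithful. Theorem~\ref{thm:frobenius} then forces $G_{I}$ to be a bireflection. For $\tilde{\sigma}_{*}:=\Hom^{I}(M,-)\colon\m^{I}\to\mathrm{Mod}\text{-}R$ the point is that every $N\in\m^{I}$ has coaction $\rho_{N}(n)=n\otimes_{S}z$, so every right $S$-linear map $g\colon M\to N$ is automatically $I$-colinear (both $\rho_{N}\circ g$ and $(g\otimes_{S}I)\circ\rho_{M}$ carry $m$ to $g(m)\otimes_{S}z$); hence $\tilde{\sigma}_{*}=\sigma_{*}\circ F_{I}$. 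Separability of $\tilde{\sigma}_{*}$ I would obtain from the Rafael Theorem \cite{Raf90} applied to $(-)\otimes_{R}M\dashv\tilde{\sigma}_{*}$: the natural transformation $\tilde{\gamma}_{N}(n):=\sum_{i}\bigl(nf_{i}(-)\bigr)\otimes_{R}m_{i}$ satisfies $\tilde{\epsilon}_{N}\circ\tilde{\gamma}_{N}=\id$, since $\sum_{i}nf_{i}(m_{i})=nz=n$ for $n\in N\in\m^{I}$. (Equivalently, under $\m^{I}\cong\mathrm{Mod}\text{-}I$ the functor $\tilde{\sigma}_{*}$ identifies with $\Hom_{I}(M,-)$, separable because $I$ is $M$-separable over $R$, witnessed by $\sum_{i}f_{i}\otimes_{R}m_{i}$ with $\sum_{i}f_{i}(m_{i})=1_{I}$; cf.\ \cite[Theorem 34]{CMZ02}.)

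Finally, $\sigma_{*}\cong\tilde{\sigma}_{*}\circ G_{I}$: from $\tilde{\sigma}_{*}=\sigma_{*}\circ F_{I}$ and $F_{I}G_{I}(Y)=Y\otimes_{S}I\cong Yz$ as right $S$-modules (via $y\otimes_{S}i\mapsto yi$, with inverse $yz\mapsto y\otimes_{S}z$), together with the fact that every $h\in\Hom_{S}(M,Y)$ has image in $Yz$ because $h(m)=h(mz)=h(m)z$, corestriction yields $\Hom_{S}(M,Y)\cong\Hom_{S}(M,Yz)\cong\tilde{\sigma}_{*}(G_{I}Y)$, naturally in $Y$. Since $\mathrm{Mod}\text{-}S$ is idempotent complete, Corollary~\ref{cor:fact-birefl} then shows this factorization agrees, up to a category equivalence, with the one furnished by the coidentifier of the idempotent natural transformation associated to $\sigma_{*}$. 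The routine verifications (coassociativity and counitality of $\rho_{M}$, that $\varphi$ is a ring map, naturality of the displayed isomorphisms, the compatibilities under $\m^{I}\cong\mathrm{Mod}\text{-}I$) run exactly as in Corollary~\ref{cor:inducoring}; the one place I expect to demand genuine care is the identification $\tilde{\sigma}_{*}=\sigma_{*}\circ F_{I}$ together with the observation that $\Hom_{S}(M,-)$ does not distinguish $Y$ from $Yz$ — this is precisely what makes precomposition with the bireflection $G_{I}$ recover $\sigma_{*}$.
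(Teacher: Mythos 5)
Your proposal is correct and follows essentially the same route as the paper: the same coring $I$ (your $Sz$ coincides with the paper's $\mathrm{Im}(\ev_M)$, since that ideal has identity $z$), the same grouplike $z=\sum_i f_i(m_i)$, the same Rafael-type witness $\tilde{\gamma}_N(n)=\sum_i nf_i(-)\otimes_R m_i$ for the separability of $\tilde{\sigma}_*$, and the same converse via composition of a naturally full functor with a separable one. The only genuine divergences are in two sub-steps, both sound: you get the coreflection property of $G_I$ formally from the invertibility of $\Delta_I$ (idempotence of the comonad) instead of computing that the unit $\rho_N$ is invertible, and you establish $\sigma_*\cong\tilde{\sigma}_*\circ G_I$ by the elementary observations that $\Hom^I(M,N)=\Hom_S(M,N)$ for $N\in\m^I$, that $Y\otimes_S I\cong Yz$, and that $\Hom_S(M,-)$ does not distinguish $Y$ from $Yz$, where the paper instead invokes the purity/flatness criterion of \cite[22.12]{BW03} — your version is arguably more self-contained.
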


\begin{proof}
Assume that $\sigma_*$ is semiseparable. Then, by Theorem \ref{thm:bimod} $S$ is $M$-separable over $R$ through some $c:=\sum_i f_i\otimes_R m_i\in (M^*\otimes_R M)^S$.
Since $\ev_M:M^*\otimes_R M\to S$ is a morphism of $S$-bimodules, then $I:=\mathrm{Im}(\ev_M)$ is an ideal of $S$ with multiplicative identity $z:=\ev_M(c)=\sum_i f_i(m_i)$. Indeed, for all $s\in S$, $z s=\ev_M (c)s=\ev_M (cs)=\ev_M(sc)=s\ev_M(c)=sz $ and hence $z \in I^S$. For all $m\in M,f\in M^*$, we have $z f(m)=\sum_i f(m)f_i(m_i)=f(\sum_i mf_i(m_i))=f(m)$ and hence $z i=i$ for every $i\in I$. Moreover, since the morphism $\varphi:S\to I,s\mapsto sz $, is a ring epimorphism, the map $m_I:I\otimes_S I\to I$ is bijective. Thus we can consider $\Delta_I = m_I^{-1}: I\to I\otimes_{S}I$, $\Delta_I(i)=i\otimes_S z =z \otimes_S i$, so that $(I,\Delta_I,\varepsilon_I)$ becomes an $S$-coring, where $\varepsilon_I:I\hookrightarrow S$ is the canonical inclusion. By the foregoing $z \in I^S$ and, for every $i\in I$, we have $\varepsilon_I(i)z =iz =i.$ By what we recalled at the beginning of Subsection \ref{es:coring}, the induction functor $G_I:=(-)\otimes_S I: \mathrm{Mod}\text{-}S\to \m^I$ is naturally full. Consider its left adjoint, the forgetful functor $F_I:\m^I\to \mathrm{Mod}\text{-}S$, and the corresponding unit $\eta$ defined on each $N$ in $\m^I$ by setting $\eta_N:=\rho_N:N\to N\otimes_S I$. Given $n\in N$ write $\rho_N(n)=\sum_tn_t\otimes_Si_t$. By applying $N\otimes_S \varepsilon_I$ we get $n=\sum_tn_ti_t$. Thus $\rho_N(n)=\sum_tn_t\otimes_Si_t=\sum_tn_t\otimes_Si_tz =\sum_tn_ti_t\otimes_Sz =n\otimes_S z $. We have so proved that $\rho_N(n)=n\otimes_S z $, for every $n\in N$. By applying $N\otimes_S \varepsilon_I$ to this equality we get $n=nz$, for every $n\in N$. Therefore $\rho_N$ is invertible with inverse given by $n\otimes_{S} i\mapsto ni$, and then the unit $\eta$ of $F_I\dashv G_I$ is invertible, i.e. $G_I$ is a coreflection. By Theorem \ref{thm:frobenius}, $G_I$ is a bireflection.
As in \cite[Example 4.3]{GT02}, once noticed that $M\in{^R\m^I}$ (this just means that $\rho_M$ is left $R$-linear), we can consider the functor $\tilde{\sigma}^*:=(-)\otimes_R M:\mathrm{Mod}\text{-} R=\m^R\to \m^I$. By \cite[18.10.2]{BW03} we have that  $\tilde{\sigma}^*\dashv\tilde{\sigma}_*=\Hom^I(M,-)$ with unit and counit given by
\begin{equation*}
\tilde{\eta}_X : X\to \Hom^I(M,X\otimes_R M),\, x\mapsto [m\mapsto x\otimes_R m],\quad\tilde{\epsilon}_Y : \Hom^I(M ,Y)\otimes_R M\to Y,\, f\otimes_R m\mapsto f(m).
\end{equation*}
Thus, by Rafael Theorem, $\tilde{\sigma}_*$ is separable if and only if there is a natural transformation  $\tilde{\gamma}:\id\to \tilde{\sigma}^*\tilde{\sigma}_*$ such that $\tilde{\epsilon}\circ \tilde{\gamma}=\id$. For $Y$ in $\m^I$, define
$
\tilde{\gamma}_Y : Y\to \Hom^I(M ,Y)\otimes_R M,\, y\mapsto \sum_i yf_i(-)\otimes_R m_i.
$
It is easy to check it defines a natural transformation $\tilde{\gamma}:\id\to \tilde{\sigma}^*\tilde{\sigma}_*$. Moreover $\tilde{\epsilon}_Y\tilde{\gamma}_Y(y)=\sum_i yf_i(m_i)=yz $ but we already proved that $yz =y$, hence $\tilde{\epsilon}\circ\tilde{\gamma}=\id$ and $\tilde{\sigma}_*$ is separable.
Let us check that $G\cong\tilde{\sigma}_*\circ G_I$.
Note that $\varphi\circ \varepsilon=\id_I$ and both $\varphi$ and $\varepsilon$ are both left $S$-linear. As a consequence $I$ is projective, whence flat, as a left $S$-module. Thus, by \cite[22.12]{BW03} applied in case $\dd$ is the $S$-coring $S$, for every $N$ in $\mathrm{Mod\text{-} R}$ we have a functorial isomorphism of abelian groups $$\tilde{\sigma}_*G_I(N)=\Hom^I(M,N\otimes_S I)\to \Hom_S(M,N)=\sigma_*(N),\quad f\mapsto (N\otimes_S\varepsilon_I) \circ f.$$ This isomorphism is easily checked to be right $R$-linear. Thus it yields $\tilde{\sigma}_*\circ G_I\cong \sigma_*$ as desired.
Conversely, if $ \sigma_*\cong\tilde{\sigma}_*\circ G_I$, where $G_I$ is a bireflection, whence naturally full by Theorem \ref{thm:frobenius}, and $\tilde{\sigma}_*$ is separable, then $\sigma_*$ is semiseparable in view of Lemma \ref{lem:comp}(ii).
\end{proof}

\begin{claim}\label{claim:bimod}
As already mentioned, given an $(R,S)$-bimodule $M$, in order to characterize the semiseparability of the induction functor $\sigma^*=(-)\otimes_R M: \mathrm{Mod}\text{-}R\to \mathrm{Mod}\text{-}S$ we need, as in the separable case, the further assumption that $M_S$ is finitely generated and projective.
It is well-known that this hypothesis implies that the map $N\otimes_S M^*\to \Hom_S (M,N), n\otimes f\mapsto [m\mapsto nf(m)],$ is an isomorphism natural in $N$, for every right $S$-module $N$, where $M^*=\Hom_S(M,S)$.
\begin{invisible}
Since this is well-known I would not make visible a reference. Anyway a standard one is Anderson-Fullers book, Proposition 20.10.
\end{invisible} As a consequence, the right adjoint of $\sigma^*$ can be chosen to be $\sigma_*=(-)\otimes_S M^*: \mathrm{Mod}\text{-}S\to \mathrm{Mod}\text{-}R$.
If we let $\{e^*_i,e_i\}\subseteq M^*\times M $ be a finite dual basis, the unit $\eta$ and the counit $\epsilon$ of this adjunction are given for all $X\in \mathrm{Mod}\text{-}R$, $Y\in \mathrm{Mod}\text{-}S$ by
\begin{equation*}
\eta_X : X\to X\otimes_R M\otimes_S M^*,\, x\mapsto \sum_i x\otimes_Re_i\otimes_Se^*_i,\quad\epsilon_Y : Y\otimes_S M^*\otimes_R M\to Y,\, y\otimes_R f\otimes_S m\mapsto yf(m).
\end{equation*}
It turns out that, see e.g. \cite[page 30]{Mes06}, the Eilenberg-Moore category $(\mathrm{Mod}\text{-}R)_{\sigma_*\sigma^*}$ is equivalent to the category $\mathrm{Mod}\text{-}\e$, where $\e:=\mathrm{End}_S(M)\cong M\otimes_S M^*$ is the endomorphism ring with canonical morphism $\varphi:R\to\e$, $\varphi(r)(m)=rm$, for all $r\in R$ and $m\in M$. Dually the Eilenberg-Moore category $(\mathrm{Mod}\text{-}S)^{\sigma^*\sigma_*}$ is equivalent to the category $\m^\cc$ of right comodules over the comatrix $S$-coring $\cc:=M^*\otimes_R M$, see e.g. \cite[page 36]{Mes06}. Comatrix corings have been introduced in \cite{ElKGT03} and they generalize the Sweedler's canonical coring. See also \cite{CDGV07} for further investigations. The diagram \eqref{diag:eil-moore} becomes:
\begin{gather*}
\vcenter{%
\xymatrixcolsep{1.8cm}\xymatrixrowsep{1.5cm}\xymatrix{
(\mathrm{Mod}\text{-}S)^{\sigma^*\sigma_*}\cong \m^\cc \ar@{}[r]|-\perp \ar@<1ex>[r]^-{F}& \mathrm{Mod}\text{-}S\ar@/^1pc/[rd]^{K_{\sigma_*\sigma^*}} \ar@<1ex>[l]^-{G=(-)\otimes_R\cc}
\ar@<1ex>[d]^*-<0.1cm>{^{\sigma_*=(-)\otimes_S M^*}}\\
&\mathrm{Mod}\text{-}R \ar@/^1pc/[lu]^{K^{\sigma^*\sigma_*}}  \ar@<1ex>[u]^*-<0.1cm>{^{\sigma^*=(-)\otimes_R M}}\ar@{}[u]|{\dashv}\ar@{}[r]|-\perp \ar@<1ex>[r]^-{\varphi^{*}=(-)\otimes_R\e}&\mathrm{Mod}\text{-}\e \cong (\mathrm{Mod}\text{-}R)_{\sigma_*\sigma^*}\ar@<1ex>[l]^-{\varphi_*}
} }
\end{gather*}
where $F\dashv G$ is the adjunction as in Subsection \ref{es:coring}, given by the forgetful functor $F:\m^\cc\to \mathrm{Mod}\text{-}S$ and the induction functor $G:=(-)\otimes_S \cc :\mathrm{Mod}\text{-}S\to \m^\cc$; the functors $K_{\sigma_*\sigma^*}$ and $K^{\sigma^*\sigma_*}$ are the comparison and the cocomparison functor, respectively. From the diagram, we have $$\varphi_*\circ K_{\sigma_*\sigma^*}=\sigma_* \qquad L\circ K^{\sigma^*\sigma_*}=\sigma^*\qquad K_{\sigma_*\sigma^*}\circ\sigma^*=\varphi^* \qquad K^{\sigma^*\sigma_*}\circ \sigma_*=R.$$
\end{claim}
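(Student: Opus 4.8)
The statement unpacks diagram \eqref{diag:eil-moore} for the adjunction $\sigma^*\dashv\sigma_*$ under the assumption that $M_S$ is finitely generated and projective, so the proof is a chain of identifications rather than a single hard argument. First I would fix a finite dual basis $\{e_i^*,e_i\}\subseteq M^*\times M$ and recall the natural isomorphism $N\otimes_S M^*\cong\Hom_S(M,N)$, $n\otimes_S f\mapsto[m\mapsto nf(m)]$, with inverse $g\mapsto\sum_i g(e_i)\otimes_S e_i^*$, valid precisely because $M_S$ is finitely generated projective. Transporting the unit and counit of $\sigma^*\dashv\sigma_*$ recalled at the start of Subsection \ref{es:bimod} along this isomorphism yields the displayed formulas for $\eta_X$ and $\epsilon_Y$, so from now on one takes $\sigma_*=(-)\otimes_S M^*$.

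Next I would analyze the monad $GF=\sigma_*\sigma^*=(-)\otimes_R M\otimes_S M^*$ on $\mathrm{Mod}\text{-}R$. Evaluation and coevaluation give the $(R,R)$-bimodule isomorphism $M\otimes_S M^*\cong\e=\End_S(M)$, $m\otimes_S f\mapsto[m'\mapsto mf(m')]$, and a direct computation---using right $S$-linearity of the elements of $M^*$---shows that under this isomorphism the monad multiplication $G\epsilon F$ and the unit $\eta$ become respectively the multiplication $\e\otimes_R\e\to\e$ and the ring homomorphism $\varphi:R\to\e$, i.e. $GF\cong(-)\otimes_R\e$ as a monad. By the standard description of modules over a monad of this shape, the Eilenberg-Moore category $(\mathrm{Mod}\text{-}R)_{\sigma_*\sigma^*}$ is isomorphic to $\mathrm{Mod}\text{-}\e$ (see e.g. \cite[page 30]{Mes06}), with the free functor $V_{GF}$ identified with $\varphi^*$ and the forgetful functor $U_{GF}$ with $\varphi_*$; the comparison functor $K_{GF}$ then sends $Y$ to $\sigma_*Y=Y\otimes_S M^*$ equipped with its $\e$-action.

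Dually I would analyze the comonad $FG=\sigma^*\sigma_*=(-)\otimes_S M^*\otimes_R M$ on $\mathrm{Mod}\text{-}S$. Here $M^*\otimes_R M$ is exactly the comatrix $S$-coring $\cc$ of \cite{ElKGT03}, with comultiplication $f\otimes_R m\mapsto\sum_i(f\otimes_R e_i)\otimes_S(e_i^*\otimes_R m)$ and counit $\ev_M$, and an analogous computation identifies the comonad comultiplication $F\eta G$ and counit $\epsilon$, read off on the $\cc$-tensorand, with $\Delta_\cc$ and $\varepsilon_\cc=\ev_M$, so $FG\cong(-)\otimes_S\cc$ as a comonad. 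Hence $(\mathrm{Mod}\text{-}S)^{\sigma^*\sigma_*}\cong\m^\cc$ (see e.g. \cite[page 36]{Mes06}), with the cofree functor $V^{FG}$ identified with $(-)\otimes_S\cc$ and the forgetful functor $U^{FG}$ with the forgetful functor $\m^\cc\to\mathrm{Mod}\text{-}S$ of Subsection \ref{es:coring}, and the cocomparison functor $K^{FG}$ sending $X$ to $\sigma^*X=X\otimes_R M$ with its $\cc$-coaction. Substituting all these identifications into \eqref{diag:eil-moore} together with its relations $U_{GF}\circ K_{GF}=G$, $K_{GF}\circ F=V_{GF}$, $U^{FG}\circ K^{FG}=F$ and $K^{FG}\circ G=V^{FG}$ (for $F=\sigma^*$ and $G=\sigma_*$) produces the displayed diagram and the four stated relations between the functors.

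The only genuinely computational point, and hence the main obstacle, is verifying that the monad (resp. comonad) structure transported along $M\otimes_S M^*\cong\e$ (resp. along $M^*\otimes_R M=\cc$) agrees on the nose with the $R$-ring structure of $\e$ (resp. with the comatrix $S$-coring structure of $\cc$); once this is done, the remainder is a routine application of the monad--module and comonad--comodule dictionaries together with the functoriality of the whole construction.
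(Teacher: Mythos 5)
Your proposal is correct and follows essentially the same route as the paper, which simply delegates the hom--tensor isomorphism to Anderson--Fuller and the identification of the Eilenberg--Moore categories with $\mathrm{Mod}\text{-}\e$ and $\m^\cc$ to Mesablishvili: your argument is precisely the verification those references supply (dual-basis transport of the unit and counit, identification of the monad with $(-)\otimes_R\e$ and of the comonad with $(-)\otimes_S\cc$, followed by the standard (co)module-over-a-(co)monad dictionary). Your reading of the four final relations as instances of $U_{GF}\circ K_{GF}=G$, $K_{GF}\circ F=V_{GF}$, $U^{FG}\circ K^{FG}=F$ and $K^{FG}\circ G=V^{FG}$ from diagram \eqref{diag:eil-moore} is the intended one.
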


The above refinement of the functors involved allows us to obtain a different characterization also for the semiseparability of $\sigma_*$

\begin{prop}\label{prop:right-equiv-bimod}
In the setting of Claim \ref{claim:bimod}, the following assertions are equivalent.
\begin{itemize}
\item[(i)] $S$ is $M$-semiseparable over $R$;
\item[(ii)] $\sigma_*=(-)   \otimes_SM^*:\mathrm{Mod}\text{-}S\to\mathrm{Mod}\text{-}R$ is semiseparable;
\item[(iii)] the comatrix $S$-coring $\cc$ is semicosplit;
\item[(iv)] there exists an invariant element $z\in\cc^S$ such that for every $c\in \cc$, $c=\varepsilon_{\cc}(z)c$, where $\varepsilon_{\cc}$ is the counit of the comatrix $S$-coring $\cc$;
\item[(v)] $\varphi_*:\mathrm{Mod}\text{-}\e\to \mathrm{Mod}\text{-}R$ is separable (that is, $\e /R$ is separable) and $K_{\sigma_*\sigma^*}$ is naturally full.
\end{itemize}
\end{prop}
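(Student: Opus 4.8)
The plan is to prove Proposition \ref{prop:right-equiv-bimod} by stringing together the general machinery developed in Sections \ref{sect:notion-semisep} and \ref{sect:semisep-adjunct} together with Theorem \ref{thm:bimod} and the identifications recorded in Claim \ref{claim:bimod}. The equivalence $(i)\Leftrightarrow (ii)$ is immediate from Theorem \ref{thm:bimod}, where here the requirement that $M\otimes_S \ev_M$ be surjective is automatic since $M_S$ is finitely generated and projective (as observed in the remark following Theorem \ref{thm:bimod}); note that under the finitely generated projective hypothesis the right adjoint $\sigma_*$ may be taken to be $(-)\otimes_S M^*$ as in Claim \ref{claim:bimod}, which is naturally isomorphic to the $\Hom_S(M,-)$ used in Theorem \ref{thm:bimod}, so semiseparability is unaffected by Corollary \ref{cor:isomorphic}.

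Next I would handle $(ii)\Leftrightarrow (v)$ by applying Theorem \ref{thm:ssepMonad} to the adjunction $\sigma^*\dashv \sigma_*$. By that theorem, $\sigma_*$ is semiseparable if and only if the forgetful functor from the Eilenberg-Moore category of modules over the monad $\sigma_*\sigma^*$ is separable (equivalently the monad is separable) and the comparison functor $K_{\sigma_*\sigma^*}$ is naturally full. The point is then to translate ``the forgetful functor $U$ from $(\mathrm{Mod}\text{-}R)_{\sigma_*\sigma^*}$ is separable'' into ``$\varphi_*:\mathrm{Mod}\text{-}\e\to \mathrm{Mod}\text{-}R$ is separable'', i.e. $\e/R$ is separable. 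This uses the category equivalence $(\mathrm{Mod}\text{-}R)_{\sigma_*\sigma^*}\cong \mathrm{Mod}\text{-}\e$ from Claim \ref{claim:bimod}, under which $U$ corresponds to $\varphi_*$ (here $\varphi:R\to\e$ is the canonical ring map), and separability is invariant under composing with equivalences (Corollary \ref{cor:isomorphic} and Lemma \ref{lem:comp}). Then $\varphi_*$ separable $\Leftrightarrow$ $\e/R$ separable is the classical fact recalled at the start of Subsection \ref{es:extens}. Under the same equivalence the comparison functor $K_{\sigma_*\sigma^*}$ of the bimodule adjunction matches the functor labelled $K_{\sigma_*\sigma^*}$ in the diagram of Claim \ref{claim:bimod}, so its natural fullness is exactly the stated condition.

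For $(ii)\Leftrightarrow (iii)$ and $(iii)\Leftrightarrow (iv)$ I would dualize: the \emph{left} adjoint here is $\sigma^*$ and its associated comonad is $\sigma^*\sigma_*$, whose Eilenberg-Moore category of comodules is equivalent, by Claim \ref{claim:bimod}, to $\m^\cc$ for the comatrix coring $\cc=M^*\otimes_R M$, with the forgetful functor $U^{\sigma^*\sigma_*}$ corresponding to the forgetful functor $F:\m^\cc\to\mathrm{Mod}\text{-}S$ and the cofree/cocomparison functor corresponding to the induction functor $G=(-)\otimes_S\cc$. By definition $\cc$ is semicosplit exactly when $G$ is semiseparable (Subsection \ref{es:coring}), and Theorem \ref{thm:inducoring} rewrites this as regularity of $\varepsilon_\cc$ as an $S$-bimodule map, equivalently the existence of an invariant $z\in\cc^S$ with $\varepsilon_\cc(z)c=c$ for all $c\in\cc$ --- this is precisely $(iii)\Leftrightarrow (iv)$. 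To close the loop I would link $(iii)$ with $(ii)$ via Remark \ref{rmk:monad}: the adjunctions $(\sigma^*,\sigma_*)$ and $(U^{\sigma^*\sigma_*},V^{\sigma^*\sigma_*})$ share the same counit, hence by the Rafael-type Theorem \ref{thm:rafael}(ii), $\sigma_*$ is semiseparable if and only if $V^{\sigma^*\sigma_*}$ is; transporting along the equivalence $\m^{\sigma^*\sigma_*}\cong\m^\cc$, $V^{\sigma^*\sigma_*}$ corresponds to $G=(-)\otimes_S\cc$, whose semiseparability is the definition of $\cc$ being semicosplit. Alternatively one can run this directly through Theorem \ref{thm:bimod} by identifying the natural transformation $\gamma:\id\to\sigma^*\sigma_*$ produced there with the invariant $z=\sum_i f_i\otimes_R m_i\in\cc^S$.

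The main obstacle I anticipate is bookkeeping rather than conceptual: one must be careful that all the category equivalences of Claim \ref{claim:bimod} are compatible with the forgetful/comparison/cocomparison functors in the precise way the proof needs (so that ``separable forgetful functor'' really does become ``$\e/R$ separable'' and ``naturally full comparison functor'' really does become the condition on $K_{\sigma_*\sigma^*}$), and that the replacement of $\Hom_S(M,-)$ by $(-)\otimes_S M^*$ as the right adjoint does not disturb the invariants $e$, $\gamma$, etc. Once these compatibilities are checked, each implication is a one-line invocation of an earlier result, so I would organize the write-up as $(i)\Leftrightarrow(ii)$ via Theorem \ref{thm:bimod}, then $(ii)\Leftrightarrow(iii)\Leftrightarrow(iv)$ via the dual Eilenberg-Moore picture, Theorem \ref{thm:rafael}(ii) and Theorem \ref{thm:inducoring}, and finally $(ii)\Leftrightarrow(v)$ via Theorem \ref{thm:ssepMonad}.
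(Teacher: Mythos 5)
Your proposal is correct and follows essentially the same route as the paper's proof: $(i)\Leftrightarrow(ii)$ via Theorem \ref{thm:bimod}, $(ii)\Leftrightarrow(iii)$ via Remark \ref{rmk:monad}\,3) identifying $V^{\sigma^*\sigma_*}$ with the induction functor $G$ for the comatrix coring, $(iii)\Leftrightarrow(iv)$ via Theorem \ref{thm:inducoring}, and $(ii)\Leftrightarrow(v)$ via Theorem \ref{thm:ssepMonad} applied to $(\sigma^*,\sigma_*)$. The extra compatibility checks you flag (replacing $\Hom_S(M,-)$ by $(-)\otimes_S M^*$, transporting along the equivalences of Claim \ref{claim:bimod}) are exactly the points the paper leaves implicit, and your handling of them is sound.
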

\proof
(i)$\Leftrightarrow$ (ii). It is Theorem \ref{thm:bimod}.\\
(ii)$\Leftrightarrow$ (iii). By Remark \ref{rmk:monad} 3), $\sigma_*$  is semiseparable if and only if so is $V^{\sigma^*\sigma_*}=G$.\\
(iii)$\Leftrightarrow$ (iv). It follows by Theorem \ref{thm:inducoring}.\\
(ii)$\Leftrightarrow$ (v). It follows by Theorem \ref{thm:ssepMonad} applied to the adjunction $(\sigma^*,\sigma_*)$. 
\endproof

We now obtain the announced characterization of the semiseparability of $\sigma^*$.

\begin{prop}\label{prop:left-equiv-bimod}
In the setting of Claim \ref{claim:bimod}, the following assertions are equivalent.
\begin{itemize}
\item[(i)] $\sigma^*=(-)   \otimes_RM:\mathrm{Mod}\text{-}R\to\mathrm{Mod}\text{-}S$ is semiseparable;
\item[(ii)] $\varphi^*=(-)   \otimes_R\e:\mathrm{Mod}\text{-}R\to\mathrm{Mod}\text{-}\e$ is semiseparable;
\item[(iii)] there exists an $E\in {}_{R}\Hom_{R}(\e,R)$ such that $\varphi E(1_\e)=1_\e$;
\item[(iv)] $F:\m^\cc\to \mathrm{Mod}\text{-}S$ is separable (i.e. $\cc$ is coseparable) and $K^{\sigma^*\sigma_*}$ is naturally full.
\end{itemize}
\end{prop}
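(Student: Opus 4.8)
The plan is to follow the pattern of the proof of Proposition \ref{prop:right-equiv-bimod}, obtaining each equivalence from a general result of Section \ref{sect:semisep-adjunct} once the functors in play are recognized through the identifications of Claim \ref{claim:bimod}. None of the steps requires a new idea; the work is entirely one of translation.

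First I would treat $(i)\Leftrightarrow(ii)$. By Claim \ref{claim:bimod} the monad attached to the adjunction $\sigma^*\dashv\sigma_*$ on $\mathrm{Mod}\text{-}R$ is $\sigma_*\sigma^*\cong(-)\otimes_R\e$, and under the equivalence $(\mathrm{Mod}\text{-}R)_{\sigma_*\sigma^*}\cong\mathrm{Mod}\text{-}\e$ the free functor $V_{\sigma_*\sigma^*}$ corresponds to $\varphi^*=(-)\otimes_R\e$, as recorded by the relation $K_{\sigma_*\sigma^*}\circ\sigma^*=\varphi^*$. Hence Remark \ref{rmk:monad} 4) gives that $\sigma^*$ is semiseparable if and only if $\varphi^*$ is. For $(ii)\Leftrightarrow(iii)$, note that $\varphi:R\to\e$ is a ring homomorphism and $\varphi^*=(-)\otimes_R\e$ is its extension of scalars functor for right modules; applying the right-module version of Proposition \ref{prop:inducfunc} --- legitimate by Remark \ref{rmk:opposemi}, since $(\varphi^*)^{\op}$ is extension of scalars along $\varphi^{\op}:R^{\op}\to\e^{\op}$ for left modules, and since the condition $\varphi\circ E\circ\varphi=\varphi$ simplifies symmetrically to $\varphi E(1_\e)=1_\e$ --- yields that $\varphi^*$ is semiseparable precisely when there is $E\in{}_R\Hom_R(\e,R)$ with $\varphi E(1_\e)=1_\e$.

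Finally, for $(i)\Leftrightarrow(iv)$ I would invoke Theorem \ref{thm:ssep-comonad} for the adjunction $(\sigma^*,\sigma_*)$. By Claim \ref{claim:bimod} the comonad $\sigma^*\sigma_*$ on $\mathrm{Mod}\text{-}S$ is $(-)\otimes_S\cc$ for the comatrix coring $\cc=M^*\otimes_R M$, the Eilenberg-Moore category $(\mathrm{Mod}\text{-}S)^{\sigma^*\sigma_*}$ identifies with $\m^\cc$, under this identification the forgetful functor $U^{\sigma^*\sigma_*}$ becomes the forgetful functor $F:\m^\cc\to\mathrm{Mod}\text{-}S$, and the cocomparison functor is the functor $K^{\sigma^*\sigma_*}$ of Claim \ref{claim:bimod}. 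Theorem \ref{thm:ssep-comonad} then states that $\sigma^*$ is semiseparable if and only if $U^{\sigma^*\sigma_*}$ is separable and $K^{\sigma^*\sigma_*}$ is naturally full; and $U^{\sigma^*\sigma_*}$ is separable if and only if $F$ is separable, which by \cite[Corollary 3.6]{Brz02} (recalled in Subsection \ref{es:coring}) is equivalent to the coring $\cc$ being coseparable. This is exactly condition $(iv)$, completing the cycle.

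The proof being a matter of bookkeeping, I do not expect a genuine obstacle. The two points deserving care are the passage to opposite rings in $(ii)\Leftrightarrow(iii)$, and the verification that the abelian-group isomorphisms supplied by Claim \ref{claim:bimod} are compatible with the pertinent units and counits, so that they really lift to the asserted identifications of the forgetful, free, comparison and cocomparison functors; once those compatibilities are in hand, the statement reduces to citing Remark \ref{rmk:monad} 4), Proposition \ref{prop:inducfunc} and Theorem \ref{thm:ssep-comonad}.
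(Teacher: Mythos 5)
Your proposal is correct and follows essentially the same route as the paper: $(i)\Leftrightarrow(ii)$ via Remark \ref{rmk:monad} 4) and the identification $V_{\sigma_*\sigma^*}=\varphi^*$, $(ii)\Leftrightarrow(iii)$ via (the right-module version of) Proposition \ref{prop:inducfunc}, and $(i)\Leftrightarrow(iv)$ via Theorem \ref{thm:ssep-comonad} applied to $(\sigma^*,\sigma_*)$ together with the identifications of Claim \ref{claim:bimod}. Your extra care about opposite rings and the compatibility of the Eilenberg--Moore identifications is sound bookkeeping that the paper leaves implicit.
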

\proof
(i)$\Leftrightarrow$ (ii). By Remark \ref{rmk:monad} 4), $\sigma^*$ is semiseparable if and only if so is $V_{\sigma_*\sigma^*}=\varphi^*$.\\
(ii)$\Leftrightarrow$ (iii). It follows by Proposition \ref{prop:inducfunc}.\\
(i)$\Leftrightarrow$ (iv). It follows by Theorem \ref{thm:ssep-comonad} applied to the adjunction $(\sigma^*,\sigma_*)$. 
\endproof

\begin{rmk}
At the beginning of this subsection we recalled that the separability of $\sigma^*$ implies the one of $\varphi^*$. The other implication is also true if $M_S$ is finitely generated and projective. Indeed, from $K_{\sigma_*\sigma^*}\circ\sigma^*=\varphi^*$, by Remark \ref{rmk:monad} 4), we get that $\sigma^*$ is separable if and only if $\varphi^*$ is separable.
\end{rmk}

Now, as a particular case of Claim \ref{claim:bimod}, given a morphism of rings $\varphi:R\to S$ consider the $(R,S)$-bimodule $M:={}_R S_S$, with left action induced by $\varphi$, which is trivially finitely generated and projective as a right $S$-module. In this case $\sigma^*=(-)\otimes_RS=\varphi^*: \mathrm{Mod}\text{-}S\to \mathrm{Mod}\text{-}R$ is the induction functor of Section \ref{es:extens} As a consequence, the right adjoint $\sigma_*$ of $\sigma^*$ is isomorphic to the restriction of scalars functor $\varphi_*: \mathrm{Mod}\text{-}S\to \mathrm{Mod}\text{-}R$ and since it is faithful, it follows that $S$ is $S$-semiseparable over $R$ if and only if $S$ is $S$-separable over $R$.

In this case, the comatrix $S$-coring $\cc$ is the Sweedler coring $S\otimes_R S$, we have $\e = \mathrm{End}_S(M)\cong M\otimes_S M^*\cong S$,  $K_{\varphi_*\varphi^*} =\id_{\mathrm{Mod}\text{-}S}$, i.e. $\varphi_*$ is \emph{strictly monadic}, and  $K^{\varphi^*\varphi_*}=(-)\otimes_R S:\mathrm{Mod}\text{-}R\to \mathrm{Mod}\text{-}S$. Consider the induction functor $G=(-)\otimes_S\cc:\mathrm{Mod}\text{-}S\to \m^\cc$ and the forgetful functor $F: \m^\cc\to \mathrm{Mod}\text{-}S$. In this setting, as a consequence of Proposition \ref{prop:right-equiv-bimod} and Proposition \ref{prop:left-equiv-bimod} we have the following corollaries, relating the functors $\varphi_*$, $\varphi^*$, $F$, $G$ and the Sweedler coring $\cc$. We just point out that, since the coring counit $\varepsilon_{\cc}$ is the multiplication $S\otimes_R S\to S$ and we can choose $c=1_S\otimes_R 1_S\in\cc$, the existence of $z\in\cc^S$ such that $c=\varepsilon_{\cc}(z)c$, for every $c\in \cc$, is equivalent to the existence of $z\in\cc^S$ such that $1_S=\varepsilon_{\cc}(z)$ i.e. of a separability idempotent of $S/R$.

\begin{cor}\label{cor:sweed1}
In the above setting, the following assertions are equivalent.
\begin{itemize}
\item[(i)] $S$ is $S$-separable over $R$;
\item[(ii)] $\varphi_*: \mathrm{Mod}\text{-}S\to \mathrm{Mod}\text{-}R$ is separable, i.e. $S/R$ is separable;
\item[(iii)] the Sweedler $S$-coring $S\otimes_R S$ is semicosplit;
\item[(iv)] $S/R$ has a separability idempotent.
\end{itemize}
\end{cor}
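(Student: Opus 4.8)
The plan is to specialize the general bimodule machinery of Subsection~\ref{es:bimod} to the $(R,S)$-bimodule $M:={}_RS_S$ and to exploit that, in this case, the coinduction functor $\sigma_*=\Hom_S(M,-)$ coincides with the restriction of scalars functor $\varphi_*$, which is faithful. I would first recall, as in the paragraph preceding the statement, that under the identification $M^*=\Hom_S(S_S,S_S)\cong S$ (via $f\mapsto f(1_S)$) the comatrix $S$-coring $\cc=M^*\otimes_R M$ is the Sweedler coring $S\otimes_R S$ with its standard $S$-bimodule structure, its counit $\varepsilon_\cc$ is the multiplication $m_S\colon S\otimes_R S\to S$, which is exactly the evaluation map $\ev_M$ in~\eqref{eval}, and $(M^*\otimes_R M)^S=(S\otimes_R S)^S$.

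With this dictionary in place, I would establish $(i)\Leftrightarrow(iv)$ by unwinding definitions: the requirement defining ``$S$ is $M$-separable over $R$'', namely that $\ev_M$ be a split epimorphism of $S$-bimodules, equivalently that there exist a central $\sum_i f_i\otimes_R m_i\in(M^*\otimes_R M)^S$ with $\sum_i f_i(m_i)=1_S$, becomes verbatim the existence of a separability idempotent for $S/R$. Then $(i)\Leftrightarrow(ii)$ is the characterization recalled at the start of Subsection~\ref{es:bimod}, that $\sigma_*$ is separable iff $S$ is $M$-separable over $R$, combined with $\sigma_*\cong\varphi_*$ (which holds because $M={}_RS_S$). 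Finally, for $(ii)\Leftrightarrow(iii)$ I would route through semiseparability: since $\varphi_*\cong\sigma_*$ is faithful, Proposition~\ref{prop:sep}(i) identifies its separability with its semiseparability, and the equivalence $(ii)\Leftrightarrow(iii)$ of Proposition~\ref{prop:right-equiv-bimod}, read with $M={}_RS_S$, then says that $\sigma_*$ is semiseparable iff the comatrix $S$-coring $\cc=S\otimes_R S$ is semicosplit. (Alternatively, $(iii)\Leftrightarrow(iv)$ is immediate from $(iii)\Leftrightarrow(iv)$ of Proposition~\ref{prop:right-equiv-bimod} together with the remark preceding this corollary, which matches the condition $c=\varepsilon_\cc(z)c$ with the existence of a separability idempotent.)

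I do not expect a genuine obstacle here: the whole argument is bookkeeping on top of Theorem~\ref{thm:bimod}, Proposition~\ref{prop:right-equiv-bimod} and Proposition~\ref{prop:sep}. The only step deserving a line of verification is that the $S$-bimodule structure carried by $M^*\otimes_R M$ in the comatrix-coring construction agrees with the standard one on $S\otimes_R S$, and that $\ev_M$ corresponds to $m_S$ under $f\mapsto f(1_S)$ — both of which are routine.
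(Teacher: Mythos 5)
Your proposal is correct and follows essentially the same route as the paper, which derives the corollary from Proposition \ref{prop:right-equiv-bimod}, the faithfulness of $\varphi_*\cong\sigma_*$ (via Proposition \ref{prop:sep}), and the identification of the comatrix coring with the Sweedler coring together with the observation that the condition $c=\varepsilon_{\cc}(z)c$ reduces to $\varepsilon_{\cc}(z)=1_S$, i.e.\ to a separability idempotent. Your direct unwinding of $(i)\Leftrightarrow(iv)$ is a harmless shortcut for the paper's passage through $(iii)\Leftrightarrow(iv)$ of Proposition \ref{prop:right-equiv-bimod}, and all the identifications you flag as routine are indeed the ones the paper records in Claim \ref{claim:bimod} and the paragraph preceding the statement.
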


\begin{cor}\label{cor:sweed2}
In the above setting, the following assertions are equivalent.
\begin{itemize}
\item[(i)] $\varphi^*$ is semiseparable;
\item[(ii)] there exists an $E\in {}_{R}\Hom_{R}(S,R)$ such that $\varphi E(1_S)=1_S$;
\item[(iii)] $F$ is separable (i.e. the Sweedler $S$-coring $S\otimes_R S$ is coseparable) and $K^{\varphi^*\varphi_*}$ is a bireflection.
\end{itemize}
\end{cor}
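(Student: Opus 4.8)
The plan is to read off the corollary from Proposition \ref{prop:left-equiv-bimod} specialized to the bimodule $M={}_RS_S$, which is trivially finitely generated and projective as a right $S$-module. Under the identifications of the paragraph preceding the corollary (here $\e=\End_S(S_S)\cong S$ and the comatrix $S$-coring is the Sweedler coring $\cc=S\otimes_RS$), the items $(i)$ and $(ii)$ of Proposition \ref{prop:left-equiv-bimod} become precisely $(i)$ and $(ii)$ of the corollary, while item $(iv)$ of Proposition \ref{prop:left-equiv-bimod} becomes ``$F$ is separable (equivalently the Sweedler $S$-coring $S\otimes_RS$ is coseparable) and $K^{\varphi^*\varphi_*}$ is naturally full''. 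Hence $(i)\Leftrightarrow(ii)$ is immediate (it can also be obtained directly from Proposition \ref{prop:inducfunc} via Remark \ref{rmk:opposemi}), the implication $(iii)\Rightarrow(i)$ is clear since a bireflection is in particular naturally full by Theorem \ref{thm:frobenius} and so Proposition \ref{prop:left-equiv-bimod} applies, and the only real point is to upgrade, under the assumption $(i)$, the natural fullness of $K^{\varphi^*\varphi_*}$ to ``$K^{\varphi^*\varphi_*}$ is a bireflection''.

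For this upgrade I would use the concrete factorization available when $\varphi^*$ is semiseparable. By Proposition \ref{prop:modextens} there is a (unique) central idempotent $z\in R$ with $\varphi(z)=1_S$ such that $\tau:=\varphi_{\mid Rz}:Rz\to S$ is split-mono as an $Rz$-bimodule map, and writing $\psi:R\to Rz,\ r\mapsto rz$, one has $\varphi=\tau\circ\psi$ as in Remark \ref{rmk:modextens}. Since $\psi$ is a ring epimorphism with $\psi(1-z)=0$, one has $S\otimes_RS=S\otimes_{Rz}S$ and $\psi^*\psi_*\cong\id$, so the comonads and cocomparison functors attached to $(\varphi^*,\varphi_*)$ and to $(\tau^*,\tau_*)$ are compatible, giving $K^{\varphi^*\varphi_*}\cong K^{\tau^*\tau_*}\circ\psi^*$ with values in $\m^{S\otimes_{Rz}S}=\m^{\cc}$. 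Now $\psi^*=(-)\otimes_RRz$ is a bireflection: $Rz$ is a direct summand of $R$, so $\psi_*$ is Frobenius and fully faithful, whence Theorem \ref{thm:frobenius} applies (equivalently, use Remark \ref{rmk:epirng}). On the other hand $\tau^*$ is separable because $\tau$ is split-mono as an $Rz$-bimodule map, so $K^{\tau^*\tau_*}$ is fully faithful by Corollary \ref{cor:sepcomonad}; moreover a split $Rz$-bimodule monomorphism is a pure, hence an effective descent, morphism, so $K^{\tau^*\tau_*}:\mathrm{Mod}\text{-}Rz\to\m^{S\otimes_{Rz}S}$ is essentially surjective, hence an equivalence. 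Composing, $K^{\varphi^*\varphi_*}$ is an equivalence followed by a bireflection, hence a bireflection, which together with Proposition \ref{prop:left-equiv-bimod} finishes $(i)\Leftrightarrow(iii)$.

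The same conclusion can be packaged abstractly: $\mathrm{Mod}\text{-}R$ is idempotent complete, so the idempotent $e$ associated to the semiseparable functor $\varphi^*$ splits and $H:\mathrm{Mod}\text{-}R\to(\mathrm{Mod}\text{-}R)_e$ is a bireflection by Proposition \ref{prop:Hcorefl}; then both $\varphi^*=\varphi^*_e\circ H$ and $\varphi^*=F\circ K^{\varphi^*\varphi_*}$ are factorizations of $\varphi^*$ as a (bi)reflection followed by a conservative (separable) functor, so by the uniqueness of image-factorizations (Remark \ref{rmk:factsepbir}, together with Corollary \ref{cor:fact-birefl}) there is an equivalence $(\mathrm{Mod}\text{-}R)_e\simeq\m^{\cc}$ carrying $H$ to $K^{\varphi^*\varphi_*}$. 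In either route the obstacle is exactly the same: one must know that the canonical fully faithful functor $(K^{\varphi^*\varphi_*})_e:(\mathrm{Mod}\text{-}R)_e\to\m^{\cc}$ provided by the dual of Remark \ref{rmk:compar-fattoriz} is in fact essentially surjective (equivalently, that $K^{\varphi^*\varphi_*}$ is a coreflection). The formal results of Sections \ref{sect:notion-semisep}--\ref{sect:semisep-adjunct} only give full faithfulness of $(K^{\varphi^*\varphi_*})_e$; the essential surjectivity is precisely the effective-descent statement used above, and it relies genuinely on $\varphi^*$ being a colimit-preserving functor between module categories whose associated idempotent splits.
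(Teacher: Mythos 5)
Your proposal is correct, and it actually does more work than the paper, whose entire justification for this corollary is the specialization of Proposition \ref{prop:left-equiv-bimod} to $M={}_RS_S$ (giving $\e\cong S$ and $\cc=S\otimes_RS$). That specialization immediately yields $(i)\Leftrightarrow(ii)$ and, via Theorem \ref{thm:frobenius}, the implication $(iii)\Rightarrow(i)$; but, as you correctly isolate, item (iv) of that proposition only says that $K^{\varphi^*\varphi_*}$ is \emph{naturally full}, and the promotion to ``bireflection'' in (iii) is genuine additional content that the paper leaves implicit. Your argument for the promotion --- factoring $\varphi=\tau\circ\psi$ through $Rz$ via Proposition \ref{prop:modextens}, identifying $K^{\varphi^*\varphi_*}\cong K^{\tau^*\tau_*}\circ\psi^*$ with $\psi^*$ a bireflection, and making $K^{\tau^*\tau_*}$ an equivalence by effective descent along the split monomorphism $\tau$ --- is valid, but the descent step imports an external theorem (pure ring monomorphisms are effective descent morphisms for modules) that lies outside the paper. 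There is a purely formal substitute that stays inside the paper's toolkit and also corrects your closing diagnosis: by Lemma \ref{lem:monad}(ii) the comonad $\varphi^*\varphi_*$ is coseparable, so $F=U^{\varphi^*\varphi_*}$ is separable, hence Maschke; the unit of $U^{\varphi^*\varphi_*}\dashv V^{\varphi^*\varphi_*}$ at a comodule $(X,\rho)$ has underlying morphism $\rho$, which is split mono in $\mathrm{Mod}\text{-}S$ by counitality, so every object of $\m^{\cc}$ is a retract of a cofree comodule $V^{\varphi^*\varphi_*}Y=K^{\varphi^*\varphi_*}(\varphi_*Y)$. Since $(\mathrm{Mod}\text{-}R)_e$ is idempotent complete and $(K^{\varphi^*\varphi_*})_e$ is fully faithful, its essential image is closed under retracts and contains all cofree comodules, so $(K^{\varphi^*\varphi_*})_e$ is an equivalence and $K^{\varphi^*\varphi_*}=(K^{\varphi^*\varphi_*})_e\circ H$ is a bireflection. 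Thus the essential surjectivity you single out at the end needs only coseparability of the comonad together with idempotent completeness of the source, not colimit-preservation or descent theory.
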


Clearly the equivalence between (i),(ii) and (iv) of Corollary \ref{cor:sweed1} above is well-known while the equivalence between (i) and (ii) of Corollary \ref{cor:sweed2} is just Proposition \ref{prop:inducfunc}.

\subsection{Right Hopf algebras}\label{sub:rightHopf}

Let $B$ be a bialgebra over a field $\Bbbk$, let $\mathfrak{M}$ denote the category of vector spaces over $\Bbbk$ and let $\mathfrak{M}_{B}^{B} $ denote the category of right Hopf modules over $B$. Consider the coinvariant functor $\left( -\right)
^{\co B}:\mathfrak{M}_{B}^{B}\rightarrow \mathfrak{M}$ which, for every object $M$ in $\mathfrak{M}%
_{B}^{B}$, is defined by setting $M^{\co B}:=\{m\in M\mid\rho_M(m)=m\otimes 1_B\}$. It is known that it fits into an adjoint triple $\overline{\left( -\right) }^{B}\dashv \left( -\right) \otimes B\dashv \left( -\right)
^{\co B}$, see e.g. \cite[Section 3]{Sar21}, where $\overline{M}^{B}=\frac{M}{MB^+}$ and $B^+=\ker(\varepsilon_B)$. The unit and counit are given by
\begin{gather*}
\eta _{M}:M\rightarrow \overline{M}^{B}\otimes
B,\; m\mapsto \sum \overline{m_{0}}\otimes m_{1},\qquad \epsilon _{V}:\overline{\left( V\otimes B\right) }^{B}\overset{%
\cong }{\rightarrow }V,\;\overline{v\otimes b}\mapsto v\varepsilon _{B}\left(
b\right)
\\
  \nu _{V}:V\overset{\cong }{\rightarrow }\left( V\otimes B\right) ^{\co %
B},\;v\mapsto v\otimes 1_{B}, \qquad \theta
_{M}:M^{\co B}\otimes B\rightarrow M,\;m\otimes b\mapsto mb.
\end{gather*}
By Proposition \ref{prop:adj-triples}, the functor $\left( -\right)
^{\co B}$ is semiseparable (resp. separable, naturally full) if and only if so is $\overline{%
\left( -\right) }^{B}$. Moreover, by \cite[Proposition 3.4.1]{Bor94}, the functor $%
\left( -\right) \otimes B$ is fully faithful so that $\left( -\right) ^{\co B}$ is a coreflection. Thus, by Theorem \ref{thm:frobenius} it follows that $\left( -\right) ^{\co B}$ is semiseparable, if and only if it is naturally full, if and only if it is Frobenius. Our aim here is to characterize the semiseparability of $\left( -\right)
^{\co B}$. Note that there is a natural transformation $\sigma:(-)^{\co B}\to \overline{(-)}^{B}$ defined on components by $\sigma_M:M^{\co B}\to \overline{M}^{B},m\mapsto \overline{m}:=m+MB^+$, see \cite[Section 3]{Sar21}.

$\left( 1\right) \Leftrightarrow \left( 2\right) $ in the
following result is a semi-analogue of $\left( 1\right) \Leftrightarrow
\left( 6\right) $ in \cite[Theorem 3.13]{Sar21}.

\begin{thm}\label{thm:rightHopf}
Let $B$ be a bialgebra over a field $\Bbbk$ and consider the coinvariant functor $\left( -\right)
^{\co B}:\mathfrak{M}_{B}^{B}\rightarrow \mathfrak{M}$. The following assertions
are equivalent.

\begin{enumerate}

\item[$\left( 1\right) $] $\left( -\right) ^{\co B}$ is
semiseparable.


\item[$\left( 2\right) $] $B$ is a right Hopf algebra with
anti-multiplicative and anti-comultiplicative right antipode.

\item[$\left( 3\right) $] The canonical natural transformation $\sigma:(-)^{\co B}\to \overline{(-)}^{B}$ is invertible.

\item[$\left( 4\right) $] The canonical natural transformation $\sigma:(-)^{\co B}\to \overline{(-)}^{B}$ is split-mono.
\end{enumerate}
\end{thm}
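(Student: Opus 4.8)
The plan is to organise the proof around the adjoint triple $\overline{(-)}^B\dashv (-)\otimes B\dashv (-)^{\co B}$ together with the structural results already established. The equivalence $(1)\Leftrightarrow(3)\Leftrightarrow(4)$ is essentially a direct application of Proposition \ref{prop:sigma}: since $(-)\otimes B$ is fully faithful, the functor $(-)^{\co B}$ is a coreflection fitting into the hypotheses of that proposition, and the natural transformation $\sigma$ considered there (namely $\sigma=F\epsilon^r\circ(\epsilon^l H)^{-1}$, with $F=\overline{(-)}^B$, $G=(-)\otimes B$, $H=(-)^{\co B}$) coincides with the canonical $\sigma:(-)^{\co B}\to\overline{(-)}^B$ of \cite[Section 3]{Sar21}. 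I would first verify this identification of $\sigma$ by chasing the unit/counit formulas recalled just before the statement: $\epsilon^l_V:\overline{V\otimes B}^B\overset{\cong}{\to}V$ and $\epsilon^r_V:(V\otimes B)^{\co B}\overset{\cong}{\to}V$ are $\nu_V^{-1}$, so on $M^{\co B}$ one computes $\sigma_M(m)=\overline{m}$, matching the description in \cite{Sar21}. Granting this, Proposition \ref{prop:sigma} yields immediately: $(-)^{\co B}$ is semiseparable $\iff$ $\sigma$ is split-mono $\iff$ $\sigma$ is invertible, which is $(1)\Leftrightarrow(3)\Leftrightarrow(4)$.

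**The core equivalence $(1)\Leftrightarrow(2)$.** Here the plan is to go through Theorem \ref{thm:frobenius}: since $(-)^{\co B}$ is a coreflection, it is semiseparable if and only if it is Frobenius, i.e. if and only if $\overline{(-)}^B\cong(-)^{\co B}$, equivalently (by the $(3)\Leftrightarrow(1)$ part just proved, or directly by \cite[Proposition 2.2]{Sar21}) $\sigma$ is invertible. I would then invoke the Hopf-theoretic translation from \cite[Theorem 3.13]{Sar21}: the invertibility of $\sigma$ (their condition $(6)$, or the isomorphism $\overline{(-)}^B\cong(-)^{\co B}$ of adjoint-triple type) is equivalent to $B$ being a right Hopf algebra with a right antipode $S$ that is anti-multiplicative and anti-comultiplicative. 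The two implications of that translation are the genuine content: from a suitable right antipode one constructs an explicit inverse to $\sigma_M$ (on $M=B$ itself, $\sigma_B:B^{\co B}\to\overline{B}^B$, and the antipode provides the splitting via the "coinvariants" projection $m\mapsto\sum m_0 S(m_1)$), and conversely $\sigma_B$ invertible forces $B$ to have the fundamental theorem of Hopf modules available in the one-sided form, from which one extracts $S$ as the composite $B\xrightarrow{\sim}\overline{B}^B\xleftarrow{\sim}B^{\co B}\hookrightarrow B$ twisted appropriately, and checks the anti-(co)multiplicativity by naturality. I would cite \cite{Sar21} for the detailed bialgebra computations rather than redo them.

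**Main obstacle.** The delicate point is making the identification between the abstract $\sigma$ of Proposition \ref{prop:sigma} and the concrete $\sigma$ of \cite[Section 3]{Sar21} watertight, and then matching "semiseparable / Frobenius / $\sigma$ invertible" with precisely the clauses of \cite[Theorem 3.13]{Sar21}. Since \cite{Sar21} already proves that $\overline{(-)}^B\cong(-)^{\co B}$ is equivalent to $(2)$, the cleanest route is: prove $(1)\Leftrightarrow(4)\Leftrightarrow(3)$ via Proposition \ref{prop:sigma}, observe that $(3)$ says exactly $\overline{(-)}^B\cong(-)^{\co B}$ (again by \cite[Proposition 2.2]{Sar21}), and then quote $(1)\Leftrightarrow(6)$ of \cite[Theorem 3.13]{Sar21} for $(3)\Leftrightarrow(2)$. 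The only real work on our side is the compatibility check of the two $\sigma$'s, which is a short naturality/triangle-identity computation using the formulas displayed before the theorem; everything else is assembly of \ref{prop:sigma}, \ref{thm:frobenius}, and \ref{prop:adj-triples} with the cited Hopf-algebraic input.
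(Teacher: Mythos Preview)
Your proposal is correct and follows essentially the same approach as the paper: both use Proposition \ref{prop:sigma} to obtain $(1)\Leftrightarrow(3)\Leftrightarrow(4)$, and both reduce $(2)\Leftrightarrow(3)$ to the Hopf-algebraic results of \cite{Sar21} after identifying the abstract $\sigma$ of Proposition \ref{prop:sigma} with the canonical $\sigma_M(m)=\overline{m}$. Two small corrections: in the adjoint triple $\overline{(-)}^B\dashv(-)\otimes B\dashv(-)^{\co B}$ the counit $\epsilon^r$ of the right adjunction is $\theta_M:M^{\co B}\otimes B\to M$ (not $\nu_V^{-1}$), and the paper invokes \cite[Theorem 3.7]{Sar21} rather than \cite[Theorem 3.13]{Sar21} for the equivalence $(2)\Leftrightarrow(3)$.
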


\begin{proof}


$\left( 1\right) \Leftrightarrow \left( 2\right)$. We already noticed that $\left( -\right) ^{\co B}$ is
semiseparable if and only if it is Frobenius. Moreover $\left( -\right)^{\co B}$ is  Frobenius if and only if the natural transformation $\sigma$ is invertible, c.f. \cite[Lemma 2.3]{Sar21} applied to the adjoint triple $\overline{\left( -\right) }%
^{B}\dashv \left( -\right) \otimes B\dashv \left( -\right) ^{\co B}$.

$\left( 2\right) \Leftrightarrow \left( 3\right)$. The equivalence follows by \cite[Theorem 3.7]{Sar21}.

$\left( 1\right) \Leftrightarrow \left( 3\right) \Leftrightarrow \left( 4\right) $. It follows from Proposition \ref{prop:sigma}.
\end{proof}

\begin{rmk}
As mentioned, the functor $\left( -\right) ^{\co B}:\mathfrak{M}%
_{B}^{B}\rightarrow \mathfrak{M}$ fits into an
adjoint triple $\overline{\left( -\right) }^{B}\dashv \left( -\right)
\otimes B\dashv \left( -\right) ^{\co B}$. Thus, $\left( -\right) ^{%
\co B}$ is Frobenius if and only if $\left( -\right) ^{\co %
B}\dashv \left( -\right) \otimes B$, if and only if $\overline{\left(
-\right) }^{B}\cong \left( -\right) ^{\co B}$. Note that there are
bialgebras $B$ which are not right Hopf algebras and hence $\left( -\right)
^{\co B}$ needs not to be a Frobenius functor in general. For instance let $G$
be a monoid and consider the monoid algebra $B=\Bbbk G$ over a field $\Bbbk$. If $B$ is a
right Hopf algebra, then it has a right antipode $S_{B}:B\rightarrow B$ and
hence, for every $x\in G$, one has $xS_{B}\left( x\right) =\sum
x_{(1)}S_{B}\left( x_{(2)}\right) =\varepsilon _{B}\left( x\right) 1_{B}=1_{G}$.
In particular each element in $G$ is right invertible and hence $G$ must be
a group, which is not always the case.
\begin{invisible}
If for every $x$ there is $x^{\prime }$ such that $xx^{\prime }=1,$
then $x^{\prime }\left( x^{\prime }\right) ^{\prime }=1$ and hence $%
x^{\prime }$ has both a left and a right inverse i.e. it is invertible. Thus, from $xx^{\prime }=1,$ we deduce that $x=\left( x^{\prime }\right) ^{-1}.$
\end{invisible}
Moreover, see \cite[Example 3.9]{Sar21}, there are bialgebras $B$ satisfying the equivalent conditions of Theorem \ref{thm:rightHopf} that are not Hopf algebras, i.e. such that the coreflection $\left( -\right) ^{\co B}$ is semiseparable but not separable. Indeed, $B$ is a Hopf algebra if and only if $\left( -\right) ^{\co B}$ is an equivalence if and only if it is separable, cf. Remark \ref{rmk:sepcoref}.
\end{rmk}

\subsection{Examples of (co)reflections}\label{appendix:corefl}
The connections between some type of functors we have considered in this paper are summarized in the following diagrams.
\begin{equation}\begin{split}
{ \includegraphics[scale=.6]{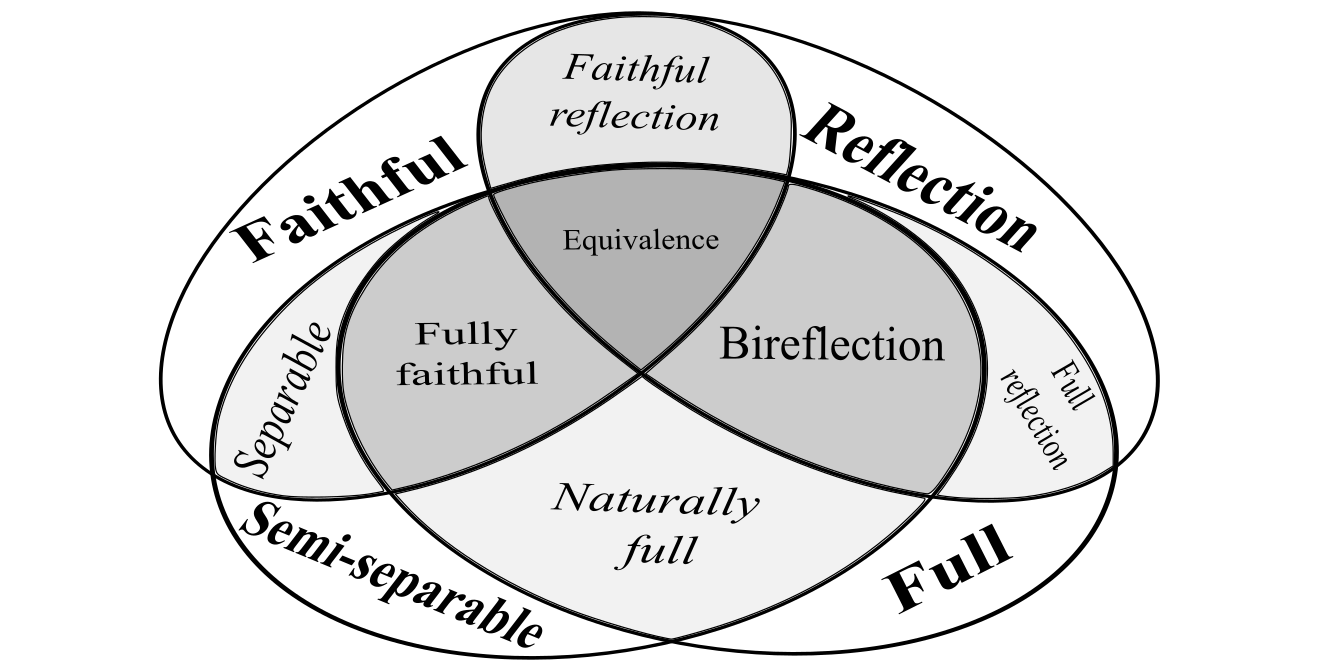}  \includegraphics[scale=.6]{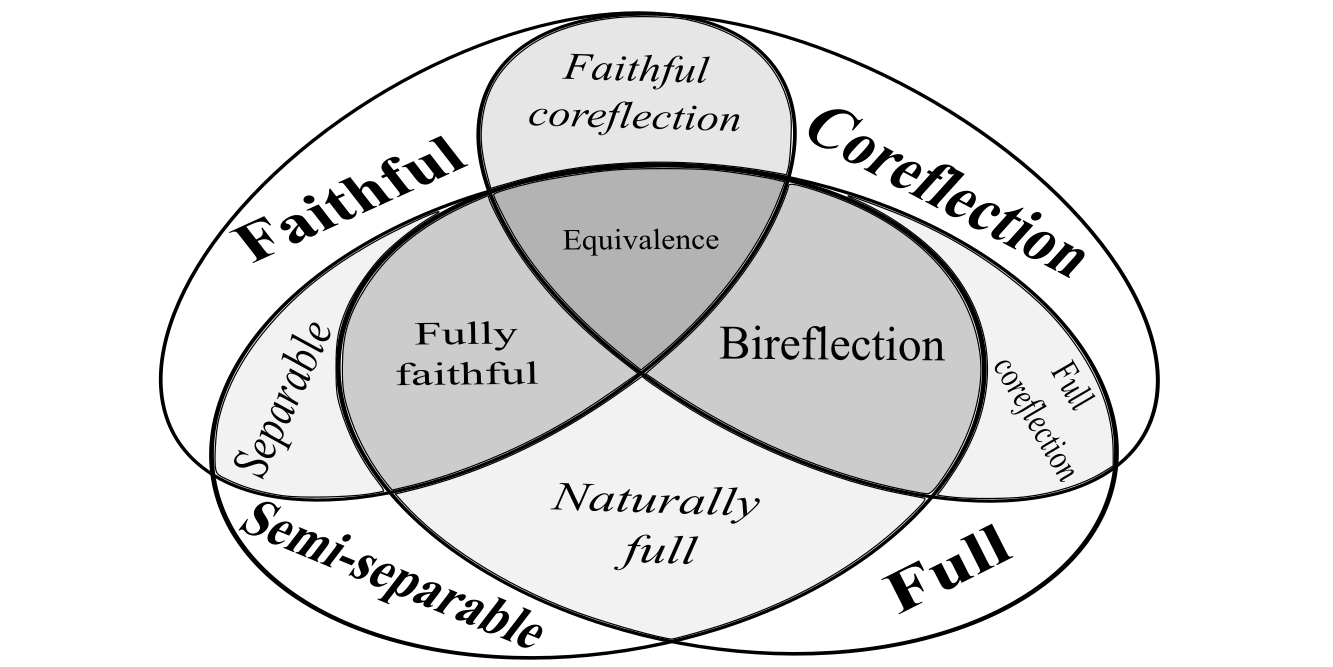}\label{diag:eulerovenn2} }
 \end{split}
  \end{equation}
  Indeed we know that separable functors are the ``intersection'' of semiseparable and faithful functors and that naturally full functors are the ``intersection'' of semiseparable and full functors, see Proposition \ref{prop:sep}. At the very beginning, we observed that a functor is fully faithful if and only if it is at the same time separable and naturally full. We know that a (co)reflection is semiseparable if and only if it is naturally full if and only if it is a bireflection, see Theorem \ref{thm:frobenius}. We also observed that a separable (co)reflection is necessarily an equivalence, see Remark \ref{rmk:sepcoref}.

In order to completely justify the coherence of   diagrams \eqref{diag:eulerovenn2} we need examples of a
\begin{itemize}
  \item (co)reflection which is neither full nor faithful;
  \item faithful (co)reflection which is neither semiseparable nor full;
  \item full (co)reflection which is neither semiseparable nor faithful.
\end{itemize}
Now, we have observed in Remark \ref{rmk:opposemi}, that a functor $F$ is semiseparable (resp. separable, naturally full, full, faithful, fully faithful) if and only if so is $F^\op$. On the other hand, since the opposite switches the functors of an adjunction, one has that $F$ is a reflection (resp. coreflection) if and only if $F^\op$ is a coreflection (resp. reflection). As a consequence we can focus on coreflections as the corresponding examples for reflections can be obtained by duality.

Moreover, a fully faithful coreflection is an equivalence, whence in particular semiseparable. Thus we can omit the last option in the second and in the third item above. Moreover, by Theorem \ref{thm:frobenius}, we know that a coreflection is
semiseparable if and only if it is naturally full if and only if it is a bireflection if and only if it is
Frobenius. Thus a faithful coreflection, which is also semiseparable, must be full whence an equivalence. Summing up, our problem reduces in finding  examples of a
\begin{itemize}
  \item coreflection which is neither full nor faithful;
  \item faithful coreflection which is not an equivalence;
  \item full coreflection which is not a bireflection.
\end{itemize}
\medskip

We start by including an example of coreflection which is neither full nor faithful.

\begin{es}
Let $\Bbbk $ be a field, let $\Coalg$ be the category of coalgebras
over $\Bbbk $ and let $\Set$ be the category of sets. The functor $G:%
\Coalg\rightarrow \Set$ that associates to a coalgebra $C$
the set $G\left( C\right) $ of grouplike elements in $C$, is a coreflection.
In fact it has a fully faithful left adjoint $F$ that takes a set $S$ to the
group-like coalgebra $\Bbbk S.$ The unit and counit components are the
canonical bijection $\eta _{S}:S\rightarrow GFS=G\left( \Bbbk S\right) \ $%
and the canonical injection $\epsilon _{C}:FGC=\Bbbk G\left( C\right) \hookrightarrow C$, respectively.
Let us check that $G$ is not full. Let $D$ be the matrix coalgebra $%
M_{2}^{c}\left( \Bbbk \right) .$ Note that $GD=G\left( M_{2}^{c}\left( \Bbbk
\right) \right) =\emptyset $ which is the initial object in $\Set$.

\begin{invisible}
For $n>1$ we have $G\left( M_{n}^{c}\left( \Bbbk \right) \right) \cong
G\left( M_{n}\left( \Bbbk \right) ^{\ast }\right) =\Alg\left(
M_{n}\left( \Bbbk \right) ,\Bbbk \right) =\left\{ 0\right\} .$ In fact, if $%
f:M_{n}\left( \Bbbk \right) \rightarrow \Bbbk $ is an algebra map. then $%
\mathrm{Ker}\left( f\right) $ is an ideal in $M_{n}\left( \Bbbk \right) $
and hence there is an ideal $I$ of $\Bbbk $ such that $\mathrm{Ker}\left(
f\right) =M_{n}\left( I\right) $. Since $\Bbbk $ is a field, then $I$ is
either $0$ or $\Bbbk $. If $I=0$ then $\mathrm{Ker}\left( f\right)
=M_{n}\left( 0\right) =0$ and hence $f$ is injective. Thus $M_{n}\left(
\Bbbk \right) $ embeds in $\Bbbk $ and hence it is commutative but this is
impossible as $n>1,$ unless $\Bbbk $ is trivial. Thus $I=\Bbbk $ and hence $%
\mathrm{Ker}\left( f\right) =M_{n}\left( \Bbbk \right) $ so that $f$ is the
zero map.
\end{invisible}
Note also that, if we denote by $0$ the zero coalgebra, then we also have $%
G0=\emptyset $ so that $GD=G0.$ If $G$ is full then there is a coalgebra map
$f:D\rightarrow 0$. In particular we have $\varepsilon _{D}=\varepsilon
_{0}\circ f=0,$ a contradiction as $\varepsilon _{D}$ is the map that
assigns to a matrix its trace. Thus $G$ is not full. Let us check it is not even faithful. Otherwise, $\epsilon_C$ would be an epimorphism in $\Coalg$, for every coalgebra $C$, but, by \cite[Theorem 3.1]{MT94}, an epimorphism in $\Coalg$ is necessarily surjective whence $\epsilon_C$  would be invertible and hence every coalgebra $C$ would be isomorphic to $\Bbbk G\left( C\right)$, a contradiction.
\end{es}

\begin{rmk}
  We already observed that a conservative (co)reflection is always an equivalence. It is known (see e.g. \cite[A1.2]{Gr13}) that a faithful functor from a balanced category (i.e. a category where every monomorphism which is an epimorphism is necessarily an isomorphism) is always conservative. As a consequence a faithful (co)reflection from a balanced category is always an equivalence. Since the category $R$-Mod of left modules over a ring $R$ is abelian, it is in particular balanced and hence any faithful (co)reflection from $R$-Mod is always an equivalence.
\end{rmk}

An instance of a faithful coreflection which is not an equivalence is obtained by duality from the following example. Other examples arise as full epi-coreflective subcategories, see \cite{He71}.

\begin{es}
Let $\Dom$ be the category of integral domains and injective ring homomorphisms. Let $\Field$ be the category of fields. The forgetful functor $G:\Field\to \Dom$ has a left adjoint $F$ that takes every integral domain $D$ to its quotient field $Q(D).$ Given an integral  domain $D$, denote by $j_D :D\to Q(D),d\mapsto\frac d 1$, the canonical injection. Then the unit on an integral domain $D$ is the injection $\eta_D=j_D:D\to GFD=Q(D)$, while the counit is the isomorphism $\epsilon_F=j_K^{-1}:FGK=Q(K)\to K$. Note that, in general, $\eta$ is just a monomorphism but not an isomorphism on components. Thus, $F$ is a faithful reflection which is not an equivalence.
\end{es}

We finally provide an example of a full coreflection which is not a bireflection.
	
\begin{es}\label{es:fullcoref}
Let $\Bbbk $ be an arbitrary field, let $\Coalg_\bullet$ be the full subcategory of $\Coalg$ whose objects are pointed coalgebras
over $\Bbbk $ and let $\Set$ be the category of sets. The functor $G:%
\Coalg_\bullet\rightarrow \Set,$ that associates to a coalgebra $C$
the set $G\left( C\right) $ of grouplike elements in $C$, is a coreflection.
In fact, it has a fully faithful left adjoint $F$ that takes a set $S$ to the
group-like coalgebra $\Bbbk S.$ The unit and counit components are the
canonical bijection $\eta _{S}:S\rightarrow GFS=G\left( \Bbbk S\right) \ $%
and the canonical injection $\epsilon _{C}:FGC=\Bbbk G\left( C\right) \hookrightarrow C$, respectively.
By the dual Wedderburn-Malcev Theorem \cite[Theorem 5.4.2]{Mo93}, since $C$ is pointed, there exists a coalgebra projection $\pi:C\to C_0=\Bbbk G\left( C\right)$ such that $\pi\circ \epsilon _{C}=\id$. Thus $\epsilon _{C}$ is a split monomorphism for each pointed coalgebra $C$ and hence $G$ is full. Therefore $G$ is a full coreflection. Let us check it is not a bireflection in general. Otherwise $G$  would be Frobenius and hence from $F\dashv G$ we should deduce $G\dashv F$. Consider the Sweedler's $4$-dimensional Hopf algebra $H=\Bbbk\langle g,x\mid g^2=1,x^2=0,gx+xg=0\rangle $ with coalgebra structure given by $\Delta(g)=g\otimes g$ and $\Delta(x)=x\otimes 1+g\otimes x$ and set $S:=G(H)=\{1,g\}$. We have
 $$\Hom_{\Set}(S,S)=\Hom_{\Set}(G(H),S)\cong\Hom_{\Coalg_\bullet}(H,FS)
 =\Hom_{\Coalg}(H,\Bbbk S).$$ Since $\Hom_{\Set}(S,S)$ has cardinality $4$, we get that $\Hom_{\Coalg}(H,\Bbbk S)$ must contain exactly $4$ elements. For every $k\in\Bbbk$ define $f_k:H\to \Bbbk S$ by setting $f_k(1)=1,f_k(g)=g,f_k(x)=k(1-g)=f_k(xg)$. Then $f_k$ is a coalgebra map.
\begin{invisible}Let us check that $f_k$ is comultiplicative:
\begin{align*}
  (f_k\otimes f_k)\Delta(1) & = f_k(1)\otimes f_k(1)=1\otimes 1=\Delta (1)=\Delta(f_k(1)),\\
(f_k\otimes f_k)\Delta(g) & =f_k(g)\otimes f_k(g)=g\otimes g=\Delta (g)=\Delta(f_k(g)), \\
(f_k\otimes f_k)\Delta(x) & = f_k(x)\otimes f_k(1)+f_k(g)\otimes f_k(x)= k(1-g)\otimes 1+g\otimes k(1-g)\\
&= k1\otimes 1-kg\otimes 1+kg\otimes 1-kg\otimes g= k1\otimes 1-kg\otimes g=\Delta (k1-kg)=\Delta(f_k(x))\\
(f_k\otimes f_k)\Delta(xg) & = f_k(xg)\otimes f_k(g)+f_k(1)\otimes f_k(xg)= k(1-g)\otimes g+1\otimes k(1-g),\\
&= k1\otimes g-kg\otimes g+k1\otimes 1-k1\otimes g= k1\otimes 1-kg\otimes g=\Delta (k1-kg)=\Delta(f_k(xg)).\\
\end{align*}
Let us check that $f_k$ is counitary:
\begin{align*}
  \varepsilon_{\Bbbk S}(f_k(1)) & = \varepsilon_{\Bbbk S}(1)=1= \varepsilon_{H}(1),\\
\varepsilon_{\Bbbk S}(f_k(g)) & =  \varepsilon_{\Bbbk S}(g)=1= \varepsilon_{H}(g),\\
\varepsilon_{\Bbbk S}(f_k(x)) & =  \varepsilon_{\Bbbk S}(k(1-g))=k \varepsilon_{\Bbbk S}(1)-k \varepsilon_{\Bbbk S}(g)=0= \varepsilon_{H}(x),\\
\varepsilon_{\Bbbk S}(f_k(xg)) & = \varepsilon_{\Bbbk S}(k(1-g))=k \varepsilon_{\Bbbk S}(1)-k \varepsilon_{\Bbbk S}(g)=0= \varepsilon_{H}(xg).\\
\end{align*}
\end{invisible}
By linear independence of grouplike elements, we have that $f_k\neq f_l$ for every $k,l\in\Bbbk$ such that $k\neq l$. Since $\Hom_{\Coalg}(H,\Bbbk S)$ contains $4$ elements we deduce that the field $\Bbbk$ has at most $4$ elements,  a contradiction.
\end{es}

\end{document}